\g@addto@macro\bfseries{\boldmath} 
\long\def\@savemarbox#1#2{\global\setbox#1\vtop{\hsize\marginparwidth 
  \@parboxrestore\tiny\raggedright #2}}
\newcommand{\vol}{\operatorname{vol}}
\newcommand{\diam}{\operatorname{diam}}
\newcommand{\dist}{\operatorname{d}}
\newcommand{\C}{\mathbb{C}}
\newcommand{\R}{\mathbb{R}}
\newcommand{\N}{\mathbb{N}}
\newcommand{\toas}[2]{\xymatrix{#1 \ar[r]^-{\mathrm{a.s.}} & #2\\}}
\newcommand\asleq{\mathrel{\overset{\makebox[0pt]{\mbox{\normalfont\tiny\sffamily a.s.}}}{\leq}}}
\newcommand{\aseq}[1]{\mathrel{\overset{\makebox[0pt]{\mbox{\normalfont\tiny\sffamily #1}}}{=}}}
\newcommand{\leqtext}[1]{\mathrel{\overset{\makebox[0pt]{\mbox{\normalfont\tiny\sffamily #1}}}{\leq}}}
\newtheorem{theorem}{Theorem}[section]
\newtheorem{prop}[theorem]{Proposition}
\newtheorem{cor}[theorem]{Corollary}
\newtheorem{corollary}[theorem]{Corollary}
\newtheorem{mr}[theorem]{Main Result}
\newtheorem{lemma}[theorem]{Lemma}
\theoremstyle{definition}
\newtheorem{obs}[theorem]{Observation}
\newtheorem{definition}[theorem]{Definition}
\newtheorem{convention}[theorem]{Convention}
\numberwithin{figure}{section}
\newtheorem{remark}[theorem]{Remark}
\newtheorem{example}[theorem]{Example}
\title{Intrinsic Topological Transforms via the Distance Kernel Embedding}
\author{Cl\'{e}ment Maria, Steve Oudot, Elchanan Solomon}
\date{\today}
\begin{document}
	
	\maketitle
	\thispagestyle{empty}
	\begin{abstract}
	Topological transforms are parametrized families of topological invariants, which, by analogy with transforms in signal processing, are much more discriminative than single measurements. The first two topological transforms to be defined were the Persistent Homology Transform and Euler Characteristic Transform, both of which apply to shapes embedded in Euclidean space. The contribution of this paper is to define topological transforms that depend only on the intrinsic geometry of a shape, and hence are invariant to the choice of embedding. To that end, given an abstract metric measure space, we define an integral operator whose eigenfunctions are used to compute sublevel set persistent homology. We demonstrate that this operator, which we call the distance kernel operator, enjoys desirable stability properties, and that its spectrum and eigenfunctions concisely encode the large-scale geometry of our metric measure space. We then define a number of topological transforms using the eigenfunctions of this operator, and observe that these transforms inherit many of the stability and injectivity properties of the distance kernel operator.
	\end{abstract}

%

\section{Introduction}

	Since its initial founding in the 1990s, the field of \emph{topological data analysis} (TDA) has matured in several directions: through the development of rigorous mathematical foundations, powerful computational algorithms, and increasingly sophisticated tools. Recent years have seen the introduction of a new class of TDA tools: \emph{topological transforms}. These are parametrized families of topological invariants that enjoy injectivity properties not found in single persistence diagrams. The first to be studied were the Persistent Homology Transform and Euler Characteristic Transform of Turner, Mukherjee, and Boyer \cite{turner2014persistent}, which apply to shapes embedded in Euclidean space.  Later, in \cite{dey2015comparing}, Dey et. al. defined a topological transform for abstract metric graphs, and studied its stability and computability. This was complemented by  work of Oudot and Solomon \cite{oudot2017barcode}, who provided a variety of injectivity results for this intrinsic transform.\\

	Although the intrinsic transform introduced by Dey et. al. can be defined quite generally, it becomes difficult to study and compute on more complicated, higher-dimensional spaces. The goal of this paper is to propose a new intrinsic topological transform, defined on large classes of spaces, such as Riemannian manifolds, using spectral methods.\\

	For a metric measure space $X$, we study the eigenfunctions and eigenvalues of an integral operator on $L^{2}(X)$ called the \emph{distance kernel operator}. We define coordinates on $X$ using appropriately scaled eigenfunctions, and demonstrate that the geometry of $X$ can largely be recovered from the resulting Euclidean embedding. We then use these coordinate functions to define a number of new topological transforms and study their properties.
	 
	\subsection{Prior Work}
	
	In \cite{turner2014persistent}, Turner et. al. defined the Persistent Homology Transform (PHT) and Euler Characteristic Transform (ECT). These transforms take as input a sufficiently regular subset of Euclidean space $S \subseteq \mathbb{R}^d$, and associate to every vector $v \in \mathbb{S}^{d-1}$ the persistence diagram, or Euler characteristic curve, of the sublevel-set filtration of $S$ induced by the function $f_{v}:S \to \mathbb{R}$:
	\[f_{v}(x) = v \cdot x.\]
	
	It was subsequently proven in \cite{curry2018many} and \cite{ghrist2018persistent} that these topological transforms are injective in all dimensions\footnote{By ``injective," we mean that two subsets of Euclidean space have the same transform if and only if they are identical. Thus, the transform is injective on the space of admissible subsets.}. Moreover, it was shown in \cite{curry2018many} and \cite{belton2018learning} that, for certain families of embedded shapes, these topological transforms can be computed in finitely many steps\footnote{That is, finitely many directions determine the entire transform, and these directions can be identified with finitely many geometric computations}. Complimenting these theoretical results, Crawford et. al. \cite{crawford2016topological} demonstrated how to use these topological transforms to build an improved classifier for glioblastoma patient outcomes.
	
	In \cite{oudot2017barcode}, Oudot and Solomon defined a topological transform for intrinsic metric spaces $(X,d_X)$. This transform associates to every basepoint $x_0 \in X$ the extended persistence diagram of the function $f_{x_0}:X \to \mathbb{R}$:
	\[f_{x_0}(x) = d_{X}(x_0,x).\]
	The resulting invariant, called the Intrinsic Persistent Homology Transform (IPHT), is the collection of all persistence diagrams arising from basepoints in $X$. By computing Euler characteristic curves instead of persistence diagrams, one obtains the Intrinsic Euler Characteristic Transform (IECT). This invariant was first studied, in the case of metric graphs, by Dey, Shi, and Wang in \cite{dey2015comparing}, where they proved stability and computability results and ran some experiments. The main result of \cite{oudot2017barcode} demonstrated that these invariants are injective\footnote{Similarly to the PHT and ECT, this injectivity means that two graphs having the same transform must be isometric.} on an appropriately generic subset of the space of metric graphs.
	
	We refer the reader to \cite{oudot2018inverse} for a survey of inverse problems in applied topology.
	
	As this paper is concerned with both applied topology and spectral geometry, let us now consider some results, both classical and modern, in the latter field. To begin, the data of a weighted graph can be encoded via its adjacency matrix, and the spectral theory of these matrices is deep and of great utility, seeing application in, e.g., graph clustering and Google's PageRank algorithm. Another matrix associated to a graph is its Laplacian, whose eigendecomposition forms the basis for the Laplacian Eigenmaps technique studied by Belkin and Niyogi in \cite{belkin2002laplacian,belkin2003laplacian,belkin2007convergence}. If one performs spectral analysis on the centered Gram matrix of a distance matrix, the resulting eigenvectors give the classical Multi-Dimensional Scaling embedding. Tenenbaum et. al. \cite{tenenbaum2000global} defined an extension of Multi-Dimensional Scaling for point clouds, called IsoMap, by building a neighborhood graph on the points, defining distances using shortest paths in this graph, and applying Multi-Dimensional Scaling to the resulting distance matrix. Diffusion map embeddings arise from studying the spectral theory of the diffusion operator, as defined by Coifman and Lafon in \cite{coifman2006diffusion}. Lastly, the X-ray transform of \cite{john1938ultrahyperbolic} takes as input a continuous, compactly supported function $f$ on $\mathbb{R}^d$, and outputs a function on the space of lines in $\mathbb{R}^d$ that encodes the corresponding line integral of $f$.

	Like the embeddings of \cite{belkin2003laplacian} and \cite{coifman2006diffusion}, we study the eigenfunctions and eigenvalues of an operator defined on our shape. However, our goal is not to find a low-dimensional representation of a noisy data set, but rather to use our eigenfunctions to compute Hausdorff distances, persistence diagrams, and Betti curves. As we are interested in injectivity results, we want these eigenfunctions to encode the large-scale geometry of our shape. The results that follow demonstrate that using the distance function as an integral kernel produces an operator well suited to this task.

	\subsection{Results}
	In Section \ref{sec:distkernel}, we define the distance kernel operator on metric measure spaces, proving that it is a compact, self-adjoint operator to which the spectral theorem can be applied (Propositions \ref{prop:selfadjoint} and \ref{prop:compact}). We then establish conventions for choosing eigenfunctions for our embedding.
	
	In Section \ref{sec:distkernelembed}, we use the eigenfunctions $\{\phi_i\}$ of the distance kernel operator to define complex-valued coordinates $\alpha_i = \sqrt{\lambda_i}\phi_i$ on our metric-measure space. We then prove that, for non-zero eigenvalues, these eigenfunctions, and their associated coordinates, are Lipschitz, with constant inversely proportional to the magnitude of the eigenvalue (Lemma \ref{lem:eiglip}). We then define a $\mathbb{C}^k$-valued map $\Phi_{k}$, called the distance kernel embedding, whose component functions are the $\alpha_{i}$, and study its injectivity. When infinitely many eigenfunctions are used, we show that the embedding is pointwise injective\footnote{This injectivity is defined for pairs of points on a fixed space $X$.}:
	
	\begin{mr}[Theorem \ref{lem:psiembed}]
		Let $(X,\dist_{X},\mu_X)$ be a compact, strictly positive\footnote{See Definition \ref{def:strictlypositive}.} metric measure space. Then the map $\Phi : X \to \mathbb{C}^{\infty}$ is injective.
	\end{mr}
	
When finitely many eigenfunctions are used, we show that the fiber of a vector in $\mathbb{C}^k$ has bounded diameter, with an upper bound on the diameter decreasing in the embedding dimension $k$. This result holds for Riemannian manifolds equipped with their volume measure and, more generally, metric measure spaces satisfying a regularity condition called $(a,b)$-standardness\footnote{See Definition \ref{def:abstandard}}:

\begin{mr}[Corollary \ref{cor:Rcoarseinj}]
	There exists a function $N(r,n): \mathbb{R}_{>0} \times \mathbb{N} \to \mathbb{N}$ with the following property.
	Let $X$ be a complete $d$-dimensional Riemannian manifold with positive injectivity radius $R$. For every $r \leq R/2$, and $k \geq N(r,n)$, if  $\Phi_{k}(x) =\Phi_{k}(y)$  then $\dist_{X}(x,y) \leq 3r$.
\end{mr}

\begin{mr}[Corollary \ref{cor:ABcoarseinj}]
	There exists a function $N(s,a,b): \mathbb{R}_{>0} \times \mathbb{R}_{>0} \times \mathbb{R}_{>0} \to \mathbb{N}$ with the following property.
	Let $(X,\dist_{X},\mu_X)$ be a compact $(a,b)$-standard metric measure space with threshold parameter $r$. For every $s \leq r$ and  $k \geq N(s,a,b)$, if  $\Phi_{k}(x) =\Phi_{k}(y)$ then $\dist_{X}(x,y) \leq 3s$.
\end{mr}

Section \ref{sec:discretization} demonstrates that the spectral properties of the distance kernel operator can be approximated, within arbitrary precision, by finite, random i.i.d. samples. The main result is the following:

\begin{mr}[Theorem~\ref{thm:approxDKE}]
	For any compact metric measure space $(X,\dist,\mu)$ with $(a,b)$-standard Borel measure, and $X_n$ an i.i.d. sample of X of size $n$,
	\[
		\dist_H^{L^2}(\Phi_k(X),\hat{\Phi}_k(X_n)) \xrightarrow[]{\text{a.s}}0 \ \text{as} \ n \to +\infty 
	\]
	where $\dist_H^{L^2}$ is the Hausdorff distance for the $L^2$ norm in $\C^k$, and $\hat{\Phi}_k(X_n)$ is the \emph{empirical distance kernel embedding}\footnote{See Section~\ref{sec:discretization} for the appropriate construction} defined on the sample $X_n$.
\end{mr}

	Section \ref{sec:stability} proves that the distance kernel embedding is stable for compact Riemannian manifolds. More precisely, Main Result \ref{mr:stab} proves that there is a choice of metric on the space of Riemannian manifolds, namely the (modified) Gromov Hausdorff Prokhorov distance\footnote{See Definitions~\ref{def:modifiedPdistance} and~\ref{def:modifiedGHPdistance}}, such that the Hausdorff distance between the $k$-dimensional distance kernel embeddings of two Riemannian manifolds is bounded by a function $F$ that depends on their distance in this metric as well as their $k$ largest (in absolute value) eigenvalues, and where the function $F$ goes to zero as the distance between the manifolds goes to zero.    


	\begin{mr}[Theorem \ref{thm:stability}]
		\label{mr:stab}
Let $(X,\dist_X,\mu)$ and $(Y,\dist_Y,\eta)$ be compact finite dimensional Riemannian manifolds with their volume measures, such that $\mu(X) = \eta(Y)$. Let $|\lambda_1| > \ldots > |\lambda_k| > 0$ and $|\nu_1| > \ldots > |\nu_k| > 0$ be the $k$ largest eigenvalues in absolute value of operators $D^X$ and $D^Y$ respectively, all non-trivial, of multiplicity one, and with distinct absolute values. Let $\Phi_k\colon X \to \mathbb{C}^k$ and $\Psi_k\colon Y\to \mathbb{C}^k$ be the DKE for $D^X$ and $D^Y$ respectively. 

\medskip

Define the intertwining $\Delta_k^{X,Y}$ of the spectra of $D^X$ and $D^Y$, measuring the separation between the spectra of the operators, by:
\[
	\Delta_k^{X,Y} = \min \left\{ |\lambda_i^2-\nu_j^2| \ : \ 1 \leq i,j \leq k, i \neq j \right\},
\]
and let:
\[
	|\tau_1| := \min \{|\lambda_1|,|\nu_1|\}, \ \ \text{and} \ \ |\tau_k| := \max \{|\lambda_k|,|\nu_k|\}.
\]

\medskip

Then, 
\[
	\dist_H^{L^2}(\Phi_k(X),\Psi_k(Y)) \leq \displaystyle\sqrt{k}\left( 4\sqrt{2} \frac{(\varepsilon+|\tau_1|)}{\Delta^{X,Y}_k}\sqrt{|\tau_1|}\right) \cdot \varepsilon + 
	\displaystyle\sqrt{k}\left( 2\sqrt{2} \sqrt{\frac{(\varepsilon + |\tau_1|)}{|\tau_k|}}\right) \cdot \sqrt{\varepsilon}, 
\]
where $\varepsilon := \dist_{G\bar{P}}(X,Y)$ stands for the modified Gromov-Prokhorov distance between $X$ and $Y$ (see Definitions~\ref{def:modifiedPdistance} and~\ref{def:modifiedGHPdistance} below).
	\end{mr}
	
	In Section \ref{sec:stabinv}, we study the global injectivity and stability properties of the distance kernel embedding. The first result is that, under mild regularity assumptions, the embedding encodes the entire metric data of the space:
	
	\begin{mr}[Lemma \ref{lem:PhiInj}]
			Fix a topological space $X$. Let $\mu_{1}$ and $\mu_{2}$ be strictly positive measures for the Borel $\sigma$-algebra on $X$, with $\mu_1$ absolutely continuous with respect to $\mu_2$, and $\dist_{1}$ and $\dist_{2}$ metrics on $X$ making $X_1 = (X,\dist_1,\mu_1)$ and $X_2 = (X,\dist_2,\mu_2)$ metric measure spaces. Let $D_1$ and $D_2$ be the resulting integral operators. If $\Phi(X_1) = \Phi(X_2)$, then $\dist_1 = \dist_2$. 
	\end{mr}

	Moving forward, we show that if the distance kernel embeddings of two metric measure spaces are at small Hausdorff distance from each other, then these spaces are close in the Gromov-Hausdorff distance:
	
	\begin{definition}
		For a compact metric measure space $(X, \dist_{X}, \mu_{X})$ and a positive integer $k$, define the error function (where $[\cdot,\cdot]$ is a bilinear form defined in Section \ref{sec:stabinv}):
		\[E_{X,k}(x,x') = |[\Phi_{k}(x),\Phi_{k}(x')] - \dist_{X}(x,x')|.\] 
		This error measures the extent to which the eigenfunction expansion of $\dist_{X}$ fails to approximate the metric.
	\end{definition}
	
	\begin{mr}[Theorem \ref{thm:invstabmeas}]
	\label{mr:invstabmeas}
	Let $(X,\dist_{X}, \mu_X)$ and $(Y,\dist_{Y}, \mu_Y)$ be compact metric measure spaces, with eigenvalues $\{\lambda_{i}\}$ and $\{\nu_{i}\}$ respectively. Take $k \in \mathbb{N}$ to be a positive integer, and let $\varepsilon = \dist_{H}^{L^2}(\Phi_{k}(X),\Phi_{k}(Y))$. Then
\[d_{GH}(X,Y) \leq 2\varepsilon \min \left\{  \max_{x \in X} \| \Phi_{k}(x) \|_{2}, \max_{y \in Y} \| \Phi_{k}(y) \|_{2}\right\} + \| E_{X,k} \|_{\infty} + \| E_{Y,k} \|_{\infty} + \varepsilon^2. \]
	\end{mr}
	 
	 Simulations conducted in Section \ref{sec:experiments} demonstrate the constants appearing in this result are well-behaved, with the multiplicative error remaining small, and the additive error going to $0$, as $k$ goes to infinity.
	 
	 We then study the embedding in detail in the special case of finite metric measure spaces, culminating with the following simplified version of Theorem \ref{thm:invstabmeas}:

	 \begin{mr}[Theorem \ref{thm:invstab}]
	 	Let $(X,\dist_{X}, \mu_X)$ and $(Y,\dist_{Y}, \mu_Y)$ be finite metric measure spaces, with eigenvalues $\{\lambda_{i}\}$ and $\{\nu_{i}\}$, and let $\theta = \min \{\min_{x \in X}\mu_{X}(x),\min_{y \in Y}\mu_{X}(y)\}$. Take $k \leq |X|,|Y|$, and suppose that $\dist_{H}^{L^{2}}(\Phi_{k}(X),\Phi_{k}(Y)) \leq \varepsilon$. Then,
\[\dist_{GH}(X,Y) \leq 2\varepsilon \frac{\min (\sqrt{|\lambda_{1}|},\sqrt{|\nu_{1}|})}{\theta} + \varepsilon^2 + \frac{|\lambda_{k+1}| + |\nu_{k+1}|}{\theta}.       \]	 
	 \end{mr}
 
 	The quality of the bound in Main Result~\ref{mr:invstabmeas} depends on the $L^2$ norms of the embedding vectors and the $L^\infty$ norms of the error functions. In Section~\ref{sec:embeddingconstants}, explicit bounds on these quantities are given in terms of the geometry of the metric measure spaces. These worst-case bounds are in fact pessimistic, and the simulations considered at the end of the paper suggest that they can be improved upon greatly.
	 
	 In Section~\ref{sec:intrinsicpht}, we study the topological features of the spectrum of the distance kernel operator. We demonstrate that our eigenfunctions have persistence diagrams and, under certain regularity assumptions, Betti and Euler curves:
	 
	 \begin{mr}[Proposition \ref{prop:pers}]
	 	Let $(X,\dist_{X},\mu_X)$ be a compact metric measure space homeomorphic to the geometric realization of a finite simplicial complex. Then, any finite linear combination $f = \sum_{i=1}^{n}c_i \phi_i$ of eigenfunctions of $D^X$ with nonzero eigenvalue has a well-defined sublevel set persistence diagram $PH(X,f)$.
	 \end{mr}
	 
	 \begin{mr}[Proposition \ref{prop:betticurves}]
	 	Suppose now that $X$ is homeomorphic to the geometric realization of a finite simplical complex which implies bounded degree-$q$ total persistence\footnote{See Definition \ref{def:boundedtotal}. This condition ensures the existence of our Euler curves}. Let $p=1/q$. Then for any homological degree $k$, the sum defining $\beta_{k}(X,f)$ converges in $L^p$. Under the same hypothesis, the sum defining $\chi(X,f)$ is finite, so that the Euler curve also exists as a function in $L^p$.
	 \end{mr}

 	In Definition \ref{def:transforms}, we define two types of topological transforms: (1) intrinsic transforms, namely the i-PKT and i-EKT, that compute the persistence diagrams and Euler curves of linear combinations of eigenfunctions, and (2) embedded transforms, namely the e-PKT and e-EKT, that amount to computing the PHT and ECT on the image of the distance kernel embedding. Like the PHT and ECT, all of these transforms give maps from a sphere of the appropriate dimension to a space of topological summaries (either persistence diagrams or Euler curves). We then prove that the i-PKT and e-PKT are Lipschitz continuous, whereas their Euler Characteristic counterparts are H\"{o}lder continuous:
 	
 	\begin{mr}[Proposition \ref{prop:iPKTlip}]
	Suppose that $X$ is homeomorphic to the geometric realization of a finite simplicial complex, and the barcode space is equipped with the bottleneck distance. Then, both the i-$PKT_{k}$ and e-$PKT_{k}$ are Lipschitz continuous on $\mathbb{S}^{2k-1}$.
 	\end{mr}
 
 \begin{mr}[Corollary \ref{cor:ECTlip}]
 	Let $q \leq 1$, and suppose that $X$ is homeomorphic to the geometric realization of a finite simplicial complex which implies bounded degree-$q$ total persistence. Suppose further that there is a uniform bound on the number of points in the persistence diagrams obtained when evaluating the i-$PKT_{k}$ and e-$PKT_{k}$ at an arbitrary vector $(u,v) \in \mathbb{S}^{2k-1}$. If we equip the sphere $\mathbb{S}^{2k-1}$ with the $\ell^1$ distance, and the space of Euler curves with the $L^{1/q}$ distance, the i-$ECT_{k}$ and e-$ECT_{k}$ are $q$-H\"{o}lder continuous on $\mathbb{S}^{2k-1}$.
 \end{mr}
%
%
%
%
%

 Lastly, as a consequence of Main Result \ref{mr:invstabmeas}, we show that the e-PKT and e-EKT have bounded inverses. To obtain this technical result, we need to assume that our distance kernel embeddings are \emph{definable} subsets of Euclidean space. The notion of definability comes from the theory of o-minimal geometry, and is necessary for the application of techniques from Euler calculus. C.f. \cite{curry2018many} \S2 for precise definitions. Finite embeddings are always definable. 
 
 \begin{mr}[Theorem \ref{thm:ePKTcoarseinj}]
Let $(X,\dist_{X}, \mu_X)$ and $(Y,\dist_{Y}, \mu_Y)$ be compact metric measure spaces, with eigenvalues $\{\lambda_{i}\}$ and $\{\nu_{i}\}$ respectively. Let $k \in \mathbb{N}$ be a positive integer, and suppose that $\Phi_{k}(X)$ and $\Phi_{k}(Y)$ are definable. Then if either e-PKT$_{k}(X)$ = e-PKT$_{k}(Y)$ or e-EKT$_{k}(X)$ = e-EKT$_{k}(Y)$, we have:

\[d_{GH}(X,Y) \leq  \| E_{X,k} \|_{\infty} + \| E_{Y,k} \|_{\infty}.\]
 \end{mr}
 
 The final section of the paper, Section \ref{sec:experiments}, consists of a variety of numerical experiments serving as proofs of concept for the distance kernel embedding. We consider a number of discrete metric spaces sampled from Lens spaces, tori, 2-spheres, and 3-spheres, and we look at the Hausdorff distances between their embeddings, the distribution of their eigenvalues, the distribution of mass of the scaled eigenvectors, the distribution of magnitudes of their embedding vectors, the values of the additive and multiplicative error bounds appearing in Theorem \ref{thm:invstabmeas}, and the values of the estimates for these bounds appearing in Lemmas \ref{lem:errorbound} and \ref{lem:embedbound}. We observe that, for our data sets, the values of the additive and multiplicative errour bounds in Theorem \ref{thm:invstabmeas} are much smaller than the estimates provided in Section \ref{sec:embeddingconstants}, and that the distance kernel embedding suffices to distinguish a variety of manifolds, including two Lens spaces with same homotopy type.

\section{The Distance Kernel}
\label{sec:distkernel}
Before defining the operator of interest, we recall some basic definitions in metric geometry and functional analysis.

\begin{definition}
	A \emph{metric measure space} $(X,\dist_{X},\mu_X)$ is a metric space together with a Borel measure $\mu_X$ on the topology induced by $\dist_{X}$. In what follows, we assume that our measures are Radon. 
\end{definition}

\begin{definition}
	For a metric measure space $(X,\dist_{X},\mu_X)$, we define the volume of $X$, $\vol(X) = \mu_{X}(X)$, and the diameter $\operatorname{diam}(X) = \sup \{\dist_{X}(x,x') \colon x,x' \in X\}$. These may be finite or infinite.
\end{definition}

\begin{definition}
	An operator $T$ on a Hilbert space $\mathcal{H}$ is \emph{compact} if, for every bounded sequence $x_n \in \mathcal{H}$, the sequence $T(x_n) \in \mathcal{H}$ has a convergent subsequence.
\end{definition}

Let $(X, \dist_{X}, \mu_X)$ be a compact metric measure space. We define the following operator on $L^{2}(X)$, called the distance kernel operator:
\[(D^{X}f)(x) = \int_{X}f(y)\dist_{X}(x,y)d\mu_{X}(y). \]

\begin{prop}
	\label{prop:selfadjoint}
	$D^X$ is a self-adjoint operator.
\end{prop}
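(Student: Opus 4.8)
The plan is to recognize $D^X$ as an integral operator with a bounded, real-valued, symmetric kernel, and then to run the standard Fubini argument. First I would record the elementary finiteness facts: since $X$ is compact and $\mu_X$ is Radon, $\mu_X(X) < \infty$, and since $X$ is compact its diameter $\diam(X)$ is finite. Hence the kernel $K(x,y) := \dist_X(x,y)$ satisfies $0 \le K(x,y) \le \diam(X)$ everywhere, so $K \in L^2(X\times X,\,\mu_X\otimes\mu_X)$ with $\|K\|_{L^2(X\times X)}^2 \le \diam(X)^2\,\mu_X(X)^2 < \infty$. In particular $D^X$ is a Hilbert--Schmidt operator, and thus a bounded operator defined on all of $L^2(X)$; moreover, for $f\in L^2(X)$ the function $x\mapsto\int_X f(y)K(x,y)\,d\mu_X(y)$ is genuinely defined for a.e.\ $x$ and lies in $L^2(X)$, which is immediate from boundedness of $K$ and finiteness of $\mu_X$ via Cauchy--Schwarz.

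Since $D^X$ is bounded and everywhere defined, to prove self-adjointness it suffices to verify the symmetry relation $\langle D^X f, g\rangle = \langle f, D^X g\rangle$ for all $f,g \in L^2(X)$ (in the complex Hilbert space $L^2(X)$, with conjugate-linearity in the second slot). Expanding the left-hand side,
\[
\langle D^X f, g\rangle = \int_X \left(\int_X f(y)\,\dist_X(x,y)\,d\mu_X(y)\right)\overline{g(x)}\,d\mu_X(x).
\]
Because $\mu_X$ is finite we have $L^2(X)\subseteq L^1(X)$, and the function $(x,y)\mapsto f(y)\,\overline{g(x)}\,\dist_X(x,y)$ is absolutely integrable on $X\times X$ (bound $\dist_X$ by $\diam(X)$ and then apply Cauchy--Schwarz to $\int|f|\,\int|g|$). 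So Fubini--Tonelli applies and allows me to swap the order of integration.

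Finally I use that $\dist_X$ is real-valued and symmetric, i.e.\ $\overline{g(x)}\,\dist_X(x,y) = \overline{g(x)\,\dist_X(y,x)}$; interchanging the integrals then gives
\[
\langle D^X f, g\rangle = \int_X f(y)\,\overline{\left(\int_X g(x)\,\dist_X(y,x)\,d\mu_X(x)\right)}\,d\mu_X(y) = \int_X f(y)\,\overline{(D^X g)(y)}\,d\mu_X(y) = \langle f, D^X g\rangle,
\]
so $D^X = (D^X)^*$. The only real content here is the appeal to Fubini, and the single point to be careful about is checking its hypotheses (finite measure, bounded kernel); both are furnished by compactness of $X$ together with the Radon assumption on $\mu_X$. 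Everything else is bookkeeping with the complex conjugate, using only that the kernel $\dist_X$ is real and symmetric.
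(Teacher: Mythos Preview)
Your proof is correct and follows the same approach as the paper's: invoke $\sigma$-finiteness of $\mu_X$ (from compactness plus the Radon hypothesis) to justify Fubini, then use the symmetry of $\dist_X$ to swap the roles of $x$ and $y$ in the double integral. You are in fact a bit more careful than the paper---you verify the absolute integrability hypothesis for Fubini explicitly via the boundedness of the kernel, and you track the complex conjugate in the inner product---but the substance is identical.
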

\begin{proof}
	By convention, $\mu_X$ is Radon. Since $X$ is compact, this implies that $\mu_{X}(X) < \infty$, and hence $(X,\mu_X)$ is $\sigma$-finite. We can thus apply Fubini's theorem, and the symmetry of the distance function $\dist_{X}$, to observe that, for two integrable functions $f$ and $g$,
	\begin{align*}
	\langle D^{X}f,g\rangle & = \int_{X} \left( \int_{X} f(y)\dist_{X}(x,y) d\mu_{X}(y) \right) g(x) d\mu_{X}(x)\\
	& = \int_{X} \int_{X} f(y)g(x) \dist_{X}(x,y) d\mu_{X}(x)d\mu_{X}(y)\\
	& = \int_{X} f(y) \left( \int_{X} g(x) \dist_{X}(y,x) d\mu_{X}(x) \right) d\mu_{X}(y)\\
	& = \langle f, D^{X}g \rangle
	\end{align*}
	demonstrating self-adjointness.
\end{proof}

\begin{prop}
	\label{prop:compact}
	$D^{X}$ is a compact operator.
\end{prop}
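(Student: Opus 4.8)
The plan is to show that $D^X$ is a Hilbert--Schmidt operator, which is the standard and cleanest route to compactness for an integral operator. Concretely, the kernel is $K(x,y) = \dist_X(x,y)$, and I would verify that $K \in L^2(X \times X, \mu_X \otimes \mu_X)$. Since $X$ is compact and $\dist_X$ is continuous on $X \times X$, it is bounded: $\dist_X(x,y) \le \diam(X) < \infty$. Combined with $\mu_X(X) < \infty$ (which follows, as in the previous proof, from $\mu_X$ being Radon and $X$ compact), we get
\[
\int_X \int_X |K(x,y)|^2 \, d\mu_X(x) \, d\mu_X(y) \le \diam(X)^2 \cdot \mu_X(X)^2 < \infty,
\]
so $K$ is square-integrable on the product space. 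I would then invoke the classical fact that an integral operator on $L^2$ of a $\sigma$-finite measure space with an $L^2$ kernel is Hilbert--Schmidt, and that every Hilbert--Schmidt operator is compact (it is a norm-limit of finite-rank operators, obtained by truncating the kernel expansion in an orthonormal basis of $L^2(X \times X)$ of product form). This immediately yields the claim.

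An alternative, more hands-on approach — useful if one wants to avoid citing the Hilbert--Schmidt machinery — is to verify compactness directly via the Arzel\`a--Ascoli theorem. One shows that $D^X$ maps the unit ball of $L^2(X)$ into an equicontinuous, uniformly bounded family of continuous functions on $X$: uniform boundedness follows from Cauchy--Schwarz, $|(D^Xf)(x)| \le \|f\|_2 \, \|\dist_X(x,\cdot)\|_2 \le \|f\|_2 \, \diam(X)\sqrt{\mu_X(X)}$, and equicontinuity follows from
\[
|(D^Xf)(x) - (D^Xf)(x')| \le \|f\|_2 \, \Big( \int_X |\dist_X(x,y) - \dist_X(x',y)|^2 \, d\mu_X(y) \Big)^{1/2} \le \|f\|_2 \, \sqrt{\mu_X(X)} \, \dist_X(x,x'),
\]
using the triangle inequality $|\dist_X(x,y) - \dist_X(x',y)| \le \dist_X(x,x')$. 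Hence the image of the unit ball is relatively compact in $C(X)$ with the sup norm, and a fortiori in $L^2(X)$ (since $\mu_X(X) < \infty$ makes sup-norm convergence imply $L^2$ convergence), so any bounded sequence $x_n$ has $D^X x_n$ with a convergent subsequence.

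I do not anticipate a serious obstacle here; both arguments are routine once $\mu_X(X) < \infty$ and the boundedness/Lipschitz-in-$x$ property of the distance kernel are in hand. The only point requiring a little care is the passage from relative compactness in $C(X)$ to relative compactness in $L^2(X)$ in the Arzel\`a--Ascoli approach, which relies on finiteness of the measure; the Hilbert--Schmidt route sidesteps this entirely. I would present the Hilbert--Schmidt argument as the main proof for brevity.
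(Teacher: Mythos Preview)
Your proposal is correct. The paper's proof is precisely your ``alternative, more hands-on approach'': it takes a bounded sequence $f_n$, uses Cauchy--Schwarz and the triangle inequality to get the equicontinuity bound
\[
|D^X f_n(x) - D^X f_n(x')| \le \varepsilon \sqrt{\vol(X)} \cdot C \quad \text{for } \dist_X(x,x') \le \varepsilon,
\]
and then applies Arzel\`a--Ascoli. So your secondary argument matches the paper exactly.

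Your preferred primary route, the Hilbert--Schmidt argument, is a genuinely different but entirely valid approach. It is shorter and yields more: the Hilbert--Schmidt property gives $\sum_i \lambda_i^2 < \infty$ for free, a fact the paper actually uses later (e.g.\ in the $L^2$ error estimates of Section~\ref{sec:embeddingconstants} and in the setup of Section~\ref{sec:discretization}, where the operator is explicitly called Hilbert--Schmidt). On the other hand, the paper's Arzel\`a--Ascoli computation is not wasted either: the same Cauchy--Schwarz/triangle-inequality estimate reappears verbatim in the proof of Lemma~\ref{lem:eiglip} to establish the Lipschitz constant of the eigenfunctions. Either choice is fine here; presenting the Hilbert--Schmidt argument and noting the equicontinuity estimate separately when it is needed would be a clean way to organize things.
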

	\begin{proof}
Let $f_{n} \in L^{2}(X)$ be a bounded sequence of functions, $||f_{n}||_{L^2} \leq C$ for all $n$.

For $\dist_{X}(x,x') \leq \varepsilon$ and all $n$,

\begin{align*}
|D^{X}f_{n}(x) - D^{X}f_{n}(x')| & = \left|\int_{X} \left( \dist_{X}(x,y)f_{n}(y) - \dist_{X}(x',y)f_{n}(y) \right) d\mu_{X}(y)\right|\\
& \leq \int_{X}|\dist_{X}(x,y) - \dist_{X}(x',y)||f_{n}(y)| d\mu_{X}(y)\\
\text{(Cauchy-Schwarz)}& \leq \|\dist_{X}(x,y) - \dist_{X}(x',y)\|_{L^{2}(y)} \cdot \|f_{n}(y)\|_{L^{2}(y)}\\
\text{(triangle inequality)}& \leq \|\varepsilon\|_{L^{2}(y)} \cdot \|f_{n}(y)\|_{L^{2}(y)}\\
& \leq \varepsilon \sqrt{\vol (X)}  \cdot C .
\end{align*}

Thus, $D^{X}f_{n}$ is an equicontinuous family of functions on $X$, so, by the 
Arzel\`a-Ascoli theorem, it contains a uniformly convergent, and hence $L^2$-convergent, subsequence. This demonstrates compactness.
\end{proof} 

\begin{corollary}
	The spectral theorem for compact, self-adjoint operators on a Hilbert space implies that $L^{2}(X)$ admits a finite or countably infinite orthonormal basis consisting of eigenfunctions $\phi_{1}, \phi_{2}, \phi_{3}, \cdots$ of $D^{X}$ with eigenvalues $\lambda_{1}, \lambda_{2}, \lambda_{3}, \cdots$. By convention, we order our eigenvalues (and hence eigenfunctions) in decreasing order of absolute value, $|\lambda_1| \geq |\lambda_2| \geq |\lambda_3| \geq \ldots$, breaking the tie between positive-negative pairs by listing the positive eigenvalue first. Note that, as a consequence of the spectral theorem, $|\lambda_{i}|$ goes to zero as $i$ goes to infinity.
\end{corollary}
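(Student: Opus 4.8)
The plan is to obtain this as an immediate consequence of the Hilbert--Schmidt (Riesz--Schauder) form of the spectral theorem, fed by the two preceding propositions. First, Propositions~\ref{prop:selfadjoint} and~\ref{prop:compact} establish that $D^X$ is a compact, self-adjoint operator on the Hilbert space $\mathcal{H}=L^2(X)$; moreover, since $X$ is a compact metric space carrying a (finite) Radon measure, its Borel $\sigma$-algebra is countably generated and $\mathcal{H}$ is separable. The spectral theorem for compact self-adjoint operators then yields an orthonormal basis of $\mathcal{H}$ consisting of eigenfunctions of $D^X$, necessarily with real eigenvalues, and separability forces this basis to be finite or countably infinite. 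That gives the first assertion verbatim.

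To justify the ordering convention, the point to verify is that the nonzero eigenvalues admit an enumeration in nonincreasing order of absolute value; equivalently, that for each $\varepsilon>0$ only finitely many basis eigenfunctions have eigenvalue of absolute value $\geq\varepsilon$. I would argue by contradiction: given infinitely many orthonormal eigenfunctions $\phi_{n_1},\phi_{n_2},\dots$ with $|\lambda_{n_j}|\geq\varepsilon$, the sequence $(\phi_{n_j})_j$ is bounded in $\mathcal{H}$, yet $\|D^X\phi_{n_i}-D^X\phi_{n_j}\|^2=|\lambda_{n_i}|^2+|\lambda_{n_j}|^2\geq 2\varepsilon^2$ for $i\neq j$ by orthonormality, so $(D^X\phi_{n_j})_j$ has no convergent subsequence, contradicting compactness. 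Hence every nonzero eigenvalue has finite multiplicity and the nonzero eigenvalues can only accumulate at $0$; listing them (with multiplicity) as $|\lambda_1|\geq|\lambda_2|\geq\cdots$ is therefore well defined, resolving ties inside a $\pm$-pair by the stated choice of putting the positive eigenvalue first, and appending the (possibly infinite-dimensional) $0$-eigenspace at the tail.

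The final claim $|\lambda_i|\to 0$ falls out of the same dichotomy: if there are finitely many nonzero eigenvalues then $\lambda_i=0$ for all large $i$, and otherwise the contradiction argument above shows that for every $\varepsilon>0$ all but finitely many $|\lambda_i|$ are strictly below $\varepsilon$. I do not expect a genuine obstacle here, since the statement is classical; the only mild care needed is in matching hypotheses --- compactness and self-adjointness on a Hilbert space for the eigenbasis, plus separability of $L^2(X)$ (from compactness of $X$) for the countability --- all of which are already in hand from the preceding results.
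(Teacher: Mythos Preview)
Your proposal is correct and matches the paper's approach: the paper gives no explicit proof at all, simply stating the result as an immediate corollary of the spectral theorem for compact self-adjoint operators together with Propositions~\ref{prop:selfadjoint} and~\ref{prop:compact}. Your write-up supplies strictly more detail than the paper does (separability of $L^2(X)$, the finiteness argument for eigenvalues above a threshold, and the dichotomy for $|\lambda_i|\to 0$), all of which is standard and sound.
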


In general, it is difficult to estimate the spectrum of the distance kernel operator. However, we do have the following bound on the top eigenvalue.

\begin{lemma}
	For any compact metric measure space $(X,\dist_{X},\mu_X)$, $|\lambda_{1}| \leq \operatorname{diam}(X)\operatorname{vol}(X)$, and this bound is asymptotically sharp.
\end{lemma}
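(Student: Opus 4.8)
The plan is to separate the two assertions: the inequality, and its asymptotic sharpness.

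For the inequality, I would first invoke the preceding corollary: since $D^X$ is compact and self-adjoint, its spectrum is $\{\lambda_i\}\cup\{0\}$ and its operator norm equals its spectral radius, so $|\lambda_1| = \|D^X\|_{L^2\to L^2}$. It then suffices to bound this operator norm, and the cleanest route is a Schur-test estimate on the kernel $\dist_X(x,y)$: this kernel is nonnegative, it is symmetric, and for every fixed $x$ we have $\int_X \dist_X(x,y)\,d\mu_X(y) \le \diam(X)\vol(X)$ since $\dist_X(x,y)\le\diam(X)$ pointwise. The Schur test then yields $\|D^X\| \le \diam(X)\vol(X)$, whence $|\lambda_1|\le\diam(X)\vol(X)$. (Alternatively one can argue directly: for $\|f\|_{L^2}\le 1$, Cauchy--Schwarz gives $|(D^Xf)(x)|\le \diam(X)\|f\|_{L^1}\le \diam(X)\vol(X)^{1/2}$, and integrating the square of this over $X$ — a space of measure $\vol(X)$ — gives $\|D^Xf\|_{L^2}^2\le \diam(X)^2\vol(X)^2$; or one may simply note $|\lambda_1|\le\|D^X\|_{HS}$ and bound the Hilbert--Schmidt norm by $\iint\dist_X^2\,d\mu_X\,d\mu_X\le\diam(X)^2\vol(X)^2$.)

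For sharpness, I would exhibit an explicit family of finite metric measure spaces along which the ratio $|\lambda_1|/(\diam(X)\vol(X))$ tends to $1$. Let $X_n$ be a set of $n$ points equipped with the discrete metric ($\dist(x_i,x_j)=1$ for $i\ne j$) and the uniform measure assigning mass $1$ to each point, so $\diam(X_n)=1$ and $\vol(X_n)=n$. Then $L^2(X_n)$ is just $\R^n$ with the standard inner product, and $D^{X_n}$ is represented by the matrix $J-I$, where $J$ is the all-ones $n\times n$ matrix; its eigenvalues are $n-1$ (with eigenvector $(1,\dots,1)$) and $-1$ (with multiplicity $n-1$). Hence $|\lambda_1(X_n)| = n-1$, and $|\lambda_1(X_n)|/(\diam(X_n)\vol(X_n)) = (n-1)/n \to 1$ as $n\to\infty$, proving asymptotic sharpness.

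The bulk of this is routine and I do not anticipate a genuine obstacle; the only points requiring a little care are on the sharpness side — one must confirm the exhibited objects really are (compact) metric measure spaces (the discrete metric is a bona fide metric, and a finite space is automatically compact), and one should keep $n\ge 3$ so that $n-1$ is the strictly dominant eigenvalue in absolute value (for $n=2$ the eigenvalues $\pm1$ tie, though the ratio $1/2$ is still correctly computed). One could instead look for a "continuum" sharp family, but any such family would essentially be a limit of these discrete examples and would add nothing, so I would keep the finite construction.
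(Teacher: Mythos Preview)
Your proof is correct. The sharpness example is essentially identical to the paper's (the paper takes $n$ points at mutual distance $\delta$ with mass $1/n$ each, you take mass $1$ each with $\delta=1$; after obvious rescaling these coincide, and both give $|\lambda_1|/(\diam\cdot\vol)=(n-1)/n\to1$).

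For the inequality the route differs. The paper argues pointwise: it evaluates $D^X\phi_1$ at a point $x$ where $|\phi_1|$ attains its maximum, and uses $|\phi_1(y)|\le|\phi_1(x)|$ together with $\dist_X(x,y)\le\diam(X)$ to get $|\lambda_1||\phi_1(x)|=|(D^X\phi_1)(x)|\le\diam(X)\vol(X)|\phi_1(x)|$. This is effectively the bound $\|D^X\|_{L^\infty\to L^\infty}\le\diam(X)\vol(X)$ applied to the eigenfunction, and it tacitly uses that $\phi_1$ is continuous (so that the maximum is actually attained). Your approach instead bounds the $L^2\to L^2$ operator norm directly via the Schur test (or Cauchy--Schwarz, or the Hilbert--Schmidt norm) and then invokes $|\lambda_1|=\|D^X\|$ for compact self-adjoint operators. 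Your argument is slightly more robust in that it never touches the eigenfunction and hence avoids the continuity issue; the paper's argument is marginally more elementary in that it avoids citing the Schur test or the norm--spectral-radius equality. Either is perfectly adequate here.
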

\begin{proof}
	Let $f \in L^2 (X)$ be a function whose absolute value attains a (not necessarily unique) maximum at $x$. Observe that:
	\[|(D^{X}f)(x)| = \left|\int_{X} \dist_{X}(x,y)f(y) dy \right| \leq \int_{X} \dist_{X}(x,y)|f(x)| dy \leq \operatorname{diam}(X)\operatorname{vol}(X)|f(x)|. \]
	Taking $f = \phi_{1}$ completes the proof.
	
	To demonstrate asymptotic sharpness, let $X$ consist of $n$ points, at distance $\delta>0$ from one another, and each with measure $1/n$, so that $X$ has unit measure. If $f$ is any nonzero constant function, then $D^{X}(f)/f = \delta \cdot \frac{n-1}{n}$, which converges to $\delta$ as $n \to \infty$.
\end{proof}

\begin{convention}
	\label{con:sign}
	The spectral theorem asserts the existence of the eigenfunctions $\phi_i$, but does not guarantee their uniqueness. Indeed, the choice is never unique. If the eigenvalue $\lambda_{i}$ has geometric multiplicity one, one has two choices: $\{\phi_i, -\phi_i \}$. If the eigenvalue has geometric multiplicity greater than one, there are infinitely many choices. In the rest of the paper, we make the generic assumption that all the eigenvalues have multiplicity one.
	
	\medskip
	
	In resolving the ambiguity between the pair $\{\phi_i, -\phi_i \}$, we have two options. In \cite{belkin2007convergence}[Note 1], the authors suggest fixing an arbitrary function $f$ for which $\langle f, \phi_i \rangle \neq 0 \, \, \forall i$, and asserting that $\langle f, \phi_i \rangle > 0 \,\, \forall i$. In order to maintain consistency of the sign convention, we would like $f$ to be canonically defined. For example, one might set $f$ to be the constant function $f(x) = 1$, which does not depend on the representation of the data.\\

	In addition to the convention adopted in \cite{belkin2007convergence}, we propose another convention of fixing the sign: asserting that $\langle \phi_i, |\phi_i | \rangle > 0 \,\, \forall i$. As before, this convention does not always work, as the integral in question may be equal to zero. However, let us consider what such an equality would imply. Let $X_{i}^{+}  = \{x \in X \colon \phi_{i}(x) > 0 \rangle$ and $X_{i}^{-}  = \{x \in X \colon \phi_{i}(x) < 0 \rangle$. If $\langle \phi_i, |\phi_i|\rangle = 0$, we have:
	\[\int_{X_i^{+}} \phi_{i}^{2}(x) d\mu_{X}(x) - \int_{X_i^{-}} \phi_{i}^{2}(x) d\mu_{X}(x) = 0.  \] 
	At the same time, because $\langle \phi_i, \phi_i \rangle = 1$, we have:
		\[\int_{X_i^{+}} \phi_{i}^{2}(x) d\mu_{X}(x) + \int_{X_i^{-}} \phi_{i}^{2}(x) d\mu_{X}(x) = 1. \] 
	Summing up these two equations, we obtain:
		\[\int_{X_i^{+}} \phi_{i}^{2}(x) d\mu_{X}(x) = \frac{1}{2}.  \]

	From this equality, we see that $\langle \phi, |\phi| \rangle  \neq 0$ is generically satisfied for a unit-norm function $\phi$. However, as with the condition of \cite{belkin2007convergence}, it remains to be shown that this genericity is maintained when restricting ourselves to the family of eigenfunctions in question.

\end{convention}

%

\section{The Distance Kernel Embedding}
\label{sec:distkernelembed}
%

In this section, we consider the Hilbert-space embedding provided by the spectrum of the operator $D^{X}$ associated to a compact metric measure space $(X,\dist_{X},\mu_X)$. Following Convention \ref{con:sign}, we generically assume that our eigenvalues are of multiplicity one, and that we have a coherent convention for breaking the $\pm$-symmetry and picking ``positive" eigenfunctions. 

\begin{definition}\label{def:alphafunctions}
	Let $D^{X}: L^{2}(X) \to L^{2}(X)$ be the operator defined in Section \ref{sec:distkernel}, with corresponding orthonormal system of eigenfunctions and eigenvalues  $(\phi_i, \lambda_i)$. For an eigenfunction $\phi_{i}$, define $\alpha_{i} = \sqrt{\lambda_i}\phi_i: X \to \C$. 
	By convention, when $\lambda_{i}$ is negative, we take the square root with positive imaginary part. For a point $x \in X$, with associated distance-to-$x$ function $\dist_{x}$, we have:
	
	\begin{equation}
	\label{eqn:coeffs}
	\langle \dist_{x}, \phi_{i}\rangle_{L^2} = \int_{X}\dist_{X}(x,y)\phi_{i}(y) d\mu_{X}(y) = (D^{X}\phi_{i})(x) = \lambda_{i}\phi_{i}(x) = \sqrt{\lambda_i}\alpha_{i}(x).
	\end{equation}
	
\end{definition}

Thus, $\dist_{x}$ has the following eigenfunction expansion\footnote{Note that the choice of $\sqrt{\lambda_i}$ as scaling factor in $\sqrt{\lambda_i}\phi_i$ allows this expression of $\dist_x$. Additionally, it will prove essential in the results of Section \ref{sec:stabinv}}, which converges in $L^2$:

\[\dist_{x} = \sum_{i=1}^{\infty}\langle \dist_{x}, \phi_{i}\rangle_{L^2}\phi_i = \sum_{i=1}^{\infty} \sqrt{\lambda_i} \alpha_{i}(x)\phi_{i} =  \sum_{i=1}^{\infty}\alpha_{i}(x)\alpha_{i}.\]

We now demonstrate that the eigenfunctions $\phi_i$ corresponding to nonzero eigenvalues are Lipschitz.


\begin{lemma}
	\label{lem:eiglip}
For every $i \in \mathbb{N}_{>0}$, The function $\lambda_{i} \phi_{i}$ is $\sqrt{\operatorname{Vol(X)}}$-Lipschitz. Hence, if  $\lambda_i \neq 0$, $\phi_i$ is $(\sqrt{\operatorname{Vol(X)}}/|\lambda_{i}|)$-Lipschitz. Note that our assertion that $X$ is compact implies that it has finite volume, so these Lipschitz constants are indeed finite.
\end{lemma}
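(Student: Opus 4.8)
The plan is to exploit the identity recorded in equation~\eqref{eqn:coeffs}, which expresses $\lambda_i\phi_i(x)$ as the inner product $\langle \dist_x,\phi_i\rangle_{L^2}=(D^X\phi_i)(x)=\int_X \dist_X(x,y)\phi_i(y)\,d\mu_X(y)$. This reduces the Lipschitz estimate to essentially the same computation that already appears in the proof of Proposition~\ref{prop:compact}, except that the fixed unit-norm eigenfunction $\phi_i$ plays the role of the bounded sequence $f_n$ there.

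Concretely, I would fix two points $x,x'\in X$ and write
\[
|\lambda_i\phi_i(x)-\lambda_i\phi_i(x')| = \left|\int_X \bigl(\dist_X(x,y)-\dist_X(x',y)\bigr)\phi_i(y)\,d\mu_X(y)\right|.
\]
Applying Cauchy--Schwarz and using $\|\phi_i\|_{L^2}=1$ bounds this by $\|\dist_X(x,\cdot)-\dist_X(x',\cdot)\|_{L^2}$. Then the reverse triangle inequality gives the pointwise bound $|\dist_X(x,y)-\dist_X(x',y)|\leq \dist_X(x,x')$ for every $y$, so the $L^2$ norm is at most $\dist_X(x,x')\sqrt{\operatorname{Vol}(X)}$. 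Chaining these, $|\lambda_i\phi_i(x)-\lambda_i\phi_i(x')|\leq \sqrt{\operatorname{Vol}(X)}\,\dist_X(x,x')$, which is the first assertion. For the second, when $\lambda_i\neq 0$ I simply divide through by $|\lambda_i|$, using $|\lambda_i\phi_i|=|\lambda_i|\,|\phi_i|$; finiteness of both constants follows since compactness together with the standing Radon assumption forces $\operatorname{Vol}(X)=\mu_X(X)<\infty$.

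There is essentially no real obstacle: the result is a direct consequence of the integral kernel being the distance function and the eigenfunctions being $L^2$-normalized. The only points deserving a moment of care are that Cauchy--Schwarz is applied to $\phi_i$ itself (not to $|\phi_i|$), and that the factor $\sqrt{\operatorname{Vol}(X)}$ arises from integrating the constant $\dist_X(x,x')^2$ over $X$; both are routine. It is also worth noting in passing that the estimate is insensitive to the sign convention for $\phi_i$ discussed in Convention~\ref{con:sign}, since it uses only $\|\phi_i\|_{L^2}=1$.
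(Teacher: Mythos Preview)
Your proposal is correct and follows essentially the same approach as the paper: both use the identity $\lambda_i\phi_i = D^X\phi_i$ to write the difference as an integral of $(\dist_X(x,\cdot)-\dist_X(x',\cdot))\phi_i$, apply Cauchy--Schwarz together with $\|\phi_i\|_{L^2}=1$, and bound the kernel difference pointwise by $\dist_X(x,x')$ via the triangle inequality. The only cosmetic difference is that the paper squares first and then takes a root, whereas you work directly with the absolute value.
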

\begin{proof}
	 Let $x,y \in X$ and $\varepsilon = \dist_{X}(x,y)$. By the fact that $\lambda_{i} \phi_{i} = D^{X} \phi_i$ and the Cauchy-Schwarz inequality, we have
	\begin{align*}
	|\lambda_{i} \phi_{i}(x) - \lambda_{i} \phi_{i}(y)|^{2}& = \left|(D^{X}\phi_i)(x) - (D^{X}\phi_i)(y)   \right|^{2}\\
	 & = \left| \int_{X}(\dist_{X}(x,z) - \dist_{X}(y,z))\phi_{i}(z) d\mu_{X}(z)\right|^{2}\\
	& \leq  \int_{X}(\underbrace{\dist_{X}(x,z) - \dist_{X}(y,z)}_{\leq \dist_{X}(x,y) = \varepsilon})^{2} d\mu_{X}(z) \cdot \underbrace{\int_{X} \phi_{i}^{2}(z) d\mu_{X}(z)}_{=1}\\
	& \leq \varepsilon^2 \operatorname{Vol}(X).
	\end{align*}
	Thus, $|\lambda_{i} \phi_{i}(x) - \lambda_{i} \phi_{i}(y)| \leq \varepsilon \sqrt{\operatorname{Vol(X)}}$, so $\lambda_{i} \phi_{i}$ is $\sqrt{\operatorname{Vol(X)}}$-Lipschitz.
\end{proof}

We now consider the functions $\alpha_{i}$ as coordinates on the space $X$.

\begin{definition}
	\label{def:strictlypositive}
	For $k \geq 1$, we define $\Phi_{k}: X \to \C^{k}$ to be the map sending a point $x \in X$ to $(\alpha_{1}(x), \cdots, \alpha_{k}(x)) \in \C^k$. Setting $k = \infty$ gives us a map $\Phi: X \to \C^{\infty}$. We adopt the convention of dropping eigenfunctions in the zero-eigenspace, replacing them with the zero function if necessary.
	
\end{definition}

\begin{definition}
For a metric measure space $(X,\dist_{X},\mu_X)$, we define the \emph{distance kernel embedding} $DKE(X)$ to be the image of the map $\Phi : X \to \C^{\infty}$, and the truncated distanced kernel embedding $DKE_{k}(X)$ to be the image of the map $\Phi_{k}: X \to \C^{k}$.
\end{definition}

%

Recall the following measure-theoretic definition:

\begin{definition}
For a topological space $X$ equipped with its Borel $\sigma$-algebra, we call a measure $\mu_X$ \emph{strictly positive} if the measure of any nonempty open set is strictly positive.
\end{definition}

The following lemma demonstrates, under mild conditions on the measure, that the DKE is pointwise injective.
\begin{lemma}
	Let $(X,\dist_{X},\mu_X)$ be a compact, strictly positive metric measure space. Then the map $\Phi : X \to \C^{\infty}$ is injective.
	\label{lem:psiembed}
\end{lemma}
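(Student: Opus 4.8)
The plan is to show that the family of coordinates $\{\alpha_i(x)\}_i$ determines the function $\dist_x \in L^2(X)$, so that $\Phi(x)=\Phi(y)$ forces $\dist_x = \dist_y$ in $L^2$, and then to upgrade this almost-everywhere equality to an honest equality of continuous functions using the strict positivity of $\mu_X$.

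First I would invoke the eigenfunction expansion recorded just above Lemma \ref{lem:eiglip}: for every $x \in X$ one has $\dist_x = \sum_i \alpha_i(x)\alpha_i$ with convergence in $L^2(X)$, where each term equals $\lambda_i\phi_i(x)\phi_i$ and hence the terms with $\lambda_i = 0$ drop out — so only the coordinates actually retained in $\Phi$ appear. (Note that $\dist_x$ lies in $L^2(X)$ because $X$ is compact, hence of finite diameter and finite volume.) If $\Phi(x)=\Phi(y)$, then $\alpha_i(x)=\alpha_i(y)$ for every $i$, and substituting into the two expansions gives $\dist_x = \dist_y$ as elements of $L^2(X)$; equivalently, $\dist_X(x,z) = \dist_X(y,z)$ for $\mu_X$-almost every $z \in X$.

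Next I would use continuity: the map $z \mapsto \dist_X(x,z) - \dist_X(y,z)$ is continuous (each $\dist_X(\cdot,z)$ is $1$-Lipschitz in its first argument, hence $z\mapsto\dist_X(x,z)$ is continuous), so the set $U = \{\, z \in X : \dist_X(x,z) \neq \dist_X(y,z)\,\}$ is open, and by the previous paragraph $\mu_X(U)=0$. Since $\mu_X$ is strictly positive, every nonempty open set has positive measure, so $U = \emptyset$, i.e.\ $\dist_X(x,\cdot) \equiv \dist_X(y,\cdot)$ on all of $X$. Evaluating at $z = x$ yields $0 = \dist_X(x,x) = \dist_X(y,x) = \dist_X(x,y)$, so $x = y$, which is the desired injectivity.

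The only genuinely delicate point is the passage from $L^2$-equality to pointwise equality of the distance functions, and that is precisely where the strict positivity hypothesis is used; everything else follows directly from the spectral expansion of $\dist_x$. Compactness enters only to ensure $\dist_x \in L^2(X)$ and to make the spectral theorem (hence the expansion) available.
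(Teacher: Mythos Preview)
Your proof is correct and follows essentially the same approach as the paper: both use the eigenfunction expansion to deduce $\dist_x = \dist_y$ in $L^2$, then invoke strict positivity together with continuity of the distance functions to upgrade this to a pointwise equality, yielding $x=y$. The paper phrases the last step as a contradiction via an explicit ball of radius $\dist_X(x,y)/3$, whereas you argue directly that the open set $\{z:\dist_X(x,z)\neq\dist_X(y,z)\}$ is null and hence empty; these are two presentations of the same idea. (One tiny slip: the parenthetical ``$\dist_X(\cdot,z)$ is $1$-Lipschitz in its first argument'' does not literally justify continuity of $z\mapsto\dist_X(x,z)$; what you want is that $z\mapsto\dist_X(x,z)$ is $1$-Lipschitz, which is immediate from the triangle inequality.)
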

\begin{proof}
	Suppose that there are $x \neq y \in X$ such that $\Phi(x) = \Phi(y)$. This implies that $\alpha_{i}(x) = \alpha_{i}(y)$ and, in turn, $\lambda_{i}\phi_{i}(x) = \lambda_{i}\phi_{i}(y)$ for all $i$. Let $\dist_{x}$ and $\dist_{y}$ be the distance functions associated to $x$ and $y$ respectively. We know from Equation \ref{eqn:coeffs} that
	\[\label{eq:sproddistfunction}
		\langle \dist_{x}, \phi_i \rangle =  \lambda_{i}\phi_{i}(x). \]

	Thus, using the $L^2$-convergence of our eigenfunction expansion,
	\[ \|\dist_{x} - \sum_{i=1}^{n} \lambda_{i}\phi_{i}(x) \phi_{i} \|_{L^2} = \|\dist_{x} - \sum_{i=1}^{n} \langle \dist_{x}, \phi_i \rangle \phi_i \|_{L^2} \xrightarrow[]{n \to \infty} 0.  \]
	Similarly, 
	\[ \|\dist_{y} - \sum_{i=1}^{n} \lambda_{i}\phi_{i}(y)\phi_i \|_{L^2} = \|\dist_{y} - \sum_{i=1}^{n} \langle \dist_{y}, \phi_i \rangle \phi_i \|_{L^2} \xrightarrow[]{n \to \infty} 0.  \]
	
	Since
	\[\sum_{i=1}^{n} \lambda_{i}\phi_{i}(x)\phi_i  = \sum_{i=1}^{n} \lambda_{i}\phi_{i}(y)\phi_i,  \]
	we may apply the triangle inequality and take limits to conclude that $\|\dist_{x} - \dist_{y}\|_{L^2} = 0$. Let now $r = \dist_{X}(x,y)/3 > 0$, and let $U$ be the open neighborhood of radius $r$ around $x$. The function $|\dist_{x} - \dist_{y}|$ is bounded below by $r$ on $U$, and since $U$ is not empty (it contains $x$), it has strictly positive measure. This then implies $ \|\dist_{x} - \dist_{y}\|_{L^2} > 0$, a contradiction. Thus, $\Phi(x) \neq \Phi(y)$ for $x \neq y$.
\end{proof}	

\begin{cor}
	\label{cor:DKThom}
	By Lemma \ref{lem:eiglip}, every component of the map $\Phi$ is continuous. Any metric on $\mathbb{C}^{\infty}$ gives it a Hausdorff topological structure. Thus, for any such choice of metric, $\Phi$ is a continuous injection from a compact space to a Hausdorff space, and hence is a homeomorphism.
\end{cor}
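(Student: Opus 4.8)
This corollary assembles three ingredients: continuity of $\Phi$, injectivity of $\Phi$, and the point-set fact that a continuous injection from a compact space into a Hausdorff space is a homeomorphism onto its image. Injectivity is precisely Lemma~\ref{lem:psiembed}, whose hypotheses---compactness and strict positivity of $(X,\dist_X,\mu_X)$---are those in force here, so the work is to establish the other two ingredients.

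First I would fix the metric on $\mathbb{C}^{\infty}$ to be one inducing the product topology, for instance $d((a_i),(b_i)) = \sum_{i \geq 1} 2^{-i}\min(1,|a_i - b_i|)$; for such a metric, a map into $\mathbb{C}^{\infty}$ is continuous exactly when each of its coordinate functions is. Thus it suffices to check continuity of each $\alpha_i$. When $\lambda_i \neq 0$, Lemma~\ref{lem:eiglip} gives that $\phi_i$ is $(\sqrt{\vol(X)}/|\lambda_i|)$-Lipschitz, hence continuous, and scaling by the constant $\sqrt{\lambda_i}$ preserves continuity; when $\lambda_i = 0$, by convention $\alpha_i \equiv 0$, which is continuous. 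Either way $\alpha_i$ is continuous, so $\Phi$ is continuous.

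For the final step I would invoke the standard argument: it is enough to show $\Phi$ is a closed map onto $\Phi(X)$, whence $\Phi^{-1}\colon \Phi(X) \to X$ is continuous. If $C \subseteq X$ is closed then $C$ is compact (as $X$ is), so $\Phi(C)$ is compact, hence closed in the metric space $\mathbb{C}^{\infty}$ and a fortiori closed in $\Phi(X)$. Therefore $\Phi$ is a continuous bijection onto its image with continuous inverse, i.e., a homeomorphism. The one point needing care---and the main ``obstacle,'' such as it is---is the clause ``any metric on $\mathbb{C}^{\infty}$'': continuity of $\Phi$ is only guaranteed for metrics compatible with coordinatewise convergence (so that the coordinate projections are continuous). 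This covers every natural choice---the displayed metric, or the $\ell^2$ metric once one checks that $\Phi(X)$ lies in $\ell^2$---and for any such metric the argument above goes through verbatim; beyond this bookkeeping remark the reasoning is entirely routine.
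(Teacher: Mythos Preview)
Your argument is correct and matches the paper's own reasoning, which is given inline in the corollary statement rather than as a separate proof: continuity of the components via Lemma~\ref{lem:eiglip}, injectivity via Lemma~\ref{lem:psiembed}, and the standard compact-to-Hausdorff fact. Your caveat about the phrase ``any metric on $\mathbb{C}^{\infty}$'' is well-taken and in fact sharper than the paper---Hausdorffness holds for any metric, but continuity of $\Phi$ from componentwise continuity requires that the coordinate projections be continuous, which is exactly the restriction you identify.
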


%
%
%
%
\begin{remark}
	Bates \cite{bates2014embedding} demonstrated that finitely many \emph{Laplacian} eigenfunctions are needed to (not necessarily isometrically) embed a Riemannian manifold in Euclidean space, and that the maximal embedding dimension depends on the dimension, injectivity radius, Ricci curvature, and volume of the manifold. By contrast, Lemma \ref{lem:psiembed} provides an embedding with infinitely many eigenfunctions, a much weaker result. However, it holds in greater generality, applying to any compact, strictly positive metric measure space.
\end{remark}

As opposed to Laplacian eigenfunctions, it is not clear that finitely many distance kernel eigenfunctions suffice for an injective embedding. However, one can, in certain settings, get a \emph{coarse injectivity}: if $\Phi_{k}(x) = \Phi_{k}(y)$ for $x,y \in X$, then $\dist_{X}(x,y) \leq \varepsilon(k)$, where $\lim_{k} \varepsilon(k) = 0$. Let us first show this in the setting of Riemannian manifolds, equipped with the volume measure, for which we need the following lower bound on the volume of balls in $X$.

\begin{prop}[\cite{croke1980some}, Prop. 14]
	For every dimension $n$, there exists a constant $C_n$ such that if $M$ is an $n$-dimensional Riemannian manifold with injectivity radius $R$, and $r \leq \frac{1}{2}R$, then $\operatorname{Vol}B(x,r) \geq C_{n}r^n$ for all $x \in M$.
\end{prop}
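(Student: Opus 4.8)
The plan is to avoid estimating the Jacobian of $\exp_x$ directly --- which is hopeless without a curvature hypothesis --- and instead to combine an integral-geometric (Santal\'{o}-type) identity with the coarea formula, so that a purely dimensional constant falls out. Write $V(t) = \operatorname{Vol}(B(x,t))$. For every $t \le r$ we have $\operatorname{diam}(\overline{B(x,t)}) \le 2t \le R$, so every geodesic segment contained in $\overline{B(x,t)}$ is minimizing; in particular $\exp_x$ restricts to a diffeomorphism on $B(0,R)\subset T_xM$, the metric sphere $S(x,t) = \partial B(x,t)$ is a smooth hypersurface, and the coarea formula applied to the $1$-Lipschitz function $d(x,\cdot)$ gives $V'(t) = \operatorname{Vol}(\partial B(x,t))$ for all $t \le r$.

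The heart of the argument is a curvature-free isoperimetric inequality: for any relatively compact domain $\Omega$ all of whose contained geodesic arcs are minimizing, $\operatorname{Vol}(\partial\Omega)^n \ge c_n \operatorname{Vol}(\Omega)^{n-1}$ with $c_n$ depending only on $n$. I would prove this via Santal\'{o}'s formula, which (using invariance of Liouville measure under the geodesic flow) yields $\omega_{n-1}\operatorname{Vol}(\Omega) = \int_{\partial_+ U\Omega}\ell(v)\cos\theta(v)\,d\mathcal L(v)$, where $\partial_+ U\Omega$ is the set of inward unit vectors along $\partial\Omega$, $\theta(v)$ the angle with the inward normal, $\mathcal L$ the Liouville measure, and $\ell(v)$ the length of the maximal geodesic arc in $\Omega$ issuing from $v$. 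Applying H\"{o}lder with exponents $n+1$ and $(n+1)/n$ bounds the right-hand side by $\bigl(\int \ell^{n+1}\cos\theta\,d\mathcal L\bigr)^{1/(n+1)}\bigl(\int \cos\theta\,d\mathcal L\bigr)^{n/(n+1)}$; the second factor is a dimensional constant times $\operatorname{Vol}(\partial\Omega)$, and rearranging then gives the claimed inequality provided one establishes $\int \ell^{n+1}\cos\theta\,d\mathcal L \le c_n'\operatorname{Vol}(\Omega)^2$. This last estimate is the technical crux: writing $\ell^{n+1} = (n+1)\int_0^\ell(\ell-s)^n\,ds$ and transporting once more along the geodesic flow reduces it to the bound $\int_{U_yM}\ell^+(y,w)^n\,dw \le c_n''\operatorname{Vol}(\Omega)$ for each interior point $y$ (here $\ell^+(y,w)$ is the forward exit distance of the geodesic from $y$ in direction $w$), and this holds precisely because the minimizing hypothesis forbids nearby short geodesics and so prevents $\exp_y$ from being too strongly volume-contracting on the scale of $\Omega$.

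Granting the isoperimetric inequality, I would apply it to $\Omega = B(x,t)$ for each $t \le r$ --- legitimate since $2t \le R$ --- to obtain $V'(t)^n \ge c_n V(t)^{n-1}$. Rewriting this as $\frac{d}{dt}\bigl(V(t)^{1/n}\bigr) \ge c_n^{1/n}/n$ and integrating from $0$ to $r$, using $V(0) = 0$, yields $V(r) \ge (c_n/n^n)\,r^n$, so $C_n = c_n/n^n$ works. The main obstacle is the curvature-free isoperimetric inequality of the previous paragraph --- and inside it, the moment estimate $\int \ell^{n+1}\cos\theta\,d\mathcal L \le c_n'\operatorname{Vol}(\Omega)^2$: this is the one place where "global minimizing behaviour" must be converted into volume with a constant depending on $n$ alone, even though the domains in question can be highly curved, and it is where a naive appeal to Jacobi-field comparison would illegitimately reintroduce a curvature bound. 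This is exactly Proposition~14 of \cite{croke1980some}; the argument there uses Santal\'{o}'s formula in essentially this way.
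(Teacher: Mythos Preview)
The paper does not supply a proof of this proposition: it is quoted verbatim as a result from \cite{croke1980some} and used as a black box in the proof of Lemma~\ref{lem:coarseinj}. So there is no ``paper's own proof'' to compare against.

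Your sketch is a faithful outline of Croke's original argument: Santal\'o's formula plus H\"older to obtain a curvature-free isoperimetric inequality $\operatorname{Vol}(\partial\Omega)^n \ge c_n \operatorname{Vol}(\Omega)^{n-1}$ for domains in which all geodesics are minimizing, followed by integration of the resulting differential inequality $V'(t)^n \ge c_n V(t)^{n-1}$. You correctly locate the nontrivial step --- the moment estimate on $\int \ell^{n+1}\cos\theta\,d\mathcal L$ --- and correctly flag that this is where the minimizing hypothesis does real work and where curvature comparison would be illegitimate. Since you yourself note at the end that this is exactly Croke's Proposition~14, the proposal is not so much an alternative proof as a summary of the cited one, which is appropriate given that the present paper treats the result as an external input.
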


\begin{lemma}
	\label{lem:coarseinj}
	There exists a universal function $T(r,n): \mathbb{R}_{>0} \times \mathbb{N} \to \mathbb{R}$ with the following property.
	Let $X$ be a compact $n$-dimensional Riemannian manifold with positive injectivity radius $R>0$, and let $r \leq R/2$. Then if $x,y \in X$ are points with associated distance functions $\dist_{x}$ and $\dist_{y}$, and if $\| \dist_{x} - \dist_{y}\|_{L^2} < T(r,n)$, then $\dist_{X}(x,y) \leq 3r$.

\end{lemma}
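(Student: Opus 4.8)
The plan is to argue by contrapositive: I will show that if $\dist_X(x,y) > 3r$, then $\|\dist_x - \dist_y\|_{L^2}$ is bounded below by an explicit quantity depending only on $r$ and $n$, which I then take to be $T(r,n)$.

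First I would localize the discrepancy between $\dist_x$ and $\dist_y$ to a small ball around $x$. Assume $\dist_X(x,y) > 3r$. For any $z$ in the open ball $B(x,r)$, the triangle inequality gives $\dist_x(z) = \dist_X(x,z) < r$, while $\dist_y(z) = \dist_X(y,z) \geq \dist_X(x,y) - \dist_X(x,z) > 3r - r = 2r$. Hence $|\dist_x(z) - \dist_y(z)| = \dist_y(z) - \dist_x(z) > r$ for every $z \in B(x,r)$. Next I would integrate this pointwise bound, using that $r \leq R/2$ so that Croke's Proposition (quoted above) applies and yields $\vol B(x,r) \geq C_n r^n$:
\[
\|\dist_x - \dist_y\|_{L^2}^2 \geq \int_{B(x,r)} |\dist_x(z) - \dist_y(z)|^2 \, d\mu_X(z) \geq r^2 \, \vol B(x,r) \geq C_n r^{n+2}.
\]
Setting $T(r,n) := \sqrt{C_n}\, r^{(n+2)/2}$, I conclude that $\dist_X(x,y) > 3r$ forces $\|\dist_x - \dist_y\|_{L^2} \geq T(r,n)$; equivalently, $\|\dist_x - \dist_y\|_{L^2} < T(r,n)$ implies $\dist_X(x,y) \leq 3r$, as claimed. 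Since $C_n$ is the universal constant from Croke's Proposition, $T$ depends only on $r$ and $n$, so it is universal in the required sense.

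I do not expect a real obstacle here. The only points to check are that the hypotheses of Croke's Proposition are met — precisely the condition $r \leq R/2$, which is built into the statement — and that $B(x,r)$ has volume bounded below so the integral estimate is nontrivial, which is exactly what Croke's bound provides. Compactness of $X$ is used only to ensure $X$ has finite volume and that $\dist_x, \dist_y \in L^2(X)$, and plays no role in the lower bound itself.
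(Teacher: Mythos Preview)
Your argument is correct and follows the same contrapositive strategy as the paper: assume $\dist_X(x,y)>3r$, bound $|\dist_x-\dist_y|$ below by $r$ on a small ball, and invoke Croke's volume estimate. The only difference is that the paper integrates over the disjoint union $B(x,r)\sqcup B(y,r)$ rather than just $B(x,r)$, picking up an extra factor of $2$ and taking $T(r,n)=\sqrt{2C_n}\,r^{(n/2)+1}$ instead of your $\sqrt{C_n}\,r^{(n+2)/2}$; this is immaterial for the statement as phrased.
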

\begin{proof}
	Suppose that $\dist_{X}(x,y) > 3r$. Then the balls of radius $r$ about $x$ and $y$ do not overlap, and for all $z \in B = B(x,r) \sqcup B(y, r)$, $|\dist_{X}(x,z) - \dist_{X}(y,z)| \geq r$. We can deduce that:
	\[\| \dist_x - \dist_y \|_{L^2}^{2} \geq \int_{B} (\dist_x - \dist_y)^2 d\mu_{X} \geq 2C_{n}r^{n+2}, \]
	where the factor of $2$ appears because $B$ is the disjoint union of \emph{two} balls of radius $r$. Thus,
	\[\| \dist_x - \dist_y \|_{L^2} \geq \sqrt{2 C_{n}}r^{(n/2) + 1}. \]
	
	Setting $T(n,r) = \sqrt{2 C_{n}} r^{(n/2) + 1}$ completes the proof.
\end{proof}

\begin{cor}
	\label{cor:Rcoarseinj}
	There exists a function $N(r,n): \mathbb{R}_{>0} \times \mathbb{N} \to \mathbb{N}$ with the following property.
	Let $X$ be a complete $n$-dimensional Riemannian manifold with positive injectivity radius $R$. For every $r \leq R/2$ and $k \geq N(r,n)$, if  $\Phi_{k}(x) =\Phi_{k}(y)$  then $\dist_{X}(x,y) \leq 3r$.
\end{cor}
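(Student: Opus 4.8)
The plan is to deduce the corollary from Lemma~\ref{lem:coarseinj} by controlling $\|\dist_{x}-\dist_{y}\|_{L^2}$ via the eigenfunction expansion of the distance functions. Recall from Definition~\ref{def:alphafunctions} that $\dist_{z}=\sum_{i\geq1}\lambda_{i}\phi_{i}(z)\phi_{i}=\sum_{i\geq1}\alpha_{i}(z)\alpha_{i}$, with convergence in $L^{2}(X)$; write $R_{k}(z):=\dist_{z}-\sum_{i=1}^{k}\lambda_{i}\phi_{i}(z)\phi_{i}=\sum_{i>k}\lambda_{i}\phi_{i}(z)\phi_{i}$ for the tail. If $\Phi_{k}(x)=\Phi_{k}(y)$ then $\alpha_{i}(x)=\alpha_{i}(y)$, hence $\lambda_{i}\phi_{i}(x)=\sqrt{\lambda_{i}}\,\alpha_{i}(x)=\sqrt{\lambda_{i}}\,\alpha_{i}(y)=\lambda_{i}\phi_{i}(y)$, for every $i\leq k$. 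The first $k$ terms of $\dist_{x}-\dist_{y}$ therefore cancel, and
\[
\|\dist_{x}-\dist_{y}\|_{L^2}=\|R_{k}(x)-R_{k}(y)\|_{L^2}\leq\|R_{k}(x)\|_{L^2}+\|R_{k}(y)\|_{L^2}\leq 2\,\eta(k),\qquad\eta(k):=\sup_{z\in X}\|R_{k}(z)\|_{L^2}.
\]
Granting $\eta(k)\to0$, let $N(r,n)$ be the least $k$ with $2\,\eta(k)<T(r,n)$, where $T$ is the function from Lemma~\ref{lem:coarseinj}; then for $k\geq N(r,n)$ with $\Phi_{k}(x)=\Phi_{k}(y)$ we have $\|\dist_{x}-\dist_{y}\|_{L^2}<T(r,n)$, and Lemma~\ref{lem:coarseinj} gives $\dist_{X}(x,y)\leq 3r$.

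The crux is the \emph{uniform} decay $\eta(k)\to0$: pointwise, $\|R_{k}(z)\|_{L^2}\to0$ is merely $L^{2}$-convergence of the expansion, but uniformity in $z$ is what lets a single $N$ serve all pairs. I would obtain it from compactness. The map $z\mapsto\dist_{z}$ is $\sqrt{\vol(X)}$-Lipschitz from $X$ into $L^{2}(X)$ — integrate $|\dist_{X}(x,z)-\dist_{X}(y,z)|\leq\dist_{X}(x,y)$ over $z$, exactly the estimate used in the proof of Lemma~\ref{lem:eiglip} — so $K:=\{\dist_{z}:z\in X\}$ is a compact subset of $L^{2}(X)$. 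Let $P_{k}^{\perp}$ be the orthogonal projection onto $\overline{\operatorname{span}}\{\phi_{i}:i>k\}$, so that $R_{k}(z)=P_{k}^{\perp}\dist_{z}$. For a compact $K\subset L^{2}(X)$ one has $\sup_{v\in K}\|P_{k}^{\perp}v\|_{L^2}\to0$: given $\delta>0$, cover $K$ by finitely many $L^{2}$-balls $B(v_{1},\delta),\dots,B(v_{m},\delta)$, choose $k_{0}$ with $\|P_{k}^{\perp}v_{j}\|_{L^2}<\delta$ for all $j$ and all $k\geq k_{0}$ (possible since $\sum_{i}|\langle v_{j},\phi_{i}\rangle|^{2}<\infty$), and, for arbitrary $z$, pick $j$ with $\|\dist_{z}-v_{j}\|_{L^2}<\delta$; since $P_{k}^{\perp}$ is a contraction, $\|R_{k}(z)\|_{L^2}\leq\|\dist_{z}-v_{j}\|_{L^2}+\|P_{k}^{\perp}v_{j}\|_{L^2}<2\delta$ for $k\geq k_{0}$. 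Hence $\eta(k)\leq2\delta$ eventually, so $\eta(k)\to0$.

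This uniform tail bound is the only real obstacle; everything else is bookkeeping (the constraint $r\leq R/2$ is already absorbed into the statement of Lemma~\ref{lem:coarseinj}, where it is needed for Croke's ball-volume estimate). One caveat worth recording: the compactness argument supplies no rate for $\eta(k)$, and the rate genuinely depends on the spectral decay of $D^{X}$ — its kernel $\dist_{X}$ lies in $L^{2}(X\times X)$, so $D^{X}$ is Hilbert--Schmidt and $\sum_{i}\lambda_{i}^{2}<\infty$, but the tails $\sum_{i>k}\lambda_{i}^{2}$ can decay arbitrarily slowly. Strictly speaking, then, $N$ depends on $X$ and not on $(r,n)$ alone; what is truly universal in $(r,n)$ is the threshold $T(r,n)$ that $2\,\eta(k)$ must eventually fall below. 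To make $N$ a function of $(r,n)$ only, one would need a further normalization of $X$, for instance prescribing $\vol(X)$ together with a modulus of decay for $\sum_{i>k}\lambda_{i}^{2}$.
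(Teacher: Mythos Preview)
Your approach mirrors the paper's: reduce to Lemma~\ref{lem:coarseinj} by showing that $\Phi_k(x)=\Phi_k(y)$ forces the first $k$ terms of the expansions of $\dist_x$ and $\dist_y$ to coincide, then bound the remaining tail by $T(r,n)/2$ on each side and apply the triangle inequality. The paper's proof simply asserts the existence of $N(r,n)$ making the tails small for both $x$ and $y$, without justifying uniformity in the point; your compactness argument for $\eta(k)\to 0$ (via the Lipschitz map $z\mapsto\dist_z$ into $L^2(X)$ and the contraction $P_k^\perp$) fills exactly this gap, and is the right way to do it.

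Your closing caveat is also correct, and it applies equally to the paper's statement: the threshold $T(r,n)$ is genuinely universal in $(r,n)$ via Croke's volume bound, but the index $k$ at which $\eta(k)$ falls below $T(r,n)/2$ depends on the spectral decay of $D^X$, hence on $X$. The paper's proof does not address this, so as literally stated the corollary overreaches --- $N$ should be allowed to depend on $X$, or on an additionally prescribed modulus of decay for the tails $\sum_{i>k}\lambda_i^2$, as you note.
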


\begin{proof}
	Let $T(r,n)$ be as in Lemma \ref{lem:coarseinj}. 
	From the $L^2$ convergence of the eigenfunction expansions of the distance functions $\dist_{x}$ and $\dist_{y}$, there exists $N(r,n)$ large enough such that, for all $k \geq N(r,n)$,
	\begin{equation}
	\label{eqn:convergence}
	\max \left\{ \|\dist_{x} - \sum_{i=1}^{k}\alpha_{i}(x)\alpha_{i}\|_{L^2}, \|\dist_{y} - \sum_{i=1}^{k}\alpha_{i}(y)\alpha_{i}\|_{L^2}  \right\} < \frac{T(r,n)}{2}.  \end{equation}
	If $\Phi_{k}(x) = \Phi_{k}(y)$, then $\alpha_{i}(x) = \alpha_{i}(y)$ for $i =1, \cdots, k$. Substituting this equality into Equation \ref{eqn:convergence}, and applying the triangle inequality, we deduce that $\| \dist_{x} - \dist_{y}\|_{L^2} < T(r,n)$. The result then follows directly from Lemma \ref{lem:coarseinj}.

\end{proof}

To generalize the above result to metric measure spaces, we need a lower bound on the volume of balls:

\begin{definition}
	\label{def:abstandard}
	Let $a,b > 0$ be positive real numbers. A metric measure space $(X,\dist_{X},\mu_X)$ is called \emph{$(a,b)$-standard} if there is a threshold $r > 0$ such that for all $s \leq r$ and all $x \in X$, 
	\[\operatorname{Vol}B(x,s) \geq as^b .\]
	
	The constant $a$ can be interpreted as bounding the curvature of $X$, whereas the constant $b$ is related to the dimension of $X$. 
\end{definition}

The proof of the following Lemma is identical to that of Lemma \ref{lem:coarseinj}.

\begin{lemma}
	There exists a universal function $T(s,a,b): \mathbb{R}_{>0} \times \mathbb{R}_{>0} \times \mathbb{R}_{>0} \to \mathbb{R}$ with the following property.
	Let $(X,\dist_{X},\mu_X)$ be a compact $(a,b)$-standard metric measure space with threshold parameter $r > 0$. For every $s \leq r$, if $x,y \in X$ are points with associated distance functions $\dist_{x}$ and $\dist_{y}$ respectively, and if $\| \dist_{x} - \dist_{y}\|_{L^2} < T(s,a,b)$, then $\dist_{X}(x,y) \leq 3s$.
\end{lemma}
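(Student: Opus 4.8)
The plan is to argue by contraposition, following the proof of Lemma \ref{lem:coarseinj} verbatim and simply substituting the $(a,b)$-standardness hypothesis for the volume lower bound of Croke used there. Concretely, I would assume $\dist_{X}(x,y) > 3s$ and derive from this a lower bound on $\|\dist_{x} - \dist_{y}\|_{L^2}$ that depends only on $s$, $a$, and $b$; the function $T(s,a,b)$ is then defined to be exactly this lower bound, so that the contrapositive of the displayed implication is precisely what has been proved.

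First I would exploit the separation to locate a region where $\dist_x$ and $\dist_y$ differ substantially. Since $\dist_{X}(x,y) > 3s$, the balls $B(x,s)$ and $B(y,s)$ are disjoint, and for every $z \in B(x,s)$ the triangle inequality gives $\dist_{X}(y,z) \geq \dist_{X}(x,y) - \dist_{X}(x,z) > 3s - s = 2s$, whence $|\dist_{x}(z) - \dist_{y}(z)| > s$; the symmetric estimate holds on $B(y,s)$. Writing $B = B(x,s) \sqcup B(y,s)$, this yields
\[
\|\dist_{x} - \dist_{y}\|_{L^2}^{2} \geq \int_{B}(\dist_{x} - \dist_{y})^{2}\,d\mu_{X} \geq s^{2}\big(\mu_{X}(B(x,s)) + \mu_{X}(B(y,s))\big).
\]

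Next I would invoke $(a,b)$-standardness. Because $s \leq r$ and $r$ is the threshold parameter of $X$, each of the two balls has measure at least $a s^{b}$, so the right-hand side above is at least $2a s^{b+2}$, i.e. $\|\dist_{x} - \dist_{y}\|_{L^2} \geq \sqrt{2a}\, s^{(b/2)+1}$. Setting $T(s,a,b) = \sqrt{2a}\, s^{(b/2)+1}$ and taking the contrapositive completes the proof, and one notes that $T$ is ``universal'' in that it depends on $s,a,b$ alone and not on the particular space $X$.

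I do not expect any real obstacle here: the definition of $(a,b)$-standardness is engineered precisely so that this argument goes through with no change. The only points needing (routine) attention are checking that the hypothesis $s \leq r$ is exactly what licenses the volume lower bound, and observing that the separation $3s$ (rather than $2s$) is what guarantees the pointwise gap $|\dist_{x} - \dist_{y}| > s$ on both balls; these are inherited unchanged from Lemma \ref{lem:coarseinj}.
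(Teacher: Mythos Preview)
Your proposal is correct and follows exactly the same approach as the paper's proof: argue by contraposition, use the separation $\dist_X(x,y)>3s$ to get disjoint balls $B(x,s)$ and $B(y,s)$ on which $|\dist_x-\dist_y|\geq s$, apply $(a,b)$-standardness to bound each ball's measure below by $as^b$, and set $T(s,a,b)=\sqrt{2a}\,s^{(b/2)+1}$. Your write-up is in fact slightly more careful than the paper's, which contains a typo (writing $B(x,r)$ instead of $B(x,s)$) and does not spell out the triangle-inequality step.
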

\begin{proof}
	Suppose that $\dist_{X}(x,y) > 3s$. Then the balls of radius $s$ about $x$ and $y$ do not overlap, and for all $z \in B = B(x,r) \sqcup B(y, r)$, $|\dist_{X}(x,z) - \dist_{X}(y,z)| \geq s$. We can deduce that:
\[\| \dist_{x} - \dist_{y} \|_{L^2}^{2} \geq \int_{B} (\dist_{x} - \dist_{y})^2 d\mu_{X} \geq 2 (as^b)  s^{2} = 2as^{b+2}. \]
Thus,
\[\| \dist_{x} - \dist_{y} \|_{L^2} \geq \sqrt{2 a}s^{(b/2) + 1}. \]

Setting $T(s,a,b) = \sqrt{2a} s^{(b/2) + 1}$ completes the proof.
\end{proof}

And we obtain a corresponding corollary.

\begin{cor}
	\label{cor:ABcoarseinj}
	There exists a universal function $N(s,a,b): \mathbb{R}_{>0} \times \mathbb{R}_{>0} \times \mathbb{R}_{>0} \to \mathbb{N}$ with the following property.
	Let $(X,\dist_{X},\mu_X)$ be a compact $(a,b)$-standard metric measure space with threshold parameter $r$. For every $s \leq r$, and any $k \geq N(s,a,b)$, if  $\Phi_{k}(x) =\Phi_{k}(y)$ then $\dist_{X}(x,y) \leq 3s$.
\end{cor}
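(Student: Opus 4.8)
The plan is to run the proof of Corollary~\ref{cor:Rcoarseinj} essentially verbatim, with the $(a,b)$-standard Lemma stated just above in the role of Lemma~\ref{lem:coarseinj}. Write $T(s,a,b) := \sqrt{2a}\, s^{(b/2)+1}$ for the explicit (hence genuinely universal) threshold produced by that Lemma: if $\|\dist_x - \dist_y\|_{L^2} < T(s,a,b)$ then $\dist_X(x,y) \le 3s$. The whole corollary then reduces to producing a function $N(s,a,b)$, depending on $s$, $a$, $b$ alone, such that for \emph{every} compact $(a,b)$-standard space with threshold at least $s$, every point $z$ in it, and every $k \ge N(s,a,b)$,
\[
	\Bigl\| \dist_z - \sum_{i=1}^{k} \alpha_i(z)\,\alpha_i \Bigr\|_{L^2} \;<\; \tfrac{1}{2}\,T(s,a,b).
\]
Indeed, granting this estimate: if $\Phi_k(x) = \Phi_k(y)$, then $\alpha_i(x) = \alpha_i(y)$ for all $i \le k$, so the two truncated sums $\sum_{i\le k}\alpha_i(x)\alpha_i$ and $\sum_{i\le k}\alpha_i(y)\alpha_i$ coincide; the triangle inequality then gives $\|\dist_x - \dist_y\|_{L^2} < T(s,a,b)$, and the $(a,b)$-standard Lemma concludes that $\dist_X(x,y) \le 3s$. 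This is the verbatim analogue of the argument in Corollary~\ref{cor:Rcoarseinj}, with $N$ now required to be independent of the space.

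To get the displayed estimate with a universal $N$, I would argue spectrally. By Equation~\ref{eqn:coeffs} and orthonormality of the $\phi_i$, the square of the left-hand side is exactly $\sum_{i>k}\lambda_i^2\,\phi_i(z)^2$, and since $|\lambda_i| \le |\lambda_{k+1}|$ for $i > k$,
\[
	\sum_{i>k}\lambda_i^2\,\phi_i(z)^2 \;\le\; |\lambda_{k+1}| \sum_{i\ge 1} |\lambda_i|\,\phi_i(z)^2 .
\]
Splitting $D^X = D^+ - D^-$ into its positive and negative spectral parts, the last sum equals $K^+(z,z) + K^-(z,z)$, where $K^{\pm}$ is the positive-semidefinite kernel of $D^{\pm}$; and because the diagonal of the distance kernel itself vanishes, $K^+(z,z) - K^-(z,z) = \dist_X(z,z) = 0$, so the sum is $2K^+(z,z)$. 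Thus the truncation error is at most $\sqrt{2\,|\lambda_{k+1}|\,K^+(z,z)}$, and it remains to bound $|\lambda_{k+1}|$ and $\sup_z K^+(z,z)$ by quantities depending only on $a$ and $b$. This is where $(a,b)$-standardness would have to be used decisively: the volume lower bound $\vol B(x,s) \ge a s^b$, fed into an argument in the spirit of the preceding Lemma, should cap the number of eigenvalues of $D^X$ above any prescribed level, forcing $|\lambda_{k+1}|$ below $T(s,a,b)^2 / (4\sup_z K^+(z,z))$ as soon as $k$ exceeds some $N(s,a,b)$.

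The step I expect to be the main obstacle is exactly this last one --- extracting bounds that are independent of the particular space $X$. If one only invokes the $L^2$-convergence of the expansion $\dist_z = \sum_i \alpha_i(z)\alpha_i$ (equivalently: this convergence is uniform over the compact family $\{\dist_z : z \in X\} \subset L^2(X)$, by continuity of $z\mapsto\dist_z$), one obtains an index depending on $X$, not on $s,a,b$ alone. Moreover, the elementary a priori estimates at hand --- the Lipschitz bounds of Lemma~\ref{lem:eiglip}, the bound $|\lambda_1| \le \diam(X)\vol(X)$, and the crude bound $K^+(z,z) \le \diam(X)^2\vol(X)$ --- all involve $\diam(X)$ and $\vol(X)$, which $(a,b)$-standardness does not bound above. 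So the genuine content of the corollary is a uniform decay estimate for the distance-kernel spectrum over the entire class of $(a,b)$-standard spaces --- showing that, on any such space, only a number of eigenvalues controlled by $s,a,b$ can contribute non-negligibly to a distance function --- and delivering such an estimate is where the real work lies.
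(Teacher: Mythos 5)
Your first paragraph is exactly the paper's proof. The paper proves the $(a,b)$-standard lemma by the same computation as Lemma~\ref{lem:coarseinj}, yielding the threshold $T(s,a,b)=\sqrt{2a}\,s^{(b/2)+1}$, and then obtains Corollary~\ref{cor:ABcoarseinj} by repeating the proof of Corollary~\ref{cor:Rcoarseinj} verbatim: use the $L^2$-convergence of the eigenfunction expansions of $\dist_x$ and $\dist_y$ to choose $N$ so that both truncation errors are below $T(s,a,b)/2$; if $\Phi_k(x)=\Phi_k(y)$ the two partial sums $\sum_{i\le k}\alpha_i(x)\alpha_i$ and $\sum_{i\le k}\alpha_i(y)\alpha_i$ coincide, so the triangle inequality gives $\|\dist_x-\dist_y\|_{L^2}<T(s,a,b)$ and the lemma concludes. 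Up to that point you match the paper exactly, and that is all the paper does.

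The reservation you then raise is legitimate, but it is a criticism of the paper's own argument rather than a gap peculiar to your write-up: the $N$ produced by invoking $L^2$-convergence depends on the space $X$ (and a priori on the pair $x,y$; for a fixed compact $X$ one can make it uniform over points via compactness of $\{\dist_z : z\in X\}$ in $L^2$, as you note), and nothing in the paper upgrades it to a function of $(s,a,b)$ alone. Your observation that $(a,b)$-standardness bounds neither $\diam(X)$ nor $\vol(X)$ from above, so that Lemma~\ref{lem:eiglip} and the bound $|\lambda_1|\le\diam(X)\vol(X)$ cannot yield a class-uniform tail estimate, is correct. Your attempted spectral repair is, as you admit, not completed, and it has an additional soft spot: the identity $\sum_i|\lambda_i|\phi_i(z)^2=2K^+(z,z)$ requires pointwise (Mercer-type) convergence of the kernel expansion on the diagonal, which is not available for this indefinite kernel and its possibly discontinuous spectral parts. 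But since the paper attempts no such repair, the fair summary is: your proof is the paper's proof, and the literal ``universal $N(s,a,b)$'' in the statement is asserted rather than established by the paper's argument as well; with $N$ allowed to depend on $X$, the argument you (and the paper) give is sound.
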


%

\section{Finite Approximation of the Distance Kernel Embedding}
\label{sec:discretization}

In this section we prove the approximability of the distance kernel transform for metric measure spaces with bounded geometry, a property of particular importance for computation (see the experimental section~\ref{sec:experiments}).

Building on the work of Koltchinskii and Gin\'e, we prove that the DKE of a metric measure space is approximated by the \emph{empirical} DKE associated to a random sample (defined below): 

\begin{theorem}\label{thm:approxDKE}
For any compact metric measure space $(X,\dist_X,\mu)$ with $(a,b)$-standard Borel measure, and $X_n$ an i.i.d. sample of X of size $n$,
\[
	\toas{\dist_H^{L^2}(\Phi_k(X),\hat{\Phi}_k(X_n))}{0} \ \text{as} \ n \to +\infty, 
\]
where $\dist_H^{L^2}$ is the Hausdorff distance for the $L^2$ norm in $\C^k$.
\end{theorem}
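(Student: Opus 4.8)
The plan is to prove the statement in two stages: first approximating the operator $D^X$ by a sequence of empirical operators $D^{X_n}$ built from the random sample $X_n$, and then transferring the spectral convergence to convergence of the embeddings in Hausdorff distance. For the first stage, I would follow the Koltchinskii–Giné framework: to the i.i.d.\ sample $X_n = \{x_1,\dots,x_n\}$ one associates the empirical measure $\mu_n = \frac1n\sum_{j}\delta_{x_j}$ and the empirical integral operator $D^{X_n}f(x) = \frac1n\sum_j \dist_X(x,x_j)f(x_j)$, which has a matrix representative $\tfrac1n[\dist_X(x_i,x_j)]_{ij}$ on $\mathbb{R}^n$. The kernel $\dist_X$ is bounded (by $\diam(X) < \infty$, since $X$ is compact) and continuous, hence the associated "U-statistic"/empirical operator arguments of Koltchinskii–Giné give almost-sure convergence of the spectrum of $D^{X_n}$ to that of $D^X$: each eigenvalue $\hat\lambda_i^{(n)} \to \lambda_i$ a.s., and, after fixing the sign and genericity conventions of Convention \ref{con:sign}, the corresponding eigenfunctions converge as well, in the sense that the extension operators $\hat\phi_i^{(n)}$ (defined by the Nyström-type formula $\hat\phi_i^{(n)}(x) = \frac{1}{\hat\lambda_i^{(n)} n}\sum_j \dist_X(x,x_j)\hat\phi_i^{(n)}(x_j)$) converge uniformly, hence in $L^2(X)$, to $\phi_i$ almost surely. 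This is where the $(a,b)$-standardness is used: it guarantees that $\mu$ has full support (strict positivity), so that the $\lambda_i$ with $i \le k$ are nonzero and the $L^2(\mu)$ and $L^2(\mu_n)$ geometries are genuinely comparable; it also underlies the uniform lower volume bounds needed to control the Nyström extension.

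Second, I would package the empirical embedding: define $\hat\alpha_i = \sqrt{\hat\lambda_i^{(n)}}\,\hat\phi_i^{(n)}$ (with the same branch-of-square-root convention as in Definition \ref{def:alphafunctions}) and $\hat\Phi_k(X_n)(x) = (\hat\alpha_1(x),\dots,\hat\alpha_k(x)) \in \mathbb{C}^k$, regarded as a subset of $\mathbb{C}^k$ as $x$ ranges over $X_n$ — or over $X$ via the Nyström extension; either way the Hausdorff distance to $\Phi_k(X)$ is what must go to zero. The key estimate is a pointwise bound: for $x \in X$,
\[
\|\hat\Phi_k(X_n)(x) - \Phi_k(x)\|_2^2 = \sum_{i=1}^k \big|\sqrt{\hat\lambda_i^{(n)}}\,\hat\phi_i^{(n)}(x) - \sqrt{\lambda_i}\,\phi_i(x)\big|^2,
\]
and each summand is controlled by $|\sqrt{\hat\lambda_i^{(n)}} - \sqrt{\lambda_i}|^2\cdot\|\hat\phi_i^{(n)}\|_\infty^2 + |\lambda_i|\cdot|\hat\phi_i^{(n)}(x) - \phi_i(x)|^2$, both of which tend to zero a.s.\ uniformly in $x$ by the first stage (using that the Nyström extensions are uniformly bounded and equicontinuous, as in the proof of Proposition \ref{prop:compact} and Lemma \ref{lem:eiglip}). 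Since $k$ is fixed and finite, summing $k$ terms preserves the a.s.\ convergence. A uniform-in-$x$ bound of this form immediately yields $\dist_H^{L^2}(\Phi_k(X),\hat\Phi_k(X_n)) \le \sup_{x\in X}\|\hat\Phi_k(X_n)(x) - \Phi_k(x)\|_2 + (\text{a Hausdorff term between } X_n \text{ and } X \text{ inside } \mathbb{C}^k)$, and the latter term is handled because $X_n$ is a.s.\ eventually dense in $X$ (again by strict positivity / $(a,b)$-standardness) combined with the uniform continuity of $\Phi_k$.

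The main obstacle, and the step requiring the most care, is the eigenfunction convergence in the first stage: eigenvalue convergence for empirical kernel operators is classical and robust, but eigenvector convergence is delicate near eigenvalue crossings and requires the genericity hypothesis (simple spectrum, Convention \ref{con:sign}) together with a spectral-gap argument to invoke a Davis–Kahan / perturbation bound. One must also be careful that the sign convention is respected in the limit — i.e.\ that the canonical sign choice ($\langle \mathbf{1},\hat\phi_i^{(n)}\rangle > 0$ or $\langle \hat\phi_i^{(n)}, |\hat\phi_i^{(n)}|\rangle > 0$) is stable under the perturbation, which is where one needs the strict inequality in the genericity condition to survive passage to the limit. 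I expect everything else — the boundedness and equicontinuity of Nyström extensions, the reduction of Hausdorff distance to a sup-norm bound, the a.s.\ density of $X_n$ — to be routine given the tools already developed in Sections \ref{sec:distkernel} and \ref{sec:distkernelembed}.
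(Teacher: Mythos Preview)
Your overall architecture --- invoke Koltchinskii--Gin\'e for spectral convergence, then transfer to Hausdorff distance using density of the sample and Lipschitz continuity of the $\alpha_i$ --- matches the paper's. The sign-convention stability issue you flag is also exactly the one the paper has to resolve. But the middle step diverges in a way worth noting.

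You propose to extend the empirical eigenvectors $\hat\phi_i^{(n)}$ to all of $X$ via the Nystr\"om formula and then argue \emph{uniform} convergence $\hat\phi_i^{(n)} \to \phi_i$ on $X$. This is plausible (the extensions are uniformly Lipschitz, so Arzel\`a--Ascoli is available), but you assert that Koltchinskii--Gin\'e delivers this directly, and it does not: their theorem gives convergence of eigen\emph{projections} tested against functions from a Glivenko--Cantelli class, not uniform convergence of eigenfunctions. Closing that gap requires identifying the subsequential limit, which is nontrivial.

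The paper sidesteps this entirely with a sharper decomposition. It never extends $\hat\Phi_k$ off the sample; instead it uses the triangle inequality
\[
\dist_H^{L^2}(\Phi_k(X),\hat\Phi_k(X_n)) \le \dist_H^{L^2}(\Phi_k(X),\Phi_k(X_n)) + \dist_H^{L^2}(\Phi_k(X_n),\hat\Phi_k(X_n)),
\]
where the first term is handled by Lipschitzness of $\Phi_k$ plus a.s.\ density of $X_n$ (this is where $(a,b)$-standardness is actually used --- nowhere else). For the second term, the paper only needs pointwise convergence $\hat\phi_i(\hat x) \to \phi_i(\hat x)$ at sample points $\hat x \in X_n$, and this it extracts from Koltchinskii--Gin\'e by a trick you did not mention: choosing the test functions $f = \phi_i$ and $g = \dist_X(\hat x,\cdot)$, so that the projection inner product becomes $\langle \hat\phi_i,\tilde\phi_i\rangle \cdot \hat\lambda_i\hat\phi_i(\hat x)$ on one side and $\lambda_i\phi_i(\hat x)$ on the other, via the identity~\eqref{eqn:coeffs}. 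Verifying that the class $\mathcal{F} = \{\dist_X(x,\cdot)\}_{x\in X} \cup \{\phi_1,\dots,\phi_k\}$ times $\phi_j$ is Glivenko--Cantelli is done via uniform Lipschitz bounds and $W_1$ convergence of the empirical measure. This avoids Davis--Kahan and any uniform-convergence argument; your route could be completed, but the paper's is more direct and requires only what Koltchinskii--Gin\'e literally states.
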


Koltchinskii and Gin\'e~\cite{KonchilskiiGine2000}, then Koltchinskii~\cite{10.1007/978-3-0348-8829-5_11} have established the convergence of spectra and eigenprojections of random empirical operators approximating a Hilbert-Schmidt operator. We define some notions in order to state Koltchinskii and Gin\'e's results, and connect it to our setting. 

In the following, let $(X,\mu)$ be a probability space, and $h$ a symmetric measurable kernel $h \colon X \times X \to \R$ that is square integrable, has trivial diagonal, and defines a Hilbert-Schmidt integral operator $H \colon L_2(X,\mu) \to L_2(X,\mu)$, i.e.,
\[
	\int |h(x,y)|^2 d(\mu \otimes \mu)(x,y) < +\infty,
	\hspace{0.6cm} \forall x \in X, h(x,x) = 0, \hspace{0.6cm} 
	(Hf)(x) := \int h(x,y) f(y) d\mu(y). 
\] 

Let $X_n := \{x_1, \ldots, x_n\}$ be points of $X$ sampled i.i.d. from $\mu$ and defining a probability space $(X_n,\mu_n)$ with uniform measure $\mu_n(x_i) = 1/n$ for all $i$. Let $\hat{H}_n$ be the associated empirical operator $\hat{H}_n \colon L_2(X_n,\mu_n) \to L_2(X_n,\mu_n)$, i.e.,
\[
	(\hat{H}_n f)(x) = \int h(x,y) f(y) d\mu_n(y) = \frac{1}{n}\sum_{i=1}^n h(x,x_i) f(x_i).
\]

Denote by $\sigma(H) = \{ \lambda_i \colon i \in \N_{>0} \}$ the set of eigenvalues (with general multiplicities) of operator $H$, ordered such that $|\lambda_1| \geq |\lambda_2| \geq \ldots$. Similarly, denote by $\sigma(\hat{H}_n) = \{ \hat{\lambda}_i \colon i \in \N_{>0} \}$ the set of eigenvalues of $\hat{H}_n$. 
For a function $f \colon X \to \R$, we denote by $\tilde{f}$ its restriction $\tilde{f} \colon X_n \to \R$.

We define $\mu$-Glivenko-Cantelli classes of functions, which contain functions that are well behaved in the empirical setting:

\begin{definition}
A \emph{$\mu$-Glivenko-Cantelli class} of functions $\mathcal{F} \subset L_2(X)$ is a set of functions that satisfy:
\[
\toas{\displaystyle\sup_{f \in \mathcal{F}} \left| \int f(x) d\mu(x) - \int f(x)d\mu_n(x) \right|}{0} \ \text{as} \ n \to +\infty. 
\]
\end{definition}

We state the convergence theorem:

\begin{theorem}[Koltchinskii and Gin\'e~\cite{KonchilskiiGine2000}, Koltchinskii~\cite{10.1007/978-3-0348-8829-5_11}]\label{thm:KG}
With the above notations, 
\[
	\toas{\sum_{i \in \N_{>0}} |\lambda_i - \hat{\lambda}_i|^2}{0} \ \text{as} \ n \to +\infty 
\]

Additionally, let $\mathcal{F}$ be a class of measurable functions on $X$ with a square integrable envelope $F \in L_2(X,\mu)$, i.e. $|f(x)| \leq F(x)$ for all $x \in X$ and $f \in \mathcal{F}$, such that, for all eigenfunctions $\phi_i$, $i \in \N_{>0}$, of operator $H$, not in the kernel of $H$, the class of functions
\[
	\mathcal{F}\phi_i := \{f \phi_i \colon f \in \mathcal{F} \} \ \text{is $\mu$-Glivenko-Cantelli.}
\]

Then, for $\lambda$ an eigenvalue of $H$ of multiplicity $m$ at distance at least $2 \varepsilon > 0$ from other eigenvalues $\sigma(H) - \{\lambda\}$ and $0$, we have:
\[
	\displaystyle\displaystyle\sup_{f,g \in \mathcal{F}} \left| \langle P_\lambda^\varepsilon(\hat{H}_n) \tilde{f}, \tilde{g} \rangle_{L_2(X_n,\mu_n)} - \langle P_\lambda(H) f, g \rangle_{L_2(X,\mu)} \right|\toas{}{0} \ \text{as} \ n \to +\infty, 
\] 
where $P_\lambda(H) \colon L_2(X,\mu) \to L_2(X,\mu)$ is the projection on the $m$-dimensional space spanned by the eigenfunctions of $H$ of eigenvalue $\lambda$, and $P_\lambda^\varepsilon(\hat{H}_n) \colon L_2(X_n,\mu_n) \to L_2(X_n,\mu_n)$ is the projection on the space spanned by all eigenfunctions of $\hat{H}_n$ of eigenvalues in the interval $[\lambda - \varepsilon ; \lambda + \varepsilon]$. 
\end{theorem}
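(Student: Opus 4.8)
This is the theorem of Koltchinskii and Gin\'e~\cite{KonchilskiiGine2000}, in the form recorded by Koltchinskii~\cite{10.1007/978-3-0348-8829-5_11}; we only indicate the architecture of the argument. Everything rests on the observation that the trivial-diagonal hypothesis on $h$ makes $\hat H_n$ a $U$-statistic-type (hence essentially unbiased) approximation of $H$. First, $\|\hat H_n\|_{\mathrm{HS}}^2 = \tfrac1{n^2}\sum_{i\neq j} h(x_i,x_j)^2 = (1-\tfrac1n)\,U_n$, where $U_n$ is a degree-$2$ $U$-statistic with the integrable kernel $h^2$; by Hoeffding's strong law of large numbers for $U$-statistics,
\[
	\|\hat H_n\|_{\mathrm{HS}}^2 \ \xrightarrow{\mathrm{a.s.}}\ \int |h(x,y)|^2\, d(\mu\otimes\mu)(x,y) \ =\ \|H\|_{\mathrm{HS}}^2 \ =\ \sum_{i\in\N_{>0}} \lambda_i^2 .
\]
Writing $\tilde\phi_i$ for the restriction of the eigenfunction $\phi_i$ to $X_n$, the same strong law --- applied after expanding the relevant squared norms into finite sums of $V$-statistics over $2$- and $3$-tuples of sample points, with integrable kernels such as $h(x,y)h(x,y')\phi_i(y)\phi_i(y')$ --- gives, for all $i,j$,
\[
	\langle \tilde\phi_i, \tilde\phi_j\rangle_{L_2(X_n,\mu_n)} \xrightarrow{\mathrm{a.s.}} \delta_{ij}, \qquad\text{and, for } \lambda_i\neq 0,\quad \big\| \hat H_n \tilde\phi_i - \lambda_i\tilde\phi_i \big\|_{L_2(X_n,\mu_n)} \xrightarrow{\mathrm{a.s.}} 0 .
\]
Thus, for each fixed $m$ and along almost every sample path, $\tilde\phi_1,\dots,\tilde\phi_m$ form an asymptotically orthonormal family in $L_2(X_n,\mu_n)$ on which $\hat H_n$ acts asymptotically as $\operatorname{diag}(\lambda_1,\dots,\lambda_m)$.

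The $\ell^2$-convergence $\sum_i|\lambda_i-\hat\lambda_i|^2\to0$ is then a squeeze. For each fixed $m$, the approximate-eigenfunction estimates above together with the min--max characterisation of eigenvalues force the $m$ leading eigenvalues of $\hat H_n$ to converge a.s.\ to $\lambda_1,\dots,\lambda_m$; meanwhile $\sum_i\hat\lambda_i^2 = \|\hat H_n\|_{\mathrm{HS}}^2\to\sum_i\lambda_i^2$ rules out spurious large eigenvalues and controls the spectral tail of $\hat H_n$ uniformly in (large) $n$. Concretely, given $\varepsilon_0>0$ choose $m$ with $\sum_{i>m}\lambda_i^2<\varepsilon_0$; then a.s.\ $\limsup_n\sum_{i>m}\hat\lambda_i^2 = \sum_i\lambda_i^2 - \liminf_n\sum_{i\le m}\hat\lambda_i^2 \le \varepsilon_0$, whence $\limsup_n\sum_i|\lambda_i-\hat\lambda_i|^2\le4\varepsilon_0$. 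Letting $\varepsilon_0\to0$ gives the first assertion.

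For the eigenprojections, use the Riesz representation: with $\Gamma\subset\C$ the circle $\{z : |z-\lambda|=\varepsilon\}$, which by the spectral-gap hypothesis encircles $\lambda$ and no other point of $\sigma(H)\cup\{0\}$,
\[
	P_\lambda(H) = \frac{1}{2\pi i}\oint_\Gamma (zI - H)^{-1}\,dz ,
\]
and, once $n$ is large enough that $\sigma(\hat H_n)$ meets $[\lambda-\varepsilon,\lambda+\varepsilon]$ only strictly inside $\Gamma$ and stays bounded away from $\Gamma$ --- which holds a.s.\ eventually, since the eigenvalue convergence forces $\sup_i|\lambda_i-\hat\lambda_i|\to0$ --- the same contour gives $P_\lambda^\varepsilon(\hat H_n) = \tfrac1{2\pi i}\oint_\Gamma (zI-\hat H_n)^{-1}\,dz$. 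It therefore suffices to show
\[
	\sup_{z\in\Gamma}\ \sup_{f,g\in\mathcal F}\ \Big| \big\langle (zI-\hat H_n)^{-1}\tilde f,\tilde g\big\rangle_{L_2(X_n,\mu_n)} - \big\langle (zI-H)^{-1}f,g\big\rangle_{L_2(X,\mu)} \Big| \xrightarrow{\mathrm{a.s.}} 0 .
\]
Expand both resolvent bilinear forms in the eigenbases of $H$ and $\hat H_n$ (write $\hat\phi_i$ for the eigenfunctions of $\hat H_n$) and truncate each sum at a level $m$. The head terms $i\le m$ are controlled by the eigenvalue convergence $\hat\lambda_i\to\lambda_i$, the bound $|z-\lambda_i|,|z-\hat\lambda_i|\ge\varepsilon/2$ on $\Gamma$, and --- this is where the hypothesis enters --- the assumption that each $\mathcal F\phi_i$ is $\mu$-Glivenko--Cantelli, which gives $\langle \tilde f,\tilde\phi_i\rangle_{L_2(X_n,\mu_n)} = \int f\phi_i\,d\mu_n \to \int f\phi_i\,d\mu = \langle f,\phi_i\rangle$ uniformly over $f\in\mathcal F$; one then passes from $\tilde\phi_i$ to $\hat\phi_i$ using the spectral gap and the approximate-eigenfunction bound. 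The tail terms $i>m$ are bounded on $\Gamma$ by $\tfrac2\varepsilon$ times products of the $\ell^2$-tails of the (empirical and true) eigencoefficients of $f$ and $g$, controlled uniformly in $n$ by the square-integrable envelope $F$ (e.g.\ $\int F^2\,d\mu_n\to\int F^2\,d\mu$ a.s.) together with the Hilbert--Schmidt convergence, and hence tend to $0$ as $m\to\infty$; a diagonal argument over $m\to\infty$, $n\to\infty$ completes the proof. The main obstacle throughout is structural: $H$ and $\hat H_n$ act on different Hilbert spaces, so every comparison must be routed through the sampling map $f\mapsto\tilde f$ and through $U$-/$V$-statistic limit theorems, and the genuinely delicate point is to make the spectral-tail estimates uniform over the whole class $\mathcal F$ --- which is precisely why both the square-integrable envelope and the Glivenko--Cantelli hypotheses are imposed.
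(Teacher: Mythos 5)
First, a point of comparison: the paper does not prove Theorem~\ref{thm:KG} at all --- it is imported verbatim from Koltchinskii--Gin\'e and Koltchinskii and used as a black box in the proof of Theorem~\ref{thm:approxDKE}. So there is no ``paper proof'' to match; what can be assessed is whether your reconstruction is a sound sketch of the cited argument. In broad architecture it is: the $U$-statistic strong law for the Hilbert--Schmidt norms (made unbiased by the trivial diagonal), the restricted eigenfunctions $\tilde\phi_i$ as asymptotically orthonormal approximate eigenvectors of $\hat H_n$, the squeeze over a finite-rank head plus an $\ell^2$ tail for the eigenvalue statement, and the Riesz contour-integral representation of $P_\lambda(H)$ and $P_\lambda^\varepsilon(\hat H_n)$ with the Glivenko--Cantelli hypothesis supplying $\sup_{f\in\mathcal F}|\int f\phi_i\,d(\mu_n-\mu)|\to 0$ for the head terms --- all of this is consistent with how the original proofs are organized (the original paper phrases the eigenvalue result in the rearrangement metric $\delta_2$ and handles the head via Hoffman--Wielandt on finite-rank approximations, but that is a difference of bookkeeping, not of substance).

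There is, however, one step in your sketch that would fail as written: the tail estimate in the resolvent comparison. You bound the terms $i>m$ of $\langle (zI-H)^{-1}f,g\rangle$ by $\tfrac{2}{\varepsilon}$ times the $\ell^2$-tails of the eigencoefficients $\sum_{i>m}\langle f,\phi_i\rangle^2$, and claim these are ``controlled uniformly by the square-integrable envelope $F$.'' The envelope only gives $\sum_i\langle f,\phi_i\rangle^2\le\|F\|_{L^2(\mu)}^2$; it does not make the tails small \emph{uniformly over} $f\in\mathcal F$ (no equi-small-tail or compactness condition on $\mathcal F$ is assumed), and uniformity over the class is exactly what the conclusion $\sup_{f,g\in\mathcal F}$ demands. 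The standard repair --- and the place where the hypothesis that $\lambda$ is at distance at least $2\varepsilon$ from $0$ is actually used --- is to write $(z-t)^{-1}=z^{-1}+t\,z^{-1}(z-t)^{-1}$: the $z^{-1}$ contributions (including the component of $f$ in $\ker H$) integrate to zero over $\Gamma$ because $0$ lies outside the contour, and the remaining tail is weighted by $\lambda_i$ (resp.\ $\hat\lambda_i$), hence bounded by $\tfrac{2}{\varepsilon^2}\,\sup_{i>m}|\lambda_i|\cdot\|F\|^2$ and its empirical analogue, which is uniform over $\mathcal F$ and vanishes as $m\to\infty$ by the eigenvalue/Hilbert--Schmidt convergence. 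With that substitution (and the usual integrability checks on the $V$-statistic kernels, e.g.\ $h^2\phi_i^2$, which your sketch glosses over), the outline is a faithful reconstruction of the cited result.
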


\medskip

We connect this theorem to our setting. Let $(X,\dist_X,\mu)$ be a compact metric measure space, with $(a,b)$-standard Borel measure $\mu$. We denote its diameter by $\diam X < +\infty$, and we assume, up to rescaling, that $\mu(X) = 1$. 

Let $\Phi_k \colon X \to \C^k$ be the distance kernel embedding truncated at dimension $k$, for $k$ fixed, induced by the distance kernel operator $D^X$ (defined in Section~\ref{sec:distkernelembed}). Denote the eigenvalues and eigenfunctions of $D^X$ by $\{(\lambda_i, \phi_i)\}_{i \in \N_{>0}}$, such that $|\lambda_1| \geq |\lambda_2| \geq \ldots$ following Convention~\ref{con:sign}. We assume all non-zero eigenvalues of $D^X$ have multiplicity one, and $D^X$ has at least $k$ non-zero eigenvalues, i.e., $|\lambda_k| > 0$. 



Any finite subsample $X_n = \{x_1, \ldots , x_n\} \subset X$ induces a finite metric measure space $(X_n, \dist_X, \mu_n)$, where $\dist_X$ is inherited from $X$ by restriction, and $\mu_n$ is the uniform measure $\mu_n(x_i) = 1 / n$. 
We denote by $\hat{\Phi}_k \colon X_n \to \C^k$ the truncated distance kernel transform induced by the distance kernel operator $D^{X_n}$ defined on the discrete metric measure space with uniform measure. Denote by $\{(\hat{\lambda}_i, \hat{\phi}_i)\}_{i \in \N_{>0}}$ the spectrum of $D^{X_n}$, ordered by decreasing eigenvalue moduli.

Note that $X_n$ can be embedded in $\C^k$ using either the empirical embedding $\hat{\Phi}_k$, or using the embedding $\Phi_k$ by restriction to $X_n \subset X$. 

\medskip

We first prove some lemmata. Define the class of functions: 
\begin{equation}\label{eq:familyFGC}
	\mathcal{F} := \{\dist_X(x, \cdot) \colon \forall x \in X\} \cup \{\phi_i \colon i = 1 \ldots k \},
\end{equation} 
from $X$ to $\R$.

\begin{lemma}\label{lem:Fsqintegrable}
The family $\mathcal{F}$ has a square integrable envelope.
\end{lemma}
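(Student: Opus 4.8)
The plan is to exhibit an explicit square-integrable function $F\colon X\to\R$ that dominates every member of the finite-plus-parametrized family $\mathcal{F} = \{\dist_X(x,\cdot) : x\in X\}\cup\{\phi_i : i=1,\dots,k\}$. Since the whole space $X$ is compact, the diameter $\diam X$ is finite, so for every $x\in X$ the function $\dist_X(x,\cdot)$ is bounded pointwise by the constant $\diam X$. The eigenfunctions $\phi_1,\dots,\phi_k$ are individually in $L^2(X)$ (they are an orthonormal system, hence unit norm), but to get a pointwise envelope one should invoke Lemma~\ref{lem:eiglip}: each $\phi_i$ with $\lambda_i\neq 0$ is $(\sqrt{\operatorname{Vol}(X)}/|\lambda_i|)$-Lipschitz, hence continuous on the compact space $X$, hence bounded; call the bound $M_i := \sup_{x\in X}|\phi_i(x)| < \infty$. (We are assuming $|\lambda_k|>0$, so all of $\phi_1,\dots,\phi_k$ fall under this case.)

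Concretely, I would set
\[
	F(x) := \max\Bigl\{ \diam X,\ M_1,\ \dots,\ M_k \Bigr\},
\]
a (finite) constant function on $X$. Then $|f(x)|\le F(x)$ for all $x\in X$ and all $f\in\mathcal{F}$: for $f=\dist_X(x_0,\cdot)$ this is the diameter bound, and for $f=\phi_i$ it is the definition of $M_i$. Finally $F\in L^2(X,\mu)$ because $F$ is a constant and $\mu(X)=1<\infty$ (or more generally $\mu(X)<\infty$ by compactness and the Radon hypothesis), so $\int_X F^2\,d\mu = F^2\,\mu(X) < \infty$. This establishes the claim.

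There is essentially no obstacle here: the only point that requires a nonvacuous argument — rather than being immediate from compactness — is the boundedness of the eigenfunctions $\phi_i$, and this is handed to us by Lemma~\ref{lem:eiglip} together with compactness of $X$ (a Lipschitz function on a compact metric space attains a maximum). If one preferred not to quote the Lipschitz lemma, one could alternatively note that $\phi_i = \lambda_i^{-1} D^X\phi_i$ and that $D^X$ maps $L^2(X)$ into $C(X)$ (the image consists of continuous, indeed equicontinuous, functions, as shown in the proof of Proposition~\ref{prop:compact}), which again forces $\phi_i$ to be bounded. Either route gives the finite constant envelope, and squaring an $L^\infty$ constant against a finite measure is trivially integrable.
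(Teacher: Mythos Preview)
Your proof is correct and follows essentially the same approach as the paper: bound the distance functions by $\diam(X)$, invoke Lemma~\ref{lem:eiglip} to get boundedness of each $\phi_i$, and take a constant envelope. The only cosmetic difference is that the paper makes the bound on $|\phi_i|$ explicit (using that $\|\phi_i\|_{L^2}=1$ and $\mu(X)=1$ forces a point with $|\phi_i(x_0)|\le 1$, whence $|\phi_i|\le 1+\diam(X)/|\lambda_i|$), whereas you simply appeal to continuity on a compact space.
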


\begin{proof}
Naturally, the functions $\dist_X(x, \cdot)$ are bounded by $\diam(X) < +\infty$ for all $x \in X$. Any eigenfunction $\phi_i$, for $1 \leq i \leq k$, has $L_2$-norm $1$, and consequently admits a point $x_0$ for which $|\phi_i(x_0)| \leq 1$. Since $\phi_i$ is $1/|\lambda_i|$-Lipschitz for $\lambda_i \neq 0$ (Lemma~\ref{lem:eiglip}), the function is bounded in absolute value by $1 + \diam(X)/|\lambda_i|$. Finally, the constant function $x \mapsto \max \{ \diam(X), 1 + \diam(X)/|\lambda_k| \}$ is square integrable and is an envelope for the family $\mathcal{F}$. 

\end{proof}

\begin{lemma}\label{lem:cvHdistance}
The Hausdorff distance $\toas{\dist_{H}(X, X_n)}{0}$ as $n \to +\infty$.
\end{lemma}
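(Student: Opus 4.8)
The plan is to show that the Hausdorff distance $\dist_H(X, X_n)$ (between $X$ and a random i.i.d.\ sample $X_n$, viewed as subsets of the ambient metric space $(X,\dist_X)$) converges to $0$ almost surely. Since $X_n \subseteq X$, one direction is free: $\sup_{p \in X_n}\dist_X(p, X) = 0$. So the content is entirely in the other direction, namely $\sup_{x \in X}\dist_X(x, X_n) \to 0$ a.s., i.e.\ for every $\delta > 0$, eventually every point of $X$ is within $\delta$ of some sample point.

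First I would use compactness of $X$ to fix, for each $\delta > 0$, a finite $\delta/2$-net $\{y_1, \ldots, y_m\}$ of $X$. Because $\mu$ is $(a,b)$-standard (and in particular strictly positive), each ball $B(y_j, \delta/2)$ has measure $\mu(B(y_j,\delta/2)) \geq a(\delta/2)^b =: p_\delta > 0$. The probability that an i.i.d.\ sample of size $n$ misses a fixed such ball is at most $(1 - p_\delta)^n$, so by a union bound the probability that $X_n$ misses \emph{some} $B(y_j,\delta/2)$ is at most $m(1-p_\delta)^n$, which is summable in $n$. By Borel--Cantelli, almost surely $X_n$ meets every $B(y_j,\delta/2)$ for all sufficiently large $n$. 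When that happens, any $x \in X$ lies within $\delta/2$ of some $y_j$, which lies within $\delta/2$ of some sample point, so $\dist_X(x,X_n) \leq \delta$; hence $\dist_H(X,X_n) \leq \delta$ eventually. Finally I would take a countable sequence $\delta \to 0$ (say $\delta = 1/\ell$, $\ell \in \N$) and intersect the corresponding almost-sure events to conclude $\dist_H(X,X_n) \to 0$ a.s.

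There is no serious obstacle here; the only mild point is bookkeeping the quantifier order in the Borel--Cantelli argument so that the single almost-sure event works simultaneously for all $\delta$ in the chosen countable sequence. The $(a,b)$-standardness hypothesis is used only through the uniform lower bound $\mu(B(x,s)) \geq a s^b$ on ball masses, which is what makes the net-covering probabilities summable; strict positivity of $\mu$ alone (without uniformity) would still suffice given compactness, but the standardness assumption makes the estimate completely explicit.
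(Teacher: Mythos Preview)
Your proposal is correct and follows essentially the same approach as the paper: compactness of $X$ gives a finite cover by small balls, $(a,b)$-standardness gives a uniform lower bound on their measures, and then the i.i.d.\ sample almost surely hits every ball eventually. Your version is in fact more careful than the paper's, which leaves the Borel--Cantelli step and the passage to a countable sequence of $\delta$'s implicit.
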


\begin{proof}
Fix $\varepsilon > 0$ arbitrarily small. Since $X$ is compact, it can be covered by finitely many balls of radius $\varepsilon$. Since $\mu$ is $(a,b)$-standard, the measure of these balls is uniformly bounded below. Consequently, the probability that any such ball remains empty when picking larger i.i.d. samples goes to zero. Hence, for any $\varepsilon$, $X_n$ is almost surely $\varepsilon$-dense in $X$ when $n \to +\infty$.

\end{proof}

\begin{lemma}\label{lem:W1convergence}
The $1$-Wasserstein distance $\toas{W_1(\mu, \mu_n)}{0}$ as $n \to +\infty$.
\end{lemma}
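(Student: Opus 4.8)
The plan is to deduce this from the strong law of large numbers applied to finitely many events, via a discretization of $X$ at a small scale, paralleling the proof of Lemma~\ref{lem:cvHdistance}. First I would fix $\varepsilon > 0$; since $X$ is compact it is covered by finitely many balls of radius $\varepsilon$, and disjointifying these balls in the standard way yields a finite Borel partition $X = A_1 \sqcup \cdots \sqcup A_m$ with $\diam(A_j) \leq 2\varepsilon$ for each $j$. Choosing a representative point $a_j \in A_j$ in every nonempty part, I would introduce the discretized measures $\bar{\mu} := \sum_{j} \mu(A_j)\,\delta_{a_j}$ and $\bar{\mu}_n := \sum_{j} \mu_n(A_j)\,\delta_{a_j}$, both supported on $\{a_1, \ldots, a_m\}$. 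Transporting the mass of $\mu$ (resp.\ $\mu_n$) inside each $A_j$ to $a_j$ gives $W_1(\mu, \bar{\mu}) \leq 2\varepsilon$ and $W_1(\mu_n, \bar{\mu}_n) \leq 2\varepsilon$, while two probability measures carried by a common finite set of diameter at most $\diam(X)$ satisfy $W_1(\bar{\mu}, \bar{\mu}_n) \leq \diam(X)\cdot \tfrac12 \sum_{j=1}^{m} |\mu(A_j) - \mu_n(A_j)|$. The triangle inequality for $W_1$ then yields
\[
	W_1(\mu, \mu_n) \;\leq\; 4\varepsilon + \frac{\diam(X)}{2}\sum_{j=1}^{m} \bigl| \mu(A_j) - \mu_n(A_j) \bigr|.
\]

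Next, for each fixed $j$ the quantity $\mu_n(A_j) = \tfrac1n \#\{\, i \leq n : x_i \in A_j \,\}$ is an empirical average of i.i.d.\ Bernoulli($\mu(A_j)$) variables, so $\mu_n(A_j) \to \mu(A_j)$ almost surely by the strong law of large numbers; as there are only finitely many parts, almost surely this holds for all $j$ at once, so the sum on the right tends to $0$ almost surely and $\limsup_n W_1(\mu, \mu_n) \leq 4\varepsilon$ on an event of full probability that a priori depends on $\varepsilon$. Applying this with $\varepsilon = 1/\ell$ for every $\ell \in \mathbb{N}_{>0}$ and intersecting the corresponding probability-one events — a countable intersection — produces a single event of probability one on which $\limsup_n W_1(\mu, \mu_n) \leq 4/\ell$ for all $\ell$, that is, $W_1(\mu, \mu_n) \to 0$.

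I do not expect a substantive obstacle; the only point requiring a little care — and the closest thing to a pitfall — is the order of quantifiers, namely exhibiting one almost-sure event valid for every $\varepsilon$ simultaneously rather than one per $\varepsilon$, which the countable intersection above settles. (I note that $(a,b)$-standardness is not actually needed for this lemma: compactness alone, in fact just finiteness of the first moment, suffices; it is simply convenient here.) As an alternative avoiding discretization entirely, one can argue through Kantorovich--Rubinstein duality: the family of $1$-Lipschitz functions on $X$ vanishing at a fixed basepoint is, by Arzel\`a--Ascoli, relatively compact in $C(X)$ and hence a $\mu$-Glivenko--Cantelli class, and the uniform law of large numbers over that class is exactly the statement $W_1(\mu,\mu_n) = \sup_f |\int f\,d\mu - \int f\,d\mu_n| \to 0$ almost surely.
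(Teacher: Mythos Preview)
Your argument is correct. The discretization via a finite Borel partition, the transport bounds $W_1(\mu,\bar\mu)\leq 2\varepsilon$ and $W_1(\mu_n,\bar\mu_n)\leq 2\varepsilon$, the estimate $W_1(\bar\mu,\bar\mu_n)\leq \diam(X)\cdot d_{TV}(\bar\mu,\bar\mu_n)$, the strong law of large numbers on each cell, and the countable intersection over $\varepsilon=1/\ell$ all go through without issue. Your closing remark that $(a,b)$-standardness is irrelevant here is also accurate.

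The paper, however, does not argue this way. Its proof is a two-line appeal to classical facts: on a Polish space the empirical measure converges weakly to $\mu$ almost surely (Varadarajan's theorem), and on a compact space $W_1$ metrizes weak convergence (Villani). So the paper's route is purely citational, while yours is a self-contained elementary construction. What your approach buys is independence from those references and an explicit, quantitative decomposition that mirrors the proof of Lemma~\ref{lem:cvHdistance}; what the paper's approach buys is brevity and the observation that the result sits inside a well-known general framework. Your alternative via Kantorovich--Rubinstein duality and the Glivenko--Cantelli property of the $1$-Lipschitz class is in fact closer in spirit to the paper's proof, since weak convergence plus compactness is essentially that uniform law of large numbers.
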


\begin{proof}
This is standard in statistics and follows from the fact that on Polish spaces the empirical measure weakly converges to $\mu$ almost surely~\cite{10.2307/25048365} and $W_1$ metrizes weak convergence~\cite[Theorem 6.9]{VillaniOldandNew}.

\end{proof}

\begin{lemma}\label{lem:lip}
If $f$ is $c_1$-Lipschitz and $g$ is $c_2$-Lipschitz, then $fg$ is $(||f||_\infty c_2 + ||g||_\infty c_1)$-Lipschitz.
\end{lemma}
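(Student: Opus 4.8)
The plan is to use the standard add-and-subtract trick to split the increment of the product into two pieces, each controlled by one of the two Lipschitz hypotheses. Concretely, fix two points $x, y$ in the domain. I would write
\[
f(x)g(x) - f(y)g(y) = f(x)\bigl(g(x) - g(y)\bigr) + \bigl(f(x) - f(y)\bigr)g(y),
\]
and then apply the triangle inequality to obtain
\[
|f(x)g(x) - f(y)g(y)| \leq |f(x)|\,|g(x) - g(y)| + |f(x) - f(y)|\,|g(y)|.
\]

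Next I would bound each factor: $|f(x)| \leq \|f\|_\infty$ and $|g(y)| \leq \|g\|_\infty$ by definition of the sup-norm, while $|g(x) - g(y)| \leq c_2\,\dist(x,y)$ and $|f(x) - f(y)| \leq c_1\,\dist(x,y)$ by the Lipschitz hypotheses. Combining these gives
\[
|f(x)g(x) - f(y)g(y)| \leq \bigl(\|f\|_\infty c_2 + \|g\|_\infty c_1\bigr)\dist(x,y),
\]
which is exactly the claimed Lipschitz bound. Since $x$ and $y$ were arbitrary, the conclusion follows. (Implicitly one should note the statement is only meaningful when $\|f\|_\infty, \|g\|_\infty < \infty$; otherwise the asserted constant is infinite and the bound is vacuous.)

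There is no real obstacle here — the only ``choice'' is which of the two symmetric splittings to use (pulling out $f(x)$ versus $f(y)$ in the first term), and either works; one could even symmetrize to get a marginally better constant, but the stated bound is the clean one and is all that is needed downstream. The lemma will be applied to products of eigenfunctions (which are Lipschitz with constants controlled via Lemma \ref{lem:eiglip}) and bounded weight functions, so the sup-norm hypotheses are automatically satisfied in every intended use.
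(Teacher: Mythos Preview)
Your proof is correct and follows essentially the same approach as the paper: the paper also uses the add-and-subtract trick (inserting the intermediate term $f(y)g(x)$ rather than your $f(x)g(y)$), applies the triangle inequality, and bounds each piece by the sup-norm times the Lipschitz increment. The two versions are interchangeable.
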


\begin{proof}
By an elementary computation, for any $x,y$ in the domain:
\[\begin{array}{ccr}
|f(x)g(x)-f(y)g(y)| &\leq& |f(x)g(x)-f(y)g(x)| + |f(y)g(x)-f(y)g(y)| \\
                  &    & \leq ||g||_\infty |f(x)-f(y)| + ||f||_\infty |g(x)-g(y)|\\
                  &    & \leq ||g||_\infty c_1 \dist_X(x,y) + ||f||_\infty c_2 \dist_X(x,y).\\
\end{array}
\]

\end{proof}

\begin{lemma}\label{lem-muGC}
For any eigenfunction $\phi_j$ of $H$, such that $H \phi_j \neq 0$, the family $\mathcal{F}\phi_j$ is $\mu$-Glivenko-Cantelli.
\end{lemma}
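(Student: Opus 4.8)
The plan is to produce a single finite Lipschitz constant that controls every element of the product class $\mathcal{F}\phi_j$ simultaneously, and then to deduce the Glivenko--Cantelli property from Kantorovich--Rubinstein duality together with the almost sure convergence $W_1(\mu,\mu_n)\to 0$ recorded in Lemma~\ref{lem:W1convergence}.

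First I would collect the relevant quantitative facts for a fixed $j$ with $H\phi_j\neq 0$, i.e.\ $\lambda_j\neq 0$. By Lemma~\ref{lem:eiglip}, $\phi_j$ is $(1/|\lambda_j|)$-Lipschitz, and the estimate used in the proof of Lemma~\ref{lem:Fsqintegrable} gives $\|\phi_j\|_\infty\leq 1+\diam(X)/|\lambda_j|$ (together with the analogous bounds for each $\phi_i$, $1\leq i\leq k$). Now inspect the two kinds of elements of $\mathcal{F}\phi_j$. If $f=\dist_X(x,\cdot)$, then $f$ is $1$-Lipschitz with $\|f\|_\infty\leq\diam(X)$, so by Lemma~\ref{lem:lip} the product $f\phi_j$ is Lipschitz with constant at most $\diam(X)/|\lambda_j|+\|\phi_j\|_\infty\leq 1+2\diam(X)/|\lambda_j|$; crucially this bound does not depend on $x$. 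If instead $f=\phi_i$ with $1\leq i\leq k$, then by Lemma~\ref{lem:lip} the product $\phi_i\phi_j$ is Lipschitz with constant at most $\|\phi_i\|_\infty/|\lambda_j|+\|\phi_j\|_\infty/|\lambda_i|<\infty$, and there are only finitely many such $i$. Letting $L_j$ be the maximum of these finitely many constants, every $g\in\mathcal{F}\phi_j$ is $L_j$-Lipschitz with $L_j<\infty$.

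Next I would pass to the empirical comparison. Since $X$ is compact every $L_j$-Lipschitz function is bounded, so Kantorovich--Rubinstein duality (see \cite{VillaniOldandNew}) yields $\bigl|\int g\,d\mu-\int g\,d\mu_n\bigr|\leq L_j\,W_1(\mu,\mu_n)$ for each $g\in\mathcal{F}\phi_j$. Taking the supremum over the class,
\[
\sup_{g\in\mathcal{F}\phi_j}\left|\int g\,d\mu-\int g\,d\mu_n\right|\;\leq\;L_j\,W_1(\mu,\mu_n),
\]
and the right-hand side tends to $0$ almost surely as $n\to\infty$ by Lemma~\ref{lem:W1convergence}. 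Hence $\mathcal{F}\phi_j$ is $\mu$-Glivenko--Cantelli.

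There is no real obstacle here; the two points deserving attention are, first, that the Lipschitz bound for the subfamily $\{\dist_X(x,\cdot)\phi_j:x\in X\}$ must be uniform in $x$ — which holds because $\dist_X(x,\cdot)$ is $1$-Lipschitz and bounded by $\diam(X)$ no matter what $x$ is — and second, that $L_j$ is allowed to depend on $j$, since the Glivenko--Cantelli property is a statement about each individual fixed class $\mathcal{F}\phi_j$, which is exactly the hypothesis needed to invoke Theorem~\ref{thm:KG}. One could alternatively bypass $W_1$ entirely: $\mathcal{F}\phi_j$ is uniformly bounded and uniformly equicontinuous on the compact space $X$, hence totally bounded in $C(X)$ by Arzel\`a--Ascoli, and a sup-norm totally bounded class is Glivenko--Cantelli by approximating each $g$ within $\varepsilon$ by one of finitely many functions and applying the strong law of large numbers to each of them.
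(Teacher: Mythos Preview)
Your proposal is correct and follows essentially the same route as the paper's proof: both arguments observe that every element of $\mathcal{F}\phi_j$ is Lipschitz with a uniform constant (via Lemma~\ref{lem:lip} applied to the uniformly Lipschitz, uniformly bounded factors from Lemmas~\ref{lem:eiglip} and~\ref{lem:Fsqintegrable}), and then conclude via Kantorovich--Rubinstein duality and the $W_1$ convergence of Lemma~\ref{lem:W1convergence}. Your write-up is slightly more explicit in separating the two kinds of $f\in\mathcal{F}$ and in noting that the bound is uniform in $x$, and your closing Arzel\`a--Ascoli remark is a genuine alternative, but the core argument is the same.
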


\begin{proof}
Fix an arbitrary $j$ such that $\phi_j$ of eigenvalue $\lambda_j$ is not in the kernel of $H$, i.e., $\lambda_j \neq 0$. Since the functions $\dist_X(x,\cdot)$ and $\phi_i$, for $i \in \{1, \ldots, k\} \cup \{j\}$, are $c$-Lipschitz, for a uniform constant $c \leq \max \{1, 1/|\lambda_k|, 1/|\lambda_j|\} < +\infty$, and uniformly bounded (see proof of Lemma~\ref{lem:Fsqintegrable}), by Lemma~\ref{lem:lip} their pairwise products are $c'$-Lipschitz for a uniform constant $c' < +\infty$. 

Using the formula for $W_1$ from Kantorovich-Rubinstein duality and the convergence of Lemma~\ref{lem:W1convergence}, we have for any $f \in \mathcal{F}$:
\[
\begin{array}{c}
  \left| \displaystyle\int f(x)\phi_j(x) d(\mu - \mu_n)(x) \right| \leq c' \cdot \sup \left\{ \left|\displaystyle\int h d(\mu-\mu_n)\right| \colon h \ \text{is $1$-Lipschitz} \right\} \hspace{3cm} \\
  \hfill \ \ \ \ = \toas{c' \cdot W_1(\mu,\mu_n)}{0} \ \text{as} \ n \to +\infty.\\
 \end{array}
\]

\end{proof}

We finally prove the main result of the section.


\begin{proof}[Proof of Theorem~\ref{thm:approxDKE}] 




Let $X_n$ be an arbitrary i.i.d. subsample of $X$, of size $n$. Recall that $\{(\lambda_i,\phi_i)\}_{i \in \N_{>0}}$ denotes the eigenvalues and eigenfunctions of $D^X$, and $\{(\hat{\lambda}_i,\hat{\phi}_i)\}_{i \in \N_{>0}}$ denotes the eigenvalues and eigenfunctions of the empirical operator $D^{X_n}$. 

Recall that, by hypothesis, the first $k$ eigenvalues of $D^X$ are non-zero, i.e., $|\lambda_1| \geq \ldots \geq |\lambda_k| > 0$, and of multiplicity $1$. 

Pick any $x \in X$, and any $\hat{x} \in X_n$. Since $\phi_i$ is $1/|\lambda_i|$-Lipschitz, the distance between $\Phi_k(x)$ and $\Phi_k(\hat{x})$ is bounded by:
\[
||\Phi_k(x) - \Phi_k(\hat{x})||_\infty = \max_{j=1, \ldots, k} \sqrt{|\lambda_j|}|\phi_j(x)-\phi_j(\hat{x})| \leq \max_{j=1, \ldots, k} \frac{\dist_X(x,\hat{x})}{\sqrt{|\lambda_j|}} = \frac{\dist_X(x,\hat{x})}{\sqrt{|\lambda_k|}}.
\]

Consequently, 
\begin{equation}\label{eq:proof1}
	\dist_H^{L^2}(\Phi_k(X), \Phi_k(X_n)) \leq \toas{\displaystyle\sqrt{\frac{k}{|\lambda_k|}} \ \dist_H(X,X_n)}{0} \ \text{as} \ n \to +\infty,
\end{equation}
by virtue of Lemma~\ref{lem:cvHdistance}.

Define $\delta := \frac{1}{2} \min_{1 \leq i,j \leq k, i\neq j}\{|\lambda_i-\lambda_j|\} > 0$. 
According to Theorem~\ref{thm:KG} on the convergence of empirical eigenvalues,we have $\hat{\lambda}_i \to \lambda_i$ almost surely as $n \to +\infty$. Therefore, the only eigenvalue of the empirical distance kernel operator $D^{X_n}$ that falls in the interval $[\lambda_i-\delta, \lambda_i + \delta]$ is the eigenvalue $\hat{\lambda_i}$, almost surely as $n \to +\infty$. Consequently, as $n \to +\infty$, we have for any eigenvalue $\lambda_i$, $1 \leq i \leq k$, the projections onto eigenspaces: 
\[
	P_{\lambda_i}(D^X) \colon L^2(X,\mu) \to L^2(X,\mu), \ \ \ \text{and} \ \ \ 
	P_{\lambda_i}^{\delta}(D^{X_n}) \colon L^2(X_n,\mu_n) \to L^2(X_n,\mu_n), 
\]
that are almost surely onto $1$-dimensional spaces, spanned respectively by $\phi_i$ and $\hat{\phi}_i$. 

Hence, for any square integrable functions $f,g \colon X \to \R$, we have the following equality: 
\begin{equation}\label{eq:rewritescalarprod}
\begin{gathered}
\left| \langle P_{\lambda_i}^{\delta}(D^{X_n}) \tilde{f}, \tilde{g} \rangle_{L_2(X_n, \mu_n)} - \langle P_\lambda(D^X) f, g \rangle_{L_2(X,\mu)} \right| \hspace{150pt}\\
\hfill  \hspace{50pt} \overset{\text{a.s.}}{=} 
\left| \langle \hat{\phi}_i , \tilde{f} \rangle_{L_2(X_n,\mu_n)} \langle \hat{\phi}_i, \tilde{g} \rangle_{L_2(X_n,\mu_n)} - \langle \phi_i , f \rangle_{L_2(X,\mu)} \langle \phi_i, g \rangle_{L_2(X,\mu)}  \right| \ \text{as} \ n \to +\infty,\\
\end{gathered}
\end{equation}
where $\tilde{f}, \tilde{g}$ are the restrictions of functions $f,g \colon X \to \R$ to $X_n$.

Additionally, for any two functions $f,g$ in the family $\mathcal{F}$ defined in~(\ref{eq:familyFGC}), Lemma~\ref{lem-muGC} and Theorem~\ref{thm:KG} ensure that:
\begin{equation}\label{eq:convergencescalarprod}
	\toas{\left| \langle P_{\lambda_i}^{\delta}(D^{X_n}) \tilde{f}, \tilde{g} \rangle_{L_2(X_n, \mu_n)} - \langle P_{\lambda_i}(D^X) f, g \rangle_{L_2(X,\mu)} \right|}{0} \ \text{as} \ n \to +\infty.
\end{equation}

Substituting $\phi_i$ for $f$ ($1 \leq i \leq k$) and $\dist_X(\hat{x}, \cdot)$ for $g$ (for an arbitrary point $\hat{x} \in X_n$) in~\eqref{eq:convergencescalarprod}, with $\phi_i,\dist_X(\hat{x}, \cdot) \in \mathcal{F}$, we get:
\begin{equation}\label{eq:KGeq1}
\begin{array}{l}
\left| \langle P_{\lambda_i}^{\delta}(D^{X_n}) \tilde{\phi_i}, \ \widetilde{ \dist_X( \hat{x}, \cdot)} \ \rangle_{L_2(X_n, \mu_n)} - \langle P_{\lambda_i}(D^X) \phi_i, \dist_X(\hat{x}, \cdot) \rangle_{L_2(X,\mu)} \right| \\
\ \ \ \ \ \ \ \ \ \ \ \ \ \ \ 
\overset{\text{a.s.}}{=} \left| \langle \hat{\phi}_i, \tilde{\phi}_i \rangle_{L^2(X_n,\mu_n)} \cdot \langle \hat{\phi}_i, \ \widetilde{\dist_X(\hat{x}, \cdot)} \ \rangle_{L_2(X_n, \mu_n)} - \langle \phi_i, \dist_X(\hat{x}, \cdot) \rangle_{L_2(X,\mu)} \right| \\
\ \ \ \ \ \ \ \ \ \ \ \ \ \ \ \ \ \ \ \ \ \ \ \ \ \ \ \ \ \ \ 
= \toas{\left| \langle \hat{\phi}_i, \tilde{\phi}_i \rangle_{L^2(X_n,\mu_n)} \cdot \hat{\lambda}_i \hat{\phi}_i(\hat{x}) - \lambda_i \phi_i(\hat{x}) \right|}{0} \ \text{as} \ n \to +\infty.\\
\end{array}
\end{equation}
where the first equality follows from~\eqref{eq:rewritescalarprod} and the second equality follows from~\eqref{eqn:coeffs}.

Similarly, for $f = g = \phi_i$, we get:
\begin{equation}\label{eq:KGeq2}
	\toas{\left| \langle \hat{\phi}_i, \tilde{\phi}_i \rangle^2_{L^2(X_n,\mu_n)} - 1 \right|}{0} \ \text{as} \ n \to +\infty.
\end{equation}

We resolve the sign ambiguity of $\langle \hat{\phi}_i, \tilde{\phi}_i \rangle^2_{L^2(X_n,\mu_n)}$. Recall that, by Convention~\ref{con:sign}, the eigenfunctions satisfy:
\begin{equation}\label{eq:convsignambiguity}
\langle \phi_i, |\phi_i| \rangle_{L^2(X,\mu)} > 0, \ \ \text{and} \ \
 \langle \hat{\phi}_i, |\hat{\phi}_i| \rangle_{L^2(X_n,\mu_n)} > 0. 
\end{equation}
Now, suppose by contradiction that $\langle \hat{\phi}_i, \tilde{\phi}_i \rangle_{L^2(X_n,\mu_n)}$ is a.s. negative, in which case it converges a.s. to $-1$ (Equation~(\ref{eq:KGeq2})). Taking the limit in Equation~(\ref{eq:KGeq1}), together with the a.s. convergence of eigenvalues $\hat{\lambda}_i \to \lambda_i$, we get that $\hat{\phi}_i(\hat{x})$ converges a.s. to $-\phi_i(\hat{x})$ on any point $\hat{x}$. In turn, with the weak convergence of measures $\mu_n \to \mu$, this implies:
\[
	\underbrace{\langle \hat{\phi}_i, |\hat{\phi}_i| \rangle_{L^2(X_n,\mu_n)}}_{>0} \toas{}{}  \underbrace{\langle -\phi_i, |\phi_i| \rangle_{L^2(X,\mu)}}_{<0}  \ \text{as} \ n \to +\infty,
\]
in contradiction with the sign convention of Equation~(\ref{eq:convsignambiguity}).

In conclusion, we get that for any index $i$, $1 \leq i \leq k$, and for any point $\hat{x}$ of the sample, both:
\[
	\toas{\hat{\lambda_i}}{\lambda_i} \ \text{and} \ \toas{\hat{\phi}_i(\hat{x})}{\phi_i(\hat{x})}  \ \text{as} \ n \to +\infty,
\]
therefore $\toas{||\Phi_k(\hat{x})-\hat{\Phi}_k(\hat{x})||_{\infty}}{0} \ \text{as} \ n \to +\infty.$





%


This gives:
\begin{equation}\label{eq:proof2}
	\toas{\dist_H^{L^2}(\Phi_k(X_n),\hat{\Phi}_k(X_n))}{0} \ \text{as} \ n \to +\infty.
\end{equation}

We conclude the proof by applying the triangle inequality, and Equations~(\ref{eq:proof1}) and~(\ref{eq:proof2}), 
\[
  \dist_H^{L^2}(\Phi_k(X),\hat{\Phi}_k(X_n)) \leq 
  \toas{\dist_H^{L^2}(\Phi_k(X),\Phi_k(X_n)) + \dist_H^{L^2}(\Phi_k(X_n),\hat{\Phi}_k(X_n))}{0} \ \text{as} \ n \to +\infty.
\]
%
%

\end{proof}

\section{Stability of the Distance Kernel Embedding}
\label{sec:stability}

In this section, we focus on compact Riemannian manifolds, with the genericity assumption that the non-trivial eigenvalues $\{\lambda_i\}_{i \in \N_{>0}}$ of the distance kernel operator satisfy $|\lambda_i| \neq |\lambda_j|$ whenever $i \neq j$. Consequently, the non-trivial eigenvalues can be labeled such that $|\lambda_1| > \ldots > |\lambda_k| > \ldots$. In this framework, the DKE is geometrically stable:



\begin{theorem}\label{thm:stability}
Let $(X,\dist_X,\mu)$ and $(Y,\dist_Y,\eta)$ be compact finite-dimensional Riemannian manifolds equipped with their volume measures, such that $\mu(X) = \eta(Y)$. Let $|\lambda_1| > \ldots > |\lambda_k| > 0$ and $|\nu_1| > \ldots > |\nu_k| > 0$ be the $k$ largest eigenvalues in absolute value of operators $D^X$ and $D^Y$ respectively, all non-trivial, of multiplicity one, and with distinct absolute values. Let $\Phi_k\colon X \to \mathbb{C}^k$ and $\Psi_k\colon Y\to \mathbb{C}^k$ be the DKE for $D^X$ and $D^Y$ respectively. 

\medskip

Define the separation $\Delta_k^{X,Y}$ of the spectra of operators $D^X$ and $D^Y$ by:
\[
	\Delta_k^{X,Y} = \min \left\{ |\lambda_i^2-\nu_j^2| \ : \ 1 \leq i,j \leq k, i \neq j \right\},
\]
and let:
\[
	|\tau_1| := \min \{|\lambda_1|,|\nu_1|\}, \ \ \text{and} \ \ |\tau_k| := \max \{|\lambda_k|,|\nu_k|\}.
\]

\medskip

Then, 
\[
	\dist_H^{L^2}(\Phi_k(X),\Psi_k(Y)) \leq \displaystyle\sqrt{k}\left( 4\sqrt{2} \frac{(\varepsilon+|\tau_1|)}{\Delta^{X,Y}_k}\sqrt{|\tau_1|}\right) \cdot \varepsilon + 
	\displaystyle\sqrt{k}\left( 2\sqrt{2} \sqrt{\frac{(\varepsilon + |\tau_1|)}{|\tau_k|}}\right) \cdot \sqrt{\varepsilon}, 
\]
where $\varepsilon := \dist_{G\bar{P}}(X,Y)$ stands for the modified Gromov-Prokhorov distance between $X$ and $Y$ (see Definitions~\ref{def:modifiedPdistance} and~\ref{def:modifiedGHPdistance} below).
\end{theorem}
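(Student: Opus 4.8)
The plan is to compare the two distance kernel operators $D^X$ and $D^Y$ through a common ``transport map'' and to track how a small Gromov–Prokhorov distance $\varepsilon$ propagates through: (i) the operator norm difference, (ii) the eigenvalues, (iii) the eigenprojections via a Davis–Kahan type perturbation bound, and finally (iv) the coordinates $\alpha_i = \sqrt{\lambda_i}\phi_i$ and the Hausdorff distance between the embedded images. First I would unpack the definition of $\dist_{G\bar{P}}(X,Y) = \varepsilon$: it should provide a probability space $Z$ (or a coupling) with measurable maps, or a measure $\pi$ on $X \times Y$ with appropriate marginals, such that $\dist_X$ and $\dist_Y$ agree up to $\varepsilon$ after transport and the pushforward measures agree up to $\varepsilon$. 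Using this coupling I would build an isometry-like identification $L^2(X,\mu) \leftrightarrow L^2(Y,\eta)$ (legitimate because $\mu(X)=\eta(Y)$, so after normalization both are probability spaces) and estimate $\|D^X - \iota^{-1} D^Y \iota\|$ in operator norm by a quantity of order $(\varepsilon + |\tau_1|)\varepsilon$: the diameter-type factor $|\tau_1|$ (recall $|\lambda_1| \le \diam(X)\vol(X)$) enters because the kernel is the distance function itself, whose sup norm is controlled by the diameter, and the measure discrepancy multiplies that sup norm.

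Next I would invoke standard spectral perturbation theory. From the operator-norm bound, Weyl's inequality gives $|\lambda_i - \nu_i| \lesssim (\varepsilon+|\tau_1|)\varepsilon$ for each $i \le k$; combined with the spectral gap encoded in $\Delta_k^{X,Y}$ (note the squares: this is natural because the relevant self-adjoint perturbation argument for eigenprojections, or equivalently working with $D^2$, produces $|\lambda_i^2 - \nu_j^2|$ in denominators), the Davis–Kahan $\sin\Theta$ theorem bounds the difference of the one-dimensional eigenprojections $P_{\lambda_i} - P_{\nu_i}$ in operator norm by roughly $\frac{(\varepsilon+|\tau_1|)\varepsilon}{\Delta_k^{X,Y}}$. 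Translating projections into (sign-normalized, by Convention~\ref{con:sign}) eigenfunctions yields $\|\phi_i - \psi_i\|_{L^2} \lesssim \frac{(\varepsilon+|\tau_1|)\varepsilon}{\Delta_k^{X,Y}}$, with an additional contribution of order $\sqrt{\varepsilon}$ coming from the measure mismatch when we compare pointwise values through the coupling rather than in $L^2$ — this is the source of the second, $\sqrt{\varepsilon}$-order term in the statement, and the $1/|\tau_k|$ there reflects dividing by $\sqrt{|\lambda_k|}$ to pass from $\lambda_i\phi_i$ (which is Lipschitz with a clean constant by Lemma~\ref{lem:eiglip}) back to $\phi_i$.

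Then I would assemble the coordinate estimate. For a point $x \in X$ matched via the coupling to $y \in Y$, I write
\[
|\alpha_i^X(x) - \alpha_i^Y(y)| = |\sqrt{\lambda_i}\phi_i(x) - \sqrt{\nu_i}\psi_i(y)|
\le |\sqrt{\lambda_i} - \sqrt{\nu_i}|\,|\phi_i(x)| + \sqrt{|\nu_i|}\,|\phi_i(x) - \psi_i(y)|,
\]
bound $|\sqrt{\lambda_i}-\sqrt{\nu_i}| \le |\lambda_i - \nu_i|/(\sqrt{|\lambda_i|}+\sqrt{|\nu_i|}) \lesssim |\lambda_i-\nu_i|/\sqrt{|\tau_k|}$, and control $|\phi_i(x) - \psi_i(y)|$ by combining the $L^2$ eigenfunction bound above with the Lipschitz property (Lemma~\ref{lem:eiglip}) to upgrade from average to pointwise, at the cost of the $\sqrt{\varepsilon}$ term and the $|\tau_1|$ scaling of the eigenfunctions (eigenfunction values themselves are bounded via the envelope argument in the proof of Lemma~\ref{lem:Fsqintegrable}). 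Summing the squares over $i = 1, \dots, k$ produces the overall factor $\sqrt{k}$; the two groups of terms sort into the $\varepsilon$-linear piece with denominator $\Delta_k^{X,Y}$ and the $\sqrt{\varepsilon}$ piece with denominator $\sqrt{|\tau_k|}$, matching the claimed inequality up to the explicit constants $4\sqrt2$ and $2\sqrt2$. Since every $x \in X$ has such a partner $y \in Y$ (and symmetrically), this bounds both directions of the Hausdorff distance $\dist_H^{L^2}(\Phi_k(X),\Psi_k(Y))$.

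The main obstacle I expect is Step (iii)–(iv): getting a clean \emph{pointwise} comparison of the sign-normalized eigenfunctions through a coupling that is only approximately measure-preserving and only approximately isometric. Davis–Kahan naturally lives in $L^2$, so converting its output into a bound valid at every point — without losing the sharp dependence on $\Delta_k^{X,Y}$, $|\tau_k|$, and the diameter — requires carefully interleaving the Lipschitz estimates of Lemma~\ref{lem:eiglip} with the coupling error, and is precisely where the somewhat awkward $\sqrt{\varepsilon}$ term and the square-gap $\Delta_k^{X,Y}$ are forced upon us. Verifying that Convention~\ref{con:sign} indeed pins down the signs consistently on \emph{both} spaces so that no spurious $\pm$ mismatch survives (the same subtlety handled in the proof of Theorem~\ref{thm:approxDKE}) is the second delicate point.
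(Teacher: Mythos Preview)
Your proposal takes a genuinely different route from the paper, and the difference is precisely at the obstacle you flag. You attempt to work directly in the continuous setting: build an identification $L^2(X)\leftrightarrow L^2(Y)$ from a coupling, bound $\|D^X-\iota^{-1}D^Y\iota\|$, apply Davis--Kahan in $L^2$, then upgrade to pointwise. The paper instead \emph{discretizes first}: it proves a finite-sample version (Theorem~\ref{lem:discr_conv}) for $n$-point uniform metric measure spaces embedded in a common $(Z,\dist_Z)$, then passes to the limit via Theorem~\ref{thm:approxDKE} and the convergence of empirical spectra. In the discrete setting the modified Prokhorov distance equals the bottleneck distance (Lemma~\ref{lem:matching}), which yields an explicit bijection $x_i\leftrightarrow y_i$; the operators become $n\times n$ matrices and the ``pointwise'' comparison of eigenfunctions is literally a coordinatewise comparison of eigenvectors---no $L^2$-to-$L^\infty$ upgrade is needed. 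What you identify as the hard step is thus sidestepped entirely.

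Two of your attributions are also off, and they matter for whether the direct approach could be pushed through. First, the operator-norm perturbation in the paper is $\|D^{Y_n}-D^{X_n}\|\le 2\varepsilon$, not $(\varepsilon+|\tau_1|)\varepsilon$; the factor $(\varepsilon+|\tau_1|)$ enters only later, from squaring the matrix in the ``revisited'' Weyl/Davis--Kahan (Theorem~\ref{thm:weylrevisited}), since $(M+H)^2-M^2=H^2+MH+HM$ has norm $\le\varepsilon(\varepsilon+2|\lambda_1|)$. This squaring is forced by the absolute-value ordering of eigenvalues in Convention~\ref{con:sign}, and is the actual reason $\Delta_k^{X,Y}$ involves $|\lambda_i^2-\nu_j^2|$. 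Second, the $\sqrt{\varepsilon}$ term does not come from a measure mismatch in the coupling; it comes from bounding $|\sqrt{\lambda_j}-\sqrt{\nu_j}|$ by $\sqrt{\varepsilon'}$ with $\varepsilon'=4\varepsilon(\varepsilon+|\tau_1|)/|\tau_k|$, via a case split on whether $|\lambda_j|,|\nu_j|$ exceed $\varepsilon'$. So your coordinate decomposition $|\sqrt{\lambda_i}\phi_i(x)-\sqrt{\nu_i}\psi_i(y)|\le|\sqrt{\lambda_i}-\sqrt{\nu_i}||\phi_i(x)|+\sqrt{|\nu_i|}|\phi_i(x)-\psi_i(y)|$ is the right shape, but the two pieces feed from different sources than you suggest, and the second piece is controlled in $\ell^\infty$ directly (via $\|\hat\phi_j-\hat\psi_j\|_\infty\le\sqrt{2}|\sin\theta_j|$ on finite vectors), not by a Lipschitz-plus-$L^2$ argument.
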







\begin{remark}
We restrict the stability result to Riemannian manifolds, in order to be able to compare measures with a modified version of the Prokhorov distance that admits a simple interpretation on discrete spaces (Lemma~\ref{lem:matching}). Contrarily to the Prokhorov distance, this distance is not suitable to study general metric measure spaces. However, in the case of metric spaces with $(a,b)$-standard measure and minimal geodesics between any pair of points---and in particular compact Riemannian manifolds with their volume measure---the two distances have similar properties (Lemma~\ref{lem:mprok}).  
\end{remark}


%

Consider the distance operators on, respectively, $L^2(X)$ and $L^2(Y)$,
\[
	(D^{X}f)(x) = \int_X f(z) \dist_X(x,z) \ d\mu(z) \ \ \ \ \ \ (D^{Y}f)(y) = \int_Y f(z) \dist_Y(y,z) \ d\eta(z).
\] 

We recall some definitions from measure theory.

\begin{definition}
The {\em Prokhorov distance} $\dist_P(\eta, \mu)$ between two Borel measures $\eta, \mu$ on a common metric space $(Z,\dist_Z)$ with Borel sigma algebra $\mathcal{B}(Z)$ is defined by:
\[
\dist_P(\eta, \mu) := \inf \{\varepsilon \geq 0 : \eta(A) \leq \mu(A^\varepsilon)+\varepsilon \ \text{and} \ \mu(A) \leq \eta(A^\varepsilon) + \varepsilon, \ \text{for all} \ A \in \mathcal{B}(Z) \},
\]
where $A^\varepsilon$ denotes the $\varepsilon$-neighborhood of $A$ in $Z$, i.e.,
\[
	A^\varepsilon := \{z \in Z \colon \inf_{a \in A} \dist_Z(a,z) \leq \varepsilon \}.
\]
\end{definition}

Here we consider the following modified version of the Prokhorov distance:

\begin{definition}\label{def:modifiedPdistance}
The modified Prokhorov distance $\dist_{\bar{P}}(\eta, \mu)$ between two measures Borel measures $\eta, \mu$ on a common metric space $(Z,\dist_Z)$ with Borel sigma algebra $\mathcal{B}(Z)$, such that $\mu(Z)=\eta(Z)$, is defined by:
\[
\dist_{\bar{P}}(\eta, \mu) := \inf \{\varepsilon \geq 0 : \eta(A) \leq \mu(A^\varepsilon) \ \text{and} \ \mu(A) \leq \eta(A^\varepsilon), \ \text{for all} \ A \in \mathcal{B}(Z) \}.
\]
\end{definition}

This distance naturally satisfies the triangle inequality. 
On compact Riemannian manifolds, notions of convergence for Prokhorov and modified Prokhorov distances coincide:

\begin{lemma}[Burago et al~{\cite[Lemma 8.2]{burago2013laplacebeltrami}}]
	\label{lem:mprok}
	Let $M$ be a $d$-dimensional Riemannian manifold, and $\mu$ a Borel measure on $M$ such that $\mu(M)=\vol_M(M)$. Then,
	\[
	\dist_P(\mu,\vol_M) \leq \dist_{\bar{P}}(\mu,\vol_M) \leq C_M \dist_P(\mu,\vol_M)^{\frac{1}{d}},
	\]
	for a constant $C_M$ depending only on the geometry of $M$.
\end{lemma}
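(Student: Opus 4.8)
The left-hand inequality is immediate: any $\varepsilon\ge 0$ admissible in the definition of $\dist_{\bar P}(\mu,\vol_M)$ also satisfies the weaker defining inequalities of $\dist_P$, so the infimum defining $\dist_P(\mu,\vol_M)$ is taken over a superset and $\dist_P(\mu,\vol_M)\le\dist_{\bar P}(\mu,\vol_M)$ (here one uses $\mu(M)=\vol_M(M)$, so the defining set for $\dist_{\bar P}$ is nonempty). For the right-hand inequality the plan is to reduce everything to a single volume-comparison estimate for metric neighbourhoods on the compact manifold $M$, for which the injectivity radius $R$ is positive and $\diam(M),\vol_M(M)$ are finite. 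Write $\varepsilon:=\dist_P(\mu,\vol_M)$.

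The heart of the matter is the following \emph{core claim}: there is a constant $c>0$, depending only on $d$, $R$, $\diam(M)$ and the universal constant $C_d$ of the ball-volume bound of \cite{croke1980some} (Prop.~14), such that $\mu(A)\le\vol_M(A^{t})$ for every Borel $A\subseteq M$, where $t:=c\,\varepsilon^{1/d}$. Granting this, set $C_M:=2c$. Then $\mu(A)\le\vol_M(A^{t})\le\vol_M(A^{2t})$ for all $A$; and applying the core claim to $B:=M\setminus A^{t}$ — on which $\dist(\cdot,A)>t$, so that the strict triangle inequality gives the inclusion $B^{t}\subseteq M\setminus A$ — yields $\mu(B)\le\vol_M(B^{t})\le\vol_M(M\setminus A)$, which rearranges, using $\mu(M)=\vol_M(M)$, to $\vol_M(A)\le\mu(A^{t})\le\mu(A^{2t})$. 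Hence $\dist_{\bar P}(\mu,\vol_M)\le 2t=C_M\,\dist_P(\mu,\vol_M)^{1/d}$, as desired.

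To prove the core claim, fix a nonempty Borel $A$ (the empty set is trivial) and recall $\mu(A)\le\vol_M(A^{\varepsilon})+\varepsilon$ from $\dist_P(\mu,\vol_M)=\varepsilon$, so it suffices to show $\vol_M(A^{t})\ge\vol_M(A^{\varepsilon})+\varepsilon$. If $A^{t}=M$ this is automatic, since then $\vol_M(A^{t})=\vol_M(M)=\mu(M)\ge\mu(A)$. Otherwise some point of $M$ lies at distance $>t$ from $A$; as $z\mapsto\dist(z,A)$ is $1$-Lipschitz, vanishes on $A$, and $M$ is connected, the intermediate value theorem provides a point $q$ with $\dist(q,A)=\tfrac12(\varepsilon+t)$, and for $\rho:=\min\{\tfrac13(t-\varepsilon),\,R/2\}$ the ball $B(q,\rho)$ lies in $A^{t}\setminus A^{\varepsilon}$. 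By \cite{croke1980some}, $\vol_M(B(q,\rho))\ge C_d\rho^{d}$, so $\vol_M(A^{t})-\vol_M(A^{\varepsilon})\ge C_d\rho^{d}$, and the claim reduces to ensuring $C_d\rho^{d}\ge\varepsilon$.

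This last step is the only delicate point, since one constant $c$ must work across all regimes. If $\tfrac13(t-\varepsilon)\le R/2$, then for $c$ large one has $\rho=\tfrac13(t-\varepsilon)\ge\tfrac16 t=\tfrac16 c\,\varepsilon^{1/d}$, so $C_d\rho^{d}\ge C_d(c/6)^{d}\varepsilon\ge\varepsilon$ as soon as $c$ is large in terms of $C_d,d$. If instead $\rho=R/2$, then $C_d\rho^{d}=C_d(R/2)^{d}$ is a fixed positive constant, and a short bookkeeping shows that this regime (together with ``$t<\diam(M)$'', the non-trivial one) forces $\varepsilon\le C_d(R/2)^{d}$ once $c\ge 2\diam(M)/(C_d^{1/d}R)$ — the same inequality also guaranteeing $A^{t}=M$ (the trivial case) whenever $\varepsilon$ is large enough that $t\ge\diam(M)$. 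Taking $c$ to dominate these finitely many thresholds, all of which depend only on the geometry of $M$, completes the proof. The main obstacle is therefore not any single inequality but this uniform choice of constant bridging the ``balls stay small'' and ``neighbourhood already exhausts $M$'' regimes; the two set inclusions and the appeal to Croke's ball-volume lower bound are routine.
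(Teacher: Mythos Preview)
The paper does not prove this lemma; it is cited from Burago et al.\ without proof. So there is nothing to compare against, and the question is simply whether your argument is correct.

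Your strategy is right, and the volume--growth estimate via Croke's ball lower bound is exactly what one needs. There is, however, a small but genuine slip in the reduction step. With $B:=M\setminus A^{t}$ you assert that ``the strict triangle inequality gives $B^{t}\subseteq M\setminus A$''. This fails on the boundary: for $z\in A$ and $w\in B$ one has $d(z,w)\ge d(w,A)>t$, hence $d(z,B)\ge t$, but the infimum can equal $t$ (since $B$ is open), so $z$ may lie in $B^{t}$. The set $A\cap B^{t}$ is contained in a level set of $d(\cdot,B)$, which need not have zero volume for an arbitrary Borel $A$. The fix is immediate and already implicit in your choice $C_M=2c$: to show $\dist_{\bar P}\le 2t$ it suffices to verify the defining inequalities for every $\varepsilon'>2t$. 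For such $\varepsilon'$, set $B:=M\setminus A^{\varepsilon'}$; then for $z\in A$ and $w\in B$ one has $d(z,w)\ge d(w,A)>\varepsilon'>2t$, so $d(z,B)\ge 2t>t$ strictly, and now $B^{t}\subseteq M\setminus A$ holds honestly. The rest of your argument goes through unchanged.

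One further remark: you invoke a positive injectivity radius and finite diameter and volume, i.e.\ you are tacitly assuming $M$ is compact (or at least complete with bounded geometry). The lemma as stated does not say this, though the paper only ever applies it to compact manifolds, so the assumption is harmless in context; it would be worth making it explicit.
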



Our stability result is stated in terms of the modified Gromov-Prokhorov distance, capturing both metric and measure similarity:

\begin{definition}\label{def:modifiedGHPdistance}
The {\em modified Gromov-Prokhorov distance} between two metric measure spaces $(X, \dist_X, \mu)$ and $(Y,\dist_Y,\eta)$ with $\mu(X)=\eta(Y)$ is:
\[
\dist_{G\bar{P}}(X,Y) := \inf_{\iota,\kappa} \{\dist_{\bar{P}}(\iota\#\mu, \kappa\#\eta)\},
\]
where the infimum is taken over isometries $\iota\colon X \to \iota(X) \subseteq Z$ and $\kappa\colon Y \to \kappa(Y) \subseteq Z$ into a common metric space $(Z,\dist_Z)$.
\end{definition}

Naturally, the modified Gromov-Prokhorov distance is larger than the Gromov-Prokhorov distance (substituting $\dist_P$ for $\dist_{\bar{P}}$ in the definition above). It is also larger than the Gromov-Hausdorff distance between two Riemannian manifolds equipped with their volume measures. Indeed, for two isometries $\iota$ and $\kappa$, the measure $\iota\#\mu$ of an arbitrary small ball centered at a point $x \in \iota(X)$ is non-zero, which implies that the volume $\kappa\#\eta$ of this ball inflated by $\dist_{\bar{P}}(\iota\#\mu,\kappa\#\eta)$ is non-zero, and hence contains a point in $\kappa(Y)$. 

We prove that isometries do not affect the spectral theory of the distance operators. Specifically:

\begin{prop}\label{prop:samesame}
	Let $\iota \colon X \to Z$ be an isometry of a compact metric measure space $(X,\dist_X,\mu)$ onto its image $\iota(X) := X' \subset Z$. Then, there exists a bijection $\iota^* \colon L^2(X) \to L^2(X')$ such that the following diagram commutes:
	\[
	\xymatrix {
		L^2(X) \ar@{->}[r]^{\iota^*} \ar@{->}[d]_{D^X} & L^2(X') \ar@{->}[d]^{D^{X'}}\\
		L^2(X) \ar@{->}[r]^{\iota^*} & L^2(X')\\}
	\]
	where $D^{X'}$ is the distance kernel operator associated to the space $(X',\dist_Z,\iota\#\mu)$.

	In particular, $\phi$ is an eigenfunction for $D^X$ of eigenvalue $\lambda$ iff $\iota^* \phi$ is an eigenfunction for $D^{X'}$ of eigenvalue $\lambda$, and, denoting the DKE for $X$ and $X'$ by $\Phi_k \colon X \to \C^k$ and $\Phi'_k \colon X' \to \C^k$ respectively, we have:
	\[
	  \Phi_k(X) = \Phi'_k(X').
	 \]
\end{prop}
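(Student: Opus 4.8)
The plan is to take $\iota^*$ to be the pullback along $\iota^{-1}$, i.e. $(\iota^* f)(x') := f(\iota^{-1}(x'))$ for $x' \in X'$, which makes sense because $\iota$ is a bijection onto $X'$. First I would check that $\iota^*$ is a well-defined unitary isomorphism $L^2(X,\mu) \to L^2(X',\iota\#\mu)$: the change-of-variables formula for pushforward measures gives $\int_{X'} |\iota^* f|^2 \, d(\iota\#\mu) = \int_X |f|^2 \, d\mu$, so $\iota^*$ preserves the $L^2$-norm (hence descends to equivalence classes and is injective), while its inverse is the pullback along $\iota$, namely $g \mapsto g \circ \iota$, so it is onto. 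By polarization it also preserves inner products. Note $\iota$ is automatically a homeomorphism onto $X'$, so $X'$ is compact and $\iota\#\mu$ is a finite (hence Radon) Borel measure on $X'$, and $D^{X'}$ is defined exactly as in Section \ref{sec:distkernel}.

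Next I would verify that the square commutes. Fix $f \in L^2(X)$ and $x' = \iota(x) \in X'$. Unwinding the definition of $D^{X'}$ (which uses the restricted metric $\dist_Z|_{X'}$ and the measure $\iota\#\mu$) and applying the change-of-variables formula,
\[
(D^{X'}\iota^* f)(x') = \int_{X'} (\iota^* f)(y')\,\dist_Z(x',y')\, d(\iota\#\mu)(y') = \int_X f(y)\,\dist_Z(\iota(x),\iota(y))\, d\mu(y).
\]
Since $\iota$ is an isometry, $\dist_Z(\iota(x),\iota(y)) = \dist_X(x,y)$, so the right-hand side equals $(D^X f)(x) = (\iota^*(D^X f))(x')$. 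Hence $D^{X'}\circ \iota^* = \iota^* \circ D^X$, which is the asserted commuting diagram. In particular $D^{X'}$ is again compact and self-adjoint (also directly by Propositions \ref{prop:selfadjoint} and \ref{prop:compact} applied to $X'$), and $\phi$ is an eigenfunction of $D^X$ with eigenvalue $\lambda$ iff $\iota^*\phi$ is an eigenfunction of $D^{X'}$ with the same eigenvalue; conjugation by the unitary $\iota^*$ shows the two operators have identical spectra with identical multiplicities, so the eigenvalue ordering of Convention \ref{con:sign} is preserved.

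To conclude that $\Phi_k(X) = \Phi'_k(X')$ I would check that $\iota^*$ carries the \emph{chosen} eigenfunctions of $D^X$ to the chosen eigenfunctions of $D^{X'}$, not merely the eigenspaces. Since $\iota^*$ is unitary it sends the orthonormal eigenbasis $\{\phi_i\}$ to an orthonormal eigenbasis $\{\iota^*\phi_i\}$; and because taking absolute values commutes with the pullback, $|\iota^*\phi_i| = \iota^*|\phi_i|$, so $\langle \iota^*\phi_i, |\iota^*\phi_i|\rangle = \langle \phi_i, |\phi_i|\rangle > 0$ by Convention \ref{con:sign}. Thus $\iota^*\phi_i$ is the admissible eigenfunction, i.e. $\phi'_i = \iota^*\phi_i$ and $\lambda'_i = \lambda_i$. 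Consequently $\alpha'_i = \sqrt{\lambda'_i}\,\phi'_i = \iota^*\alpha_i$, so for every $x \in X$ we get $\Phi'_k(\iota(x)) = (\alpha'_1(\iota(x)),\dots,\alpha'_k(\iota(x))) = (\alpha_1(x),\dots,\alpha_k(x)) = \Phi_k(x)$, and taking images over all $x$ yields $\Phi'_k(X') = \Phi_k(X)$.

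I expect the only real subtleties to be measure-theoretic bookkeeping — confirming that $\iota\#\mu$ is a Radon Borel measure on $X'$ for which the change-of-variables identity holds, and on which $D^{X'}$ is defined exactly as in Section \ref{sec:distkernel} — together with the observation that the sign convention must be checked (rather than just the eigenspace correspondence) in order to get equality of the two embeddings on the nose; neither of these presents a genuine difficulty once $\iota^*$ is recognized as a unitary intertwiner.
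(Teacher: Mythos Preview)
Your proposal is correct and follows essentially the same approach as the paper: define $\iota^* f = f\circ\iota^{-1}$, verify the intertwining $D^{X'}\circ\iota^* = \iota^*\circ D^X$ via change of variables and the isometry identity $\dist_Z(\iota(x),\iota(y))=\dist_X(x,y)$, then check that the sign convention $\langle\phi_i,|\phi_i|\rangle>0$ is preserved under $\iota^*$ so that the chosen eigenfunctions correspond, giving $\Phi'_k(\iota(x))=\Phi_k(x)$. Your write-up is in fact a bit more careful than the paper's (you explicitly note unitarity and the Radon-measure bookkeeping), but the argument is the same.
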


\begin{proof}
	Define $\iota^* \colon f \mapsto f \circ \iota^{-1}$. It is naturally a bijection. By a change of variable with the pushforward measure, and the fact that $\iota$ is an isometry, we get:
	\begin{align*}
	(D^{X'} (\iota^* f))(x') &= \int_{X'} f(\iota^{-1}(z)) \dist_Z(x',z) d \iota\# \mu(z)\\
	&= \int_X f(y) \dist_Z(x',\iota(y)) d \mu(y)\\
	&= \int_X f(y) \dist_X(\iota^{-1}(x'),y) d\mu(y) = (D^X f)(\iota^{-1}(x')) = (\iota^*(D^X f))(x')\\ 
	\end{align*}
	The spectral consequence follows. Now, let $|\lambda_1| > \ldots > |\lambda_k| > 0$ be the eigenvalues of largest absolute value for $D^X$ and $D^{X'}$. Let $\phi_1, \ldots, \phi_k$ be the corresponding eigenfunctions for $D^X$ used to define the DKE of $X$, i.e., such that $\langle \phi_i,|\phi_i| \rangle > 0$. Let $\phi_1', \ldots, \phi_k'$ be the eigenfunctions for $D^{X'}$ such that $\phi'_i = \phi \circ \iota^{-1}$. By the same change of variable as above, we have $\langle \phi'_i,|\phi'_i| \rangle = \langle \phi_i,|\phi_i| \rangle > 0$, therefore the $\phi_i'$ are used to define the DKE of $X'$.

	Then, for any $x \in X$, we have: 
	\[
	   \sqrt{\lambda_i} \phi_i(x) = \sqrt{\lambda_i} \phi_i \circ \iota^{-1}( \iota(x)), \forall i \in \{1, \ldots , k\}, \ \ \text{hence} \ \ \Phi_k(x) = \Phi'_k(\iota(x)).
	\]

	The map $\iota$ being a bijection, we conclude that $\Phi_k(X) = \Phi'_k(X')$.
\end{proof}




We first focus on finite metric subspaces of a common embedding space $(Z,\dist_Z)$. Recall Weyl's and Davis-Kahan's perturbation theorems for symmetric real matrices with multiplicity $1$ eigenvalues, as formulated for example in~\cite{DBLP:conf/alt/EldridgeBW18}:

\begin{theorem}[Weyl~\cite{Weyl1912}, Davis-Kahan~\cite{davis69}]
	\label{thm:weyl}
	Let $M$ and $M+H$ be symmetric real matrices with multiplicity $1$ eigenvalues, and let $\lambda_1, \ldots, \lambda_n$ be the spectrum of $M$, such that $\lambda_1 > \ldots > \lambda_n$, and let $\nu_1, \ldots ,\nu_n$ be the spectrum of $M+H$, such that $\nu_1 > \ldots > \nu_n$, i.e., eigenvalues are ordered by decreasing real values. 

\smallskip

	For $i \in \{1, \ldots ,n\}$ let $u_i$ and $v_i$ be eigenvectors of respectively $M$ and $M+H$, such that $M u_i = \lambda_i u_i$ and $(M+H) v_i = \nu_i v_i$. Finally, let $\theta_i$ be the angle formed by the lines spanned by $u_i$ and $v_i$. 

\smallskip

	Define $\varepsilon := ||H||$ the spectral norm of $H$ (i.e., the maximal absolute value of an eigenvalue). Then, for all $i \in \{1, \ldots , n\}$:
	
\smallskip

	\begin{enumerate}[(i)]
		\itemsep0.5em   
		\item $|\lambda_i - \nu_i| \leq \varepsilon$,
		\item $\sin \theta_i \leq \varepsilon / \Delta'_i$, with $\Delta'_i = \min \{ |\lambda_i - \nu_j| : j=1 \ldots n, \ j \neq i \}$.
	\end{enumerate}
\end{theorem}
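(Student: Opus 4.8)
The plan is to handle the two parts by classical perturbation arguments: (i) follows from the Courant--Fischer min--max characterization of eigenvalues, and (ii) from expanding one eigenvector in the orthonormal eigenbasis of the \emph{other} matrix and controlling how $H$ acts on the resulting coordinates. Neither step uses anything beyond finite-dimensional spectral theory, so the effort is in bookkeeping rather than in any deep idea.

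For (i), I would recall $\lambda_i = \max_{\dim S = i}\ \min_{x \in S,\, \|x\| = 1} x^{T} M x$ and likewise $\nu_i = \max_{\dim S = i}\ \min_{x \in S,\, \|x\| = 1} x^{T}(M+H)x$. For any unit vector $x$ one has $|x^{T}Hx| \le \|H\| = \varepsilon$, hence $x^{T}(M+H)x \ge x^{T}Mx - \varepsilon$. Fixing an optimal $i$-dimensional subspace $S$ for $M$ and taking the minimum over unit $x \in S$ gives $\nu_i \ge \lambda_i - \varepsilon$; writing $M = (M+H) + (-H)$ and repeating with the roles exchanged gives $\lambda_i \ge \nu_i - \varepsilon$, so $|\lambda_i - \nu_i| \le \varepsilon$.

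For (ii), fix $i$ and normalize $u_i, v_i$ to unit length. Expand $u_i = \sum_{j=1}^{n} d_j v_j$ in the orthonormal eigenbasis $\{v_1,\dots,v_n\}$ of $M+H$, so $\sum_j d_j^2 = 1$ and, because $\theta_i$ is the angle between the \emph{lines} spanned by $u_i$ and $v_i$, $\cos\theta_i = |v_i^{T} u_i| = |d_i|$ and $\sin^2\theta_i = \sum_{j \ne i} d_j^2$. Using $M u_i = \lambda_i u_i$ and $(M+H)v_j = \nu_j v_j$,
\[
  H u_i = (M+H)u_i - M u_i = \sum_{j=1}^{n} (\nu_j - \lambda_i)\, d_j\, v_j,
\]
so by orthonormality $\|H u_i\|^2 = \sum_j (\nu_j - \lambda_i)^2 d_j^2 \ge \sum_{j\ne i}(\nu_j-\lambda_i)^2 d_j^2 \ge (\Delta'_i)^2 \sum_{j \ne i} d_j^2 = (\Delta'_i)^2 \sin^2\theta_i$, where $\Delta'_i = \min_{j \ne i}|\lambda_i - \nu_j|$. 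Since $\|H u_i\| \le \|H\|\,\|u_i\| = \varepsilon$, this yields $\sin\theta_i \le \varepsilon/\Delta'_i$.

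The main obstacle is matching the precise gap quantity $\Delta'_i$: the naive move of expanding $v_i$ in the eigenbasis of $M$ produces a bound governed by $\min_{j \ne i}|\nu_i - \lambda_j|$ rather than the ``mixed'' gap $\Delta'_i = \min_{j\ne i}|\lambda_i - \nu_j|$, so one must instead expand $u_i$ in the $(M+H)$-eigenbasis in order that exactly the differences $\nu_j - \lambda_i$ entering $\Delta'_i$ appear as the coefficients of $H u_i$. The only other points needing care are minor: $\theta_i$ refers to lines, so the orientation of each eigenvector is irrelevant and no sign convention enters; and the inequality in (ii) is read as vacuous when $\Delta'_i = 0$, while the multiplicity-one hypothesis combined with (i) guarantees $\Delta'_i > 0$ once $\varepsilon$ is small relative to the spectral gaps of $M$.
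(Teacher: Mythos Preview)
Your argument is correct; both parts are the standard textbook proofs of Weyl's eigenvalue perturbation bound and the Davis--Kahan $\sin\theta$ inequality in the simple-eigenvalue case. Note, however, that the paper does not supply its own proof of this statement: it is quoted as a classical result from the cited references (Weyl, Davis--Kahan, as formulated in \cite{DBLP:conf/alt/EldridgeBW18}) and then used as a black box to derive the ``revisited'' version (Theorem~\ref{thm:weylrevisited}) by applying it to $M^2$ and $(M+H)^2$. So there is no proof in the paper to compare against; your write-up simply fills in the omitted classical argument, and does so correctly, including the point that one must expand $u_i$ in the $(M+H)$-eigenbasis (rather than $v_i$ in the $M$-eigenbasis) to get exactly the gap $\Delta'_i = \min_{j\neq i}|\lambda_i - \nu_j|$ appearing in the statement.
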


In its usual form, Weyl's and Davis-Kahan's theorems use a different ordering of eigenvalues, compared to Convention~\ref{con:sign} for the DKE. We therefore adapt their result to our setting:

\begin{theorem}[Weyl \& Davis-Kahan, revisited]
	\label{thm:weylrevisited}
	Let $M$ and $M+H$ be symmetric real matrices with eigenvalues of distinct absolute value, and let $\lambda_1, \ldots , \lambda_n$ be the spectrum of $M$, such that $|\lambda_1| > \ldots > |\lambda_n|$, and let $\nu_1, \ldots ,\nu_n$ be the spectrum of $M+H$, such that $|\nu_1| > \ldots > |\nu_n|$, i.e., eigenvalues are ordered by {\em decreasing absolute values}. 

\smallskip

	For $i \in \{1, \ldots ,n\}$ let $u_i$ and $v_i$ be eigenvectors of respectively $M$ and $M+H$, such that $M u_i = \lambda_i u_i$ and $(M+H) v_i = \nu_i v_i$. Finally, let $\theta_i$ be the angle formed by the lines spanned by $u_i$ and $v_i$. 

\smallskip

	Define $\varepsilon := ||H||$, the spectral norm of $H$. Then, for all $i \in \{1, \ldots , n\}$: 

\smallskip

	\begin{enumerate}[(i)]
	\itemsep0.5em  
		\item $|\lambda_i^2 - \nu_i^2| \leq \varepsilon (\varepsilon + 2 |\lambda_1|)$,
		\item $\sin \theta_i \leq \varepsilon (\varepsilon + 2 |\lambda_1|) / \Delta_i$, where $\Delta_i = \min \{ |\lambda_i^2 - \nu_j^2| : j=1 \ldots n, \ j \neq i \}$.
	\end{enumerate}
\end{theorem}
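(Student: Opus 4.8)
The plan is to reduce everything to the classical Weyl and Davis--Kahan theorems (Theorem~\ref{thm:weyl}) by passing to the \emph{squared} matrices $M^2$ and $(M+H)^2$. The point is that squaring turns the ``decreasing absolute value'' ordering of Convention~\ref{con:sign} into the ``decreasing real value'' ordering required by Theorem~\ref{thm:weyl}. First I would record the elementary structural facts: $M^2$ is symmetric real with $M^2 u_i = \lambda_i^2 u_i$, and since the $|\lambda_i|$ are pairwise distinct the $\lambda_i^2$ are pairwise distinct, so $M^2$ has multiplicity-one eigenvalues $\lambda_1^2 > \lambda_2^2 > \cdots > \lambda_n^2 \geq 0$ with $u_i$ spanning the eigenline for $\lambda_i^2$. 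Likewise $(M+H)^2$ is symmetric real with eigenvalues $\nu_1^2 > \cdots > \nu_n^2 \geq 0$, each of multiplicity one, and eigenlines spanned by the $v_i$. Moreover the eigenline of $u_i$ (resp. $v_i$) is the same whether viewed through $M$ or $M^2$ (resp. $M+H$ or $(M+H)^2$), so the angle $\theta_i$ is literally the same in both pictures; this is what lets me transport the Davis--Kahan conclusion back.

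Next I would set $\tilde H := (M+H)^2 - M^2 = MH + HM + H^2$, which is symmetric, and bound its spectral norm via submultiplicativity together with $\|M\| = \max_i |\lambda_i| = |\lambda_1|$:
\[
\|\tilde H\| \;\leq\; 2\|M\|\,\|H\| + \|H\|^2 \;=\; 2|\lambda_1|\varepsilon + \varepsilon^2 \;=\; \varepsilon(\varepsilon + 2|\lambda_1|).
\]

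Now apply Theorem~\ref{thm:weyl} with the matrix $M$ there replaced by $M^2$ and $H$ replaced by $\tilde H$ (so that ``$M+H$'' there is $(M+H)^2$, with eigenvalues $\nu_i^2$ and eigenvectors $v_i$, and the index $i$ matches ours). Part (i) of that theorem gives $|\lambda_i^2 - \nu_i^2| \leq \|\tilde H\| \leq \varepsilon(\varepsilon + 2|\lambda_1|)$, which is exactly statement (i). Part (ii) gives $\sin\theta_i \leq \|\tilde H\|/\Delta'_i$ with $\Delta'_i = \min\{|\lambda_i^2 - \nu_j^2| : j \neq i\} = \Delta_i$; combining with the norm bound above yields $\sin\theta_i \leq \varepsilon(\varepsilon + 2|\lambda_1|)/\Delta_i$, which is statement (ii).

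There is no genuine obstacle here; the argument is essentially a one-line reduction plus a triangle inequality. The only points that require care are (a) verifying that squaring preserves both the multiplicity-one hypothesis and the ordering, so that Theorem~\ref{thm:weyl} applies and its enumeration of eigenpairs agrees with ours, and (b) observing that $u_i$ and $v_i$ remain eigenvectors---spanning the \emph{same} eigenlines---of the squared operators, so that the angle $\theta_i$ need not be re-defined. Everything else is the submultiplicativity and triangle inequality for the spectral norm.
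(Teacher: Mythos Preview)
Your proposal is correct and follows essentially the same approach as the paper: square the matrices to convert the absolute-value ordering into a real-value ordering, bound $\|(M+H)^2 - M^2\| = \|MH+HM+H^2\| \leq \varepsilon(\varepsilon+2|\lambda_1|)$ by submultiplicativity and the triangle inequality, and then apply the classical Weyl and Davis--Kahan theorems to $M^2$ and $(M+H)^2$. Your write-up is in fact slightly more careful than the paper's in explicitly noting that the eigenlines (and hence the angles $\theta_i$) are unchanged under squaring.
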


\begin{proof}
Note that an eigenvector $u$ of a symmetric real matrix $A$, such that $Au = \lambda u$, is an eigenvector of $A^2$ of eigenvalue $\lambda^2$. Consequently, $M^2$ is a real symmetric matrix of spectrum $\lambda_1^2 > \ldots > \lambda_n^2$, all of multiplicity one, and eigenvectors $u_1, \ldots, u_n$, and similarly for $(M+H)^2$ with spectrum $\nu_1^2 > \ldots > \nu_n^2$ and eigenvectors $v_1, \ldots, v_n$.

Note also that $(M+H)^2 = M^2 + H^2 + MH + HM$, and $||H^2 + MH + HM|| \leq ||H^2|| + ||MH|| + ||HM|| = ||H||^2 + 2||M|| ||H|| = \varepsilon (\varepsilon + 2|\lambda_1|)$. Besides, $H^2$ and $MH + HM$ are real symmetric, and so is their sum. Applying Theorem~\ref{thm:weyl} to $M^2$, with perturbation $(H^2+MH+HM)$, 
concludes the proof. 
\end{proof}

We use the following notions:

\begin{definition}
Let $X_n = \{x_1, \ldots, x_n\}$ be a discrete space. For a constant $c > 0$, we call $\mu_n$ the \emph{$c$-uniform measure} on $X_n$ the measure $\mu_n(x_i) = c / n$ for all $x_i \in X_n$. It is the empirical measure normalized such that the measure of the total space $X_n$ is $c$. 
\end{definition}

\begin{definition}
For a metric space $(Z, \dist_Z)$, define the {\em bottleneck distance} between two finite samples $X_n, Y_n \subset Z$ of same size $n$ to be:
\[
	\dist_B(X_n,Y_n) = \min_{\text{bij.} \ \pi \colon X_n \to Y_n \ \ \ } \max_{x \in X_n} \ \dist_Z(x, \pi(x)). 
\]
\end{definition}

We prove the following simple statements about finite metric spaces with uniform measure. 

\begin{lemma}\label{lem:matching}
	Let $X_n, Y_n \subset Z$ be samples of same size. For $c > 0$, consider $\mu_n$ and $\eta_n$ to be the $c$-uniform measures on $X_n$ and $Y_n$ respectively, with $\mu_n(X_n) = \eta_n(Y_n) = c$. Then:
	\[
		\dist_B(X_n,Y_n) = \dist_{\bar{P}}(\mu_n, \eta_n).
	\]
\end{lemma}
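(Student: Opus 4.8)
The plan is to prove the two inequalities $\dist_B(X_n,Y_n) \leq \dist_{\bar P}(\mu_n,\eta_n)$ and $\dist_{\bar P}(\mu_n,\eta_n) \leq \dist_B(X_n,Y_n)$ separately, with the first direction being the substantive one (it is essentially a combinatorial matching argument, i.e.\ Hall's theorem), and the second being a routine check.

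For the direction $\dist_{\bar P}(\mu_n,\eta_n) \leq \dist_B(X_n,Y_n)$: let $\pi \colon X_n \to Y_n$ be a bijection realizing the bottleneck distance, so $\dist_Z(x,\pi(x)) \leq \varepsilon := \dist_B(X_n,Y_n)$ for all $x$. For any Borel set $A$, every point $x \in A \cap X_n$ has $\pi(x) \in A^\varepsilon$, so the $\mu_n$-mass of $A$ is matched injectively into $\eta_n$-mass sitting in $A^\varepsilon$; since both measures assign mass $c/n$ to each point, this gives $\mu_n(A) = \frac{c}{n}|A \cap X_n| \leq \frac{c}{n}|A^\varepsilon \cap Y_n| = \eta_n(A^\varepsilon)$. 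The symmetric inequality follows by running the same argument with $\pi^{-1}$. Hence $\varepsilon$ is admissible in the definition of $\dist_{\bar P}$, giving $\dist_{\bar P}(\mu_n,\eta_n) \leq \varepsilon$.

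For the harder direction $\dist_B(X_n,Y_n) \leq \dist_{\bar P}(\mu_n,\eta_n)$: set $\varepsilon := \dist_{\bar P}(\mu_n,\eta_n)$ (the infimum is attained, or one works with $\varepsilon + \delta$ and lets $\delta \to 0$, or notes that with finitely many points only finitely many values of the neighborhood radius are relevant so the inf is a min). I want to produce a bijection $\pi$ matching each $x_i$ to some $y_j$ with $\dist_Z(x_i,y_j) \leq \varepsilon$. Build the bipartite graph $G$ on vertex sets $X_n$ and $Y_n$ with an edge $x_i \sim y_j$ iff $\dist_Z(x_i,y_j) \leq \varepsilon$; a perfect matching in $G$ is exactly such a $\pi$. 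By Hall's marriage theorem it suffices to check Hall's condition: for every $S \subseteq X_n$, $|N(S)| \geq |S|$, where $N(S) = \{y_j : \exists x_i \in S, \dist_Z(x_i,y_j)\leq\varepsilon\}$. To see this, apply the defining inequality $\mu_n(A) \leq \eta_n(A^\varepsilon)$ (which holds, for $\varepsilon = \dist_{\bar P}$, for all Borel $A$ — here using that the infimum defining $\dist_{\bar P}$ is achieved, or a limiting argument) with $A = S$: then $S^\varepsilon \cap Y_n \subseteq N(S)$ (in fact they are equal, since a point of $Y_n$ within distance $\varepsilon$ of $S$ is within $\varepsilon$ of some point of $S$), so $\frac{c}{n}|S| = \mu_n(S) \leq \eta_n(S^\varepsilon) = \frac{c}{n}|S^\varepsilon \cap Y_n| = \frac{c}{n}|N(S)|$, i.e.\ $|N(S)| \geq |S|$. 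Hall's condition holds, a perfect matching $\pi$ exists, and it satisfies $\max_x \dist_Z(x,\pi(x)) \leq \varepsilon$, giving $\dist_B(X_n,Y_n) \leq \varepsilon$. Combining the two inequalities completes the proof.

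\textbf{Main obstacle.} The only delicate point is the attainment of the infimum in the definition of $\dist_{\bar P}$, so that the inequality $\mu_n(A)\leq\eta_n(A^\varepsilon)$ is available at the exact value $\varepsilon = \dist_{\bar P}(\mu_n,\eta_n)$ rather than only for every $\varepsilon' > \dist_{\bar P}$. For finite spaces this is easy — the quantity $\inf_{a\in A}\dist_Z(a,z)$ ranges over a finite set of values, so $A^{\varepsilon}$ is eventually constant as $\varepsilon$ decreases to a limit, and the admissibility conditions are closed — but it should be remarked on. An alternative that sidesteps this entirely is to fix $\delta>0$, run the Hall argument with $\varepsilon' = \dist_{\bar P}+\delta$ to get a matching of bottleneck cost $\leq \dist_{\bar P}+\delta$, observe there are only finitely many bijections $X_n\to Y_n$ hence finitely many achievable bottleneck costs, and conclude $\dist_B \leq \dist_{\bar P}$ by letting $\delta\to 0$.
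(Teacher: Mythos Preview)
Your proof is correct and follows essentially the same approach as the paper: build the bipartite graph on $X_n \sqcup Y_n$ with edges $\dist_Z(x,y)\leq\varepsilon$ and observe that Hall's condition is equivalent to the modified Prokhorov inequality $\mu_n(A)\leq\eta_n(A^\varepsilon)$ for the $c$-uniform measures. Your write-up is in fact more careful than the paper's, which treats the equivalence as self-evident and does not comment on attainment of the infimum.
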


\begin{proof}
	Let $\varepsilon := \dist_{\bar{P}}(\mu_n, \eta_n)$, and consider the bipartite graph on $X_n \sqcup Y_n$ where $(x,y)$ is an edge iff $\dist_Z(x,y) \leq \varepsilon$. By Hall's marriage theorem for bipartite graphs, the graph admits a bipartite matching of size $n$ iff every subset $A \subset X_n$ has at least $|A|$ neighbors in $Y_n$, and vice versa. This condition is equivalent to $\mu_n(A) \leq \eta_n(A^\varepsilon)$, and $\eta_n(A) \leq \mu_n(A^\varepsilon)$, from the modified Prokhorov metric applied to the $c$-uniform measures $\mu_n$ and $\eta_n$.
\end{proof}

Additionally, the modified Prokhorov distance of samples converges:

\begin{lemma}\label{lem:convmodifPmetric}
Let $(X,\mu)$ be a compact Riemannian manifold equipped with its volume measure, and $(X_n, \mu_n)$ an i.i.d. sample of size $n$ with $\mu(X)$-uniform measure, such that $\mu_n(X_n) = \mu(X)$. Let $\iota \colon X \to \iota(X) \subseteq Z$ be an isometry onto a metric space $(Z,\dist_Z)$. We have:
\[
	\toas{\dist_{\bar{P}}(\iota\#\mu,\iota\#\mu_n)}{0} \ \ \text{as} \ \ n \to +\infty.
\]
\end{lemma}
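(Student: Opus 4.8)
The plan is to reduce the statement to an intrinsic question on $X$ and then combine the comparison estimate of Lemma~\ref{lem:mprok} with the almost sure weak convergence of empirical measures. First I would record that the modified Prokhorov distance is intrinsic for isometrically embedded measures: since $\iota$ is an isometry of $X$ onto the compact (hence closed) subset $\iota(X)\subseteq Z$, and both $\iota\#\mu$ and $\iota\#\mu_n$ are supported on $\iota(X)$, Definition~\ref{def:modifiedPdistance} gives $\dist_{\bar{P}}(\iota\#\mu,\iota\#\mu_n)=\dist_{\bar{P}}(\mu,\mu_n)$, the right-hand side being computed with $\dist_X$; the only point to check is that for Borel $A\subseteq\iota(X)$ the $\varepsilon$-neighborhood taken in $Z$ meets $\iota(X)$ in exactly the $\varepsilon$-neighborhood of $A$ taken inside $\iota(X)$, which is immediate because $\iota(X)$ carries the subspace metric. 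So it suffices to prove $\dist_{\bar{P}}(\mu,\mu_n)\xrightarrow{\text{a.s.}}0$.

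Next I would apply Lemma~\ref{lem:mprok} with $M=X$ (of dimension $d$), taking the reference measure to be $\mu$ itself --- which by hypothesis is the volume measure $\vol_X$ --- and the perturbed measure to be the empirical measure $\mu_n$. Since $\mu_n$ is a genuine Borel measure on $X$ with $\mu_n(X)=\mu(X)=\vol_X(X)$, the hypothesis of that lemma is met, and we obtain the \emph{deterministic} bound $\dist_{\bar{P}}(\mu_n,\mu)\le C_X\,\dist_P(\mu_n,\mu)^{1/d}$, where $C_X$ depends only on the geometry of $X$ (in particular not on $n$, nor on the realization of the sample). Finally I would argue that $\dist_P(\mu_n,\mu)\xrightarrow{\text{a.s.}}0$, exactly as in the proof of Lemma~\ref{lem:W1convergence}: on the Polish space $X$ the empirical measures converge weakly to $\mu$ almost surely \cite{10.2307/25048365} (after normalizing total mass to $1$), and the Prokhorov distance metrizes weak convergence. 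Plugging this into the displayed bound and using that $t\mapsto C_X t^{1/d}$ is continuous and vanishing at $0$ yields the claim.

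The one conceptual subtlety --- and the reason the proof is routed through Lemma~\ref{lem:mprok} rather than estimating $\dist_{\bar{P}}$ head-on --- is that the modified Prokhorov distance is far more rigid than the ordinary one: lacking the additive slack $\varepsilon$, it does not metrize weak convergence, and on general spaces it need not even tend to $0$ along weakly convergent sequences, so no soft weak-convergence argument is available for it directly. The manifold hypothesis enters precisely through the uniform lower bounds on the volumes of small balls that underlie Lemma~\ref{lem:mprok}, which is what makes $\dist_{\bar{P}}$ and $\dist_P$ H\"older-comparable here. Everything else --- the intrinsicness reduction, verifying the hypotheses of the two cited lemmas, and the almost sure weak law of large numbers for empirical measures --- is routine.
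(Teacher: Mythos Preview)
Your proposal is correct and follows essentially the same route as the paper: reduce to the intrinsic distance $\dist_{\bar{P}}(\mu,\mu_n)$ via the isometry (using that both pushforwards are supported on $\iota(X)$ and that $\iota(A^\varepsilon)=\iota(A)^\varepsilon\cap\iota(X)$), then combine almost-sure weak convergence of empirical measures on Polish spaces with Lemma~\ref{lem:mprok} to upgrade Prokhorov convergence to modified Prokhorov convergence. The only difference is the order of exposition---the paper establishes $\dist_{\bar{P}}(\mu,\mu_n)\to 0$ first and proves the isometric invariance afterward.
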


\begin{proof}
On Polish spaces (such as $(X,\mu)$), $\mu_n$ weakly converges to $\mu$ almost surely~\cite{10.2307/25048365} and the Prokhorov distance metrizes weak convergence~\cite[Chapter 6]{VillaniOldandNew}. Consequently:
\[
	\toas{\dist_{P}(\mu,\mu_n)}{0} \ \ \text{as} \ \ n \to +\infty.
\]	 
The convergence of the modified Prokhorov distance in the Riemannian manifold case follows from the bounds given in Lemma~\ref{lem:mprok}. Hence,
\[
	\toas{\dist_{\bar{P}}(\mu,\mu_n)}{0} \ \ \text{as} \ \ n \to +\infty.
\]
Now, we prove that $\dist_{\bar{P}}(\mu,\mu_n) = \dist_{\bar{P}}(\iota\#\mu,\iota\#\mu_n)$. It is sufficient to prove that for any measurable set $A \subseteq X$ and $\varepsilon > 0$, we have:
\[
	\iota\#\mu(\iota(A^\varepsilon)) = \iota\#\mu( \iota(A)^\varepsilon ) \ \ \text{and} \ \ \iota\#\mu_n(\iota(A^\varepsilon)) = \iota\#\mu_n( \iota(A)^\varepsilon ).
\]
%
These equalities follow from the facts that $\iota(A^\varepsilon) = \iota(A)^\varepsilon \cap \iota(X)$ (because $\iota$ is an isometry), and that $\iota\#\mu (\iota(A)^\varepsilon \cap \iota(X)) = \iota\#\mu (\iota(A)^\varepsilon)$ and $\iota\#\mu_n (\iota(A)^\varepsilon \cap \iota(X)) = \iota\#\mu_n (\iota(A)^\varepsilon)$ (because the supports of $\iota \# \mu$ and $\iota \# \mu_n$ are contained in $\iota(X))$.
\end{proof}

\medskip

The key step in the proof of Theorem~\ref{thm:stability} is the following stability result for discrete metric measure spaces:

\begin{theorem}\label{lem:discr_conv}
	Let $(X_n, \dist_Z, \mu_n)$ and $(Y_n, \dist_Z, \eta_n)$ be discrete metric measure spaces with $c$-uniform measures associated to samples $X_n$ and $Y_n$ of same size $n$, with $\mu_n(X_n) = \eta_n(Y_n) = c > 0$, in a common metric space $(Z,\dist_Z)$.

	Let $|\hat{\lambda}_1| > \ldots > |\hat{\lambda}_k| > 0$ and $|\hat{\nu}_1| > \ldots > |\hat{\nu}_k| > 0$ be the $k$ largest eigenvalues in absolute value of operators $D^{X_n}$ and $D^{Y_n}$ respectively, all non-trivial with distinct absolute values. Let $\hat{\Phi}_k\colon X_n \to \mathbb{C}^k$ and $\hat{\Psi}_k\colon Y_n\to \mathbb{C}^k$ be the DKEs associated with $D^{X_n}$ and $D^{Y_n}$ respectively.


	\medskip

	Define the separation $\Delta_k^{X_n,Y_n}$ of the spectra of operators $D^{X_n}$ and $D^{Y_n}$ by:
	\[
		\Delta^{X_n,Y_n}_k := \min\{ |\hat{\lambda}^2_i - \hat{\nu}^2_j| : 1\leq i,j \leq k \ , \ i \neq j \}, 
	\] 
	and let: 
	\[
		|\hat{\tau}_1| := \min \{|\hat{\lambda}_1|,|\hat{\nu}_1|\}, \ \ \text{and} \ \ |\hat{\tau}_k| := \max \{|\hat{\lambda}_k|,|\hat{\nu}_k|\}.
	\]

	\medskip

	\noindent
	Then, 
\[
	\dist_H^{L^2}(\hat{\Phi}_k(X_n),\hat{\Psi}_k(Y_n)) \leq \displaystyle\sqrt{k}\left( 4\sqrt{2} \frac{(\varepsilon+|\hat{\tau}_1|)}{\Delta^{X_n,Y_n}_k}\sqrt{|\hat{\tau}_1|}\right) \cdot \varepsilon + 
	\displaystyle\sqrt{k}\left( 2\sqrt{2} \sqrt{\frac{(\varepsilon + |\hat{\tau}_1|)}{|\hat{\tau}_k|}}\right) \cdot \sqrt{\varepsilon}, 
\]
where $\varepsilon := \dist_{\bar{P}}(X_n,Y_n)$ stands for the modified Prokhorov distance between $X_n$ and $Y_n$.


\end{theorem}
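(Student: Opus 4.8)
The plan is to reduce the claim to a finite-dimensional matrix perturbation statement, via the bottleneck reformulation of the modified Prokhorov distance, and then to push the resulting spectral estimates back to the embeddings using the reproducing identity~\eqref{eqn:coeffs}. First I would apply Lemma~\ref{lem:matching} to obtain a bijection $\pi\colon X_n\to Y_n$ with $\dist_Z(x,\pi(x))\le\varepsilon$ for every $x\in X_n$, and relabel $Y_n$ so that $y_i=\pi(x_i)$. In orthonormal bases of $L^2(X_n,\mu_n)$ and $L^2(Y_n,\eta_n)$, the operators $D^{X_n}$ and $D^{Y_n}$ are then represented by symmetric $n\times n$ matrices $M^X$ and $M^Y$ that are (a fixed multiple of) the distance matrices $(\dist_Z(x_i,x_j))$ and $(\dist_Z(y_i,y_j))$. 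The triangle inequality in $Z$ gives $|\dist_Z(x_i,x_j)-\dist_Z(y_i,y_j)|\le\dist_Z(x_i,y_i)+\dist_Z(x_j,y_j)\le 2\varepsilon$ for all $i,j$, so the perturbation $H:=M^Y-M^X$ is small, with $\|H\|$ bounded by a multiple of $\varepsilon$.

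Next I would feed $M^X$ and $M^X+H$ into the revisited Weyl--Davis-Kahan theorem (Theorem~\ref{thm:weylrevisited}). For each index $1\le j\le k$ this yields (i) $|\hat\lambda_j^2-\hat\nu_j^2|\lesssim\varepsilon(\varepsilon+|\hat\tau_1|)$, and (ii) $\sin\theta_j\lesssim\varepsilon(\varepsilon+|\hat\tau_1|)/\Delta_k^{X_n,Y_n}$, where $\theta_j$ is the angle between the lines spanned by $\hat\phi_j$ and the $\pi$-transport $\hat\psi_j\circ\pi$, and where I have used $\Delta'_j\ge\Delta_k^{X_n,Y_n}$ for $j\le k$. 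To upgrade (ii) to the honest bound $\|\hat\phi_j-\hat\psi_j\circ\pi\|_{L^2(\mu_n)}\le\sqrt2\,\sin\theta_j$, I need the signs selected by Convention~\ref{con:sign} on the two spaces to agree, i.e.\ $\hat\psi_j\circ\pi$ must be $L^2$-close to $+\hat\phi_j$, not to $-\hat\phi_j$. This is forced by the convention itself: since $\pi$ pushes $\mu_n$ forward to $\eta_n$, we have $\langle\hat\psi_j,|\hat\psi_j|\rangle_{L^2(\eta_n)}=\langle\hat\psi_j\circ\pi,|\hat\psi_j\circ\pi|\rangle_{L^2(\mu_n)}$, and if $\hat\psi_j\circ\pi$ were close to $-\hat\phi_j$ the right-hand side would be close to $\langle-\hat\phi_j,|\hat\phi_j|\rangle_{L^2(\mu_n)}<0$, contradicting $\langle\hat\psi_j,|\hat\psi_j|\rangle_{L^2(\eta_n)}>0$.

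The final step transfers these estimates to the embeddings. For a matched pair $(x_i,y_i)$ and a coordinate $j$, identity~\eqref{eqn:coeffs} gives $\hat\Phi_k(x_i)_j=\sqrt{\hat\lambda_j}\,\hat\phi_j(x_i)=\tfrac1{\sqrt{\hat\lambda_j}}\langle\dist_Z(x_i,\cdot),\hat\phi_j\rangle_{L^2(\mu_n)}$, and likewise for $Y$; transporting the $Y$-side to $L^2(\mu_n)$ and adding and subtracting, the difference $\hat\Phi_k(x_i)_j-\hat\Psi_k(y_i)_j$ breaks into three pieces: a distance-function change $\tfrac1{\sqrt{\hat\lambda_j}}\langle\dist_Z(x_i,\cdot)-\dist_Z(y_i,\pi(\cdot)),\hat\phi_j\rangle$, an eigenvalue change $\bigl(\tfrac1{\sqrt{\hat\lambda_j}}-\tfrac1{\sqrt{\hat\nu_j}}\bigr)\langle\dist_Z(y_i,\pi(\cdot)),\hat\phi_j\rangle$, and an eigenfunction change $\tfrac1{\sqrt{\hat\nu_j}}\langle\dist_Z(y_i,\pi(\cdot)),\hat\phi_j-\hat\psi_j\circ\pi\rangle$. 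The first is controlled by $\|\dist_Z(x_i,\cdot)-\dist_Z(y_i,\pi(\cdot))\|_\infty\le2\varepsilon$ and $|\hat\lambda_j|\ge|\hat\tau_k|$ (up to the $O(\varepsilon)$ Weyl correction); the second by (i) together with the elementary inequality $(\sqrt a-\sqrt b)^2\le|a-b|$; the third by (ii), $\|\dist_Z(y_i,\cdot)\|_\infty\le\diam$ and $\sqrt{|\hat\nu_j|}\le\sqrt{|\hat\tau_1|}$. The first two pieces are each $\lesssim\sqrt{(\varepsilon+|\hat\tau_1|)/|\hat\tau_k|}\cdot\sqrt\varepsilon$ and the third is $\lesssim\sqrt{|\hat\tau_1|}\,(\varepsilon+|\hat\tau_1|)\varepsilon/\Delta_k^{X_n,Y_n}$; summing the squares over $j=1,\dots,k$ contributes the factor $\sqrt k$ and produces the two terms appearing in the statement, and taking the supremum over matched pairs (in both directions, since $\pi$ is a bijection) bounds the Hausdorff distance, the explicit constants $4\sqrt2$ and $2\sqrt2$ emerging from the bookkeeping.

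I expect two points to require care. The first is the sign-compatibility argument above: Davis--Kahan only controls the angle between lines, whereas the distance kernel embedding uses the convention-normalized eigenfunctions, so without this step the $\sqrt2\,\sin\theta_j$ estimate is not available. The second is the $n$-independence of the bound: a single embedding vector $\hat\Phi_k(x_i)$ can have norm of order $\diam/\sqrt{|\hat\tau_k|}$ (hence unbounded as the sample grows and small eigenvalues appear), so the argument must be organized so that this potential blow-up cancels between the two sides and only the \emph{differences}, governed by $|\hat\tau_1|$, $|\hat\tau_k|$ and $\Delta_k^{X_n,Y_n}$, survive; routing each of the three perturbation sources to the correct term of the final bound, with the correct power of these spectral quantities, is the main bookkeeping obstacle.
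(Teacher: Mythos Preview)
Your overall architecture---reduce to a matrix perturbation via Lemma~\ref{lem:matching}, bound $\|H\|\le 2\varepsilon$ from the triangle inequality in $Z$, and then invoke the revisited Weyl/Davis--Kahan Theorem~\ref{thm:weylrevisited}---is exactly the paper's strategy. The divergence, and the gap, is in the final step where you pass from the spectral estimates to the coordinate differences.

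The paper does \emph{not} route through the reproducing identity~\eqref{eqn:coeffs}. It decomposes each coordinate directly as
\[
\sqrt{\hat{\lambda}_j}\,\hat{\phi}_j(x_i)-\sqrt{\hat{\nu}_j}\,\hat{\psi}_j(y_i)
=\sqrt{\hat{\lambda}_j}\bigl(\hat{\phi}_j(x_i)-\hat{\psi}_j(y_i)\bigr)+\bigl(\sqrt{\hat{\lambda}_j}-\sqrt{\hat{\nu}_j}\bigr)\hat{\psi}_j(y_i),
\]
and then uses the elementary bounds $|\hat{\phi}_j(x_i)-\hat{\psi}_j(y_i)|\le\|\hat{\phi}_j-\hat{\psi}_j\|_\infty\le\|\hat{\phi}_j-\hat{\psi}_j\|_2\le\sqrt{2}\sin\theta_j$ and $|\hat{\psi}_j(y_i)|\le\|\hat{\psi}_j\|_2=1$. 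This two-term split is what produces the coefficient $\sqrt{|\hat{\lambda}_j|}\le\sqrt{|\hat{\tau}_1|}$ on $\sin\theta_j$ and hence the first term of the stated bound. Your three-term split via~\eqref{eqn:coeffs} instead puts a factor $\tfrac{1}{\sqrt{\hat{\nu}_j}}\langle d_Z(y_i,\pi(\cdot)),\,\cdot\,\rangle$ in front of the eigenvector difference; Cauchy--Schwarz then yields $\diam(Z)\sqrt{c}/\sqrt{|\hat{\nu}_j|}$, not $\sqrt{|\hat{\tau}_1|}$, so the constants do not match the theorem. Concretely, your justification ``$\sqrt{|\hat{\nu}_j|}\le\sqrt{|\hat{\tau}_1|}$'' is backwards: the expression carries $1/\sqrt{|\hat{\nu}_j|}$, so you would need a \emph{lower} bound on $|\hat{\nu}_j|$; and in any case $|\hat{\nu}_j|$ can exceed $|\hat{\tau}_1|=\min\{|\hat{\lambda}_1|,|\hat{\nu}_1|\}$ when $|\hat{\nu}_1|>|\hat{\lambda}_1|$. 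The same issue affects your second piece. Replacing your three-term split by the paper's two-term split removes the detour through $\diam$ and gives the stated constants directly; the $\sqrt{k}$ then comes from $\|\cdot\|_2\le\sqrt{k}\,\|\cdot\|_\infty$ on $\mathbb{C}^k$.

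On the sign issue: you are right to flag it, and the paper in fact simply writes $\|\hat{\phi}_j-\hat{\psi}_j\|_2^2=2(1-\cos\theta_j)$ without checking that $\langle\hat{\phi}_j,\hat{\psi}_j\rangle\ge 0$ under Convention~\ref{con:sign}. Your continuity argument (``close to $-\hat{\phi}_j$ forces the wrong sign of $\langle\hat{\psi}_j,|\hat{\psi}_j|\rangle$'') only establishes compatibility for $\varepsilon$ below some threshold depending on $\langle\hat{\phi}_j,|\hat{\phi}_j|\rangle$, which the statement does not assume; so as written neither argument fully closes this point.
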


\begin{proof}
	Write $\varepsilon := \dist_{\bar{P}}(X_n,Y_n)$. By equivalence of modified Prokhorov distance and bottleneck distance (Lemma~\ref{lem:matching}), there exists a labeling $X_n = \{x_1, \ldots, x_n\}$ and $Y_n = \{y_1, \ldots, y_n\}$ such that $\dist_Z(x_i,y_i) \leq \varepsilon$ for all $i = 1 \ldots n$. Let $D^{X_n} := \frac{1}{n}\bar{D}^{X_n}$ and $D^{Y_n} := \frac{1}{n}\bar{D}^{Y_n}$ be the distance matrices $\bar{D}^{X_n}$ and $\bar{D}^{Y_n}$ of $X_n$ and $Y_n$ respectively, with points ordered as above, scaled by a factor $\frac{1}{n}$. They represent the distance kernel operators, considering $L^2(X_n) = \R^n = L^2(Y_n)$. The difference matrix $H = D^{Y_n} - D^{X_n}$ is symmetric. It has entries $h_{ij} = \frac{1}{n}(d(y_i,y_j) - d(x_i,x_j))$, which can be bounded above in absolute value by $2\varepsilon/n$ using the triangle inequality.
	
	The spectral norm of $H$ is consequently at most $2\varepsilon$. Indeed, let $u$ be a non-zero eigenvector of $H$, with $Hu = \gamma u$. Let $|u_i|$ be the largest entry of $u$ in absolute value, i.e., $|u_i| = ||u||_\infty > 0$. We have:
	\begin{equation}
	|\gamma| = \frac{||H u||_\infty}{||u||_\infty} = \max_k \left|\sum_{j=1}^n h_{kj} \frac{u_j}{u_i}\right| \leq \max_k \sum_{j=1}^n |h_{kj}| \left|\frac{u_j}{u_i}\right| \leq 2\varepsilon.
	\end{equation}

Hence:
\begin{equation}\label{eq:twoepsilonboundmatrixnorm}
	||D^{Y_n} - D^{X_n}|| = || H || \leq 2 \varepsilon.
\end{equation}

	Denote by $\hat{\phi}_j$ a unit norm eigenvector of eigenvalue $\hat{\lambda}_j$ of $D^{X_n}$, and by $\hat{\psi}_j$ a unit norm eigenvector of eigenvalue $\hat{\nu}_j$ of $D^{Y_n}$. 
	Fix an index $i$, $1 \leq i \leq n$, and consider the points $x_i$ and $y_i$. By definition, the $j^{\text{th}}$ coordinate, $1 \leq j \leq k$, of $\hat{\Phi}_k(x_i)$ is equal to the $i^{\text{th}}$ coordinate of the eigenvector $\hat{\phi}_j$ times $\sqrt{\hat{\lambda}_j}$. Similarly for $\hat{\Psi}_k(y_i)$ and $\hat{\psi}_j$. We bound:
\[
	||\hat{\Phi}_k(x_i) - \hat{\Psi}_k(y_i)||_\infty = \max_{j=1 \ldots k} \left|\left[\hat{\Phi}_k(x_i)\right]_j - \left[\hat{\Psi}_k(y_i)\right]_j\right| = \max_{j=1 \ldots k} \left|\sqrt{\hat{\lambda}_j}\left[\hat{\phi}_j\right]_i - \sqrt{\hat{\nu}_j}\left[\hat{\psi}_j\right]_i \right|.
\]

First, note that for any $j$, denoting by $\theta_j$ the angle formed by the lines spanned by eigenvectors $\hat{\phi}_j$ and $\hat{\psi}_j$, we have:

\begin{equation}\label{eq:boundinfuv}
\begin{array}{rcl}
	||\hat{\phi}_j-\hat{\psi}_j||_\infty^2 \leq ||\hat{\phi}_j-\hat{\psi}_j||_2^2 &=& \langle \hat{\phi}_j,\hat{\phi}_j \rangle + \langle \hat{\psi}_j,\hat{\psi}_j \rangle - 2\langle \hat{\phi}_j,\hat{\psi}_j \rangle \\
	&=& 2(1-\cos \theta_j) \ \leq \ 2(1-\cos^2 \theta_j) \ = \ 2 \sin^2 \theta_j.
\end{array}
\end{equation}


Then:   
\[
\displaystyle	\left|\sqrt{\hat{\lambda}_j}\left[\hat{\phi}_j\right]_i - \sqrt{\hat{\nu}_j}\left[\hat{\psi}_j\right]_i \right| \leq 
	  \left|\sqrt{\hat{\lambda}_j}\right| \cdot \underbrace{\left|\left[\hat{\phi}_j\right]_i - \left[\hat{\psi}_j\right]_i\right|}_{\leq \sqrt{2}|\sin \theta_j| \ \text{by Eq.~(\ref{eq:boundinfuv})}}  \ \ + \ \  \underbrace{\left|\left[\hat{\psi}_j\right]_i\right|}_{\leq ||\hat{\phi}_j||_2 = 1} \cdot \left|\sqrt{\hat{\lambda}_j} - \sqrt{\hat{\nu}_j}\right|.
\]
Exchanging the roles of $\hat{\lambda}_j$ and $\hat{\nu}_j$ we get a similar bound by symmetry, and thus:
\begin{equation}\label{eq:takethemax}
	\displaystyle	\left|\sqrt{\hat{\lambda}_j}\left[\hat{\phi}_j\right]_i - \sqrt{\hat{\nu}_j}\left[\hat{\psi}_j\right]_i \right| \leq \sqrt{2 \min \{|\hat{\lambda}_j|,|\hat{\nu}_j|\}} \cdot |\sin \theta_j | + \underbrace{\left|\sqrt{\hat{\lambda}_j} - \sqrt{\hat{\nu}_j}\right|}_{(\star)}.
\end{equation}

Recall the notations $|\hat{\tau}_1| := \min \{|\hat{\lambda}_1|,|\hat{\nu}_1|\}$, and $|\hat{\tau}_k| := \max \{|\hat{\lambda}_k|,|\hat{\nu}_k|\}$.

We now bound the term $(\star)$. By hypothesis, both matrices $D^{X_n}$ and $D^{Y_n}$ have eigenvalues with distinct absolute values. Considering the bound on $||H||$ from Equation~(\ref{eq:twoepsilonboundmatrixnorm}), we apply Weyl's (revisited) Theorem~\ref{thm:weylrevisited} to matrices $(D^{X_n}, D^{X_n}+H = D^{Y_n})$, and to matrices $(D^{Y_n}, D^{Y_n}-H)$, to get the bounds, for all $1 \leq j \leq k$:
\[
	\displaystyle\left| |\hat{\lambda}_j| - |\hat{\nu}_j| \right| \left| |\hat{\lambda}_j| + |\hat{\nu}_j| \right| = |\hat{\lambda}^2_j - \hat{\nu}^2_j| \leq 4 \varepsilon (\varepsilon + |\hat{\tau}_1|).
\]
Consequently:
\[
	\displaystyle\left| |\hat{\lambda}_j| - |\hat{\nu}_j| \displaystyle\right| \leq \frac{4 \varepsilon (\varepsilon + |\hat{\tau}_1|)}{\displaystyle\left| |\hat{\lambda}_j| + |\hat{\nu}_j| \displaystyle\right|} \leq \frac{4 \varepsilon (\varepsilon + |\hat{\tau}_1|)}{|\hat{\tau}_k|},
\]
and denote this bound by $\varepsilon' := \frac{4 \varepsilon (\varepsilon + |\hat{\tau}_1|)}{|\hat{\tau}_k|}$.

\smallskip

We distinguish between the following two cases. 
First, suppose that at least one of $\hat{\lambda}_j$ or $\hat{\nu}_j$ has absolute value larger than $\varepsilon'$. In this case, $\hat{\lambda}_j$ and $\hat{\nu}_j$ have the same sign, and: 
\[
	(\star) 
	= 
	\left|\sqrt{\hat{\lambda}_j} - \sqrt{\hat{\nu}_j}\right| = \left|\sqrt{|\hat{\lambda}_j|} - \sqrt{|\hat{\nu}_j|}\right| 
	= 	
	\displaystyle\frac{\displaystyle\left| |\hat{\lambda}_j|-|\hat{\nu}_j| \displaystyle\right|}{ \sqrt{|\hat{\lambda}_j|} + \sqrt{|\hat{\nu}_j|} } 
	\leq 
	\frac{\varepsilon'}{\sqrt{\varepsilon'}} \leq \sqrt{\varepsilon'}.
\]

Otherwise, both $\hat{\lambda}_j$ and $\hat{\nu}_j$ have absolute value smaller than $\varepsilon'$, and:
\[
	(\star) = \left|\sqrt{\hat{\lambda}_j} - \sqrt{\hat{\nu}_j}\right| \leq 
	\sqrt{|\hat{\lambda}_j|+|\hat{\nu}_j|} \leq \sqrt{2 \varepsilon'}.
\]

In general, $(\star)$ is consequently bounded above by:
\[
	(\star) \leq \sqrt{2\varepsilon'} = \displaystyle\left( 2\sqrt{2} \sqrt{\frac{\varepsilon + |\hat{\tau}_1|)}{|\hat{\tau}_k|}}\right) \cdot \sqrt{\varepsilon}.
\]



%


\medskip

Now, using the quantity $\Delta^{X_n,Y_n}_k$, we get the following bound on $\sin \theta_j$, for any $j=1 \ldots k$, using Davis-Kahan's (revisited) Theorem~\ref{thm:weylrevisited} with $||D^{X_n} - D^{Y_n}|| \leq 2\varepsilon$:
\[
	\sin \theta_j \leq \displaystyle\frac{4 \varepsilon (\varepsilon+ |\hat{\tau}_1|)}{\Delta^{X_n,Y_n}_k}.
\]

\medskip

In conclusion, 
we get from Equation~(\ref{eq:takethemax}), passing to the max in $j$, that for every pair of points $(x_i,y_i)$,
\[
	||\hat{\Phi}_k(x_i) - \hat{\Psi}_k(y_i)||_\infty \leq 
	\displaystyle\left( 4\sqrt{2} \frac{(\varepsilon+|\hat{\tau}_1|)}{\Delta^{X_n,Y_n}_k} \sqrt{|\hat{\tau}_1|}\right) \cdot \varepsilon + 
	\displaystyle\left( 2\sqrt{2} \sqrt{\frac{(\varepsilon + |\hat{\tau}_1|)}{|\hat{\tau}_k|}}\right) \cdot \sqrt{\varepsilon}.
%
\]

	And finally,
	\begin{align*}
	\dist_H^{L^2}(\hat{\Phi}_k,\hat{\Psi}_k) \leq \max_{i=1\ldots n} \{||\hat{\Phi}_k(x_i) - \hat{\Psi}_k(y_i)||_2\} \leq \sqrt{k} \max_{i=1\ldots n} \{||\hat{\Phi}_k(x_i) - \hat{\Psi}_k(y_i)||_\infty\} \ \ \ \ \ \ \ \ \ \ \ \ \ \ \ \ \ \\
	\hfill \leq 
	\displaystyle\sqrt{k}\left( 4\sqrt{2} \frac{(\varepsilon+|\hat{\tau}_1|)}{\Delta^{X_n,Y_n}_k}\sqrt{|\hat{\tau}_1|}\right) \cdot \varepsilon + 
	\displaystyle\sqrt{k}\left( 2\sqrt{2} \sqrt{\frac{(\varepsilon + |\hat{\tau}_1|)}{|\hat{\tau}_k|}}\right) \cdot \sqrt{\varepsilon}.
	\end{align*}
%
\end{proof}

We conclude the section by proving Theorem~\ref{thm:stability}:

\begin{proof}[Proof of Theorem~\ref{thm:stability}]
Let $(X,\dist_X,\mu)$ and $(Y,\dist_Y,\eta)$ be compact Riemannian manifolds equipped with their volume measures, with $\mu(X)=\eta(Y)$. Denote by $\{\lambda_i\}_{i\in \N_{>0}}$ and $\{\nu_i\}_{i\in \N_{>0}}$ the spectra, ordered by decreasing absolute values, of the distance kernel operators $D^X$ and $D^Y$ respectively, and by $\Phi_k(X)$ and $\Psi_k(Y)$ the corresponding DKEs. 

Let $X_n$ and $Y_n$ be i.i.d. samples of $(X,\mu)$ and $(Y,\eta)$ respectively, both of size $n$. Consider the metric measure spaces $(X_n,\dist_X,\mu_n)$ and $(Y_n,\dist_Y,\eta_n)$ equipped with the $c$-uniform measure, with $c=\mu(X)=\eta(Y)$. 
Let $\{\hat{\lambda}_i\}_{i\in \N_{>0}}$ and $\{\hat{\nu}_i\}_{i\in \N_{>0}}$ be the spectra, ordered by decreasing absolute values, of the associated distance kernel operators $D^{X_n}$ and $D^{Y_n}$ respectively, and let $\hat{\Phi}_k(X_n)$ and $\hat{\Psi}_k(Y_n)$ be the corresponding DKEs. 

\medskip

By the triangle inequality, we have: 
\begin{equation}\label{eq:boundprokdistviaempiralembed}
\begin{array}{l}
 \dist_H^{L^2}(\Phi_k(X),\Psi_k(Y)) \leq \dist_H^{L^2}(\Phi_k(X),\hat{\Phi}_k(X_n))  + \dist_H^{L^2}(\hat{\Phi}_k(X_n),\hat{\Psi}_k(Y_n)) 
	    + \dist_H^{L^2}(\hat{\Psi}_k(Y_n),\Psi_k(Y)) \\
	     \hfill \ \asleq \ \dist_H^{L^2}(\hat{\Phi}_k(X_n),\hat{\Psi}_k(Y_n)) + \varrho_0 \ \ \text{as} \ \ n \to +\infty,\\
\end{array}
\end{equation}
for an arbitrary small $\varrho_0 > 0$, where the last almost sure inequality follows from the almost sure convergence of $\dist_H^{L^2}(\Phi_k(X),\hat{\Phi}_k(X_n))$ and $\dist_H^{L^2}(\hat{\Psi}_k(Y_n),\Psi_k(Y))$ to $0$ as $n$ goes to infinity (Theorem~\ref{thm:approxDKE}, the volume measure in a Riemannian manifold being $(a,b)$-standard).

\medskip

For an arbitrary small $\varrho_1 > 0$, there exist a metric space $(Z,\dist_Z)$ and isometries $\iota\colon X \to \iota(X) \subseteq Z$ and $\kappa\colon Y \to \kappa(Y) \subseteq Z$ such that:
\begin{equation}\label{eq:sandwichmodifprokdistwithGP}
	\dist_{G\bar{P}}(X,Y) \leq \dist_{\bar{P}}(\iota\#\mu,\kappa\#\eta) \leq \dist_{G\bar{P}}(X,Y) + \varrho_1.
\end{equation}
Fix $\varrho_1 >0$, and pick $(Z,\dist_Z), \iota, \kappa$ satisfying the above. 

\medskip

Write for short $X_n' := \iota(X_n)$, and $Y_n' := \kappa(Y_n)$. Consider the metric measure spaces $(X_n',\dist_Z,\iota\#\mu_n)$ and $(Y_n',\dist_Z,\kappa\#\eta_n)$, their associated distance kernel operators $D^{X_n'}$ and $D^{Y_n'}$, and their DKEs $\hat{\Phi}'_k(X_n')$ and $\hat{\Psi}'_k(Y_n')$. 

By virtue of Proposition~\ref{prop:samesame}, $D^{X_n'}$ and $D^{X_n}$ have the same spectrum $\{\hat{\lambda}_i\}_{i\in \N_{>0}}$ and the same DKE $\hat{\Phi}_k(X_n) = \hat{\Phi}'_k(X_n')$. Similarly, $D^{Y_n'}$ and $D^{Y_n}$ have the same spectrum $\{\hat{\nu}_i\}_{i\in \N_{>0}}$ and the same DKE $\hat{\Psi}_k(Y)=\hat{\Psi}'_k(Y')$. In particular,
\begin{equation}\label{eq:equalitydistPsamples}
	\dist_H^{L^2}(\hat{\Phi}_k(X_n),\hat{\Psi}_k(Y_n)) = \dist_H^{L^2}(\hat{\Phi}'_k(X_n'),\hat{\Psi}'_k(Y_n')).
\end{equation}

\medskip

Define $|\hat{\tau}_1| := \min \{|\hat{\lambda}_1|,|\hat{\nu}_1|\}$, $|\hat{\tau}_k| := \max \{|\hat{\lambda}_k|,|\hat{\nu}_k|\}$, and $\Delta^{X_n,Y_n}_k := \min\{ |\hat{\lambda}^2_i - \hat{\nu}^2_j| : 1\leq i,j \leq k \ , \ i \neq j \}$.

Following the hypothesis that the $k$ largest eigenvalues, in absolute value, of $D^X$ and $D^Y$ are non-zero and have distinct absolute values, we have, by the convergence of empirical eigenvalues of Theorem~\ref{thm:KG}:
\[
	|\hat{\lambda}_1| > |\hat{\lambda}_2| > \ldots > |\hat{\lambda}_k| > 0 \ \ \text{and} \ \ |\hat{\nu}_1| > |\hat{\nu}_2| > \ldots > |\hat{\nu}_k| > 0 
	 \ \ \text{almost surely, as} \ \ n \to +\infty,
\] 
and, additionally,
\begin{equation}\label{eq:proofconveigen}
\toas{|\hat{\tau}_1|}{|\tau_1|}, \ \ \toas{|\hat{\tau}_k|}{|\tau_k|} > 0, \ \ \toas{\Delta_k^{X_n,Y_n}}{\Delta_k^{X,Y} > 0} \ \ \text{as} \ \ n \to +\infty.
\end{equation}
In particular, $|\hat{\tau}_k|$ and $\Delta_k^{X_n,Y_n}$ are almost surely strictly positive when $n \to +\infty$.

\medskip

Now, considering the bound of Theorem~\ref{lem:discr_conv}, define the continuous, monotonically increasing function $F \colon \R_{\geq 0}^4 \to \R$:
\[ 
   F\left(\varepsilon,\tau_1,\frac{1}{\tau_k},\frac{1}{\Delta}\right) := \displaystyle\sqrt{k}\left( 4\sqrt{2} \frac{(\varepsilon+\tau_1)}{\Delta}\sqrt{\tau_1}\right) \cdot \varepsilon + 
	\displaystyle\sqrt{k}\left( 2\sqrt{2} \sqrt{\frac{(\varepsilon + \tau_1)}{\tau_k}}\right) \cdot \sqrt{\varepsilon},
\]
such that:
\[
	\dist_H^{L^2}(\hat{\Phi}'_k(X_n'),\hat{\Psi}'_k(Y_n')) \leq F\left(\dist_{\bar{P}}(\iota\#\mu_n,\kappa\#\eta_n), |\hat{\tau}_1|, \frac{1}{|\hat{\tau}_k|}, \frac{1}{\Delta_k^{X_n,Y_n}}\right).
\]

By the triangle inequality, we have:
\begin{equation}\label{eq:proofineqdistances}
\begin{array}{l}
	\dist_{\bar{P}}(\iota\#\mu_n,\kappa\#\eta_n) \leq \dist_{\bar{P}}(\iota\#\mu_n,\iota\#\mu) + \dist_{\bar{P}}(\iota\#\mu,\kappa\#\eta) + \dist_{\bar{P}}(\kappa\#\eta,\kappa\#\eta_n)  \ \ \ \ \ \ \ \ \ \ \ \ \ \ \ \ \  \\
	\hfill \toas{}{} \dist_{\bar{P}}(\iota\#\mu,\kappa\#\eta) \ \ \text{as} \ \ n \to +\infty,\\
\end{array}
\end{equation}
where the last almost sure convergence follows from Lemma~\ref{lem:convmodifPmetric}.


Consequently, for an arbitrary small $\varrho_2 > 0$, we have, by Equations~(\ref{eq:proofconveigen}) and~(\ref{eq:proofineqdistances}) above, and the fact that $F$ is continuous monotonically increasing:
\begin{equation}\label{eq:boundwithvarrho2}
\begin{array}{l}
	\dist_H^{L^2}(\hat{\Phi}_k(X_n),\hat{\Psi}_k(Y_n)) \ \ \ \ \aseq{Eq.(\ref{eq:equalitydistPsamples})} \ \ \ \ \dist_H^{L^2}(\hat{\Phi}'_k(X_n'),\hat{\Psi}'_k(Y_n')) \ \asleq \ F\left(\dist_{\bar{P}}(\iota\#\mu,\kappa\#\eta) , |\tau_1|, \frac{1}{|\tau_k|}, \frac{1}{\Delta_k^{X,Y}}\right) + \varrho_2\\
		\hfill \ \ \text{as} \ \ n \to +\infty.
\end{array}
\end{equation}

	In conclusion, for arbitrary small $\varrho_0, \varrho_1, \varrho_2 > 0$, there exist isometries $\iota,\kappa$ onto a space $(Z,\dist_Z)$, and large enough samples $X_n$ and $Y_n$ of $X$ and $Y$, such that:
	\[
	\begin{array}{l}
		\dist_H^{L^2}(\Phi_k(X),\Psi_k(Y)) \ \ \leqtext{Eq.(\ref{eq:boundprokdistviaempiralembed})} \ \ \dist_H^{L^2}(\hat{\Phi}_k(X_n),\hat{\Psi}_k(Y_n)) + \varrho_0 \ \ \ \ \ \ \ \ \ \ \ \ \ \ \ \ \ \ \ \ \ \ \  \ \ \ \ \ \ \ \ \ \ \ \ \ \ \ \ \ \ \ \ \ \ \  \ \ \ \ \ \ \ \ 
 \\
		\hfill \leqtext{Eq.(\ref{eq:boundwithvarrho2})} \displaystyle F\left(\dist_{\bar{P}}(\iota\#\mu,\kappa\#\eta) , |\tau_1|, \frac{1}{|\tau_k|}, \frac{1}{\Delta_k^{X,Y}}\right) + \varrho_0+\varrho_2 \ \ \ \ \ \ \ \ \ \ \ \ \ \ \  \\
		\hfill \leqtext{Eq.(\ref{eq:sandwichmodifprokdistwithGP})} \displaystyle F\left(\dist_{G\bar{P}}(X,Y) + \varrho_1 , |\tau_1|, \frac{1}{|\tau_k|}, \frac{1}{\Delta_k^{X,Y}}\right) + \varrho_0+\varrho_2,\\
	\end{array}
	\]
	We conclude, by continuity of $F$, that:
	\[
		\dist_H^{L^2}(\Phi_k(X),\Psi_k(Y)) \leq F\left(\dist_{G\bar{P}}(X,Y) , |\tau_1|, \frac{1}{|\tau_k|}, \frac{1}{\Delta_k^{X,Y}}\right).
	\]
\end{proof}

\section{Inverse Results}
\label{sec:stabinv}
\subsection{Injectivity}

	In this section, we explore the injectivity of the distance kernel embedding on the space of metric measure spaces. Our first result is that, under mild regularity assumptions, two metric measure spaces have the same distance kernel embedding if and only if there is an isometry between them.
	
\begin{obs}
	\label{obs:hom}
	Corollary \ref{cor:DKThom} states that $\Phi$ is a homeomorphism when $(X,\dist_X,\mu_X)$ is a strictly positive metric measure space. Therefore, if $\Phi(X,\dist_X,\mu_X) = \Phi(Y,\dist_{Y},\mu_Y)$ for a pair of strictly positive metric measure spaces, then there is a homeomorphism from $X$ to $Y$ preserving all the eigenfunctions. Thus, in the following lemma, we fix an underlying topological space and allow the metric and measure to vary.
\end{obs}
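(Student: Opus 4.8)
The statement is a short bookkeeping consequence of Corollary~\ref{cor:DKThom}, so the plan is brief. First I would apply Corollary~\ref{cor:DKThom} separately to each of the two (compact, strictly positive) metric measure spaces: writing $\Phi_X\colon X\to\C^{\infty}$ and $\Phi_Y\colon Y\to\C^{\infty}$ for their respective distance kernel embeddings, each is a continuous injection from a compact space into the Hausdorff space $\C^{\infty}$ (equipped with any metric inducing its product topology), hence a homeomorphism onto its image, $\Phi_X(X)$ or $\Phi_Y(Y)$, taken with the subspace topology. Compactness is exactly what makes Corollary~\ref{cor:DKThom} (via Lemmas~\ref{lem:eiglip} and~\ref{lem:psiembed}) applicable, so I would flag it explicitly.

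Next I would feed in the hypothesis $\Phi_X(X)=\Phi_Y(Y)$. Since the two homeomorphisms onto subspaces of $\C^{\infty}$ now share an image, the composite $h:=\Phi_Y^{-1}\circ\Phi_X\colon X\to Y$ is a well-defined homeomorphism with $\Phi_Y\circ h=\Phi_X$. Reading this identity off coordinate by coordinate gives, for every $i$, the relation $\alpha_i^{X}=\alpha_i^{Y}\circ h$, where $\alpha_i^{X}=\sqrt{\lambda_i^{X}}\,\phi_i^{X}$ and $\alpha_i^{Y}=\sqrt{\lambda_i^{Y}}\,\phi_i^{Y}$ are the scaled eigenfunctions from Definition~\ref{def:alphafunctions}. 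Thus $h$ carries the $i$-th coordinate function of the embedding of $X$ to the $i$-th coordinate function of the embedding of $Y$; in particular it sends eigenfunctions of $D^{X}$ to eigenfunctions of $D^{Y}$ (up to the normalizing factors $\sqrt{\lambda_i}$), which is the ``preservation of eigenfunctions'' asserted.

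Finally I would record the intended payoff. Transporting $\dist_Y$ and $\mu_Y$ to the underlying set of $X$ along $h$ --- that is, replacing $(Y,\dist_Y,\mu_Y)$ by the set $X$ equipped with the pullback metric $(x,x')\mapsto\dist_Y(h(x),h(x'))$ and the pushforward of $\mu_Y$ under $h^{-1}$ --- produces a metric measure space isometric to $Y$ which, by the change-of-variables argument behind Proposition~\ref{prop:samesame}, carries exactly the embedding map $\Phi_X$. Hence in Lemma~\ref{lem:PhiInj} one loses no generality by assuming from the outset that the two spaces share a common underlying topological space and that only the metric and the measure vary. There is no real obstacle in any of this; the content lives entirely in Corollary~\ref{cor:DKThom}, and the only point requiring care is to topologize $\C^{\infty}$ so that a continuous bijection out of a compact space is automatically a homeomorphism.
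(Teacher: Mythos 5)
Your argument is correct and is essentially the reasoning the paper intends: Corollary~\ref{cor:DKThom} makes $\Phi_X$ and $\Phi_Y$ homeomorphisms onto a common image, so $h=\Phi_Y^{-1}\circ\Phi_X$ is a homeomorphism with $\alpha_i^X=\alpha_i^Y\circ h$ for all $i$, which is the ``eigenfunction-preserving'' identification justifying the reduction in Lemma~\ref{lem:PhiInj}. You merely spell out (correctly) details the paper leaves implicit, such as topologizing $\C^\infty$ compatibly with the product topology and transporting the metric and measure along $h$.
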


\begin{lemma}
	\label{lem:PhiInj}
	Fix a compact topological space $T$. Let $\mu$ and $\mu'$ be strictly positive measures for the Borel $\sigma$-algebra on $T$, with $\mu$ absolutely continuous with respect to $\mu'$, and $\dist$ and $\dist'$ metrics on $X$, both inducing the topology $T$, making $X = (T,\dist,\mu)$ and $X' = (T,\dist',\mu')$ metric measure spaces. Let $D$ and $D'$ be the resulting integral operators. If $\Phi(X) = \Phi(X')$, then $\dist = \dist'$. 
	
\end{lemma}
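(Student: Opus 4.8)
The plan is to read off, for each fixed basepoint, the eigenfunction expansion of the distance function, to use the absolute continuity hypothesis to compare the two $L^{2}$ limits that then arise, and finally to upgrade an almost-everywhere identity to an honest pointwise one by invoking continuity and strict positivity.

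First, by Corollary \ref{cor:DKThom} both $\Phi$ and $\Phi'$ are homeomorphisms onto their (by hypothesis common) image, so, as Observation \ref{obs:hom} anticipates, we may take the underlying topological space to be a single $T$ on which $\Phi$ and $\Phi'$ agree; concretely this means $\alpha_{i}\equiv\alpha_{i}'$ as functions $T\to\C$ for every $i$, where indices with zero eigenvalue contribute the zero function on both sides. Fix $s\in T$. The partial sums $g_{N}:=\sum_{i=1}^{N}\alpha_{i}(s)\,\alpha_{i}$ are then one and the same sequence of functions on $T$ (each Lipschitz, by Lemma \ref{lem:eiglip}), and by the eigenfunction expansion recorded after Definition \ref{def:alphafunctions},
\[
	g_{N}\xrightarrow{\,N\to\infty\,}\dist_{X}(s,\cdot)\ \text{in}\ L^{2}(T,\mu),
	\qquad
	g_{N}\xrightarrow{\,N\to\infty\,}\dist_{X'}(s,\cdot)\ \text{in}\ L^{2}(T,\mu').
\]

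Next I would extract a single subsequence witnessing both limits almost everywhere. From the first convergence pick a subsequence $g_{N_{k}}\to\dist_{X}(s,\cdot)$ $\mu$-a.e.; this subsequence still converges to $\dist_{X'}(s,\cdot)$ in $L^{2}(\mu')$, so refine once more to get $g_{N_{k_{j}}}\to\dist_{X'}(s,\cdot)$ $\mu'$-a.e. Here the hypothesis $\mu\ll\mu'$ does the essential work: a $\mu'$-null set is $\mu$-null, so the last convergence also holds $\mu$-a.e., while $g_{N_{k_{j}}}\to\dist_{X}(s,\cdot)$ $\mu$-a.e. as a further subsequence; hence $\dist_{X}(s,\cdot)=\dist_{X'}(s,\cdot)$ $\mu$-a.e. on $T$. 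To finish, note that $\dist_{X}(s,\cdot)$ and $\dist_{X'}(s,\cdot)$ are $1$-Lipschitz for $\dist_{X}$ and $\dist_{X'}$ respectively, and that both metrics induce the topology of $T$, so both functions are continuous on $T$; two continuous functions on $T$ agreeing $\mu$-a.e. agree everywhere, since the open set on which their difference is nonzero would otherwise have positive $\mu$-measure by strict positivity. Thus $\dist_{X}(s,t)=\dist_{X'}(s,t)$ for all $t$, and as $s\in T$ was arbitrary, $\dist=\dist'$.

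I expect the pointwise-versus-$L^{2}$ gap to be the delicate point. One cannot simply evaluate $\sum_{i}\alpha_{i}(s)\alpha_{i}(t)$ at a point, because this Mercer-type series need not converge pointwise: the bound on $\|\phi_{i}\|_{\infty}$ obtainable from the Lipschitz estimate of Lemma \ref{lem:eiglip} degrades like $1/|\lambda_{i}|$, which diverges as $|\lambda_{i}|\to 0$. The two-step subsequence extraction is the device that circumvents this, and it is exactly here that absolute continuity is indispensable: for two mutually singular (but still strictly positive) measures the $\mu$-full and $\mu'$-full sets produced by the two $L^{2}$ limits could be disjoint, and no common a.e. subsequence would exist. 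The only other point requiring care is the reduction in the second paragraph to $\alpha_{i}\equiv\alpha_{i}'$, which is precisely the role of Observation \ref{obs:hom}.
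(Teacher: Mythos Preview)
Your proof is correct and follows essentially the same approach as the paper: exploit that $\Phi(X)=\Phi(X')$ forces the same scaled eigenfunctions, use the $L^{2}$ eigenfunction expansion of the distance function, extract pointwise a.e.\ convergent subsequences in two passes, invoke $\mu\ll\mu'$ to reconcile the null sets, and upgrade a.e.\ equality to pointwise equality via continuity and strict positivity. The only cosmetic difference is that you work one slice $\dist(s,\cdot)$ at a time on $T$, whereas the paper runs the identical argument once on the product $T\times T$ with the product measures $\mu\otimes\mu$ and $\mu'\otimes\mu'$.
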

\begin{proof}
	By assuming that both metric measure spaces induce the same topology, we can work with a single $\sigma$-algebra: their common Borel $\sigma$-algebra. This will prove essential in the following proof, where we take various unions and complements of measurable sets for $\mu$ and $\mu'$, respectively.\\

	The equality $\Phi(X) = \Phi(X')$ implies that $D$ and $D'$ have the same scaled eigenfunctions $\alpha_i$. The distance functions $\dist, \dist'$ thus have the same eigenfunction expansion:
	\[(x_1, x_2) \mapsto \sum_{i=1}^{\infty}\alpha_{i}(x_1)\alpha_{i}(x_2). \]
	
	This converges to $\dist$ in $L_2 (\mu \otimes \mu)$ and to $\dist'$ in $L_2 (\mu' \otimes \mu')$ to $\dist'$. Let us denote by $S_n$ the partial sums of this expansion:
		\[S_n = \sum_{i=1}^{n}\alpha_{i}(x_1)\alpha_{i}(x_2). \]
	
	It is a standard result in measure theory that any $L^{2}$-convergent sequence admits a subsequence that converges pointwise a.e.\footnote{See Theorem 2.15(c) in \cite{folland2009guide} for the implication that convergence in measure implies the existence of pointwise a.e. convergent subsequence. Chebyshev's inequality proves that $L^2$ convergence implies convergence in measure.} Thus, one can extract a subsequence $S_{n_{k}}$ that converges to $\dist$ pointwise on $(T \times T) \setminus N_1$, for some set $N_1 \subset T$ such that $(\mu \otimes \mu)(N_1) = 0$. We can then extract a further subsequence $S_{n_{k_j}}$ that converges pointwise to $\dist'$ on $((T \times T) \setminus N_1) \setminus N_2$, where $(\mu' \otimes \mu')(N_2) = 0$. Since $\mu$ is absolutely continuous to $\mu'$, if we set $N = N_1 \cup N_2$ then $(\mu \otimes \mu)(N) = 0$. Since $\mu$ is strictly positive, the set $N$ cannot contain any open sets, hence $N^{c}$ is dense in $T \times T$. We see then that $\dist = \dist'$ on a dense subset of $T \times T$; since these functions are both continuous in the same topology $T$, they are equal everywhere.

\end{proof}


For finite metric measure spaces, Lemma \ref{lem:PhiInj} only requires a finite-dimensional embedding to hold. 

\begin{cor}
	Let $X = (X,\dist_{X},\mu_X)$ and $Y = (Y, \dist_{Y}, \mu_Y)$ be a pair of finite metric measure spaces, with $|X|, |Y| \leq k$. If $\Phi_{k}(X) = \Phi_{k}(Y)$, then $X$ and $Y$ are isometric.
\end{cor}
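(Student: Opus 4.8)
The plan is to mimic the proof of Lemma~\ref{lem:PhiInj}, exploiting that on a finite space the eigenfunction expansion of a distance function is a \emph{finite} sum and $L^2$-equality is genuine pointwise equality, so that the analytic subtleties of that lemma evaporate. One preliminary remark: the topology of a finite metric space being discrete, ``strictly positive'' simply means every point carries positive mass, and I assume this of both $X$ and $Y$ (null points are invisible to $L^2$, so without it the statement as phrased would need adjusting); this matches the standing hypotheses of Theorem~\ref{thm:invstab}. Now write $m = |X| \leq k$, so that the eigenfunctions $\phi_1,\dots,\phi_m$ of $D^X$ form an orthonormal basis of $L^2(X)$. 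By~(\ref{eqn:coeffs}) the expansion $\dist_x = \sum_{i=1}^m \langle \dist_x,\phi_i\rangle\,\phi_i = \sum_{i=1}^m \alpha_i(x)\,\alpha_i$ holds exactly in $L^2(X)$ — hence, by strict positivity, pointwise — with the summands having $\lambda_i = 0$ vanishing. Evaluating at a point $x'$, and recalling that $\Phi_k$ pads the coordinates past the $m$-th with zeros (Definition~\ref{def:strictlypositive}), we obtain
\[
	\dist_X(x,x') \;=\; \sum_{i=1}^{m}\alpha_i(x)\,\alpha_i(x') \;=\; [\Phi_k(x),\Phi_k(x')],
\]
where $[v,w] := \sum_{j=1}^{k} v_j w_j$ is the symmetric \emph{bilinear} (not Hermitian) form on $\C^k$; this is the correct pairing precisely because $\alpha_i = \sqrt{\lambda_i}\,\phi_i$ uses a square root with positive imaginary part when $\lambda_i < 0$ (Definition~\ref{def:alphafunctions}). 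The identical formula holds for $Y$ and its embedding map, which I denote $\Psi_k$ to distinguish it from that of $X$.

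The displayed identity makes $\Phi_k$ injective on $X$: if $\Phi_k(x) = \Phi_k(x')$ then $\dist_X(x,z) = [\Phi_k(x),\Phi_k(z)] = [\Phi_k(x'),\Phi_k(z)] = \dist_X(x',z)$ for every $z \in X$, and taking $z = x$ forces $x = x'$; symmetrically $\Psi_k$ is injective on $Y$. Since by hypothesis $\Phi_k(X) = \Psi_k(Y)$ as subsets of $\C^k$, the composite $f := \Psi_k^{-1}\circ\Phi_k \colon X \to Y$ is a well-defined bijection satisfying $\Psi_k\circ f = \Phi_k$, whence for all $x,x' \in X$
\[
	\dist_X(x,x') = [\Phi_k(x),\Phi_k(x')] = [\Psi_k(f(x)),\Psi_k(f(x'))] = \dist_Y(f(x),f(x')),
\]
so $f$ is an isometry and $X \cong Y$. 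No step presents a real obstacle: the convergence issues that complicate Lemma~\ref{lem:PhiInj} are absent because $L^2(X)$ is finite-dimensional, and the only points meriting care are the strict-positivity reduction at the start and the use of the bilinear, rather than Hermitian, pairing on $\C^k$.
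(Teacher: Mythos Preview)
Your proof is correct. It differs from the paper's argument, which is terser but less self-contained: the paper observes that since $|X|,|Y|\le k$, all coordinates past the $k$-th vanish, so $\Phi_k$ coincides with the full embedding $\Phi$; it then invokes Observation~\ref{obs:hom} to conclude $|X|=|Y|$ (hence $X$ and $Y$ are homeomorphic as discrete spaces) and appeals to Lemma~\ref{lem:PhiInj} to identify the metrics. Your route instead establishes the exact recovery formula $\dist_X(x,x')=[\Phi_k(x),\Phi_k(x')]$ directly from finite-dimensionality and uses it to build the isometry by hand, bypassing the $L^2$-subsequence argument of Lemma~\ref{lem:PhiInj} entirely. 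This is in fact the content of the ``Finite Metric Measure Spaces'' subsection that follows the corollary in the paper, so you have effectively anticipated that computation and applied it here; your argument is more explicit and avoids checking the absolute-continuity hypothesis of Lemma~\ref{lem:PhiInj}, at the cost of redoing work the paper packages into that lemma.
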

\begin{proof}
	From Observation \ref{obs:hom}, we see that $|X| = |Y|$, so $X$ and $Y$ are homeomorphic when equipped with the discrete topology, which is the same topology induced by any choice of metric on $X$ or $Y$. The result follows then from Lemma \ref{lem:PhiInj}, by noting that all the ``higher" eigenfunctions vanish and therefore are equal.
\end{proof}

\subsection{Inverse Stability}

These injectivity results tell us that distinct metric spaces have distinct embeddings. However, we would like to assert something stronger: spaces with similar embeddings are also geometrically similar. To that end, we need to introduce some new definitions and technical lemmas.

\begin{definition}
	For vectors $v,w \in \mathbb{C}^k$, define the following bilinear form:
	\[[v,w] = \sum_{i=1}^{k} v_{i}w_{i} \in \mathbb{C}. \]
	This form is symmetric but not a dot product. 
\end{definition}

The utility of the bilinear form $[\cdot, \cdot]$ comes from the following equality:
\begin{equation}
\label{eqn:expansion}
[\Phi_{k}(x),\Phi_{k}(x')] =   \sum_{i=1}^{k}\alpha_{i}(x)\alpha_{i}(x') =  \sum_{i=1}^{k}\left(\sqrt{\lambda_{i}}\phi_{i}(x)\right)\left(\sqrt{\lambda_{i}}\phi_{i}(x')\right)= \sum_{i=1}^{k} \lambda_{i}\phi_{i}(x)\phi_{i}(x'). 
\end{equation}

That is, when applied to the distance kernel embedding it gives the first $k$ terms of the eigenfunction expansion of the distance function $\dist_{X}$.

\begin{lemma}
	The bilinear form $[,]$ satisfies the following Cauchy-Schwarz inequality:
	\[|[v,w]| \leq \| v \|_{2}\| w \|_{2}. \]	
\end{lemma}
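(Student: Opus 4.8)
The statement is the Cauchy--Schwarz-type bound $|[v,w]| \le \|v\|_2\|w\|_2$ for the symmetric bilinear form $[v,w] = \sum_{i=1}^k v_i w_i$ on $\mathbb{C}^k$, where $\|\cdot\|_2$ denotes the usual Hermitian norm. The plan is to reduce this to the ordinary (real) Cauchy--Schwarz inequality by passing to absolute values of coordinates. First I would apply the triangle inequality for the modulus on $\mathbb{C}$ to get
\[
|[v,w]| = \Big| \sum_{i=1}^k v_i w_i \Big| \le \sum_{i=1}^k |v_i w_i| = \sum_{i=1}^k |v_i|\,|w_i|.
\]
Then I would apply the standard Cauchy--Schwarz inequality in $\mathbb{R}^k$ to the nonnegative real vectors $(|v_1|,\dots,|v_k|)$ and $(|w_1|,\dots,|w_k|)$, obtaining
\[
\sum_{i=1}^k |v_i|\,|w_i| \le \Big(\sum_{i=1}^k |v_i|^2\Big)^{1/2}\Big(\sum_{i=1}^k |w_i|^2\Big)^{1/2} = \|v\|_2\,\|w\|_2.
\]
Chaining the two displays gives the claim.

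There is no real obstacle here: the only subtlety worth flagging is that $[\cdot,\cdot]$ is bilinear rather than sesquilinear, so one cannot invoke Cauchy--Schwarz for an inner product directly — but the coordinatewise modulus bound sidesteps this entirely, and the $\mathbb{R}^k$ Cauchy--Schwarz inequality is classical. I would present the proof essentially as the two inequalities above, perhaps with a one-line remark that equality analysis is not needed for our purposes.
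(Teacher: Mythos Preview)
Your proof is correct and follows essentially the same approach as the paper: apply the triangle inequality to pass from $|[v,w]|$ to $\sum_i |v_i||w_i|$, then apply the standard real Cauchy--Schwarz inequality to the vectors of coordinatewise moduli, noting that these have the same $\ell^2$ norms as $v$ and $w$.
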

\begin{proof}
	Let $v = (v_1, \cdots, v_k) \in \mathbb{C}^k$ and $w = (w_1, \cdots, w_k) \in \mathbb{C}^k$. By definition,
	\[[v,w] = \sum_{i=1}^{k}v_{i}w_{i}. \]
	Using the triangle inequality for complex numbers,
	\[|[v,w]| = \left|\sum_{i=1}^{k}v_{i}w_{i}\right| \leq \sum_{i=1}^{k}|v_{i}||w_{i}| = \langle \tilde{v}, \tilde{w} \rangle, \]
	where $\tilde{v},\tilde{w} \in \mathbb{R}^k$ are obtained from $v$ and $w$ by taking component-wise moduli. Note that
	\[\| v \|_{2}^2 =  \sum_{i=1}^{k}|v_{i}|^2 = \|\tilde{v}\|_{2}^2. \] 
	\[\| w \|_{2}^2 = \sum_{i=1}^{k}|w_{i}|^2 = \|\tilde{w}\|_{2}^2. \] 
	
	Thus $v$ and $\tilde{v}$ have the same magnitude, as do $w$ and $\tilde{w}$. To complete the proof, we apply the ordinary Cauchy-Schwarz inequality to $\tilde{v}$ and $\tilde{w}$,
	\[\langle \tilde{v}, \tilde{w} \rangle \leq \|\tilde{v}
	\|_2 \|\tilde{w}\|_2 = \| v \|_{2}\| w \|_{2}.    \]
	
\end{proof}

The following lemma asserts that pairs of nearby vectors have similar bilinear products.

\begin{lemma}
	\label{lem:haustodist}
	Let $v_{1},v_{2},w_{1},w_{2} \in \mathbb{C}^k$ be vectors such that $\|v_1 - w_1\|_{2} \leq \varepsilon$ and $\|v_2 - w_2\|_{2} \leq \varepsilon$. Then \[|[v_1,v_2] - [w_1,w_2]| \leq \varepsilon \min \left\{\|v_1\|_{2} + \|v_2\|_{2}, \|w_1\|_{2} + \|w_2\|_{2}\right\} + \varepsilon^2.\]
\end{lemma}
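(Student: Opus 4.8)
The plan is to expand the difference $[v_1,v_2]-[w_1,w_2]$ via bilinearity of $[\cdot,\cdot]$ and then bound each resulting term using the Cauchy--Schwarz inequality for $[\cdot,\cdot]$ established in the previous lemma. First I would introduce the perturbation vectors $\delta_1 := v_1 - w_1$ and $\delta_2 := v_2 - w_2$, so that $\|\delta_1\|_2 \leq \varepsilon$ and $\|\delta_2\|_2 \leq \varepsilon$ by hypothesis. Substituting $v_1 = w_1 + \delta_1$ and $v_2 = w_2 + \delta_2$ and using bilinearity gives
\[ [v_1,v_2] = [w_1,w_2] + [w_1,\delta_2] + [\delta_1,w_2] + [\delta_1,\delta_2], \]
so that $[v_1,v_2] - [w_1,w_2] = [w_1,\delta_2] + [\delta_1,w_2] + [\delta_1,\delta_2]$.

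Next I would apply the triangle inequality for complex numbers together with the Cauchy--Schwarz inequality $|[v,w]| \leq \|v\|_2\|w\|_2$ to each of the three terms, and use $\|\delta_i\|_2 \leq \varepsilon$. This yields
\[ |[v_1,v_2] - [w_1,w_2]| \leq \varepsilon \|w_1\|_2 + \varepsilon \|w_2\|_2 + \varepsilon^2, \]
which is exactly one of the two expressions inside the minimum. To obtain the other expression, I would run the symmetric computation: write instead $w_1 = v_1 - \delta_1$ and $w_2 = v_2 - \delta_2$, expand $[w_1,w_2]$ in the same way, and conclude $|[v_1,v_2] - [w_1,w_2]| \leq \varepsilon \|v_1\|_2 + \varepsilon \|v_2\|_2 + \varepsilon^2$. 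Since both bounds are valid simultaneously, taking the smaller of the two gives the claimed inequality.

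I do not expect any genuine obstacle here; this is a routine estimate. The only point that requires a little care is that the asymmetric decomposition keeps the norms of whichever pair of vectors is held fixed, so both decompositions must be carried out explicitly in order to recover the symmetric $\min\{\|v_1\|_2 + \|v_2\|_2,\ \|w_1\|_2 + \|w_2\|_2\}$ on the right-hand side. (One could alternatively perform a single decomposition and then convert $\|w_i\|_2$ to $\|v_i\|_2$ via $\|w_i\|_2 \leq \|v_i\|_2 + \varepsilon$, but this introduces unnecessary extra $\varepsilon^2$ slack, so doing the computation both ways is cleaner.)
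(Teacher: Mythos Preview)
Your proposal is correct and follows essentially the same approach as the paper: expand the difference via bilinearity, bound each term with the Cauchy--Schwarz inequality for $[\cdot,\cdot]$, and run the symmetric computation to obtain the other branch of the minimum. The only cosmetic difference is that the paper expands $[w_1,w_2]$ around $v_1,v_2$ first (yielding the $\|v_1\|_2+\|v_2\|_2$ bound before the $\|w_1\|_2+\|w_2\|_2$ one), whereas you do it in the opposite order.
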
  
\begin{proof}
By bilinearity,
	\[[w_1,w_2] = [v_1,v_2] + [v_1,(w_2 - v_2)] + [(w_1 -v_1),v_2] + [(w_1 - v_1),(w_2 - v_2)].\]
	Thus,
	\[|[v_1,v_2] - [w_1,w_2]| \leq | [v_1,(w_2 - v_2)]| + |[(w_1 -v_1),v_2]| + |[(w_1 - v_1),(w_2 - v_2)]|.\]
	
	By a symmetric argument, switching $v_1$ and $v_2$ with $w_1$ and $w_2$, one obtains:
	\[|[v_1,v_2] - [w_1,w_2]| \leq | [w_1,(v_2 - w_2)]| + |[(v_1 -w_1),w_2]| + |[(v_1 - w_1),(v_2 - w_2)]|.\]
	The result then follows by applying the Cauchy-Schwarz to each term on the right-hand sides of both inequalities, and taking the minimum of the two sums.
\end{proof}

 Before stating our first inverse stability result, we need the following definition:
\begin{definition}
	For a compact metric measure space $(X, \dist_{X}, \mu_{X})$ and a positive integer $k$, define the error function:
	\[E_{X,k}(x,x') = |[\Phi_{k}(x),\Phi_{k}(x')] - \dist_{X}(x,x')|\]
	By equation (\ref{eqn:expansion}), this measures the extent to which the eigenfunction expansion of $\dist_{X}$, truncated at $k$, approximates the metric.
\end{definition}
\begin{theorem}
	\label{thm:invstabmeas}
	Let $(X,\dist_{X}, \mu_X)$ and $(Y,\dist_{Y}, \mu_Y)$ be compact metric measure spaces, with eigenvalues $\{\lambda_{i}\}$ and $\{\nu_{i}\}$. Take $k \in \mathbb{N}$ to be a positive integer, and let $\varepsilon = \dist_{H}^{L^2}(\Phi_{k}(X),\Phi_{k}(Y))$. Then
	\[d_{GH}(X,Y) \leq 2\varepsilon \min \left\{ \max_{x \in X} \| \Phi_{k}(x) \|_{2}, \max_{y \in Y} \| \Phi_{k}(y) \|_{2}\right\} + \| E_{X,k} \|_{\infty} + \| E_{Y,k} \|_{\infty} + \varepsilon^2. \]
\end{theorem}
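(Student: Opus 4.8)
The plan is to bound the Gromov--Hausdorff distance by constructing an explicit correspondence between $X$ and $Y$ and estimating its distortion. Since $\varepsilon = \dist_H^{L^2}(\Phi_k(X),\Phi_k(Y))$, the Hausdorff distance in $\C^k$ between the two images is $\varepsilon$; after noting that $\Phi_k$ is injective (at least on the relevant part; in any case we only need surjectivity onto the image) we obtain a correspondence $R \subseteq X \times Y$ by declaring $(x,y) \in R$ whenever $\|\Phi_k(x) - \Phi_k(y)\|_2 \leq \varepsilon$. Density of each image within $\varepsilon$ of the other (a property of the Hausdorff distance) guarantees that $R$ projects onto both $X$ and $Y$, so $R$ is a genuine correspondence. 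It then suffices to bound $\tfrac12 \operatorname{dis}(R) = \tfrac12 \sup \{ |\dist_X(x,x') - \dist_Y(y,y')| : (x,y),(x',y') \in R\}$.

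The key estimate chains three comparisons. Fix $(x,y),(x',y') \in R$. First, $|\dist_X(x,x') - [\Phi_k(x),\Phi_k(x')]| \leq \|E_{X,k}\|_\infty$ by the definition of the error function together with Equation~(\ref{eqn:expansion}). Symmetrically, $|\dist_Y(y,y') - [\Phi_k(y),\Phi_k(y')]| \leq \|E_{Y,k}\|_\infty$. The middle step compares the two bilinear products: since $\|\Phi_k(x) - \Phi_k(y)\|_2 \leq \varepsilon$ and $\|\Phi_k(x') - \Phi_k(y')\|_2 \leq \varepsilon$, Lemma~\ref{lem:haustodist} gives
\[
|[\Phi_k(x),\Phi_k(x')] - [\Phi_k(y),\Phi_k(y')]| \leq \varepsilon \min\Big\{ \|\Phi_k(x)\|_2 + \|\Phi_k(x')\|_2,\ \|\Phi_k(y)\|_2 + \|\Phi_k(y')\|_2 \Big\} + \varepsilon^2.
\]
Bounding each norm by its maximum over the respective space turns the $\min$ term into $2\varepsilon \min\{\max_{x}\|\Phi_k(x)\|_2, \max_{y}\|\Phi_k(y)\|_2\}$. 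Assembling the three bounds by the triangle inequality yields
\[
|\dist_X(x,x') - \dist_Y(y,y')| \leq 2\varepsilon \min\Big\{\max_{x \in X}\|\Phi_k(x)\|_2, \max_{y \in Y}\|\Phi_k(y)\|_2\Big\} + \|E_{X,k}\|_\infty + \|E_{Y,k}\|_\infty + \varepsilon^2,
\]
and since $d_{GH}(X,Y) \leq \tfrac12 \operatorname{dis}(R)$ — and the right-hand side above already dominates $\operatorname{dis}(R)$, with the stated bound being a (harmless, slightly lossy) factor-2 weakening — we are done. I would double-check whether the paper intends the factor-$\tfrac12$ in the GH definition; the statement as written simply drops it, which is still a valid upper bound.

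I expect the only real subtlety to be the clean construction of the correspondence from a Hausdorff-distance hypothesis — in particular making sure that every $x \in X$ is $R$-related to at least one $y \in Y$ and conversely, which follows from $\Phi_k(X)$ and $\Phi_k(Y)$ each lying in the $\varepsilon$-neighborhood of the other, pulled back through the (continuous, and on compact domains proper) maps $\Phi_k$. One small point of care: $\Phi_k$ need not be injective for finite $k$, so a single point of $\C^k$ may have several preimages; this is fine, as the correspondence is defined directly in $X \times Y$ and we never need to invert $\Phi_k$. Everything else is a direct application of Lemma~\ref{lem:haustodist} and Equation~(\ref{eqn:expansion}), so no further obstacles are anticipated.
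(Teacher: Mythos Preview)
Your proposal is correct and follows essentially the same route as the paper: build a correspondence on $X \times Y$ from the Hausdorff-$\varepsilon$ proximity of the images, then bound its distortion by sandwiching $|\dist_X - \dist_Y|$ between the two error terms and the bilinear-form comparison of Lemma~\ref{lem:haustodist}. Your discussion of the correspondence construction and the factor-$\tfrac12$ convention is, if anything, more careful than the paper's own proof, which simply invokes an ``optimal Hausdorff correspondence'' and concludes.
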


\begin{proof}
	Let $C$ be an optimal Hausdorff correspondence between $\Phi_{k}(X)$ and $\Phi_{k}(Y)$. Let $(x,x') \in X \times X$ and $(y,y') \in Y \times Y$ with $(\Phi_{k}(x),\Phi_{k}(y)),(\Phi_{k}(x'),\Phi_{k}(y')) \in C$. Lemma \ref{lem:haustodist}, together with the bounds $\|\Phi_{k}(x)-\Phi_{k}(y)\|_{L^2} \leq \varepsilon$ and $\|\Phi_{k}(x')-\Phi_{k}(y')\|_{L^2} \leq \varepsilon$ , gives
	\[|[\Phi_{k}(x),\Phi_{k}(x')] - [\Phi_{k}(y),\Phi_{k}(y')] | \leq  2\varepsilon \min \left\{ \max_{x \in X} \| \Phi_{k}(x) \|_{2}, \max_{y \in Y} \| \Phi_{k}(y) \|_{2} \right\} + \varepsilon^2.\]
	
	Using the triangle inequality, we can replace $[\Phi_{k}(x),\Phi_{k}(x')]$ with $\dist_{X}(x,x')$ and $[\Phi_{k}(y),\Phi_{k}(y')]$ with $\dist_{Y}(y,y')$, at the cost of adding an additive error of at most $\| E_{X,k} \|_{\infty}$ and $\| E_{Y,k} \|_{\infty}$ respectively, giving the inequality:
	\[ |\dist_{X}(x,x') - \dist_{Y}(y,y')| \leq 2\varepsilon \min \left\{ \max_{x \in X} \| \Phi_{k}(x) \|_{2}, \max_{y \in Y} \| \Phi_{k}(y) \|_{2}\right\} + \| E_{X,k} \|_{\infty} + \| E_{Y,k} \|_{\infty} + \varepsilon^2. \]
	
	The result follows.\end{proof}

\begin{remark}
%
In Section \ref{sec:embeddingconstants}, we provide  analytic estimates on the constants appearing in Theorem \ref{thm:invstabmeas}, in terms of the geometry of the spaces $X$ and $Y$. These estimates are pessimistic and fall short of the positive results obtained in the simulations of Section \ref{sec:experiments}. 

\end{remark}

\begin{remark}
	The theorem is stated in terms of the Gromov-Hausdorff distance rather than its measure-theoretic variants, such as the Gromov-Hausdorff-Prokhorov distance. This is because scaling the measure does not affect the embedding. Thus, we can say that our embedding uses the measure to prioritize certain distance functions on our space, but it does not record the measure itself.
\end{remark}

Theorem~\ref{thm:invstabmeas} applies to compact metric spaces in general. In the following section, we reconsider these results for the finite case, demonstrating how the embedding is computed and concluding with a simplified, more precise form of Theorem~\ref{thm:invstabmeas}.

\subsection*{Finite Metric Measure Spaces}

In the finite case, we have a metric measure space $(X,\dist_{X},\mu_X)$ with $|X| = n$. We assume that $\mu_{X}$ has full support, so that $\mu_{X}(x) > 0$ for all $x \in X$. Define the matrix\footnote{We intentionally use the same symbol $D$ to refer to both the operator and its associated matrix.} $D_{ij} = \dist_{X}(x_i,x_j)\mu_{X}(x_j)$, so that if $f: X \to \mathbb{R}$ is a function, and $v \in \mathbb{R}^n$ is the vector $v_{i} = f(x_i)$, then $(Df)(x_j) = \sum_{i=1}^{n}\mu_{X}(x_i)\dist_{X}(x_i,x_j)f(x_i) =  (Dv)_{j}$. Define the diagonal matrix $Q_{ii} = \mu_{X}(x_i)$, giving rise to the inner product $Q(v,w) = v^T Q w$.

\begin{lemma}
	The operator $D$ is self-adjoint with respect to $Q$.
\end{lemma}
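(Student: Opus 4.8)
The plan is to reduce self-adjointness with respect to $Q$ to the symmetry of an explicit matrix. Recall that $Q(v,w) = v^{T}Qw$, so for any vectors $v,w \in \mathbb{R}^{n}$ we have $Q(Dv,w) = (Dv)^{T}Qw = v^{T}(D^{T}Q)w$ and $Q(v,Dw) = v^{T}(QD)w$. Hence it suffices to prove the matrix identity $D^{T}Q = QD$, equivalently that $QD$ is a symmetric matrix (since then $D^{T}Q = (QD)^{T} = QD$).

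The second step is a one-line computation of the entries of $QD$. Because $Q$ is diagonal with $Q_{ii} = \mu_{X}(x_{i})$, we get
\[
(QD)_{ij} = Q_{ii}D_{ij} = \mu_{X}(x_{i})\,\dist_{X}(x_{i},x_{j})\,\mu_{X}(x_{j}).
\]
This expression is manifestly invariant under exchanging $i$ and $j$, using only the symmetry $\dist_{X}(x_{i},x_{j}) = \dist_{X}(x_{j},x_{i})$ of the metric and the commutativity of multiplication. Therefore $QD$ is symmetric, which gives $D^{T}Q = QD$ and hence $Q(Dv,w) = Q(v,Dw)$ for all $v,w$, i.e.\ $D$ is self-adjoint with respect to $Q$.

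There is no real obstacle here; the only thing to be careful about is bookkeeping the difference between the operator $D$ acting on functions and the matrix $D$ acting on coordinate vectors (the paper deliberately overloads the notation), and confirming that the inner product $Q(\cdot,\cdot)$ is exactly the discretization of the $L^{2}(\mu_{X})$ inner product so that this lemma is the finite-dimensional shadow of Proposition~\ref{prop:selfadjoint}.
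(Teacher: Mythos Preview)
Your proof is correct and rests on the same identity as the paper's: both reduce to the symmetry of $\mu_{X}(x_{i})\,\dist_{X}(x_{i},x_{j})\,\mu_{X}(x_{j})$ in $i,j$. The paper expands $\langle Dv,w\rangle_{Q}$ directly as a double sum and rearranges, whereas you first reduce to the matrix identity $D^{T}Q = QD$ and then check entries; these are the same computation in slightly different packaging.
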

\begin{proof}
	Let $v,w \in \mathbb{R}^n$. Compute:
	\begin{align*}
	\langle Dv, w \rangle_{Q} &= \sum_{i}(Dv)_{i}w_{i}\mu_{X}(x_i)\\
	& = \sum_{i}(\sum_{j}v_j D_{ij} )w_{i}\mu_{X}(x_i)\\
	& = \sum_{i,j}\dist(x_i,x_j)v_j w_i \mu_{X}(x_j)\mu_{X}(x_i)\\
	& = \sum_{j}(\sum_{i} \dist(x_j,x_i)\mu_{X}(x_i)w_i) v_j \mu_{X}(x_j)\\
	& = \sum_{j}(\sum_{i} D_{ji} w_i) v_j \mu_{X}(x_j)\\
	& = \sum_{j} (Dw)_{j}v_{j}\mu_{X}(x_j)\\
	& = \langle v, Dw \rangle_{Q}.
	\end{align*}
\end{proof}

Thus, by the spectral theorem, $D$ has real eigenvalues $\{\lambda_1, \cdots, \lambda_n \}$, ordered by decreasing absolute value, and a basis of $Q$-orthonormal real eigenvectors $\{e_1, \cdots e_n\}$. As earlier, we  maintain the convention that each eigenvalue has multiplicity one.\\

Let $A$ be the $n \times n$ matrix whose $i$th column is $e_i$. $Q$-orthonormality of the eigenbasis means that:
\[A^{T}QA = I.\]

Since $\mu_{X}(x_i)>0$ for all $i$, the diagonal matrix $Q$ is invertible. Moreover, since $A$ is orthonormal for the inner product induced by $Q$, it too is invertible. We can thus deduce:
\[ A^{T}QA = I  \Longrightarrow AA^{T} = Q^{-1}.\]


Denoting the $i$th row of the matrix $A$ by $r_i$, this tells us that:
\begin{equation}
\label{eqn:rownorm}
\| r_i \|_2 = 1/\sqrt{\mu_{X}(x_i)}.
\end{equation}

As before, we define the functions $\alpha_{i}(x_j) = \sqrt{\lambda_{i}}(e_i)_{j}$ with the convention that the square root of a negative eigenvalue is the imaginary number with positive imaginary part, giving rise to the embedding $\Phi = (\alpha_1, \cdots, \alpha_n)$.  If $V$ is the diagonal matrix whose $(ii)$th entry is $\sqrt{\lambda_i}$, then $\Phi$ maps $x_i$ to the $i$th row of $AV$. We now show how to recover the geometry of $X$ from its embedding. For $x_{i} \in X$, let $\dist_{i} : X \to \mathbb{R}$ be the ``distance to $x_i$" function, thought of as a vector in $\mathbb{R}^n \subset \mathbb{C}^n$. Observe that
\begin{align*}
\dist_{i} &= \sum_{l=1}^{n}\langle e_l, \dist_i \rangle_{Q} e_l\\
& = \sum_{l=1}^{n} \left(\sum_{j=1}^{n}(e_{l})_{j}Q_{jj}(d_{i})_{j} \right) e_l\\
 & = \sum_{l=1}^{n}\left(\sum_{j=1}^{n}(e_{l})_{j}\mu_{X}(x_j)(d_{i})_{j} \right) e_l\\
 & = \sum_{l=1}^{n} (De_l)_{i} e_{l}\\
 & = \sum_{l=1}^{n} \lambda_{l}(e_{l})_{i}e_{l}.
\end{align*}

Hence, using the same bilinear form $[\cdot, \cdot]$ as earlier, 
\[\dist(x_i,x_j) = (\dist_{i})_{j} = \sum_{l=1}^{n}\lambda_{l}(e_l)_{i}(e_{l})_{j} = \sum_{l=1}^{n} (\sqrt{\lambda_l} e_{l})_{i} (\sqrt{\lambda_l} e_{l})_{j} = [ \Phi(x_i), \Phi(x_j) ].\]

\begin{example}
	Let $X$ consist of two points,$x_1$ and $x_2$, with $\dist(x_1,x_2)  = 1$. Let $\mu (x_1) = 1$ and $\mu (x_2) = 4$. Our matrix $D$ is then
	\[\begin{pmatrix}
	0 & 4\\
	1 & 0
	\end{pmatrix}.\]  
	The eigenvalues of this matrix are $\pm 2$. Let 
	\[e_1 = \begin{pmatrix}
	\frac{1}{\sqrt{2}}\\
	\frac{1}{2\sqrt{2}}
	\end{pmatrix} \,\,\,\,\,\,\,\, e_2 = \begin{pmatrix}
	\frac{1}{\sqrt{2}}\\
	\frac{-1}{2\sqrt{2}}
	\end{pmatrix}.  \]
	
	These are eigenvectors with eigenvalue $+2$ and $-2$ respectively. Define the inner product matrix
	\[Q  = \begin{pmatrix}
	1 & 0\\
	0 & 4
	\end{pmatrix}.\]
	Observe that
	\[e_1^T Q e_1 = e_2^T Q e_2 =  1 \cdot \frac{1}{2} + 4 \cdot \frac{1}{8}  = 1 \]
	\[e_1^T Q e_2 = e_2^T Q e_1 = 1 \cdot \frac{1}{2} - 4 \cdot \frac{1}{8} = 0, \]
	
	so that $\{e_1,e_2\}$ is $Q$-orthonormal. We then have that
	\[\Phi(x_1) = \langle \frac{\sqrt{2}}{\sqrt{2}}, \frac{\sqrt{-2}}{\sqrt{2}}  \rangle = \langle 1, \sqrt{-1} \rangle    \]
	\[\Phi(x_2) = \langle \frac{\sqrt{2}}{2\sqrt{2}}, \frac{-\sqrt{-2}}{2\sqrt{2}}  \rangle = \langle \frac{1}{2}, \frac{-\sqrt{-1}}{2} \rangle,    \]
	
	and
	\[\dist_{X}(x_1,x_2) = [\Phi(x_1),\Phi(x_2)] = \frac{1}{2} + \frac{1}{2} = 1 \]
	\[\dist_{X}(x_1,x_1) = [\Phi(x_1),\Phi(x_1)] =  1 - 1 = 0 \]
	\[\dist_{X}(x_2,x_2) = [\Phi(x_2),\Phi(x_2)] = \frac{1}{4} - \frac{1}{4} = 0. \]
	
\end{example}

For finite metric spaces, we have the following explicit bound on the norms of our embedding vectors:

\begin{prop}
	\label{prop:embedboundtopeig}
	We have $\| \Phi_{k}(x)\|_{2} \leq \sqrt{|\lambda_1|}/\mu_{X}(x)$ for all $x \in X$.
\end{prop}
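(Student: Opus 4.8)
The plan is a short direct computation: express $\|\Phi_k(x)\|_2^2$ through the rows of the eigenvector matrix $A$, bound every eigenvalue modulus by $|\lambda_1|$, and invoke the row‑norm identity (\ref{eqn:rownorm}).

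Write $X = \{x_1,\ldots,x_n\}$. The $j$‑th coordinate of $\Phi_k(x_i)$ is $\alpha_j(x_i) = \sqrt{\lambda_j}\,(e_j)_i$, and $(e_j)_i$ — the $i$‑th entry of the $j$‑th eigenvector — is precisely the matrix entry $A_{ij}$, i.e.\ the $j$‑th component $(r_i)_j$ of the $i$‑th row $r_i$ of $A$. Since each $(e_j)_i$ is real, $\bigl|\sqrt{\lambda_j}\,(e_j)_i\bigr|^2 = |\lambda_j|\,(r_i)_j^2$ regardless of the sign of $\lambda_j$, so
\[
\|\Phi_k(x_i)\|_2^2 \;=\; \sum_{j=1}^{k} |\lambda_j|\,(r_i)_j^2 \;\le\; |\lambda_1|\sum_{j=1}^{k}(r_i)_j^2 \;\le\; |\lambda_1|\sum_{j=1}^{n}(r_i)_j^2 \;=\; |\lambda_1|\,\|r_i\|_2^2 ,
\]
where the first inequality uses $|\lambda_j|\le|\lambda_1|$ (eigenvalues are ordered by decreasing absolute value) and the second just appends the nonnegative terms $(r_i)_j^2$ for $k<j\le n$. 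By (\ref{eqn:rownorm}), which follows from $AA^T = Q^{-1}$, we have $\|r_i\|_2^2 = 1/\mu_X(x_i)$, and taking square roots yields
\[
\|\Phi_k(x_i)\|_2 \;\le\; \frac{\sqrt{|\lambda_1|}}{\sqrt{\mu_X(x_i)}},
\]
which gives the stated bound $\sqrt{|\lambda_1|}/\mu_X(x_i)$ whenever $\mu_X(x_i)\le 1$, in particular under the standing normalization that $\mu_X$ is a probability measure.

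There is essentially no obstacle here — the whole argument is the single displayed chain of estimates. The only points to track carefully are the indexing bookkeeping (that the $i$‑th row of $A$ assembles the $i$‑th components of all eigenvectors, so that truncating the embedding at $k$ corresponds to truncating $r_i$) and, depending on the normalization convention adopted for $\mu_X$, whether one records the bound with $\mu_X(x_i)$ or $\sqrt{\mu_X(x_i)}$ in the denominator; the computation above produces the latter directly, which under the usual normalization coincides with or refines the stated form.
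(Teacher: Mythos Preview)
Your argument is essentially identical to the paper's: define the row vectors $r_i$ of the eigenvector matrix $A$, note that $\Phi_k(x_i)$ is the truncation of the componentwise scaling of $r_i$ by $\sqrt{\lambda_j}$, bound each $|\lambda_j|$ by $|\lambda_1|$, and finish with the identity $\|r_i\|_2 = 1/\sqrt{\mu_X(x_i)}$ from $AA^T = Q^{-1}$.

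Your observation about the denominator is also on point: the paper's own proof, like yours, actually produces the bound $\sqrt{|\lambda_1|}/\sqrt{\mu_X(x_i)}$, not $\sqrt{|\lambda_1|}/\mu_X(x_i)$; the paper simply concludes by ``combining this inequality with Equation~(\ref{eqn:rownorm})'' without squaring anything. So the stated form is a slight weakening (or typo), and your remark that it follows from the sharper bound whenever $\mu_X(x_i)\le 1$ is the right way to reconcile the two --- though note the paper does not explicitly assume $\mu_X$ is a probability measure in this subsection.
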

\begin{proof}
	Denote by $s_i$ the complex vector whose $j$th component $(s_i)_j$ is equal to $\sqrt{\lambda_{j}}(r_i)_j$. That is, $s_i$ is obtained by rescaling the entries of the row vector $r_i$ by the square roots of the eigenvalues corresponding to the columns of those entries. For $m \in \{1, \cdots ,n\}$, let $s_{i,m}$ be the vector obtained by keeping only the first $m$ coordinates of $s_{i}$, and define $r_{i,m}$ similarly. By definition, $s_{i,k} = \Phi_{k}(x_i)$.\\
	
	By our ordering of eigenvalues, $|\lambda_{1}| \geq |\lambda_{j}|$ for all $j$. Thus, each component of $s_{i,k}$ has modulus no larger than the corresponding component of $\sqrt{\lambda_{1}}r_{i,k}$. From this we deduce that $\|\Phi_{k}(x_i)\|_{2} = \|s_{i,k}\|_{2} \leq \|\sqrt{\lambda_{1}} r_{i,k}\|_{2} =  |\sqrt{\lambda_{1}}|\|r_{i,k}\|_{2} = \sqrt{|\lambda_{1}|}\|r_{i,k}\|_{2}$. Since $r_{i,k}$ is obtained by truncating $r_i$, we have $\|r_{i,k}\|_{2} \leq \|r_{i}\|_{2}$. We thus deduce  that $\|\Phi_{k}(x_i)\|_{2}\leq \sqrt{|\lambda_{1}|}\|r_{i}\|_{2}$.  Combining this inequality with Equation~(\ref{eqn:rownorm}) completes the proof.
	
\end{proof}

We also have the following bound on the error function:

\begin{prop}
	\label{prop:truncbound}
Applying the bilinear form $[\cdot,\cdot]$ to the truncated embedding $\Phi_{k}$ gives an approximation of the distance function $\dist$ whose error is bounded by the largest (in absolute value) omitted eigenvalue and the distribution of $\mu_{X}$:
\[E_{X,k}(x_i,x_j) = |\dist(x_i,x_j) - [ \Phi_{k}(x_i), \Phi_{k}(x_j) ]| \leq  \frac{|\lambda_{k+1}|}{\sqrt{\mu_{X}(x_i)\mu_{X}(x_j)}}.\]
\end{prop}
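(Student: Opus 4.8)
The plan is to write the error as the tail of the eigenfunction expansion of the distance function and control it using the eigenvalue ordering together with the row-norm identity~(\ref{eqn:rownorm}). From the computation preceding the statement, we have the exact identity $\dist(x_i,x_j) = \sum_{l=1}^{n}\lambda_l (e_l)_i (e_l)_j$, while $[\Phi_k(x_i),\Phi_k(x_j)] = \sum_{l=1}^{k}\lambda_l (e_l)_i (e_l)_j$ by the defining formula for $\Phi_k$ and the bilinear form $[\cdot,\cdot]$. Subtracting, the error function is precisely $E_{X,k}(x_i,x_j) = \left|\sum_{l=k+1}^{n}\lambda_l (e_l)_i (e_l)_j\right|$, so the whole task is to bound this tail sum.

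First I would apply the triangle inequality and use the fact that, by our convention of ordering eigenvalues in decreasing absolute value, $|\lambda_l| \leq |\lambda_{k+1}|$ for every $l \geq k+1$; this factors $|\lambda_{k+1}|$ out and leaves $\sum_{l=k+1}^{n}|(e_l)_i|\,|(e_l)_j|$. Next I would apply the ordinary Cauchy--Schwarz inequality to this sum, obtaining the bound $\sqrt{\sum_{l=k+1}^{n}(e_l)_i^2}\cdot\sqrt{\sum_{l=k+1}^{n}(e_l)_j^2}$, and then enlarge each truncated sum to the full sum over $l=1,\dots,n$.

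The final step is to recognize $\sum_{l=1}^{n}(e_l)_i^2$ as the squared norm of the $i$th row $r_i$ of the matrix $A$ (since $(r_i)_l = (e_l)_i$), which by Equation~(\ref{eqn:rownorm}) equals $1/\mu_X(x_i)$. Substituting this for both $i$ and $j$ gives $\sum_{l=k+1}^{n}|(e_l)_i|\,|(e_l)_j| \leq 1/\sqrt{\mu_X(x_i)\mu_X(x_j)}$, and combining with the factored-out $|\lambda_{k+1}|$ yields the claimed bound. There is no real obstacle here: the argument is a routine chain of the eigenvalue ordering, Cauchy--Schwarz, and the orthonormality identity $AA^T = Q^{-1}$ that was already established; the only point requiring a little care is correctly identifying $\sum_l (e_l)_i^2$ with $\|r_i\|_2^2$ rather than with a column norm of $A$.
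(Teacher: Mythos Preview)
Your proof is correct and follows essentially the same route as the paper: express the error as the tail $\sum_{l>k}\lambda_l(e_l)_i(e_l)_j$, use the eigenvalue ordering to pull out $|\lambda_{k+1}|$, apply Cauchy--Schwarz, and invoke the row-norm identity $\|r_i\|_2^2=1/\mu_X(x_i)$. The only cosmetic difference is that the paper packages the $\sqrt{\lambda_l}$ factors into auxiliary vectors and applies Cauchy--Schwarz before bounding norms, whereas you apply the triangle inequality first; the ingredients and logic are the same.
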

\begin{proof}
Define the vectors $s_i$ as in the proof of Proposition \ref{prop:embedboundtopeig}. For $m \in \{1,2, \cdots ,n, n+1\}$, let $s_{i}^{m}$ be the vector obtained by throwing away the first $(m-1)$ coordinates of $s_{i}$, so that $s_{i}^{1} = s_{i}$ and $s_{i}^{(n+1)}$ is empty. Define $r_{i}^{m}$ similarly. Observe that:
\[|\dist(x_i,x_j) - [ \Phi_{k}(x_i), \Phi_{k}(x_j) ]| = \left|\sum_{l=k+1}^{n} \lambda_{l}(e_l)_{i}(e_l)_{j} \right| = |\langle s_{i}^{(k+1)}, s_{j}^{(k+1)} \rangle|,\]
where the inner product on the right-hand side is the ordinary, Euclidean one. The Cauchy-Schwarz inequality tells us that:
\[|\langle s_{i}^{(k+1)}, s_{j}^{(k+1)} \rangle| \leq \|s_{i}^{(k+1)}\|_{2}\|s_{j}^{(k+1)}\|_{2}.\]

We now bound the quantities on the right-hand side of the above inequality. By our ordering of eigenvalues, $|\lambda_{k+1}| \geq |\lambda_{l}|$ for all $l \geq k+1$. Thus every component of the vector $\sqrt{\lambda_{k+1}}r_{i}^{(k+1)}$ has a larger modulus than the corresponding component of $s_{i}^{(k+1)}$. We deduce that $\|s_{i}^{(k+1)}\|_{2} \leq \|\sqrt{\lambda_{k+1}}r_{i}^{(k+1)}\|_{2}= |\sqrt{\lambda_{k+1}}|\|r_{i}^{(k+1)}\|_{2} = \sqrt{|\lambda_{k+1}|} \|r_{i}^{(k+1)}\|_{2}$. We can similarly deduce that $\|s_{j}^{(k+1)}\|_{2} \leq \sqrt{|\lambda_{k+1}|} \|r_{j}^{(k+1)}\|_{2}$. Since $r_{i}^{(k+1)}$ and $r_{j}^{(k+1)}$ are truncations of $r_i$ and $r_j$, respectively, we deduce that:
\[\|s_{i}^{(k+1)}\|_{2}\|s_{j}^{(k+1)}\|_{2} \leq (\sqrt{|\lambda_{k+1}|})^{2}\|r_{i}^{(k+1)}\|_{2}\|r_{j}^{(k+1)}\|_{2} \leq |\lambda_{k+1}| \|r_{i}\|_{2}\|r_{j}\|_{2}. \]

Recalling from Equation~(\ref{eqn:rownorm}) that $\|r_{i}\|_{2} = 1/\sqrt{\mu_{X}(x_i)}$ and $\|r_{j}\|_{2} = 1/\sqrt{\mu_{X}(x_j)}$, the proof follows.
\end{proof}

We now provide a finite analogue of Theorem \ref{thm:invstabmeas}:
%
\begin{theorem}
	\label{thm:invstab}
	Let $(X,\dist_{X}, \mu_X)$ and $(Y,\dist_{Y}, \mu_Y)$ be finite metric measure spaces, with eigenvalues $\{\lambda_{i}\}$ and $\{\nu_{i}\}$, and let $\theta = \min \{\min_{x \in X}\mu_{X}(x),\min_{y \in Y}\mu_{X}(y)\}$. Take $k \leq |X|,|Y|$, and suppose that $\dist_{H}^{L^{2}}(\Phi_{k}(X),\Phi_{k}(Y)) \leq \varepsilon$. Then,
	\[\dist_{GH}(X,Y) \leq 2\varepsilon \frac{\min (\sqrt{|\lambda_{1}|},\sqrt{|\nu_{1}|})}{\theta} + \varepsilon^2 + \frac{|\lambda_{k+1}| + |\nu_{k+1}|}{\theta}.       \]	 
\end{theorem}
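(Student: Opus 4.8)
The plan is to obtain this statement as a direct corollary of Theorem~\ref{thm:invstabmeas}, by substituting into it the explicit finite-dimensional estimates for the embedding norms and the error functions furnished by Propositions~\ref{prop:embedboundtopeig} and~\ref{prop:truncbound}.

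First I would invoke Theorem~\ref{thm:invstabmeas} directly. Writing $\varepsilon' := \dist_H^{L^2}(\Phi_k(X),\Phi_k(Y)) \leq \varepsilon$, that theorem gives
\[d_{GH}(X,Y) \leq 2\varepsilon' \min \left\{ \max_{x \in X} \| \Phi_{k}(x) \|_{2}, \max_{y \in Y} \| \Phi_{k}(y) \|_{2}\right\} + \| E_{X,k} \|_{\infty} + \| E_{Y,k} \|_{\infty} + (\varepsilon')^2,\]
and since the right-hand side is monotonically increasing in the Hausdorff distance parameter, we may replace $\varepsilon'$ by $\varepsilon$ throughout.

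Next I would bound the multiplicative constant. Proposition~\ref{prop:embedboundtopeig} gives $\|\Phi_k(x)\|_2 \leq \sqrt{|\lambda_1|}/\mu_X(x)$ for every $x \in X$, so with $\theta_X := \min_{x\in X}\mu_X(x)$ we get $\max_{x\in X}\|\Phi_k(x)\|_2 \leq \sqrt{|\lambda_1|}/\theta_X$, and symmetrically $\max_{y\in Y}\|\Phi_k(y)\|_2 \leq \sqrt{|\nu_1|}/\theta_Y$ with $\theta_Y := \min_{y\in Y}\mu_Y(y)$. Because $\theta = \min\{\theta_X,\theta_Y\}$ satisfies $\theta \leq \theta_X$ and $\theta \leq \theta_Y$, taking the minimum of the two quantities yields
\[\min \left\{ \max_{x \in X} \| \Phi_{k}(x) \|_{2}, \max_{y \in Y} \| \Phi_{k}(y) \|_{2}\right\} \leq \frac{\min\{\sqrt{|\lambda_1|},\sqrt{|\nu_1|}\}}{\theta}.\]
Similarly, Proposition~\ref{prop:truncbound} gives $E_{X,k}(x_i,x_j) \leq |\lambda_{k+1}|/\sqrt{\mu_X(x_i)\mu_X(x_j)} \leq |\lambda_{k+1}|/\theta_X \leq |\lambda_{k+1}|/\theta$ for all $i,j$, hence $\|E_{X,k}\|_\infty \leq |\lambda_{k+1}|/\theta$, and likewise $\|E_{Y,k}\|_\infty \leq |\nu_{k+1}|/\theta$; in the boundary case $k = |X|$ (resp. $k=|Y|$) the eigenfunction expansion is exact, the corresponding error term vanishes, and this is consistent with the convention $\lambda_{k+1}=0$. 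Substituting these three bounds into the inequality above produces precisely
\[\dist_{GH}(X,Y) \leq 2\varepsilon \frac{\min (\sqrt{|\lambda_{1}|},\sqrt{|\nu_{1}|})}{\theta} + \varepsilon^2 + \frac{|\lambda_{k+1}| + |\nu_{k+1}|}{\theta}.\]

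I do not expect any genuine obstacle: all the mathematical content lives in Theorem~\ref{thm:invstabmeas} and in the two finite-case estimates, and the present statement is merely their assembly together with the elementary monotonicity observations $1/\theta_X,\,1/\theta_Y \leq 1/\theta$. The only point requiring a word of care is the degenerate case $k = |X|$ or $k = |Y|$, which is handled by the convention just mentioned.
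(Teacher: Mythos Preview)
Your proposal is correct and follows essentially the same approach as the paper: both arguments feed the finite-case estimates of Propositions~\ref{prop:embedboundtopeig} and~\ref{prop:truncbound} into the general inverse-stability bound. The only organizational difference is that you invoke Theorem~\ref{thm:invstabmeas} as a black box and then substitute, whereas the paper re-runs the correspondence/triangle-inequality argument of that theorem inline with the finite bounds already in hand; your packaging is slightly more modular, but the mathematical content is identical.
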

\begin{proof}
	The result holds trivially for $\theta = 0$, so let us assume $\theta > 0$.\\
	
	Suppose that the Hausdorff distance between $\Phi_{k}(X)$ and $\Phi_{k}(Y)$ is realized by a correspondence $C \subset \Phi_{k}(X) \times \Phi_{k}(Y)$. This induces a correspondence on $X \times Y$.  Suppose that $x$ is paired with $y$ and $x'$ is paired with $y'$. If we write $v_{1} = \Phi_{k}(x), v_{2} = \Phi_{k}(x')$, $w_{1} = \Phi_{k}(y)$, and $w_{2} = \Phi_{k}(y')$, this means that $\|v_1 - w_1\|_{2} \leq \varepsilon$ and $\|v_2 - w_2\|_{2} \leq \varepsilon$.  Then, Proposition \ref{prop:truncbound} tells us that $|\dist_{X}(x,x') - [v_1,v_2]| \leq |\lambda_{k+1}|/\theta$ and $|\dist_{Y}(y,y') - [w_1,w_2]| \leq |\nu_{k+1}|/\theta$. By Proposition \ref{prop:embedboundtopeig}, the $L^2$ norms of $v_{1}$ and $v_{2}$ are at most $\sqrt{|\lambda_{1}|}/\theta$, and similarly the $L^2$ norms of $w_{1}$ and $w_{2}$ are at most $\sqrt{|\nu_{1}|}/\theta$.\\
	 
	 Combining these various bounds with Lemma \ref{lem:haustodist}, and applying the triangle inequality, we deduce:
	 \begin{align*}
	 |\dist_{X}(x,x') - \dist_{Y}(y,y')|
	 & \leq |\dist_{X}(x,x') - [v_1,v_2]| + |[v_1,v_2] - [w_1,w_2]| + |[w_1,w_2] - d_{Y}(y,y')|\\
	 & \leq \frac{|\lambda_{k+1}|}{\theta} + \varepsilon\min \left\{\|v_1\|_{2} + \|v_2\|_{2}, \|w_1\|_{2} + \|w_2\|_{2}\right\} + \varepsilon^2 + \frac{|\nu_{k+1}|}{\theta}\\
	 & \leq \varepsilon \min \left\{2\frac{\sqrt{|\lambda_{1}|}}{\theta},2 \frac{\sqrt{|\nu_{1}|}}{\theta}\right\} + \varepsilon^2 + \frac{|\lambda_{k+1}| + |\nu_{k+1}|}{\theta}\\
	 & =  2\varepsilon \frac{\min (\sqrt{|\lambda_{1}|},\sqrt{|\nu_{1}|})}{\theta} + \varepsilon^2 + \frac{|\lambda_{k+1}| + |\nu_{k+1}|}{\theta}.   
	 \end{align*}
	 
\end{proof}

	When $k \geq |X|$, $\mu_{k+1} = 0$, and similarly for $Y$ and $\nu_{k+1}$. Thus, the final additive error term in Theorem \ref{thm:invstab} disappears in this setting. Note that in the case of uniform unit atomic measures, i.e. $\mu_X(x_i) = 1 = \mu_{Y}(y_j) \,\, \forall i,j$, we have $\theta = 1$.
	

%

\section{Error Bounds}
\label{sec:embeddingconstants}
The bounds in the main theorem of Section \ref{sec:stabinv}, Theorem \ref{thm:invstabmeas}, depend on the magnitude of the quantities $\|E_{X,k}\|_{\infty}$ and  $\max_{x \in X} \|\Phi_{k}(x)\|_{2}$. The goal of this section is to provide worst-case estimates of these quantities, using standard measure-theoretic techniques. We begin by bounding the $L^{\infty}$ norm of our eigenfunctions.

\begin{lemma}
	\label{lem:phibound}
	Suppose that $(X,\dist_X,\mu_X)$ is a compact metric measure space that is $(a,b)$-standard with threshold $r$. Let $\phi_i$ be an eigenfunction of the distance kernel operator $D^{X}$ with eigenvalue $\lambda_i$. Then we have the following $L^{\infty}$ bound on $\phi_i$:
	\[\|\phi_i\|_{\infty} \leq \frac{1}{\sqrt{a}r^{b/2}} + \frac{r}{|\lambda_i|}\sqrt{\vol(X)}.   \]
	
\end{lemma}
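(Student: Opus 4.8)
The plan is to play two facts against each other: the normalization $\|\phi_i\|_{L^2(X)} = 1$ forces $|\phi_i|$ to be small \emph{somewhere} inside every ball of radius $r$, while Lemma~\ref{lem:eiglip} tells us $\phi_i$ cannot grow too fast as we move away from that point. If $\lambda_i = 0$ the asserted bound is infinite and there is nothing to prove, so I would assume $\lambda_i \neq 0$; then $\phi_i$ has a continuous — indeed $(\sqrt{\vol(X)}/|\lambda_i|)$-Lipschitz — representative by Lemma~\ref{lem:eiglip}, and it is this representative whose supremum we control.

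First I would fix an arbitrary $x \in X$ and consider the closed ball $\bar B(x,r)$, which is compact since $X$ is. The continuous function $\phi_i^2$ attains a minimum on $\bar B(x,r)$, at some point $x_0$. Since $x_0$ minimizes $\phi_i^2$ over a set containing $B(x,r)$, and since $(a,b)$-standardness applied at scale $s = r$ gives $\vol B(x,r) \geq a r^b$, we obtain
\[
\phi_i^2(x_0)\, a r^b \;\leq\; \phi_i^2(x_0)\, \vol B(x,r) \;\leq\; \int_{B(x,r)} \phi_i^2 \, d\mu_X \;\leq\; \int_X \phi_i^2\, d\mu_X = 1,
\]
so $|\phi_i(x_0)| \leq 1/(\sqrt{a}\, r^{b/2})$.

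To finish I would propagate this estimate to $x$ using the Lipschitz bound of Lemma~\ref{lem:eiglip}: because $x_0 \in \bar B(x,r)$ we have $\dist_X(x,x_0) \leq r$, whence
\[
|\phi_i(x)| \;\leq\; |\phi_i(x_0)| + \frac{\sqrt{\vol(X)}}{|\lambda_i|}\,\dist_X(x,x_0) \;\leq\; \frac{1}{\sqrt{a}\, r^{b/2}} + \frac{r}{|\lambda_i|}\sqrt{\vol(X)},
\]
and taking the supremum over $x \in X$ gives the claim. The only point demanding a little care — and the reason for working with the closed ball of radius exactly $r$ rather than a smaller one — is guaranteeing that the witness $x_0$ for a small value of $\phi_i^2$ lies within distance $r$ of $x$, so that the Lipschitz step contributes only the additive term $r\sqrt{\vol(X)}/|\lambda_i|$; the rest is a routine mean-value/pigeonhole argument on the ball, and the use of compactness to turn an essential infimum into an attained minimum is purely cosmetic (one could instead argue directly with essential infima).
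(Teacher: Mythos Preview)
Your proof is correct and uses the same two ingredients as the paper --- the $(\sqrt{\vol(X)}/|\lambda_i|)$-Lipschitz bound from Lemma~\ref{lem:eiglip} and the $(a,b)$-standard volume lower bound on $B(x,r)$ --- but runs the argument in the opposite direction. The paper starts at a point where $|\phi_i|$ attains its maximum, uses Lipschitzness to show $|\phi_i|$ stays within $\frac{r}{|\lambda_i|}\sqrt{\vol(X)}$ of that maximum on the whole ball $B(x,r)$, and then the $L^2$ normalization forces the maximum to be small; this requires a preliminary case split on whether $\|\phi_i\|_\infty$ already lies below $\frac{r}{|\lambda_i|}\sqrt{\vol(X)}$. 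Your version instead starts at an arbitrary $x$, uses the $L^2$ normalization over $B(x,r)$ to locate a nearby point $x_0$ where $|\phi_i|$ is at most $1/(\sqrt{a}\,r^{b/2})$, and then propagates upward to $x$ by Lipschitzness. The two are logically equivalent, but your organization is a bit cleaner in that it avoids the case analysis.
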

\begin{proof}
	The result holds trivially when $\lambda_{i} = 0$, so let us assume $\lambda_{i} > 0$.	By Lemma \ref{lem:eiglip}, $\phi_i$ is $\frac{\sqrt{\vol(X)}}{|\lambda_i|}$-Lipschitz. Let us assume that
	\[\|\phi_{i}\|_{\infty} \geq \frac{r}{|\lambda_i|}\sqrt{\vol(X)}, \]
	as otherwise, the result would be immediate. Let $x \in X$ be a point at which $|\phi_i|$ attains the value $\|\phi_{i}\|_{\infty}$. Consider the ball $B = B(x,r)$ centered at $x$. By Lipschitzness of $\phi_i$, we know that
	\[|\phi_{i}(y)|\geq \left(\|\phi_{i}\|_{\infty}  - \frac{r}{|\lambda_i|}\sqrt{\vol(X)}\right), \,\,\,\,\, \forall y \in B. \]
		This tells us that the $L^2$ norm of $\phi_i$ restricted to the ball $B$ is bounded below by:
	\[\|\phi_{i}1_{B}\|_{2} \geq \left(\|\phi_{i}\|_{\infty}  - \frac{r}{|\lambda_i|}\sqrt{\vol(X)}\right)\sqrt{\mu_{X}(B)}.  \]
	By $(a,b)$-standardness, 
	\[\mu_{X}(B) \geq ar^{b},\]
	so that 
	\[\sqrt{\mu_{X}(B)} \geq \sqrt{a}r^{b/2}.  \]
	Since our eigenfunctions are normalized, we have
	\[1 = \|\phi_{i}\|_{2} \geq \|\phi_{i}1_{B}\|_{2} \geq \left(\|\phi_{i}\|_{\infty}  - \frac{r}{|\lambda_i|}\sqrt{\vol(X)}\right)\sqrt{a}r^{b/2},  \]
	hence the result.
\end{proof}

This gives us a bound on the $L^2$ norm of our embedding vectors.

\begin{lemma}
	\label{lem:embedbound}
	Suppose that $(X,\dist_X,\mu_X)$ is a compact metric measure space that is $(a,b)$-standard with threshold $r$. Then, for any $x \in X$, 
	\[\|\Phi_{k}(x)\|_{2} \leq  \sqrt{\sum_{i=1}^{k}  \left(\frac{\sqrt{|\lambda_i|}}{\sqrt{a}r^{b/2}} + \frac{r}{\sqrt{|\lambda_i|}}\sqrt{\vol(X)} \right)^2 }. \]	
\end{lemma}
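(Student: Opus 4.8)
The plan is to bound each coordinate of $\Phi_k(x)$ separately and then assemble the bounds via the definition of the Euclidean norm. Recall from Definition~\ref{def:alphafunctions} that the $i$th coordinate of $\Phi_k(x)$ is $\alpha_i(x) = \sqrt{\lambda_i}\,\phi_i(x)$, so that $|\alpha_i(x)| = \sqrt{|\lambda_i|}\,|\phi_i(x)|$ (the choice of branch of the square root affects only the argument of $\alpha_i(x)$, not its modulus). Hence $|\alpha_i(x)| \leq \sqrt{|\lambda_i|}\,\|\phi_i\|_\infty$ for every $i$ and every $x \in X$.

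Next I would invoke Lemma~\ref{lem:phibound}, which applies precisely because $X$ is assumed compact and $(a,b)$-standard with threshold $r$, to get $\|\phi_i\|_\infty \leq \frac{1}{\sqrt{a}\,r^{b/2}} + \frac{r}{|\lambda_i|}\sqrt{\vol(X)}$ for each eigenfunction $\phi_i$ with nonzero eigenvalue. Multiplying through by $\sqrt{|\lambda_i|}$ yields
\[
|\alpha_i(x)| \;\leq\; \frac{\sqrt{|\lambda_i|}}{\sqrt{a}\,r^{b/2}} + \frac{r}{\sqrt{|\lambda_i|}}\sqrt{\vol(X)}.
\]
For any index $i \leq k$ with $\lambda_i = 0$, our convention (Definition~\ref{def:strictlypositive}) sets the corresponding eigenfunction, and hence $\alpha_i$, to the zero function, so $|\alpha_i(x)| = 0$ and such a coordinate trivially satisfies any nonnegative bound; thus we may restrict attention to the nonzero eigenvalues among $\lambda_1,\dots,\lambda_k$ without loss.

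Finally, I would conclude by computing
\[
\|\Phi_k(x)\|_2 \;=\; \sqrt{\sum_{i=1}^{k}|\alpha_i(x)|^2} \;\leq\; \sqrt{\sum_{i=1}^{k}\left(\frac{\sqrt{|\lambda_i|}}{\sqrt{a}\,r^{b/2}} + \frac{r}{\sqrt{|\lambda_i|}}\sqrt{\vol(X)}\right)^2},
\]
where the inequality is the coordinatewise bound just established applied term by term under the square root (monotonicity of $t \mapsto \sqrt{t}$ and of sums of squares of nonnegative reals). This is exactly the claimed estimate. There is essentially no obstacle here: the lemma is a direct packaging of Lemma~\ref{lem:phibound} together with the scaling relation $\alpha_i = \sqrt{\lambda_i}\phi_i$, the only point requiring a word of care being the treatment of zero eigenvalues, which is handled by the stated convention.
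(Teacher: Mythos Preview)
Your proof is correct and follows essentially the same approach as the paper: bound each coordinate $|\alpha_i(x)|=\sqrt{|\lambda_i|}\,|\phi_i(x)|$ using the $L^\infty$ bound on $\phi_i$ from Lemma~\ref{lem:phibound}, then square, sum, and take the square root. The paper's proof is the same computation written with $\|\Phi_k(x)\|_2^2$ on the left, and your extra remark about zero eigenvalues is a minor point of care the paper leaves implicit.
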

\begin{proof}
	This follows from Lemma \ref{lem:phibound} via the following computation:
	
	\begin{align*}
	\|\Phi_{k}(x)\|_{2}^{2} &= \sum_{i=1}^{k} (\sqrt{\lambda_{i}}\phi_{i}(x))^{2}\\
	&\leq \sum_{i=1}^{k}|\lambda_{i}|\phi_{i}^{2}(x)\\
	&\leq \sum_{i=1}^{k}|\lambda_{i}|\left(\frac{1}{\sqrt{a}r^{b/2}} + \frac{r}{|\lambda_i|}\sqrt{\vol(X)}\right)^{2}\\
	&= \sum_{i=1}^{k}\left(\frac{\sqrt{|\lambda_{i}|}}{\sqrt{a}r^{b/2}} + \frac{r}{\sqrt{|\lambda_i|}}\sqrt{\vol(X)}\right)^{2}.		
	\end{align*}

\end{proof}

We now turn to studying the error of the eigenfunction approximation to the distance function $\dist_{X}(x,x)'$ on the product space $X \times X$.

\begin{lemma}
	\label{lem:distlip}
	The distance function $\dist_{X}: X \times X \to \mathbb{R}$ is $\sqrt{2}$-Lipschitz.
\end{lemma}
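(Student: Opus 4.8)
The plan is a direct two-step estimate, once the metric on the product $X \times X$ is pinned down. I will use the $\ell^2$ product metric, namely $\dist_{X\times X}\big((x_1,x_1'),(x_2,x_2')\big) = \sqrt{\dist_X(x_1,x_2)^2 + \dist_X(x_1',x_2')^2}$; this is the natural choice consistent with the $L^2$-based quantities used elsewhere in the paper (e.g.\ $\dist_H^{L^2}$), and the Lipschitz constant $\sqrt{2}$ is exactly what one gets for this convention.

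First I would interpolate: given two points $(x_1,x_1')$ and $(x_2,x_2')$ of $X\times X$, change one coordinate at a time and apply the (reverse) triangle inequality for $\dist_X$ twice, obtaining
\[
\big|\dist_X(x_1,x_1') - \dist_X(x_2,x_2')\big| \;\leq\; \dist_X(x_1,x_2) + \dist_X(x_1',x_2').
\]
Second, I would pass from the $\ell^1$ combination on the right to the $\ell^2$ product distance via the elementary inequality $a+b \leq \sqrt{2}\,\sqrt{a^2+b^2}$ (equivalently Cauchy–Schwarz applied to the vectors $(1,1)$ and $(a,b)$), which yields
\[
\big|\dist_X(x_1,x_1') - \dist_X(x_2,x_2')\big| \;\leq\; \sqrt{2}\,\sqrt{\dist_X(x_1,x_2)^2 + \dist_X(x_1',x_2')^2} \;=\; \sqrt{2}\,\dist_{X\times X}\big((x_1,x_1'),(x_2,x_2')\big),
\]
which is the claim.

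There is no real obstacle here beyond being explicit about the product metric convention; if the paper instead intends the $\ell^\infty$ (sup) product metric, the same argument gives Lipschitz constant $2$, and if it intends the $\ell^1$ metric it gives constant $1$, so the statement implicitly fixes the $\ell^2$ convention. I would simply note this convention at the start of the proof and then present the two displayed inequalities above.
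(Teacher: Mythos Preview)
Your proposal is correct and matches the paper's proof essentially line for line: the paper also uses the $\ell^2$ product metric on $X\times X$, applies the triangle inequality to bound the difference by $\dist_X(x_1,y_1)+\dist_X(x_2,y_2)$, and then invokes $a+b\le\sqrt{2}\sqrt{a^2+b^2}$. Your observation that the constant $\sqrt{2}$ pins down the $\ell^2$ convention is exactly right.
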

\begin{proof}
	Let $(x_1,x_2), (y_1,y_2) \in X \times X$. By the triangle inequality, we have
	\begin{align*}
	\dist_{X}(x_1,x_2) - \dist_{X}(y_1,y_2) &\leq  	\big(\dist_{X}(x_1,y_1) + \dist_{X}(y_1,y_2) + \dist_{X}(y_2,x_2)\big) - \dist_{X}(y_1,y_2)\\
	& = \dist_{X}(x_1,y_1) + \dist_{X}(y_2,x_2)\\
	& \leq \sqrt{2}\sqrt{\dist_{X}(x_1,y_1)^{2} + \dist_{X}(y_2,x_2)^{2}}\\
	& = \sqrt{2}\dist_{X\times X}((x_1,x_2),(y_1,y_2)).
	\end{align*}
\end{proof}

\begin{lemma}
	\label{lem:phi2lip}
		Suppose that $(X,\dist_X,\mu_X)$ is a compact metric measure space that is $(a,b)$-standard with threshold $r$. Let $\phi_i$ be an eigenfunction of the distance kernel operator $D^{X}$ with eigenvalue $\lambda_i$. Then the product function $(x,y) \mapsto \lambda_i \phi_{i}(x)\phi_{i}(y)$ is Lipschitz over $X \times X$, with Lipschitz constant
		\[K_i = \sqrt{2} \left(\frac{\sqrt{\vol(X)}}{ \sqrt{a}r^{b/2}} + \frac{r \vol(X)}{|\lambda_i|}\right).  \]
\end{lemma}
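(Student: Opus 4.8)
The plan is to reduce everything to two facts already in hand: that $\lambda_i\phi_i$ is $\sqrt{\vol(X)}$-Lipschitz (Lemma~\ref{lem:eiglip}) and that $\|\phi_i\|_\infty$ is controlled under $(a,b)$-standardness (Lemma~\ref{lem:phibound}). Since the claimed constant $K_i$ involves $1/|\lambda_i|$, I would note at the outset that we are working with a nonzero eigenvalue (zero eigenfunctions having been discarded by convention, so $g \equiv 0$ in that degenerate case). Write $g(x,y) = \lambda_i\phi_i(x)\phi_i(y)$, and for two points $(x_1,x_2),(y_1,y_2)\in X\times X$ insert the cross term $\lambda_i\phi_i(y_1)\phi_i(x_2)$ to obtain the telescoping identity
\[
g(x_1,x_2)-g(y_1,y_2) = \phi_i(x_2)\bigl(\lambda_i\phi_i(x_1)-\lambda_i\phi_i(y_1)\bigr) + \phi_i(y_1)\bigl(\lambda_i\phi_i(x_2)-\lambda_i\phi_i(y_2)\bigr).
\]
One could equivalently invoke Lemma~\ref{lem:lip}, but carrying out the estimate by hand, grouping the factor $\lambda_i$ with $\phi_i$ rather than with the \emph{other} factor, produces the sharper constant.

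Next I would bound each summand. In both, the bare factor is bounded by $\|\phi_i\|_\infty$, while the bracketed factor is bounded using Lemma~\ref{lem:eiglip}: $|\lambda_i\phi_i(x)-\lambda_i\phi_i(y)|\le\sqrt{\vol(X)}\,\dist_X(x,y)$. This yields
\[
|g(x_1,x_2)-g(y_1,y_2)| \le \|\phi_i\|_\infty\sqrt{\vol(X)}\,\bigl(\dist_X(x_1,y_1)+\dist_X(x_2,y_2)\bigr).
\]
I would then pass from this $\ell^1$ sum of factor-distances to the $\ell^2$ product metric $\dist_{X\times X}$ used in Lemma~\ref{lem:distlip}, via the elementary inequality $u+v\le\sqrt2\sqrt{u^2+v^2}$, obtaining that $g$ is $\sqrt2\,\|\phi_i\|_\infty\sqrt{\vol(X)}$-Lipschitz on $X\times X$.

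Finally, substitute the bound $\|\phi_i\|_\infty\le \frac{1}{\sqrt a\,r^{b/2}}+\frac{r}{|\lambda_i|}\sqrt{\vol(X)}$ from Lemma~\ref{lem:phibound} and distribute $\sqrt2\sqrt{\vol(X)}$ to recover exactly $K_i = \sqrt2\bigl(\frac{\sqrt{\vol(X)}}{\sqrt a\,r^{b/2}}+\frac{r\vol(X)}{|\lambda_i|}\bigr)$. There is no genuine obstacle in the argument; the only points requiring care are keeping the product-metric convention consistent with Lemma~\ref{lem:distlip} (which is the source of the factor $\sqrt2$) and estimating $|\lambda_i\phi_i(\cdot)-\lambda_i\phi_i(\cdot)|$ rather than $|\phi_i(\cdot)-\phi_i(\cdot)|$, so that Lemma~\ref{lem:eiglip} applies cleanly without introducing a stray $1/|\lambda_i|$ that would have to be absorbed elsewhere.
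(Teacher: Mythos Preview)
Your argument is correct and follows essentially the same route as the paper: telescope the product, bound one factor by $\|\phi_i\|_\infty$ and the other increment via Lemma~\ref{lem:eiglip}, convert the $\ell^1$ sum to the product metric with a $\sqrt2$, and plug in Lemma~\ref{lem:phibound}. The only cosmetic difference is that you keep the factor $\lambda_i$ attached to $\phi_i$ throughout, whereas the paper works with $\phi_i$ and its Lipschitz constant $\sqrt{\vol(X)}/|\lambda_i|$ and multiplies by $|\lambda_i|$ at the end; the resulting constant is identical.
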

\begin{proof}
	As before, let $(x_1,x_2),(y_1,y_2) \in X \times X$. We have:
	\begin{align*}
	|\phi_{i}(x_1)\phi_{i}(x_2) - \phi_{i}(y_1)\phi_{i}(y_2)| &= |\phi_{i}(x_1)\phi_{i}(x_2) - \phi_{i}(x_1)\phi_{i}(y_2) + \phi_{i}(x_1)\phi_{i}(y_2) - \phi_{i}(y_1)\phi_{i}(y_2)|\\
	& \leq |\phi_{i}(x_1)\phi_{i}(x_2) - \phi_{i}(x_1)\phi_{i}(y_2)| + |\phi_{i}(x_1)\phi_{i}(y_2) - \phi_{i}(y_1)\phi_{i}(y_2)|\\
	& = |\phi_{i}(x_1)||\phi_{i}(x_2)-\phi_{i}(y_2)| + |\phi_{i}(y_2)||\phi_{i}(x_1) - \phi_{i}(y_1)|.
	\end{align*}

	Let $k_i$ be the Lipschitz constant of $\phi_i$. The last line above is bounded by
	\begin{align*}
	k_{i}\|\phi_{i}\|_{\infty}(\dist_{X}(x_1,y_1) + \dist_{X}(x_2,y_2)) & \leq \sqrt{2} k_{i}\|\phi_{i}\|_{\infty}\sqrt{\dist_{X}(x_1,y_1)^{2} + \dist_{X}(x_2,y_2)^{2}}\\
	(\text{Lemma \ref{lem:eiglip}})& \leq \sqrt{2} \frac{\sqrt{\vol (X)}}{|\lambda_{i}|}\|\phi_{i}\|_{\infty}\dist_{X\times X}((x_1,x_2),(y_1,y_2))\\
		(\text{Lemma \ref{lem:phibound}})& \leq \sqrt{2} \frac{\sqrt{\vol (X)}}{|\lambda_{i}|}\left(\frac{1}{\sqrt{a}r^{b/2}} + \frac{r}{|\lambda_i|}\sqrt{\vol(X)}\right)\dist_{X\times X}((x_1,x_2),(y_1,y_2)).
	\end{align*} 
	The result follows after scaling by $\lambda_{i}$.
\end{proof} 

Using these results, we can control the error $E_{X,k}(x,x') = |[\Phi_{k}(x),\Phi_{k}(x')] - \dist_{X}(x,x')|$. 

\begin{lemma}
	\label{lem:errorbound}
		Suppose that $(X,\dist_X,\mu_X)$ is a compact metric measure space that is $(a,b)$-standard with threshold $r$. Let $K_i$ be as in Lemma \ref{lem:phi2lip}, and let:
		\[K = \sqrt{2} + \sum_{i=1}^{k} K_i.\]
		Define also the sum of squares of eigenvalues:
		\[\Lambda_{k+1} = \sum_{i=k+1}^{\infty} \lambda_{i}^2. \]
		Then, we have the following $L^{\infty}$ bound on $E_{X,k}$:
		\[\|E_{X,k}\|_{\infty} \leq \frac{\sqrt{\Lambda_{k+1}}}{ar^b} + rK.  \]
\end{lemma}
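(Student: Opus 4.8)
The plan is to realise the error function as the tail of the eigenfunction expansion of $\dist_X$ over the product space $X\times X$, to control its $L^2$ norm via Parseval's identity, and then to upgrade this into a uniform bound using the Lipschitz estimates of Lemmas~\ref{lem:distlip} and~\ref{lem:phi2lip} together with the $(a,b)$-standardness of $X$. Concretely, by Equation~(\ref{eqn:expansion}) we have $E_{X,k}(x,x') = \big|\,\dist_X(x,x') - \sum_{i=1}^{k}\lambda_i\phi_i(x)\phi_i(x')\,\big|$; set $g_k(x,x') := \dist_X(x,x') - \sum_{i=1}^{k}\lambda_i\phi_i(x)\phi_i(x')$, so that $|g_k| = E_{X,k}$ pointwise. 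Since $\{\phi_i\}$ is an orthonormal basis of $L^2(X,\mu_X)$, the products $\{\phi_i\otimes\phi_j : i,j\ge 1\}$, where $(\phi_i\otimes\phi_j)(x,x') = \phi_i(x)\phi_j(x')$, form an orthonormal basis of $L^2(X\times X,\mu_X\otimes\mu_X)$, and $\dist_X\in L^2(\mu_X\otimes\mu_X)$ because $X$ is compact. Using Fubini and Equation~(\ref{eqn:coeffs}), $\langle\dist_X,\phi_i\otimes\phi_j\rangle = \int_X\phi_i(x)\,(D^X\phi_j)(x)\,d\mu_X(x) = \lambda_j\,\delta_{ij}$, so $\dist_X = \sum_i\lambda_i\,\phi_i\otimes\phi_i$ in $L^2(\mu_X\otimes\mu_X)$ and hence $g_k = \sum_{i>k}\lambda_i\,\phi_i\otimes\phi_i$ there; by Parseval, $\|g_k\|_{L^2(\mu_X\otimes\mu_X)}^2 = \sum_{i>k}\lambda_i^2 = \Lambda_{k+1}$.

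Next I would record the regularity of $g_k$. Being a finite difference of continuous functions (namely $\dist_X$, and the $\phi_i$ with $i\le k$, each of which is Lipschitz by Lemma~\ref{lem:eiglip} or else identically zero), $g_k$ is continuous on the compact space $X\times X$, and therefore $|g_k| = E_{X,k}$ attains its supremum $L := \|E_{X,k}\|_\infty$ at some point $p = (x_0,x_0')$. Combining Lemma~\ref{lem:distlip} ($\dist_X$ is $\sqrt 2$-Lipschitz) with Lemma~\ref{lem:phi2lip} (each $\lambda_i\,\phi_i\otimes\phi_i$ is $K_i$-Lipschitz) and the triangle inequality shows that $g_k$ is $K$-Lipschitz on $X\times X$, with $K = \sqrt 2 + \sum_{i=1}^k K_i$.

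For the passage from $L^2$ to $L^\infty$: if $L\le rK$ the asserted bound is immediate, so assume $L>rK$. Localising at $p$, consider a neighbourhood $U$ of $p$ of the form $B_X(x_0,r)\times B_X(x_0',r)$; its points lie within distance of order $r$ of $p$ in the product metric, so $K$-Lipschitzness forces $|g_k|\ge L - rK > 0$ on $U$, while the $(a,b)$-standardness of $X$ (Definition~\ref{def:abstandard}) gives $(\mu_X\otimes\mu_X)(U)\ge(ar^b)^2$. Therefore
\[
  \Lambda_{k+1} = \|g_k\|_{L^2(\mu_X\otimes\mu_X)}^2 \ \ge\ \int_U |g_k|^2\,d(\mu_X\otimes\mu_X) \ \ge\ (L-rK)^2\,(ar^b)^2,
\]
and taking square roots yields $L \le \dfrac{\sqrt{\Lambda_{k+1}}}{ar^b} + rK$, which is the claim.

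The step requiring the most care is the last one: one must fix the product metric on $X\times X$ to be the one implicitly used in Lemmas~\ref{lem:distlip} and~\ref{lem:phi2lip}, and then track precisely how the radius of $U$, the oscillation bound $L-rK$, and the lower bound $(\mu_X\otimes\mu_X)(U)\ge(ar^b)^2$ (which lifts the standardness of $X$ to its square) interact, so that the constants collapse exactly to the factor $\tfrac{1}{ar^b}$ and the additive term $rK$ as stated. The only other subtlety is in the first step, namely verifying that $\{\phi_i\otimes\phi_j\}$ is a \emph{basis} (not merely an orthonormal system) of $L^2(\mu_X\otimes\mu_X)$ and that $\dist_X$ has the claimed Fourier coefficients; both are standard given the spectral theorem applied to $D^X$ and the tensor-product structure of $L^2$ of a product measure.
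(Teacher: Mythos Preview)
Your proof is correct and follows essentially the same route as the paper: establish that $E_{X,k}$ is $K$-Lipschitz via Lemmas~\ref{lem:distlip} and~\ref{lem:phi2lip}, compute $\|E_{X,k}\|_{L^2(X\times X)}^2 = \Lambda_{k+1}$ using orthonormality of the $\phi_i\otimes\phi_i$, and then pass from $L^2$ to $L^\infty$ by localising around a maximiser and invoking the $(a,b)$-standardness on each factor. Your treatment of the $L^2$ computation (via the tensor-product basis and the Fourier coefficients $\langle \dist_X,\phi_i\otimes\phi_j\rangle=\lambda_j\delta_{ij}$) is in fact spelled out more carefully than in the paper, and your caveat about matching the product metric, the radius, and the measure lower bound is well placed---the paper simply asserts $(a^2,2b)$-standardness of $X\times X$ without tracking the $\sqrt{2}$ factors.
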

\begin{proof}
	We can rewrite the error $E_{X,k}$ as follows:
	\[|[\Phi_{k}(x),\Phi_{k}(x')] - \dist_{X}(x,x')|  =|\sum_{i=1}^{k}\lambda_{i}\phi_{i}(x)\phi_{i}(x') - \dist_{X}(x,x') |.\]
	
	Each term of the form $\lambda_{i}\phi_{i}(x)\phi_{i}(x')$ is, by Lemma \ref{lem:phi2lip}, $K_{i}$-Lipschitz. By Lemma \ref{lem:distlip}, we know that the function $\dist_{X}(x,x')$ is $\sqrt{2}$-Lipschitz. As the sum of an $A$-Lipschitz function and a $B$-Lipschitz function is $(A+B)$-Lipschitz, and since taking absolute values does not increase Lipschitz constants, we can conclude that $E_{X,k}$ is $K$-Lipschitz. Moreover, we know that $X \times X$ is equipped with an $(a^2,2b)$-standard measure with threshold $r$, as is immediate from the definition of the product measure. Let us assume that
	\[\|E_{X,k}\|_{\infty} \geq rK, \]
	as otherwise, the result would be immediate. Let $(x,x') \in X \times X$ be a point at which $E_{X,k}$ attains its maximum. Consider the ball $B = B((x,x'),r)$ centered at $(x,x')$. By Lipschitzness of $E_{X,k}$, we know that
	\[E_{X,k}(y,y')\geq \|E_{X,k}\|_{\infty} - rK, \,\,\,\,\, \forall (y,y') \in B. \]
	This tells us that the $L^2$ norm of $E_{X,k}$ restricted to the ball $B$ is bounded below by:
	\[\|E_{X,k} 1_B\|_{L^2(X \times X)}\geq (\|E_{X,k}\|_{\infty}-rK)\sqrt{\mu_{X \times X}(B)}. \]
	By $(a^2,ab)$-standardness, 
	\[\mu_{X}(B) \geq a^2 r^{2b},\]
	so 
	\[\sqrt{\mu_{X}(B)} \geq ar^{b}.  \]
	However, by writing $E_{X,k}(x,x') = \sum_{i=k+1}^{\infty}\lambda_{i}\phi_{i}(x)\phi_{i}(x')$, the orthonormality of our eigenfunctions (and hence the orthonormality of the products $\phi_{i}(x)\phi_{i}(x')$ on $X \times X$) allows us to deduce that:
	\begin{align*}
\|E_{X,k}\|_{L^2(X \times X)}^{2} &= \|\sum_{i=k+1}^{\infty}\lambda_i \phi_{i}(x)\phi_{i}(x')\|_{L^2(X \times X)}^{2}\\
& = \sum_{i=k+1}^{\infty} \|\lambda_i \phi_{i}(x)\phi_{i}(x')\|_{L^2(X \times X)}^{2}\\
& = \sum_{i=k+1}^{\infty} \lambda_{i}^{2}\|\phi_i\|_{L^{2}(X)}^{2}\|\phi_i\|_{L^{2}(X)}^{2}\\
& = \sum_{i=k+1}^{\infty} \lambda_{i}^{2}\\
& = \Lambda_{k+1}.		
	\end{align*}

	Thus,
	\[\sqrt{\Lambda_{k+1}} = \|E_{X,k}\|_{L^2(X \times X)} \geq \|E_{X,k} 1_B\|_{L^2(X \times X)} \geq (\|E_{X,k}\|_{\infty}-rK)ar^b,  \]	
	
	which gives the result.
\end{proof}

\section{The Intrinsic PHT/ECT}
\label{sec:intrinsicpht}

\subsection*{Background}
The results of this section assume familiarity with certain tools and results in applied topology, namely those pertaining to persistence modules and persistence diagrams. For an introduction to these topics, the reader can consult the articles of Ghrist \cite{ghrist2008barcodes} and Carlsson \cite{carlsson2009topology}. A more formal and comprehensive treatment can be found in the texts by Edelsbrunner and Harer \cite{edelsbrunner2010computational}, Ghrist \cite{ghrist2014elementary}, and Oudot \cite{oudot2015persistence}. 

\subsection{Existence Results}

In this section, we introduce a large family of topological transforms. These topological transforms can be defined for any homological degree, and so we work with graded persistence diagrams: families of persistence diagrams indexed by degree. Thus, the notation $PH(X,f)$ will correspond to the graded persistence diagram of the sublevel set filtration of the function $f$ on the space $X$, and the notation $\mathbf{GrDiag}$ will refer to the space of graded persistence diagrams.\\

Our first results demonstrate that, under certain general hypotheses, persistence diagrams and their associated Betti and Euler curves exist.

\begin{prop}
	\label{prop:pers}
	 	Let $(X,\dist_{X},\mu_X)$ be a compact metric measure space homeomorphic to the geometric realization of a finite simplicial complex. Then, any finite linear combination $f = \sum_{i=1}^{n}c_i \phi_i$ of eigenfunctions of $D^X$ with nonzero eigenvalue has a well-defined sublevel set persistence diagram $PH(X,f)$.
\end{prop}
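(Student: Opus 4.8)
The plan is to show that, in each homological degree, the sublevel-set persistence module of $f$ is \emph{q-tame}, and then to invoke the structure theory of q-tame modules to conclude that it has a well-defined persistence diagram.

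First I would check that $f$ is continuous. Each eigenfunction $\phi_i$ entering the sum has nonzero eigenvalue, so by Lemma~\ref{lem:eiglip} it is $(\sqrt{\vol(X)}/|\lambda_i|)$-Lipschitz; hence $f=\sum_{i=1}^n c_i\phi_i$ is Lipschitz (with constant at most $\sqrt{\vol(X)}\sum_i |c_i|/|\lambda_i|$), and in particular continuous. This is precisely why the hypothesis restricts to eigenfunctions with nonzero eigenvalue, the only case in which the Lipschitz bound of Lemma~\ref{lem:eiglip} is available. Fix a homeomorphism $X\cong|K|$ with $K$ a finite simplicial complex, a field of coefficients, and a homological degree $k$. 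Writing $X^t:=f^{-1}((-\infty,t])$, the inclusions $X^s\hookrightarrow X^t$ for $s\le t$ give a persistence module $\mathbb V\colon t\mapsto H_k(X^t)$, and it suffices to prove that the map $v_s^t\colon H_k(X^s)\to H_k(X^t)$ has finite rank for all $s<t$.

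The key step is this finite-rank statement, which I would establish by PL approximation. Given $s<t$, put $\delta:=(t-s)/2>0$. Since $f$ is continuous on the compact polyhedron $|K|$, there is a piecewise-linear map $g\colon X\to\R$ (linear on the simplices of some subdivision of $K$) with $\|f-g\|_\infty<\delta$. Then
\[
  X_f^{\,s}\ \subseteq\ X_g^{\,s+\delta}\ \subseteq\ X_f^{\,s+2\delta}\ =\ X_f^{\,t},
\]
so $v_s^t$ factors through $H_k(X_g^{\,s+\delta})$. Because $X_g^{\,s+\delta}$ is a sublevel set of a PL map, it is a subpolyhedron of $|K|$ that deformation retracts onto a subcomplex of a subdivision of $K$; hence $H_k(X_g^{\,s+\delta})$ is finite-dimensional, and $\operatorname{rk} v_s^t<\infty$. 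Thus $\mathbb V$ is q-tame. By the structure theory of q-tame persistence modules (see, e.g., \cite{oudot2015persistence}), $\mathbb V$ then has a well-defined persistence diagram; assembling these diagrams over all homological degrees $k$ produces the graded diagram $PH(X,f)$.

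The main obstacle is the q-tameness argument of the third paragraph: continuity of $f$ is immediate from Lemma~\ref{lem:eiglip}, but converting "continuous function on a compact triangulable space" into the finite-rank condition requires the approximation/factorization trick together with the standard (if not entirely trivial) fact that sublevel sets of PL functions are triangulable and hence have finitely generated homology. One should not hope for more in this generality: a generic Lipschitz $f$ may have a non-discrete set of homological critical values, so the diagram can have points accumulating onto the diagonal, which is exactly why the sharper statements of Proposition~\ref{prop:betticurves} and the results following it impose additional regularity.
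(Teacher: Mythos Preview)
Your proof is correct and follows essentially the same approach as the paper: establish continuity of $f$ via Lemma~\ref{lem:eiglip}, then deduce existence of the persistence diagram from the q-tameness of the sublevel-set module. The only difference is one of presentation: the paper dispatches the second step by directly citing Theorem~3.33 of \cite{chazal2016structure}, whereas you unpack the content of that theorem by giving the PL-approximation/factorization argument for q-tameness explicitly. Your version is more self-contained; the paper's is shorter but relies on the reader consulting the reference.
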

\begin{proof}
	Lemma \ref{lem:eiglip} states that the eigenfunctions of $D^X$ are Lipschitz, and hence continuous. Since $f$ is a finite linear combination of continuous functions, it, too, is continuous. Lastly, we appeal to Theorem 3.33 in \cite{chazal2016structure}, which asserts the existence of persistence diagrams of continuous functions on geometric realizations of finite simplicial complexes. 
\end{proof}

In addition to persistent homology, we also define the Betti and Euler curves of the pair $(X,f)$.

\begin{definition}
	Let $(X,\dist_{X},\mu_X)$ be a compact metric measure space. For any homological degree $k \geq 0$, and any finite linear combination $f = \sum_{i=1}^{n}c_i \phi_i$ of eigenfunctions of $D^X$ with nonzero eigenvalue, we view our graded persistence diagrams as graded barcodes and (tentatively) define the degree-$k$ Betti curve to be the sum of the indicator functions of the intervals in the degree-$k$ persistent homology of $(X,f)$:
	\[\beta_{k}(X,f) = \sum_{I \in PH_{k}(X,f)} 1_{I}.  \]
	The Euler curve is then (tentatively) defined to be the alternating sum of these Betti curves:
	\[\chi(X,f) = \sum_{k=0}^{\infty} (-1)^{k} \beta_{k}(X,f).   \]
\end{definition} 

Justifying the existence of these curves requires introducing some more ideas from the theory of persistent homology.

\begin{definition}
	For a point $x$ in a persistence diagram, define $\operatorname{per}(x)$ to be the distance from $x$ to the diagonal. For a real-valued function $f: X \to \mathbb{R}$ on a triangulable, compact metric space $X$, exponent $q > 0$, and threshold parameter $t \geq 0$, we compute the sublevel set persistence and define:
	\[\operatorname{Pers}_{q}(f,t) = \sum_{\operatorname{per}(x) > t} \operatorname{per}(x)^q.  \]
	This is the sum, over all homological degrees, of the $q$th powers of the persistence of points in $PH(X,f)$ with persistence more than $t$. When $t=0$, this quantity is called the \emph{ degree-$q$ total persistence of $f$.} Note that the parameter $q$ refers to the exponent in the sum, not the homological degree of the persistence diagram.
\end{definition}

Bounding the total degree-$q$ persistence of $PH(X,f)$ necessitates placing some restrictions on $X$ and $f$. 

\begin{definition}
	A triangulable metric space $X$ has \emph{polynomial combinatorial complexity} if there are constants $C_0$ and $d$ such that, for any radius parameter $r>0$, there exists a triangulation $T$ of $X$ where every triangle has diameter at most $r$, and $T$ has at most $C_0/r^d$ simplices. 
\end{definition}

In~\cite{cohen2010lipschitz}, the authors note that the bilipschitz image of a $d$-dimensional Euclidean simplicial complex is always of polynomial combinatorial complexity. They also prove that Lipschitz functions on such spaces have finite degree-$q$ total persistence, for $q$ sufficiently large.

\begin{theorem}[{\cite[Section 2.3]{cohen2010lipschitz}}]
	\label{thm:boundedtotper}
	Let $f: X \to \mathbb{R}$ be a Lipschitz function on a metric space $X$ of $(C_0,d)$-polynomial combinatorial complexity which is homeomorphic to the geometric realization of a finite simplicial complex. Let $q = d + \delta$ for some constant $\delta > 0$. Then we have the following bound on  degree-$q$ total persistence of the sublevel set filtration:
	\[\operatorname{Pers}_{q}(f,0) \leq C_0 \operatorname{Lip}(f)^{d}\operatorname{Amp}(f)^\delta \cdot \left(1 + \frac{d+ \delta}{\delta}  \right),  \]
	where $\operatorname{Lip}(f)$ is the Lipschitz constant of $f$ and $\operatorname{Amp}(f) = \max f - \min f$.
\end{theorem}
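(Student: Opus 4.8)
The plan is to reduce the bound to an estimate on how many points of the diagram lie far from the diagonal. For $t>0$ write $N(t):=\#\{x\in PH(X,f):\operatorname{per}(x)>t\}$, where $\operatorname{per}(x)$ is the distance from $x$ to the diagonal as above, so that by the layer-cake formula $\operatorname{Pers}_q(f,0)=\sum_x\operatorname{per}(x)^q=\int_0^\infty q\,t^{q-1}N(t)\,dt$. I would first record two easy consequences of $(C_0,d)$-polynomial combinatorial complexity. Applying its definition at scale $r=\operatorname{diam}(X)$ gives a triangulation of $X$ with at most $C_0/\operatorname{diam}(X)^d$ simplices; since any triangulation has at least one simplex, $\operatorname{diam}(X)^d\le C_0$, and combined with the Lipschitz hypothesis, $\operatorname{Amp}(f)=\max f-\min f\le\operatorname{Lip}(f)\operatorname{diam}(X)$. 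Moreover, with the standard bounded-persistence convention the essential classes of the sublevel-set filtration die at $\max f$, so every persistence point has $\operatorname{per}(x)\le\operatorname{Amp}(f)$ and hence $N(t)=0$ for $t\ge\operatorname{Amp}(f)$.

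The core estimate is that the number $N_{\mathrm{fin}}(t)$ of off-diagonal, finitely-persistent points of persistence more than $t$ satisfies $N_{\mathrm{fin}}(t)\le C_0\bigl(\operatorname{Lip}(f)/t\bigr)^d$. To see this, put $r:=t/\operatorname{Lip}(f)$ and use polynomial combinatorial complexity to pick a triangulation $T_r$ of $X$ in which every simplex has diameter at most $r$ and which has at most $C_0/r^d$ simplices. Let $g\colon X\to\mathbb{R}$ agree with $f$ on the vertices of $T_r$ and be affine on each simplex. Because $f$ is $\operatorname{Lip}(f)$-Lipschitz and simplices have diameter at most $r$, we get $\|f-g\|_\infty\le\operatorname{Lip}(f)\,r=t$. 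The sublevel-set persistence of the piecewise-linear function $g$ is that of a filtered finite simplicial complex, so $PH(X,g)$ has at most $C_0/r^d$ points off the diagonal; by the bottleneck stability theorem, $d_b(PH(X,f),PH(X,g))\le\|f-g\|_\infty\le t$, and in any matching realizing this a point $x\in PH(X,f)$ with $\operatorname{per}(x)>t$ cannot be sent to the diagonal (which would cost $\operatorname{per}(x)>t$), so it is matched injectively to an off-diagonal point of $PH(X,g)$. Hence $N_{\mathrm{fin}}(t)\le C_0/r^d=C_0\bigl(\operatorname{Lip}(f)/t\bigr)^d$.

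Assembling: the finitely many essential points number at most $C_0/\operatorname{diam}(X)^d$ (bounded by the simplex count of a triangulation at scale $\operatorname{diam}(X)$) and each has $\operatorname{per}(x)\le\operatorname{Amp}(f)$, so together they contribute at most $(C_0/\operatorname{diam}(X)^d)\operatorname{Amp}(f)^q\le (C_0/\operatorname{diam}(X)^d)(\operatorname{Lip}(f)\operatorname{diam}(X))^d\operatorname{Amp}(f)^\delta=C_0\operatorname{Lip}(f)^d\operatorname{Amp}(f)^\delta$, which is the ``$1$'' in the stated constant. For the finite points, using $N_{\mathrm{fin}}=0$ beyond $\operatorname{Amp}(f)$,
\[
\begin{aligned}
\sum_{x\ \text{finite}}\operatorname{per}(x)^q
 &= \int_0^{\operatorname{Amp}(f)} q\,t^{q-1}N_{\mathrm{fin}}(t)\,dt
 \;\leq\; q\,C_0\operatorname{Lip}(f)^d\int_0^{\operatorname{Amp}(f)} t^{\,\delta-1}\,dt\\
 &= \frac{q}{\delta}\,C_0\operatorname{Lip}(f)^d\operatorname{Amp}(f)^\delta,
\end{aligned}
\]
and this is exactly where the hypothesis $\delta=q-d>0$ is indispensable, since it is what makes $\int_0^{\operatorname{Amp}(f)}t^{\delta-1}\,dt=\operatorname{Amp}(f)^\delta/\delta$ converge. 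Since $q/\delta=(d+\delta)/\delta$, adding the two contributions yields $\bigl(1+\tfrac{d+\delta}{\delta}\bigr)C_0\operatorname{Lip}(f)^d\operatorname{Amp}(f)^\delta$, as claimed.

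The step I expect to require the most care — and for which I would follow \cite{cohen2010lipschitz} rather than improvise — is the passage from $f$ to its piecewise-linear model $g$ on $T_r$ with the constant kept exactly at $C_0$: one must verify that the sublevel sets $g^{-1}(-\infty,a]$ genuinely realize the filtration of a complex with at most $C_0/r^d$ simplices (reconciling $g^{-1}(-\infty,a]$ with honest subcomplexes of $T_r$ via deformation retraction) and that no spurious constant enters through the stability comparison. Everything else is the elementary integration above, which makes clear that the genericity condition $q>d$ is the actual content of the theorem: it pins down the precise exponent window in which a Lipschitz function on a triangulable space of polynomial combinatorial complexity has finite total persistence.
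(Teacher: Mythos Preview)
The paper does not supply its own proof of this theorem: it is stated as a citation to \cite[Section 2.3]{cohen2010lipschitz} and used as a black box. Your argument is the standard one from that reference---approximate $f$ by a piecewise-linear function on a fine triangulation, invoke bottleneck stability to bound the number of points of persistence exceeding $t$ by the simplex count $C_0(\operatorname{Lip}(f)/t)^d$, and integrate via the layer-cake formula---and it is correct. The separation into essential and finite points, together with the convention that essential classes are capped at $\max f$, is handled cleanly and accounts for the ``$1$'' in the constant exactly as in the original.
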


This motivates the following definition:

\begin{definition}[ {\cite[Section 2.3]{cohen2010lipschitz}} ]
	\label{def:boundedtotal}
	A metric space $X$ \emph{implies bounded degree-$q$ total persistence} if there is a constant $C_{X}$ such that $\operatorname{Pers}_{q}(f,0) \leq C_{X}$ for any real-valued Lipschitz function $f$ with $\operatorname{Lip}(f) \leq 1$.\\
	
	Note that if $X$ implies bounded degree-$q$ total persistence, then $\operatorname{Pers}_{q}(f,0) \leq \operatorname{Lip}(f)^{q} C_{X}$ for any Lipschitz function $f$, as scaling a function only changes the resulting sublevel set persistence by scaling the endpoints of the intervals in its barcode.
\end{definition}
%
%
%

We now prove the existence of Betti and Euler curves for eigenfunctions with nonzero eigenvalue.

\begin{prop}
	\label{prop:betticurves}
	Suppose that $X$ is homeomorphic to the geometric realization of a finite simplicial complex that implies bounded degree-$q$ total persistence. Let $p=1/q$. Then for any homological degree $k$, the sum defining $\beta_{k}(X,f)$ converges in $L^p$. Moreover, the sum defining $\chi(X,f)$ is finite, so the Euler curve exists as a function in $L^p$.
\end{prop}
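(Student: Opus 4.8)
The plan is to bound the $L^p$ size of each bar-indicator $1_I$, sum over the barcode, and invoke completeness. First I would note that $f=\sum_{i=1}^{n}c_i\phi_i$ is Lipschitz: each $\phi_i$ with nonzero eigenvalue is Lipschitz by Lemma~\ref{lem:eiglip}, so the finite linear combination is too; write $L=\operatorname{Lip}(f)$. By Proposition~\ref{prop:pers} the graded diagram $PH(X,f)$ exists, and by the bounded degree-$q$ total persistence hypothesis together with the scaling remark in Definition~\ref{def:boundedtotal},
\[
\sum_{I\in PH_{k}(X,f)}\operatorname{per}(I)^{q}\;\le\;\operatorname{Pers}_{q}(f,0)\;\le\;L^{q}C_{X}\;<\;\infty
\]
for every homological degree $k$.

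Next I would enumerate the degree-$k$ bars $I_1,I_2,\dots$ in order of non-increasing persistence. Because $\{f\le t\}=X$ for $t\ge\max f$, every bar with finite death lies inside the bounded interval $[\min f,\max f]$, and there are only $\dim H_{k}(X)<\infty$ essential classes; restricting to $[\min f,\max f]$ (on which $\beta_{k}(X,f)$ is effectively supported) we may treat each $I_j$ as a bounded interval of length $\ell_j\le 2\operatorname{per}(I_j)$. The one computation needed is
\[
\|1_{I_j}\|_{L^{p}}=\ell_j^{1/p}=\ell_j^{q}\le\bigl(2\operatorname{per}(I_j)\bigr)^{q},
\]
since $p=1/q$.

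Convergence then follows from a Cauchy argument on the partial sums $g_N=\sum_{j=1}^{N}1_{I_j}$. When $q\le 1$, so that $p\ge 1$ and $L^{p}$ is a Banach space, the triangle inequality and the two displays above give
\[
\Bigl\|\sum_{j=N+1}^{M}1_{I_j}\Bigr\|_{L^{p}}\le\sum_{j=N+1}^{M}\|1_{I_j}\|_{L^{p}}\le 2^{q}\sum_{j=N+1}^{M}\operatorname{per}(I_j)^{q},
\]
and the right-hand side tends to $0$ as $N\to\infty$ since the full series $\sum_j\operatorname{per}(I_j)^{q}$ converges; hence $(g_N)$ is Cauchy, converges in $L^{p}$, and — as $g_N$ also increases pointwise to $\beta_{k}(X,f)$ — that function is the $L^{p}$-limit. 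For general $q$ (possibly $p<1$), $g_N$ still increases pointwise to $\beta_{k}(X,f)$, which is finite almost everywhere because bounded degree-$q$ total persistence makes the sublevel-set module $q$-tame; once one knows $\beta_{k}(X,f)\in L^{p}$, the inequality $0\le\beta_{k}(X,f)-g_N\le\beta_{k}(X,f)$ and dominated convergence give $g_N\to\beta_{k}(X,f)$ in the $L^{p}$ metric. For the Euler curve, $X$ is homeomorphic to a finite $d$-dimensional complex, so $\beta_{k}(X,f)\equiv 0$ for $k>d$, and $\chi(X,f)=\sum_{k=0}^{d}(-1)^{k}\beta_{k}(X,f)$ is a finite integer combination of the $L^{p}$ functions $\beta_{k}(X,f)$, hence itself an element of $L^{p}$.

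The step I expect to be the real obstacle is the case $p<1$: there the naive tail bound $\sum_{j>N}\|1_{I_j}\|_{L^{p}}^{p}=\sum_{j>N}\ell_j$ on the $L^{p}$ quasi-norm can diverge — degree-$1$ total persistence may be infinite once $d\ge 2$ — so one cannot conclude purely from $\sum_j\operatorname{per}(I_j)^{q}<\infty$ and must additionally exploit $q$-tameness (equivalently, the polynomial combinatorial complexity of $X$) to control how many short bars can simultaneously contain a common point. If, as in Corollary~\ref{cor:ECTlip}, one restricts to $q\le 1$, this difficulty disappears and the triangle-inequality argument above is already a complete proof.
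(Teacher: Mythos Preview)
Your core argument is the same as the paper's: compute $\|1_I\|_{L^p}=\operatorname{len}(I)^{1/p}=\operatorname{len}(I)^{q}\le(2\operatorname{per}(I))^{q}$, sum over the barcode, and invoke bounded degree-$q$ total persistence. The paper phrases this as the ``Weierstrass M-test'', you phrase it as a Cauchy/triangle-inequality tail estimate; in a Banach space these are identical. Your treatment of the Euler curve (finitely many nontrivial homological degrees because $X$ is a finite complex) is also exactly what the paper does.

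Where you go beyond the paper is in two places. First, you explicitly dispose of the essential (infinite) bars by restricting to $[\min f,\max f]$; the paper writes $\|1_I\|_{L^p}=\operatorname{len}(I)^{1/p}$ without comment, tacitly assuming finite bars. Second, and more substantively, you correctly isolate the case $p<1$ (equivalently $q>1$) as problematic: the inequality $\|\sum_j 1_{I_j}\|_{L^p}\le\sum_j\|1_{I_j}\|_{L^p}$ fails there, so $\sum_j\operatorname{per}(I_j)^{q}<\infty$ does not directly give a Cauchy sequence in the $L^p$ quasi-norm, while the honest metric estimate $\|\sum_{j>N}1_{I_j}\|_{L^p}^{p}\le\sum_{j>N}\operatorname{len}(I_j)$ needs degree-$1$ total persistence, which may be infinite. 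The paper does \emph{not} make this distinction---it invokes the M-test uniformly in $p$---so the difficulty you flag in your last paragraph is present in the paper's own proof as well, not a deficiency of your proposal relative to it. Your reduction to showing $\beta_k(X,f)\in L^p$ directly (then using dominated convergence) is the right shape of fix, but, as you say, that membership still needs an argument when $q>1$.

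In short: for $q\le 1$ your proof is complete and coincides with the paper's; for $q>1$ you have identified a genuine gap that the paper shares.
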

\begin{proof}
	This proof makes use of Lemma \ref{lem:eiglip}, that eigenfunctions of $D^{X}$ with nonzero eigenvalue are Lipschitz. We would like to show that the potentially infinite sum of indicator functions arising in the definition of $\beta_{k}(X,f)$ converges in $L^p$ for $0 < p < \infty$. The Weierstrass M-test guarantees convergence if:
	\[\sum_{I \in PH_{k}(X,f)} \|1_{I}\|_{L^p} < \infty. \]
	
	Note that the length of an interval $I$ in a barcode is twice the persistence of the corresponding point $x$ in the associated persistence diagram. Thus we have:
	\[\| 1_I \|_{L^p} = \operatorname{len}(I)^{1/p} = (2 \operatorname{per}(x))^{1/p} .\]

	We can thus rewrite the above sum:

	\[\sum_{I \in PH_{k}(X,f)} \|I\|_{L^p} = 2^{1/p} \sum_{x \in PH_{k}(X,f)} \operatorname{per}(x)^{1/p}.\]
	
	Since $p = 1/q$ and $f$ is Lipschitz, we can leverage the fact that $X$ implies bounded degree-$q$ total persistence to show:
	\[\sum_{x \in PH_{k}(X,f)} \operatorname{per}(x)^{1/p} = \sum_{x \in PH_{k}(X,f)} \operatorname{per}(x)^{q}  \leq \sum_{j=0}^{\infty} \sum_{x \in PH_{j}(X,f)} \operatorname{per}(x)^{1/p} = \operatorname{Pers}_{q}(f,0) < \infty. \]
	
 Thus, we have shown that $\beta_{k}(X,f)$ exists and is well-defined.\\
	
	To demonstrate that the sum defining $\chi(X,f)$ is finite, we merely note that, as $X$ is homeomorphic to the geometric realization of a finite simplical complex, there is a finite degree $K$ such that no subset of $X$ has nontrivial homology in degree $k \geq K$. Thus we can write:
	\[\chi(X,f) = \sum_{k=0}^{K} (-1)^k \beta_{k}(X,f).  \]  
\end{proof}

We now define the topological transforms of interest, beginning with those previously studied and then introducing our new contributions. We continue to assume all the assumptions needed to guarantee the existence of the objects in question: our eigenspaces have multiplicity zero, our scheme for identifying ``positive" eigenfunctions does not fail, we only take eigenfunctions with nonzero eigenvalue, the metric measure space is homeomorphic to the geometric realization of a finite simplicial complex and, to guarantee the existence of the appropriate Euler curves, the metric space implies bounded degree-$q$ total persistence.\\

%
	

\begin{definition}
	Let $\mathbb{S}^{k}$ be the $k$-dimensional sphere, and $L(\mathbb{R}^{k+1},\mathbb{R})$ the space of linear maps from $\mathbb{R}^{k+1}$ to $\mathbb{R}$. Define the map $\Theta : \mathbb{S}^{k} \to L(\mathbb{R}^{k+1},\mathbb{R})$ which sends $v \in \mathbb{S}^{k}$ to the map $x \mapsto \langle x, v \rangle$.
\end{definition}

\begin{definition}(\cite{curry2018many})
	Let $X \subset \mathbb{R}^d$ be a compact, definable set\footnote{See \cite{curry2018many} \S2 for the definition of definable sets.}. For every $v \in \mathbb{S}^{d-1}$, the sublevel set persistence diagram and Euler curve of the pair $(X,\Theta(v))$ exist. The \emph{Persistent Homology Transform} is the map $PHT(X):\mathbb{S}^{d-1} \to \mathbf{GrDiag}$  defined by:
	\[PHT(X)(v) = PHT(X,\Theta(v)).\]
	If one computes Euler curves instead of persistence diagrams, one obtains the \emph{Euler Characteristic Transform} $ECT(X)$.
\end{definition}

\begin{theorem}(\cite{curry2018many,ghrist2018persistent})
	\label{thm:PHTinj}
The PHT and ECT are injective for all $k$. That is, let $X,Y \subset \mathbb{R}^{d}$ be compact, definable subsets. If $PHT(X) = PHT(Y)$ or $ECT(X) = ECT(Y)$, then $X=Y$ as sets.
\end{theorem}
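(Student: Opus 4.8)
The plan is to deduce injectivity of the PHT from injectivity of the ECT, and then to establish the latter by inverting an Euler-characteristic Radon transform. First I would observe that $PHT(X)$ determines $ECT(X)$: for a fixed direction $v \in \mathbb{S}^{d-1}$, the sublevel set of $\Theta(v)$ at height $t$ is the compact definable set $X_{v,t} := X \cap \{x \in \mathbb{R}^d : \langle x,v\rangle \le t\}$, and by the standard dictionary between a persistence diagram and its rank function, the number of bars in the degree-$j$ barcode of $(X,\Theta(v))$ containing $t$ equals $\dim H_j(X_{v,t})$. Hence $PHT(X)(v)$ determines $t \mapsto (\beta_0(X_{v,t}),\beta_1(X_{v,t}),\dots)$ and therefore the Euler curve $t \mapsto \chi(X_{v,t}) = \sum_{j\ge 0}(-1)^j\beta_j(X_{v,t}) = ECT(X)(v)(t)$. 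So it suffices to prove that $ECT(X) = ECT(Y)$ implies $X = Y$.

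Next I would upgrade the half-space data recorded by the ECT to the full Euler-characteristic Radon transform of the constructible function $\mathbf{1}_X$. Writing $\chi$ for the compactly supported Euler characteristic, which is a valuation on compact definable sets, and noting that $X^{v,a} := X \cap \{\langle x,v\rangle \ge a\}$ equals $X_{-v,-a}$, inclusion--exclusion applied to $X = X_{v,b} \cup X^{v,a}$ (whose intersection is the compact slab) gives
\[\chi\big(X\cap\{a\le\langle x,v\rangle\le b\}\big) = ECT(X)(v)(b) + ECT(X)(-v)(-a) - \chi(X),\]
with $\chi(X) = \lim_{t\to\infty}ECT(X)(v)(t)$; so every compact slab Euler characteristic is determined by $ECT(X)$. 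By o-minimal triviality of the definable function $x \mapsto \langle x,v\rangle$ on $X$, for each $t$ the slab $X\cap\{t-\epsilon\le\langle x,v\rangle\le t\}$ deformation retracts onto the affine slice $X\cap\{\langle x,v\rangle = t\}$ once $\epsilon>0$ is small, so the Radon transform value $R\mathbf{1}_X(v,t) := \chi\big(X\cap\{\langle x,v\rangle=t\}\big)$ is likewise recovered. Thus $ECT(X)$ determines $R\mathbf{1}_X$ on the space of affine hyperplanes in $\mathbb{R}^d$.

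The crux of the argument, and the step I expect to be the main obstacle, is then to invert this Radon transform. I would invoke Schapira's inversion formula for the Euler-characteristic Radon transform of constructible functions: $\mathbf{1}_X$ is recovered from $R\mathbf{1}_X$ by an explicit integral-geometric formula involving a dual Radon transform and a nonzero normalizing constant depending only on $d$ --- with the even-dimensional case handled by the usual correction term involving the total Euler integral $\int_{\mathbb{R}^d}\mathbf{1}_X\,d\chi = \chi(X)$, itself read off from $ECT(X)$. In particular, for every $y\in\mathbb{R}^d$ the value $\mathbf{1}_X(y)\in\{0,1\}$ is determined by $ECT(X)$, so $ECT(X) = ECT(Y)$ forces $\mathbf{1}_X = \mathbf{1}_Y$, i.e.\ $X = Y$. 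The difficulty here is purely in the Euler-calculus infrastructure --- constructibility of all the sets involved in the ambient o-minimal structure, Fubini's theorem for Euler integration, and the precise constants in Schapira's formula --- whereas the first two steps are bookkeeping with rank functions and the valuation property of $\chi$. An alternative to citing Schapira's theorem wholesale, closer to the original argument of \cite{curry2018many}, is to integrate $ECT(X)$ directly against a suitable kernel on $\mathbb{S}^{d-1}\times\mathbb{R}$ and collapse the resulting double integral via Fubini for Euler integration to a local quantity detecting membership in $X$, thereby reproving the needed case of the inversion formula by hand.
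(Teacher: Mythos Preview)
The paper does not supply a proof of this theorem: it is stated with a citation to \cite{curry2018many,ghrist2018persistent} and used as a black box (e.g.\ in the proof of Theorem~\ref{thm:ePKTcoarseinj}). There is therefore no proof in the paper to compare against.

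That said, your sketch is faithful to the arguments in the cited references. The reduction $PHT \Rightarrow ECT$ via rank functions is standard and correct. The recovery of the Radon-type data from $ECT$ and the appeal to Schapira's inversion formula is exactly the strategy of \cite{ghrist2018persistent} (and, in a closely related form, \cite{curry2018many}). One small caveat: your passage from thin slabs to hyperplane slices via o-minimal (Hardt) triviality is only immediate away from the finitely many critical values of $\langle \cdot, v\rangle$ on $X$; at those values the slab need not retract onto the slice. This does not affect the conclusion, since the discrepancy occurs on a measure-zero (indeed finite) set of heights for each $v$ and the constructible function $\mathbf{1}_X$ is still determined, but it is worth flagging if you intend this as more than a sketch. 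Alternatively, one can bypass the slab-to-slice step entirely and work directly with half-space Euler characteristics, as in \cite{ghrist2018persistent}, where the inversion kernel is adapted to half-spaces rather than hyperplanes.
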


We now define our new topological transforms, of which there are two types: (1) intrinsic transforms, which compute the persistence diagrams and Euler curves of linear combinations of eigenfunctions, and (2) embedded transforms, which are the composition of the PHT or ECT with the distance kernel embedding. 

\begin{convention}
	Recall that the coordinate functions $\alpha_{i}$ defining the distance kernel embedding are complex-valued. For the purpose of computing topological invariants, we would like our height functions to be real-valued. We will thus replace each function $\alpha_{i}$ with its real and imaginary parts: $\alpha_{i}^{R} = \operatorname{Re}(\alpha_{i})$ and $\alpha_{i}^{I} = \operatorname{Im}(\alpha_{i})$, and likewise replace the embedding $\Phi_{k}$ by two $\mathbb{R}^{k}$ valued maps:  $\Phi_{k}^{R} = \operatorname{Re}(\Phi_{k})$ and $\Phi_{k}^{I} = \operatorname{Im}(\Phi_{k})$. Note that, for any index $i$, one of $\alpha_{i}^{R}$ or $\alpha_{i}^{I}$ is identically zero, and the other is equal to $\sqrt{|\lambda_i}|\phi_{i}$.
\end{convention}

\begin{definition}	
	\label{def:transforms}	
	For $k$ finite, the \emph{embedded persistence kernel transform} e-$PKT_{k}(X)$ is the PHT applied to the image of the embedding $(\Phi_{k}^{R}(X),\Phi_{k}^{I}(X)) \subset \mathbb{R}^{2k}$, which takes as input vectors in $\mathbb{S}^{2k-1}$ and takes values in $\mathbf{GrDiag}$. The \emph{intrinsic persistence kernel transform} i-$PKT_{k}(X)$ is the map from $\mathbb{S}^{2k-1}$ to $\mathbf{GrDiag}$ that maps $(u,v) \in \mathbb{S}^{2k-1} \subset \mathbb{R}^{k} \times \mathbb{R}^{k}$ to the sublevel set persistence of the pair: 
	\[\left(X,\sum_{i=0}^{k} u_{i}\alpha_{i}^{R} + v_{i}\alpha_{i}^{I} \right).\]
		Using Euler curves in place of persistent homology gives rise to the embedded and intrinsic \emph{Euler kernel transforms}, noted e-$EKT_{k}$ and i-$EKT_{k}$, respectively.
\end{definition}

\begin{remark}
	The difference between the intrinsic and embedded transforms is that the intrinsic transforms are defined using the metric measure space $X$ as the base topological space, whereas the embedded transforms use its image $(\Phi_{k}^{R}(X),\Phi_{k}^{I}(X)) \subset \mathbb{R}^{2k}$. When the mapping $(\Phi_{k}^{R},\Phi_{k}^{I})$ is injective (which is equivalent to the injectivity of $\Phi_{k}$) the two transforms are equivalent, as $X$ is homeomorphic to $(\Phi_{k}^{R}(X),\Phi_{k}^{I}(X))$. Otherwise, they contain different information, as $(\Phi_{k}^{R}(X),\Phi_{k}^{I}(X))$ is homeomorphic to the quotient of $X$ obtained by identifying points $x,x' \in X$ whenever $\phi_{i}^{R}(x)= \phi_{i}^{R}(x')$ and $\phi_{i}^{I}(x)= \phi_{I}^{R}(x')$ for all $i = 1, \cdots, k$. Note, in particular, that the topological type of $(\Phi_{k}^{R}(X),\Phi_{k}^{I}(X))$ may change as $k$ increases.\\
	
	To give a simplified example of the failure of the intrinsic and embedding transforms to give the same result, set $k=1$ and suppose that the first coordinate function $\alpha_{1}$ is constant. Then the set $\Phi_{1}(X) \subset \mathbb{R}^2$ is simply a point, and for any vector direction, the PHT or ECT give the persistence diagram or Euler curve of a point. However, the sublevel set filtration of the pair $(X,\alpha_{1})$ gives the empty set until the constant value of $\alpha_{1}$ is reached, at which point the entire space $X$ appears all at once, and its homology groups/Euler characteristic appear in the persistence diagram or Euler curve.
\end{remark}


\subsection{Stability and Inverse Results}
We will make use of the well-known functional stability result in the theory of persistence.

\begin{theorem}[Functional Stability, \cite{chazal2016structure,cohen2005stability}]
	\label{thm:funcstab}
	Let $X$ be a topological space, and let $f,g:X \to \mathbb{R}$ be two functions whose sublevel sets have finite-dimensional homology groups. Then $(X,f)$ and $(X,g)$ give rise to pointwise finite dimensional persistence modules with well-defined graded persistence diagrams $PH(X,f)$ and $PH(X,g)$. Moreover, writing $d_{B}$ for the graded bottleneck distance, we have:
	\[d_{B}(PH(X,f),PH(X,g)) \leq \|f-g\|_{\infty}.\]
\end{theorem}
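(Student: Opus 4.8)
The plan is to reduce the statement to the algebraic stability (isometry) theorem for persistence modules via the standard sublevel-set interleaving argument. First I would record that the hypothesis — the sublevel sets of $f$ and $g$ have finite-dimensional homology — means precisely that the persistence modules $\mathbb{V} = H_k(f^{-1}(-\infty,\cdot\,])$ and $\mathbb{W} = H_k(g^{-1}(-\infty,\cdot\,])$ are pointwise finite-dimensional in every homological degree $k$. By the structure theorem for pointwise finite-dimensional persistence modules over a field (Crawley-Boevey; see \cite{chazal2016structure}), each such module decomposes as a direct sum of interval modules, so the graded persistence diagrams $PH_k(X,f)$ and $PH_k(X,g)$ are well defined. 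This handles the first assertions of the theorem.

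Next, set $\delta = \|f-g\|_\infty$. For every real $t$ the pointwise inequality $|f(x)-g(x)|\le\delta$ gives inclusions of sublevel sets
\[
f^{-1}(-\infty,t] \;\subseteq\; g^{-1}(-\infty,t+\delta] \;\subseteq\; f^{-1}(-\infty,t+2\delta],
\]
and symmetrically with the roles of $f$ and $g$ exchanged. Applying $H_k$ produces linear maps $\varphi_t\colon \mathbb{V}_t \to \mathbb{W}_{t+\delta}$ and $\psi_t\colon \mathbb{W}_t \to \mathbb{V}_{t+\delta}$. Since every arrow in play — the internal transition maps of $\mathbb{V}$ and $\mathbb{W}$ as well as the maps $\varphi$ and $\psi$ — is induced by an inclusion of topological spaces, all the triangles and squares required of an interleaving commute automatically, because composites of inclusions are inclusions. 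Thus $(\varphi,\psi)$ is a $\delta$-interleaving between $\mathbb{V}$ and $\mathbb{W}$ in degree $k$.

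Finally I would invoke the isometry theorem, i.e.\ the algebraic stability theorem of \cite{chazal2016structure,cohen2005stability}: a $\delta$-interleaving of pointwise finite-dimensional persistence modules forces their persistence diagrams to lie at bottleneck distance at most $\delta$. This yields $d_B(PH_k(X,f),PH_k(X,g)) \le \delta$ for every $k$, and since the graded bottleneck distance is the supremum over homological degrees, we obtain $d_B(PH(X,f),PH(X,g)) \le \|f-g\|_\infty$. The only genuinely substantial ingredient is the isometry theorem itself, whose proof — via the box lemma together with an explicit matching of diagram points, or equivalently via the interpolation lemma between interleaved modules — I would cite rather than reproduce; the rest of the argument is routine bookkeeping with inclusions.
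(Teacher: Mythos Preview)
Your proposal is correct and follows the standard route to functional stability: establish that sublevel-set inclusions furnish a $\delta$-interleaving, then invoke the algebraic stability (isometry) theorem for pointwise finite-dimensional modules. There is nothing to fault in the argument.

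However, note that the paper does not actually prove this theorem at all. It is stated with citations to \cite{chazal2016structure,cohen2005stability} and used as a black box (``We will make use of the well-known functional stability result in the theory of persistence''). So there is no proof in the paper to compare your proposal against; your sketch is simply the standard argument one finds in those references.
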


We now state our first stability result: stability of the i-$PKT_{k}$ and e-$PKT_{k}$ with respect to the vector $v \in \mathbb{S}^{2k-1}$.

\begin{prop}
\label{prop:iPKTlip}
	Suppose that $X$ is homeomorphic to the geometric realization of a finite simplicial complex. If we equip $\mathbf{GrDiag}$ with the graded bottleneck distance and the sphere $\mathbb{S}^{2k-1}$ with the $\ell^1$ distance, then both the i-$PKT_{k}$ and e-$PKT_{k}$ are Lipschitz continuous.
\end{prop}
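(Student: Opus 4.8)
The plan is to reduce both claims to the Functional Stability Theorem (Theorem~\ref{thm:funcstab}): it suffices to show that the map sending a direction on the sphere to its associated real-valued height function is Lipschitz from $(\mathbb{S}^{2k-1},\ell^1)$ into $(C(X),\|\cdot\|_\infty)$ in the intrinsic case, and into $(C(Z),\|\cdot\|_\infty)$ for the image $Z$ in the embedded case; composing this with the $1$-Lipschitz map $f \mapsto PH(\cdot,f)$ provided by functional stability then yields the result. Existence of the diagrams involved is already guaranteed by our running hypotheses (Proposition~\ref{prop:pers} in the intrinsic case, definability of $\Phi_k(X)$ in the embedded case).

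For the i-$PKT_k$, I would take two directions $(u,v),(u',v') \in \mathbb{S}^{2k-1} \subset \mathbb{R}^k\times\mathbb{R}^k$, write the corresponding height functions $f_{(u,v)} = \sum_{i=1}^{k} u_i\alpha_i^R + v_i\alpha_i^I$ and likewise for the primed pair, and estimate
\[
\|f_{(u,v)} - f_{(u',v')}\|_\infty \;\le\; \sum_{i=1}^k\big(|u_i-u_i'|\,\|\alpha_i^R\|_\infty + |v_i-v_i'|\,\|\alpha_i^I\|_\infty\big) \;\le\; C\,\|(u,v)-(u',v')\|_1,
\]
where $C := \max_{1\le i\le k}\sqrt{|\lambda_i|}\,\|\phi_i\|_\infty$. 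This constant is finite because $X$ is compact and each $\phi_i$, being Lipschitz by Lemma~\ref{lem:eiglip}, is bounded, while $\alpha_i^R$ and $\alpha_i^I$ are each either $0$ or $\sqrt{|\lambda_i|}\phi_i$. Since $X$ is homeomorphic to a finite simplicial complex, the sublevel sets of these continuous functions have finite-dimensional homology, so Theorem~\ref{thm:funcstab} applies and bounds the graded bottleneck distance of the two diagrams by $C\,\|(u,v)-(u',v')\|_1$.

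For the e-$PKT_k$, I would instead work on the compact set $Z = (\Phi_k^R(X),\Phi_k^I(X)) \subset \mathbb{R}^{2k}$, compact because $X$ is compact and the coordinate functions are continuous. A direction $w\in\mathbb{S}^{2k-1}$ gives the linear height function $z\mapsto\langle z,w\rangle$ on $Z$, and Cauchy--Schwarz yields
\[
\|\langle\cdot,w\rangle - \langle\cdot,w'\rangle\|_{L^\infty(Z)} \;\le\; R\,\|w-w'\|_2 \;\le\; R\,\|w-w'\|_1, \qquad R := \max_{z\in Z}\|z\|_2 = \max_{x\in X}\|\Phi_k(x)\|_2 < \infty,
\]
using that $\|(\Phi_k^R(x),\Phi_k^I(x))\|_2 = \|\Phi_k(x)\|_2$. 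Applying Theorem~\ref{thm:funcstab} to these two height functions on $Z$ (whose sublevel sets have finite-dimensional homology by the standing definability assumption on $\Phi_k(X)$) gives the Lipschitz bound for the e-$PKT_k$ with constant $R$.

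The argument is short once functional stability is in hand, so there is no serious obstacle; the only points requiring attention are the finiteness of the Lipschitz constants $C$ and $R$ — which rests on compactness of $X$ together with the boundedness of the Lipschitz eigenfunctions from Lemma~\ref{lem:eiglip} — and verifying the finite-dimensionality hypothesis of functional stability in the embedded case, which is precisely where the definability of the distance kernel embedding is used.
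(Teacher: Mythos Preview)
Your proposal is correct and follows essentially the same approach as the paper: bound the $L^\infty$-distance between the height functions associated to two directions by a constant times their $\ell^1$-distance, then invoke functional stability. The only cosmetic difference is in the embedded case, where the paper uses H\"{o}lder's inequality with the $\ell^\infty/\ell^1$ pairing (obtaining the constant $\max_i\|\alpha_i\|_\infty$) while you use Cauchy--Schwarz followed by $\|\cdot\|_2\le\|\cdot\|_1$ (obtaining $\max_{x}\|\Phi_k(x)\|_2$); your explicit mention of definability to justify the finite-dimensionality hypothesis is, if anything, more careful than the paper's proof.
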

\begin{proof}
	We begin with the i-$PKT_{k}$. Our embedding functions $\alpha_i^{R}$ and $\alpha_{i}^{I}$ are continuous functions on a compact space, hence they are uniformly bounded by a constant $C$. Then, if $(u,v), (w,z) \in \mathbb{S}^{2k-1} \subset \mathbb{R}^{k} \times \mathbb{R}^{k}$ are vectors with $\|(u,v) - (w,z)\|_{1} = \| u-w\|_{1} + \| v-z\|_{1} \leq \epsilon$, we have:
	\begin{align*}
\|\sum_{i=1}^{k} (u_{i}\alpha_{i}^{R} + v_{i}\alpha_{i}^{I}) - \sum_{i=1}^{k}(w_{i}\alpha_{i}^{R} + z_{i}\alpha_{i}^{I}) \|_{\infty} &= \|\sum_{i=1}^{k} (u_i - w_i)\alpha_i^{R} + (v_i - z_i)\alpha_i^{I}  \|_{\infty}\\
& \leq \sum_{i=1}^{k} |u_i - w_i|\|\alpha_i^{R} \|_{\infty} + \sum_{i=1}^{k} |v_i - z_i|\|\alpha_i^{I} \|_{\infty}\\
& \leq C \sum_{i=1}^{k} \left(|u_i - w_i| + |v_i - z_i|\right)\\
& \leq  C\varepsilon.
	\end{align*}

	Thus, by Theorem \ref{thm:funcstab}, we have:
	\[d_{B}\left(PH \left(X,\sum_{i=1}^{k} (u_{i}\alpha_{i}^{R} + v_{i}\alpha_{i}^{I})\right),PH \left(X,\sum_{i=1}^{k} (w_{i}\alpha_{i}^{R} + z_{i}\alpha_{i}^{I})\right)\right) \leq C\varepsilon.\]
	
	For the e-$PKT_{k}$, the boundedness of our eigenfunctions implies that $(\Phi_{k}^{R}(X),\Phi_{k}^{I}(X))$ sits inside the $\ell^\infty$-ball of radius $C$ in $\mathbb{R}^{2k}$. Then, if $(u,v),(w,z) \in \mathbb{S}^{2k-1}$ are vectors with $\| (u,v) - (w,z)\|_{1} \leq \epsilon$, we deduce, for all $(x,y) \in (\Phi_{k}^{R}(X),\Phi_{k}^{I}(X))$:
	\begin{align*}
|\Theta(u,v)(x,y) - \Theta(w,z)(x,y)| & =|\langle u,x \rangle + \langle v,y \rangle - \langle w,x\rangle - \langle z,y\rangle|\\
& = |\langle u - w,x\rangle + \langle v-z,y \rangle|\\
&  = |\langle (u-w,v-z),(x,y)\rangle| \\
\mbox{(H\"{o}lder's Inequality)} &\leq \|(x,y)\|_{\infty}\|(u-w,v-z)\|_{1}\\
& \leq C\varepsilon.  
	\end{align*}
	And thus, again by Theorem \ref{thm:funcstab},
		\[d_{B}(PH(X,\Theta(u,v)),PH(X,\Theta(w,z))) \leq C\varepsilon.\]
\end{proof}

The next stability result, which applies to the i-$ECT_{k}$ and e-$ECT_{k}$, relies on the fact that the map from a persistence diagram to its Euler curve is stable:

	\begin{lemma}
		\label{clm:bar2euler}
	Let $B_1$ and $B_2$ be graded persistence diagrams. Suppose that $B_1$ and $B_2$ have at most $L$ points each. Define the following operation that sends a graded persistence diagram to the Euler curve associated to its barcode: 
	\[\chi(B) = \sum_{I \in B} (-1)^{\operatorname{deg}(I)}1_{I}, \] 
	where we write $\operatorname{deg}(I)$ to indicate the homological degree of an interval $I$. Set  $\chi_{1} = \chi(B_1)$ and $\chi_{2} = \chi(B_2)$. Then for any $0<p<1$,
	\[\|\chi_{1} - \chi_{2}\|_{p} \leq (4L)^{q}d_{B}(B_1,B_2)^{q},\]
	and for any $p \geq 1$,
	\[\|\chi_{1} - \chi_{2}\|_{p} \leq (2^{1 + q}L) d_{B}(B_1,B_2)^{q},  \]
	where $q = 1/p$.
	
\end{lemma}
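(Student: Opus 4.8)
The plan is to reduce the $L^p$ estimate to a pointwise comparison of the two Euler curves, controlled by a bottleneck matching. Let $\gamma$ be an optimal partial matching between $B_1$ and $B_2$ realizing $d_B(B_1, B_2) =: \delta$. Each matched pair of intervals $(I, J)$ with $\deg(I) = \deg(J)$ contributes $(-1)^{\deg(I)}(1_I - 1_J)$ to $\chi_1 - \chi_2$; the symmetric difference $I \triangle J$ is contained in two intervals each of length at most $\delta$ (one at each endpoint), so $\|1_I - 1_J\|_\infty \le 1$ and $1_I - 1_J$ is supported on a set of measure at most $2\delta$. Each unmatched interval $I$ of $B_1$ (or $B_2$) is matched to the diagonal, so its length is at most $2\delta$, hence $1_I$ is supported on a set of measure at most $2\delta$ and has sup-norm $1$. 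Since $B_1$ and $B_2$ have at most $L$ points each, there are at most $2L$ "defect" terms in total (matched pairs plus unmatched points from both diagrams), each a function bounded by $1$ in absolute value and supported on a set of measure at most $2\delta$.

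First I would write $\chi_1 - \chi_2 = \sum_{m=1}^{M} \epsilon_m g_m$ where $M \le 2L$, each $\epsilon_m \in \{+1,-1\}$, each $g_m$ is the indicator of a set $E_m$ with $|E_m| \le 2\delta$, and $\|g_m\|_\infty \le 1$. For $0 < p < 1$, the quasi-norm satisfies the $p$-subadditivity $\|\sum_m \epsilon_m g_m\|_p^p \le \sum_m \|g_m\|_p^p$ (this is the standard fact that $\|u+v\|_p^p \le \|u\|_p^p + \|v\|_p^p$ for $0<p<1$). Each term satisfies $\|g_m\|_p^p = |E_m| \le 2\delta$, so $\|\chi_1 - \chi_2\|_p^p \le 2L \cdot 2\delta = 4L\delta$, giving $\|\chi_1 - \chi_2\|_p \le (4L\delta)^{1/p} = (4L)^q \delta^q$. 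For $p \ge 1$ I would instead use the ordinary triangle inequality: $\|\chi_1 - \chi_2\|_p \le \sum_m \|g_m\|_p = \sum_m |E_m|^{1/p} \le 2L \cdot (2\delta)^{1/p} = 2L \cdot 2^q \delta^q = 2^{1+q} L \delta^q$, which is the claimed bound.

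The main obstacle I anticipate is bookkeeping rather than anything deep: one must check carefully that matched intervals of differing homological degree do not cause trouble — but in the graded bottleneck distance, matched intervals always share the same degree, so cross-degree matchings simply do not occur, and an interval matched across degrees would instead be split into two diagonal matchings, which is already accounted for in the $2L$ count. A second point requiring care is the measure estimate for $|1_I - 1_J|$ when $I$ and $J$ overlap: writing $I = (b_I, d_I)$ and $J = (b_J, d_J)$ with $|b_I - b_J| \le \delta$ and $|d_I - d_J| \le \delta$, the symmetric difference is contained in $[b_I \wedge b_J, b_I \vee b_J] \cup [d_I \wedge d_J, d_I \vee d_J]$, of total length at most $2\delta$; I would state this explicitly. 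Everything else is the two elementary norm inequalities quoted above, so the proof is short once the defect decomposition is set up.
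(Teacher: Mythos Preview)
Your proposal is correct and follows essentially the same approach as the paper: decompose via an optimal bottleneck matching into at most $2L$ terms (matched pairs and diagonal-matched intervals), each supported on a set of measure at most $2d_B(B_1,B_2)$ with sup-norm at most $1$, then apply $p$-subadditivity for $0<p<1$ and Minkowski for $p\ge 1$. One minor imprecision: for a matched pair, $1_I - 1_J$ is not literally an indicator (it takes values in $\{-1,0,1\}$), so your phrase ``$g_m$ is the indicator of a set $E_m$'' should be replaced by ``$|g_m| \le 1$ and $g_m$ is supported on $E_m$''; the estimate $\|g_m\|_p^p \le |E_m|$ still holds and your argument goes through unchanged.
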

\begin{proof}
	By the definition of the Bottleneck distance, there are decompositions:
	\[B_1 = \{I_{i}^{1}\}_{i=1}^{n_1} \cup \{J_{i}^{1}\}_{i=1}^{n_2} \]
	\[B_2 = \{I_{i}^{2}\}_{i=1}^{n_1} \cup \{J_{i}^{2}\}_{i=1}^{n_3}, \]
	where the intervals $I_{i}^{1}$ are paired with the intervals $I_{i}^{2}$ (both have the same degree), and the intervals $J_{i}^{1}$ and $J_{i}^{2}$ are paired with the diagonal. This means that, for $i \in \{1, \cdots, n_1\}$, the function $1_{I^{1}_{i}} - 1_{I^{2}_{i}}$ is supported on a set of measure at most $2d_{B}(B_1,B_2)$. Moreover, as the difference of two indicator functions, $1_{I^{1}_{i}} - 1_{I^{2}_{i}}$ is bounded in absolute value by one. Additionally, all the intervals $\{J_{i}^{1}\}_{i=1}^{n_2}$ and $\{J_{i}^{2}\}_{i=1}^{n_3}$ have length at most $2d_{B}(B_1,B_2)$. Note that $n_1 + n_2 + n_3 \leq (n_1 + n_2) + (n_1 + n_3) \leq 2L$. Thus:
	\[\chi_{1} - \chi_{2} = \sum_{i=1}^{n_1}(-1)^{\operatorname{deg}(I_{i}^{\ast})}(1_{I_{i}^{1}} - 1_{I_{i}^{2}}) + \sum_{i=1}^{n_2}(-1)^{\operatorname{deg}(J_{i}^{1})}1_{J_{i}^{1}} - \sum_{i=1}^{n_3}(-1)^{\operatorname{deg}(J_{i}^{2})}1_{J_{i}^{2}},  \]
where the $\ast$ indicates that the choice of $I_{i}^{1}$ or $I_{i}^{2}$ does not matter, as both intervals have the same homological degree. We now split our analysis into two cases: (a) $0< p<1$, and (b) $p \geq 1$.\\

(a) In this case, we use the modified Minkowski's inequality $\|f-g\|_{p}^{p} \leq \|f\|_{p}^{p} + \|g\|_{p}^{p}$. We deduce:
\begin{align*}
\|\chi_{1} - \chi_{2}\|_{p}^{p} &\leq \sum_{i=1}^{n_1}\|1_{I_{i}^{1}} - 1_{I_{i}^{2}}\|_{p}^{p} + \sum_{i=1}^{n_2}\|1_{J_{i}^{1}}\|_{p}^{p} +  \sum_{i=1}^{n_3}\|1_{J_{2}^{1}}\|_{p}^{p}\\
& \leq (n_1 + n_2 + n_3)\|1_{[0,2d_{B}(B_1,B_2)]}\|_{p}^{p} \\
& \leq 2L \|1_{[0,2d_{B}(B_1,B_2)]}\|_{p}^{p}\\
& = 2L(2d_{B}(B_1,B_2))\\
& = 4Ld_{B}(B_1,B_2).
\end{align*}

Hence, $\|\chi_{1} - \chi_{2}\|_{p} \leq (4L)^{q}d_{B}(B_1,B_2)^{q}$.

(b) In this case, we can use Minkowski's inequality to deduce:
\begin{align*}
\|\chi_{1} - \chi_{2}\|_{p} &\leq \sum_{i=1}^{n_1}\|1_{I_{i}^{1}} - 1_{I_{i}^{2}}\|_{p} + \sum_{i=1}^{n_2}\|1_{J_{i}^{1}}\|_{p} +  \sum_{i=1}^{n_3}\|1_{J_{2}^{1}}\|_{p}\\
& \leq (n_1 + n_2 + n_3)\|1_{[0,2d_{B}(B_1,B_2)]}\|_{p} \\
& \leq 2L \|1_{[0,2d_{B}(B_1,B_2)]}\|_{p}\\
& = 2L \sqrt[p]{2d_{B}(B_1,B_2)}\\
& = 2^{1 + 1/p}L \sqrt[p]{d_{B}(B_1,B_2)}\\
& = (2^{1 + q}L) d_{B}(B_1,B_2)^{q}.
\end{align*}
	
\end{proof}

\begin{cor}
	\label{cor:ECTlip}
 	Suppose that $X$ is homeomorphic to the geometric realization of a finite simplicial complex which implies bounded degree-$q$ total persistence, for some $q > 0$. Suppose further that there is a uniform bound on the number of points in the persistence diagrams obtained when evaluating the i-$PKT_{k}$ and e-$PKT_{k}$ at an arbitrary vector $(u,v) \in \mathbb{S}^{2k-1}$. If we equip the sphere $\mathbb{S}^{2k-1}$ with the $\ell^1$ distance, and the space of Euler curves with the $L^{1/q}$ distance, the i-$ECT_{k}$ and e-$ECT_{k}$ are $q$-H\"{o}lder continuous on $\mathbb{S}^{2k-1}$.
\end{cor}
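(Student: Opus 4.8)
The plan is to chain together the two results already established: Proposition \ref{prop:iPKTlip}, which gives Lipschitz continuity of the persistence-diagram-valued transforms i-$PKT_k$ and e-$PKT_k$ (with respect to the $\ell^1$ distance on $\mathbb{S}^{2k-1}$ and the graded bottleneck distance on $\mathbf{GrDiag}$), and Lemma \ref{clm:bar2euler}, which says the map sending a graded diagram to its Euler curve is stable (in fact $q$-Hölder with respect to bottleneck distance, in the $L^{1/q}$ norm, whenever the diagrams have a uniformly bounded number of points). The i-$ECT_k$ (resp. e-$ECT_k$) is by Definition \ref{def:transforms} literally the composition $\chi \circ (\text{i-}PKT_k)$ (resp. $\chi \circ (\text{e-}PKT_k)$), so the corollary should follow by composing a Lipschitz map with a $q$-Hölder map.

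Concretely, I would proceed as follows. First, fix two vectors $(u,v),(w,z) \in \mathbb{S}^{2k-1}$ and let $\epsilon = \|(u,v)-(w,z)\|_1$. By Proposition \ref{prop:iPKTlip}, there is a constant $C$ (the uniform sup-norm bound on the $\alpha_i^R,\alpha_i^I$, as in that proof) such that the graded persistence diagrams $B_1 = \text{i-}PKT_k(X)(u,v)$ and $B_2 = \text{i-}PKT_k(X)(w,z)$ satisfy $d_B(B_1,B_2) \leq C\epsilon$; the same holds for e-$PKT_k$ with the same constant. Second, invoke the hypothesis that there is a uniform bound $L$ on the number of points in the diagrams $B_1, B_2$ (this is exactly the extra assumption in the corollary, and it is what licenses the application of Lemma \ref{clm:bar2euler}). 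Third, apply Lemma \ref{clm:bar2euler} with $p = 1/q$: since $q \leq 1$ gives $p \geq 1$, we are in case (b) of the lemma, so $\|\chi(B_1)-\chi(B_2)\|_{1/q} \leq (2^{1+q}L)\, d_B(B_1,B_2)^q \leq (2^{1+q}L)(C\epsilon)^q = (2^{1+q}L\,C^q)\,\epsilon^q$. Since $\chi(B_1) = \text{i-}ECT_k(X)(u,v)$ and $\chi(B_2) = \text{i-}ECT_k(X)(w,z)$, this is precisely $q$-Hölder continuity with Hölder constant $2^{1+q}LC^q$; the argument for e-$ECT_k$ is identical. I should also remark that Proposition \ref{prop:betticurves} (and the hypothesis that $X$ implies bounded degree-$q$ total persistence) guarantees the Euler curves $\chi(B_i)$ actually exist as elements of $L^{1/q}$, so the statement is not vacuous.

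The only genuinely delicate point is the interaction between the exponent conventions: Lemma \ref{clm:bar2euler} is phrased with $q = 1/p$ and its "case (b)" requires $p \geq 1$, whereas the corollary fixes the space of Euler curves to carry the $L^{1/q}$ metric and asserts $q$-Hölder continuity — so I need $p = 1/q \geq 1$, i.e. $q \leq 1$, which is exactly the hypothesis stated. I would double-check that the "bounded degree-$q$ total persistence" hypothesis (used to ensure the Euler curves lie in $L^{1/q} = L^p$) and the separate "uniform bound on the number of diagram points" hypothesis are both genuinely needed: the former for existence of the curves, the latter (the constant $L$) for the quantitative stability bound in Lemma \ref{clm:bar2euler}. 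Beyond this bookkeeping, there is no real obstacle — the proof is a short composition argument, and I expect it to be only a few lines once the constants are tracked.
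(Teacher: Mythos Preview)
Your proposal is correct and matches the paper's one-line proof, which simply invokes Proposition~\ref{prop:iPKTlip} and Lemma~\ref{clm:bar2euler}. One minor correction: the corollary as stated in Section~\ref{sec:intrinsicpht} assumes only $q>0$, not $q\leq 1$ (the $q\leq 1$ restriction appears only in the Main Result summary in the introduction); for $q>1$ (i.e., $p=1/q<1$) you would invoke case (a) of Lemma~\ref{clm:bar2euler} rather than case (b), but both cases yield $q$-H\"older continuity with a constant depending only on $L$ and $q$, so the argument goes through unchanged.
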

\begin{proof}
	The result follows from Proposition \ref{prop:iPKTlip} and Lemma \ref{clm:bar2euler}.
\end{proof}

We conclude this section with a coarse injectivity result, this time only for our embedded transforms. Note that, when $\Phi_{k}$ is injective, and our transforms agree, this result also holds for the intrinsic transforms.

\begin{theorem}
\label{thm:ePKTcoarseinj}
Let $(X,\dist_{X}, \mu_X)$ and $(Y,\dist_{Y}, \mu_Y)$ be compact metric measure spaces, with eigenvalues $\{\lambda_{i}\}$ and $\{\nu_{i}\}$ respectively. Let $k \in \mathbb{N}$ be a positive integer, and suppose that $\Phi_{k}(X)$ and $\Phi_{k}(Y)$ are definable. Then if either e-PKT$_{k}(X)$ = e-PKT$_{k}(Y)$ or e-EKT$_{k}(X)$ = e-EKT$_{k}(Y)$, we have:

\[d_{GH}(X,Y) \leq  \| E_{X,k} \|_{\infty} + \| E_{Y,k} \|_{\infty}.\]
\end{theorem}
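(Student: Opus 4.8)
The plan is to reduce this to Theorem~\ref{thm:invstabmeas} by showing that equality of the embedded transforms forces the Hausdorff distance $\varepsilon = \dist_H^{L^2}(\Phi_k(X),\Phi_k(Y))$ to be zero. The key observation is that $\Phi_k^R(X)$ and $\Phi_k^I(X)$ live inside $\mathbb{R}^{2k}$, and the data $(\Phi_k^R(X),\Phi_k^I(X))$ — up to the harmless reshuffling of which coordinates are real and which are imaginary — is exactly a definable compact subset of $\mathbb{R}^{2k}$. The e-PKT$_k$ (resp. e-EKT$_k$) is by Definition~\ref{def:transforms} precisely the PHT (resp. ECT) of this subset. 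So first I would invoke Theorem~\ref{thm:PHTinj}: if e-PKT$_k(X) = $ e-PKT$_k(Y)$, or e-EKT$_k(X) = $ e-EKT$_k(Y)$, then the two definable subsets $(\Phi_k^R(X),\Phi_k^I(X))$ and $(\Phi_k^R(Y),\Phi_k^I(Y))$ coincide \emph{as subsets} of $\mathbb{R}^{2k}$.

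Next I would translate ``equal as subsets of $\mathbb{R}^{2k}$'' back to the complex picture: it means $\Phi_k(X) = \Phi_k(Y)$ as subsets of $\mathbb{C}^k$ (recalling the convention that for each index $i$ exactly one of $\alpha_i^R,\alpha_i^I$ is nonzero, so the real/imaginary splitting is a fixed linear isometry $\mathbb{C}^k \cong \mathbb{R}^{2k}$ and carries one image bijectively onto the other). Consequently $\dist_H^{L^2}(\Phi_k(X),\Phi_k(Y)) = 0$, i.e.\ $\varepsilon = 0$.

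Finally I would plug $\varepsilon = 0$ into the conclusion of Theorem~\ref{thm:invstabmeas}. The multiplicative term $2\varepsilon\min\{\cdots\}$ vanishes, the $\varepsilon^2$ term vanishes, and what remains is exactly
\[
d_{GH}(X,Y) \leq \|E_{X,k}\|_\infty + \|E_{Y,k}\|_\infty,
\]
which is the desired bound. (Strictly, Theorem~\ref{thm:invstabmeas} is stated for a fixed correspondence realizing the Hausdorff distance; when $\varepsilon = 0$ the identification $\Phi_k(X) = \Phi_k(Y)$ itself provides a correspondence through the identity on the common image, and the argument of that proof goes through verbatim with all $\varepsilon$-terms set to zero.)

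\textbf{Main obstacle.} The only real subtlety — and the step I would write most carefully — is the bookkeeping connecting the PHT/ECT injectivity statement (phrased for subsets of $\mathbb{R}^{2k}$) with the complex-valued embedding $\Phi_k$: one must check that the convention splitting each $\alpha_i$ into $\alpha_i^R,\alpha_i^I$ is applied identically to $X$ and $Y$ (it is, since it depends only on the sign of the eigenvalue, via Convention~\ref{con:sign} and the fixed ordering), so that equality of the two $\mathbb{R}^{2k}$-subsets is genuinely equivalent to equality of the two $\mathbb{C}^k$-subsets rather than merely equality up to a coordinate permutation. Everything else is a direct substitution into results already proved.
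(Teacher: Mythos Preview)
Your proposal is correct and follows essentially the same approach as the paper's own proof, which simply invokes the injectivity of the PHT/ECT (Theorem~\ref{thm:PHTinj}) to force $\Phi_k(X)=\Phi_k(Y)$ and then applies Theorem~\ref{thm:invstabmeas} with $\varepsilon=0$. Your ``main obstacle'' is a bit overthought: the map $\mathbb{C}^k\to\mathbb{R}^{2k}$ sending $z\mapsto(\operatorname{Re}z,\operatorname{Im}z)$ is a fixed $L^2$-isometry that does not depend on the space or on the signs of the eigenvalues, so equality of the $\mathbb{R}^{2k}$ images immediately gives equality of the $\mathbb{C}^k$ images without any coordinate-matching subtlety.
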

\begin{proof}
	This follows from the injectivity of the PHT and ECT, as demonstrated in Theorem \ref{thm:PHTinj}, together with Theorem \ref{thm:invstabmeas}, where we take $\epsilon = 0$.
\end{proof}

The condition that the distance kernel embeddings be definable is always satisfied when the spaces are finite. Demonstrating definability more generally will require techniques and results from the theory of o-minimal geometry, and will therefore constitute a line of inquiry quite distinct from the focus of this paper.


%


\section{Experiments}
\label{sec:experiments}
In this section, we show the results of some numerical experiments for the distance kernel embedding. The spaces we look at are discrete samples, as diverse resolutions, of the $L(7,1)$ Lens space, the $L(7,4)$ Lens space, the torus $\mathbb{T}^2$, the 2-sphere $\mathbb{S}^2$, and the 3-sphere $\mathbb{S}^3$.Note that L(7,1) and L(7,4) are homotopic Lens spaces, but they are not homeomorphic, and hence not isometric.

%

The quantities we measure are: the Hausdorff distances between distance kernel embeddings, the distribution of the absolute values of the eigenvalues, the distribution of mass and $L^{\infty}$ norms of the eigenfunctions $\phi_i$ and the distance kernel embedding $\Phi_{k}$, and the constants $\|E_{X,k}\|_{\infty}$ and $\max_{x \in X} \|\Phi_{k}(x)\|_{2}$ appearing in Theorem \ref{thm:invstabmeas}, as well as the explicit upper bounds presented in Lemmas \ref{lem:errorbound} and \ref{lem:embedbound}.

\subsection*{Distribution of Eigenvalues and Hausdorff Distances between DKEs}

The goal of this section is to illustrate the results of Sections~\ref{sec:discretization}, \ref{sec:stability}, and \ref{sec:stabinv}. In the following experiments, we compute the DKE for a variety of discrete samples on the torus and 2-sphere with metric induced by their embedding in Euclidean space, the 3-sphere, $L(7,1)$ Lens space, and $L(7,4)$ Lens space, with spherical geometry. The measures on these samples are uniform. 
These spaces have distinct integer homology, except for the two Lens spaces that have the same homotopy type but are not homeomorphic, and therefore not isometric. This makes $L(7,1)$ and $L(7,4)$ difficult to distinguish by purely topological methods. We see that the DKE (and, therefore, the resulting topological transforms) is capable of distinguishing these Lens spaces.

\smallskip

\noindent
\emph{Spectra of various manifolds.}  
In Figure~\ref{fig:eig}, we have plotted the first $8$ eigenvalues of five discrete metric spaces, sampled from each of these five manifolds, normalized by the number of points in each sample. We can observe the following: (1) the two Lens spaces have relatively similar eigenvalues, (2) the $2$- and $3$-sphere have many similar eigenvalues, but their first and fourth eigenvalues are significantly different, and (3) the torus has the most distinct spectrum.

\smallskip

\noindent
\emph{Spectra of Lens spaces for various samples.} In Figure \ref{fig:Lenspec}, we compare the spectra of a number of different random i.i.d. samples of the two Lens spaces $L(7,1)$ and $L(7,4)$. To be precise, for each Lens space we compute the spectra of two distinct random samples with $2000$ points, and a third sample with $5000$ points. The spectra for the different samples of the same Lens space are virtually impossible to distinguish, and only two curves---one for the spectrum of $L(7,1)$, and one of the spectrum of $L(7,4)$---are visible in Figure \ref{fig:Lenspec}. This attests to the stability of the eigenvalues of the distance kernel operator under random i.i.d. sampling, in line with Theorem \ref{thm:approxDKE}. 

Notably, the two Lens spaces are distinguished by the first, third, and fourth eigenvalues of their distance kernel operators. $L(7,1)$ and $L(7,4)$ having same homotopy type, this illustrates the ability of the operator to capture geometric information and distinguish between non-isometric spaces. 

\smallskip

\noindent
\emph{Hausdorff distance between DKEs.} Finally, in Figure \ref{fig:Haus}, we compare the Hausdorff distances between various pairs of distance kernel embeddings. We observe the following: 
(1) The two samples of the same size coming from the $L(7,1)$ Lens space are the closest in Hausdorff distance, and that distance is close to zero up to dimension $k=4$. Indeed, if we had taken samples of sufficiently high resolution, we would see the Hausdorff distances going to zero for larger values of $k$, as proven in Theorem \ref{thm:approxDKE}. 
(2) The second closest pair of spaces are the Lens spaces $L(7,1)$ and $L(7,4)$, that have same homotopy type and have both spherical geometry. 
(3) The third closest pair of spaces are the Lens space $L(7,1)$ and the 3-Sphere, both with spherical geometry (Lens spaces are constructed as quotients of 3-Spheres). 
(4) The manifold that appears to be most distinct from the rest is the torus. 
(5) For all pairs of manifolds, the Hausdorff distance stabilizes at around $k=10$, after which eigenvalues are close to $0$.	

Figure \ref{fig:Haus} illustrates that manifolds sharing topological or geometrical properties lead to more similar DKE. However, the DKEs of distinct samples of $L(7,1)$ remain substantially closer in Hausdorff distance compare to the DKE of samples of $L(7,4)$, with same homotopy type and same geometry locally, and the 3-sphere, with same local geometry.  

\medskip

In conclusion, these experiments illustrate that the spectra and embedding of the distance kernel operator can be approximated by finite samples, as predicted by Theorem \ref{thm:approxDKE}. Moreover, by combining the DKE with the Hausdorff metric on Euclidean space, we obtain a pseudo-metric on the space of compact metric measure spaces that succeeds in distinguishing a variety of diverse manifolds.

\begin{figure}[t!]
	\centering
	\begin{subfigure}[t]{0.5\textwidth}
		\centering
		\includegraphics[scale = 0.5]{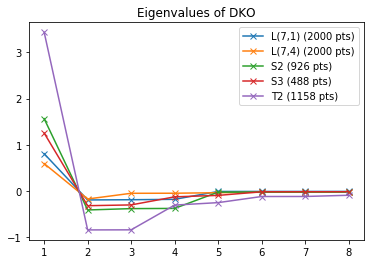}
		\caption{Eigenvalues of the DKO for a variety of\\ spaces, normalized by the number of points\\ in the sample.}
		\label{fig:eig}
	\end{subfigure}%
	~ 
	\begin{subfigure}[t]{0.5\textwidth}
		\centering
		\includegraphics[scale = 0.5]{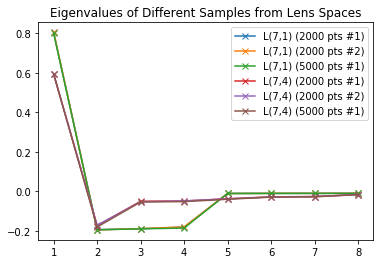}
		\caption{A comparison of the eigenvalues of various samples, at different resolutions, of these two Lens spaces.}
			\label{fig:Lenspec}
	\end{subfigure}
\vspace{10mm}

	\begin{subfigure}[t]{1\textwidth}
	\centering
	\includegraphics[scale = 0.7]{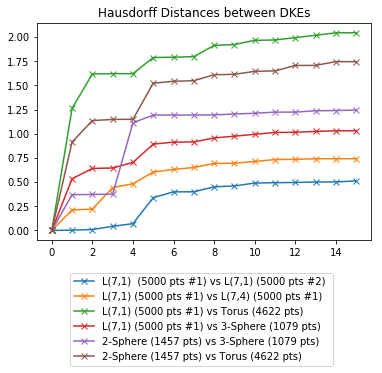}
	\caption{A comparison of Hausdorff distances between various samples of 2- and 3-manifolds.}
		\label{fig:Haus}
\end{subfigure}%

\caption{Spectra and DKE for samples of various manifolds.}
\end{figure}

\subsection*{Distribution of mass and $L^{\infty}$ norms of $\phi_i$ and $\Phi_{k}$.}
Complementing the prior simulations, we now explore the distribution of mass of (the absolute value) our eigenfunctions $\phi_i$. This is important for estimating the $L^{\infty}$ norm of these coordinate functions in the continuous case (in the finite case, the fact that $\|\phi_i\|_{L^2} = 1$ forces all the entries of the vector to have norm at most $1$). Our stability theory (Section \ref{sec:stability}) tells us that the histograms of the continuous eigenfunctions is approximated by the histograms of the discrete eigenfunctions. We can estimate the $L^{\infty}$ norm of $\phi_{i}$ by considering the histogram of values of $|\phi_{i}(x)|$ (i.e. its distribution of mass) as follows. If $|\phi_{i}(x)|$ assumes values close to its maximum on a large (in measure) subset of $X$, the fact that $\|\phi_i\|_{L^2} = 1$ ensures that this maximum cannot be too large, relative to the volume of $X$. Whether or not this is the case can be determined by inspecting this histogram. Figures \ref{fig:T1eighist}, \ref{fig:SS1eighist}, \ref{fig:SSS1eighist}, and \ref{fig:A1eighist} show the histogram of absolute values for the tenth and twentieth eigenfunctions of a range of shapes. Observe that for all of these histograms, the eigenfunctions assumes within 20\% of their maximum value on a subset of measure at least $\vol(X)/2$. This tells us that the $L^{\infty}$ norms of these eigenfunctions cannot be too big.

We can carry out the same analysis for the distance kernel embedding $\Phi_{k}$. In Figures \ref{fig:T1hist500}, \ref{fig:SS1hist500}, \ref{fig:SSS2hist500}, and \ref{fig:A1hist500}, we see the histograms of the magnitudes of these embedding vectors for a variety of shapes. We see here in every example that the magnitude of the embedding vector is never below 50\% of the maximum, and is often much greater. As before, this rules out the possibility of large embedding vectors, relative to the volume of our manifold.
%
%

\begin{figure}[htb!]
	\includegraphics[scale=0.5]{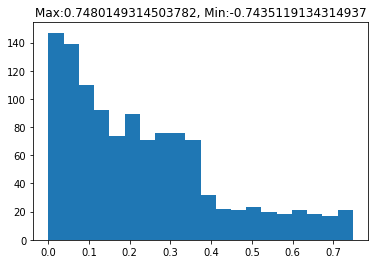} \includegraphics[scale=0.5]{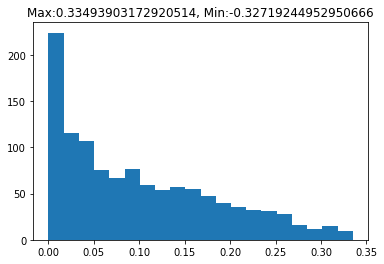}
	\caption{The histogram of values, together with the maximum and minimum values, for the tenth and twentieth eigenvalues of a sample from the torus (1158 points).}
	\label{fig:T1eighist}
\end{figure}

\begin{figure}[htb!]
	\includegraphics[scale=0.5]{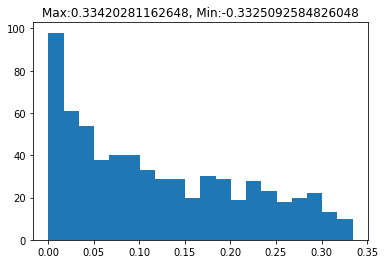} \includegraphics[scale=0.5]{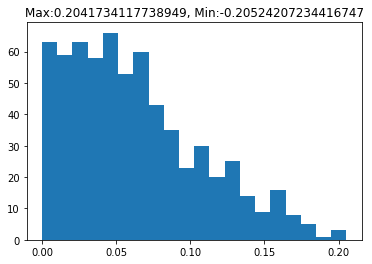}
	\caption{The histogram of values, together with the maximum and minimum values, for the tenth and twentieth eigenvalues of a sample from the 2-sphere (564 points).}
	\label{fig:SS1eighist}
\end{figure}

\begin{figure}[htb!]
	\includegraphics[scale=0.5]{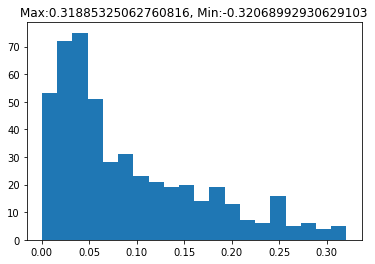} \includegraphics[scale=0.5]{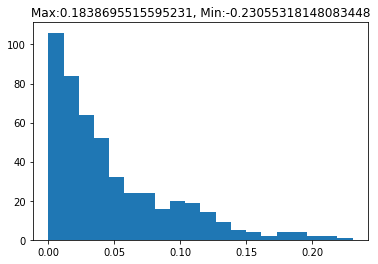}
	\caption{The histogram of values, together with the maximum and minimum values, for the tenth and twentieth eigenvalues of a sample from the 3-sphere (488 points).}
	\label{fig:SSS1eighist}
\end{figure}

\begin{figure}[htb!]
	\includegraphics[scale=0.5]{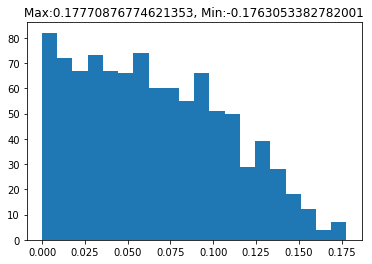} \includegraphics[scale=0.5]{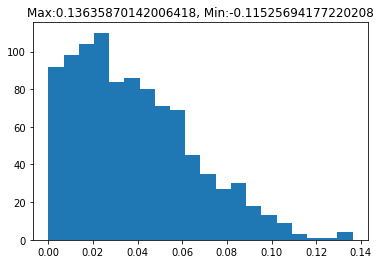}
	\caption{The histogram of values, together with the maximum and minimum values, for the tenth and twentieth eigenvalues of a sample from the $L(11,3)$ Lens space(980 points).}
	\label{fig:A1eighist}
\end{figure}

\begin{figure}[t!]
	\centering
	\begin{subfigure}[t]{0.5\textwidth}
		\centering
		\includegraphics[scale = 0.5]{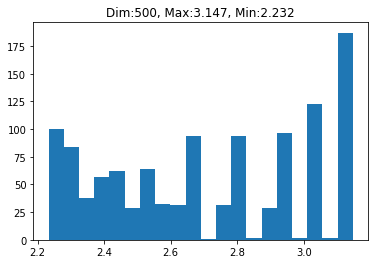}
		\caption{The histogram of the magnitude of the\\ embedding vectors for a sample from the\\ torus (1158 points) in dimension $k=500$.}
		\label{fig:T1hist500}
	\end{subfigure}%
	~ 
	\begin{subfigure}[t]{0.5\textwidth}
		\centering
		\includegraphics[scale = 0.5]{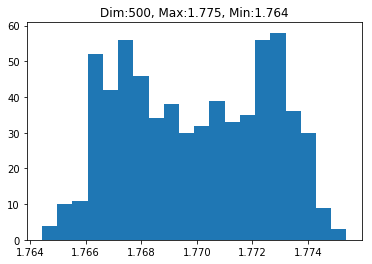}
		\caption{The histogram of the magnitude of the\\ embedding vectors for a sample from the 2-sphere\\ (564 points) in dimension $k=500$.}
		\label{fig:SS1hist500}
	\end{subfigure}
	\vspace{10mm}
	
	\begin{subfigure}[t]{0.5\textwidth}
		\centering
		\includegraphics[scale = 0.5]{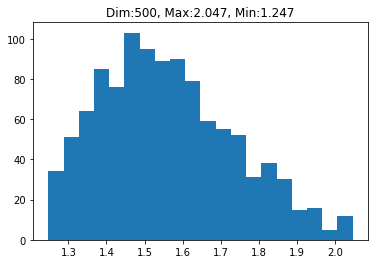}
		\caption{The histogram of the magnitude of the\\ embedding vectors for a sample from the\\ 3-sphere (1079 points) in dimension $k=500$.}
		\label{fig:SSS2hist500}
	\end{subfigure}%
	~ 
	\begin{subfigure}[t]{0.5\textwidth}
		\centering
		\includegraphics[scale = 0.5]{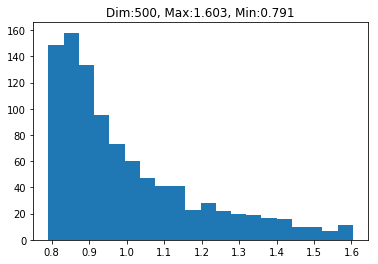}
		\caption{The histogram of the magnitude of the\\ embedding vectors for a sample from the $L(11,3)$ Lens space (980 points) in dimension $k=500$.}
		\label{fig:A1hist500}
	\end{subfigure}
	\caption{}
\end{figure}
%
%
%
%
%
%
%
%


\clearpage
\subsection*{Embedding Bounds}
In this section, we consider some simulations for the quantites $\|E_{k}\|_{\infty}$ and  $\max_{x \in X} \|\Phi_{k}(x)\|_{2}$ appearing in Theorem \ref{thm:invstabmeas}, as well as the explicit upper bounds presented in Lemmas \ref{lem:errorbound} and \ref{lem:embedbound}. Let us introduce some notations used in our table of results:
\begin{itemize}
	\item $A =  \|E_{X,k}\|_{\infty}$ is our additive constant.
	\item $B = \max_{x \in X} \|\Phi_{k}(x)\|_{2}$ is our multiplicative constant.
	\item A\_Bound is the bound on $A$ given in Lemma \ref{lem:errorbound}.
	\item B\_Bound is the bound on $B$ given in Lemma \ref{lem:embedbound}.
\end{itemize}

We compute these quantities across a range of dimensions for the torus, 2-sphere, and 3-sphere. We also note the diameters of these spaces, for comparison. In each table, we observe the same phenomenon. The worst-case bounds on B and A grow in the embedding dimension. However, the multiplicative constant B stabilizes at a small value (relative to $1$), and the additive constant A goes to zero, quickly becoming much smaller than the diameter.  

\begin{table}[htb!]
	\centering
	
	\begin{tabular}{|l|l|l|l|l|l|}
		\hline
		
		Dimension & 2 & 5 & 10 & 50 & 200 \\ \hline
		B\_Bound & 7.098810562 & 11.95365386 & 18.41320952 & 50.85533019 & 91.71628432 \\ \hline
		B & 2.785385644 & 2.866424003 & 2.937811041 & 3.061283988 & 3.127787041 \\ \hline
		A\_bound & 50.42347825 & 126.1558938 & 292.5051042 & 2313.723784 & 4249.850655 \\ \hline
		A & 2.831494962 & 0.863195908 & 0.904480257 & 0.188507231 & 0.080790547 \\ \hline
		
	\end{tabular}
	\caption{Simulations of the constants A and B, and their upper bounds, for a sample from the torus (1158 points, diam 7.52945)}
\end{table}

\begin{table}[htb!]
	\centering
	
	\begin{tabular}{|l|l|l|l|l|l|}
		\hline
		
		Dimension & 2 & 5 & 10 & 50 & 200 \\ \hline
		B\_Bound & 4.415703664 & 8.423681386 & 12.7145307 & 41.13417364 & 44.98760706 \\ \hline
		B & 1.655840841 & 1.703608832 & 1.711791639 & 1.747744118 & 1.767800296 \\ \hline
		A\_Bound & 19.05590903 & 63.08320762 & 141.1412916 & 1115.905333 & 1115.912121 \\ \hline
		A & 1.529116741 & 0.369015434 & 0.291928199 & 0.07784093 & 0.018153383 \\ \hline
		
	\end{tabular}
	\caption{Simulations of the constants A and B, and their upper bounds, for a sample from the 2-sphere (564 points, diam 3.10087)}
\end{table}

\begin{table}[htb!]
	\centering
	
	\begin{tabular}{|l|l|l|l|l|l|}
		\hline
		
		Dimension & 2 & 5 & 10 & 50 & 200 \\ \hline
		B\_Bound & 4.658734251 & 7.958344597 & 14.99125753 & 33.83567486 & 34.71946412 \\ \hline
		B & 1.769845855 & 1.895902581 & 1.942573759 & 1.978396949 & 2.011362641 \\ \hline
		A\_Bound & 21.43974332 & 56.15815487 & 215.9544069 & 653.2712051 & 653.2911455 \\ \hline
		A & 1.138554951 & 0.364063761 & 0.290519127 & 0.123081673 & 0.032296336 \\ \hline

	\end{tabular}
	\caption{Simulations of the constants A and B, and their upper bounds, for a sample from the 3-sphere(488 points, diam 3.06147)}
\end{table}
%
%
%
%
%
%
%
%
%
%

\section{Conclusion}
\label{sec:conclusion}
The results of this paper justify the consideration of a new method for comparing metric-measure objects and a new class of topological transforms.\\

In terms of future work, there are both theoretical and applied lines of inquiry:
\begin{itemize}
	\item More carefully study the spectral theory of the distance kernel operator to understand the decay of the eigenvalues and the histograms of $\sqrt{\lambda_i}\phi_i, \|\Phi_{k}\|$, and $E_{X,k} = |\dist_{X}(x,y) - \sum_{i=1}^{k} \lambda_{i}\phi_{i}(x)\phi_{i}(y)|$. In particular, our experimental data shows these distributions are much more concentrated than the worst-case estimates coming from Markov's inequality.
	\item Determine if there exists a sufficiently large dimension $K$ making the map $\Phi_{K}:X \to \mathbb{C}^{K}$ an honest embedding, i.e. injective.
	\item Obtain stability results for our topological transforms under perturbation and discretization of the underlying shape (in line with the results of Section \ref{sec:discretization} and \ref{sec:stability} for the distance kernel embedding).
	\item Proving the definability of the DKE embedding for sufficiently nice metric measure spaces.
	\item Introduce the PHT or ECT into the experimental pipeline and observe if the resulting invariants are effective feature vectors across a variety of learning tasks. 
\end{itemize}

\bibliographystyle{plain}
\bibliography{bib}

@Article{Weyl1912,
author="Weyl, Hermann",
title="Das asymptotische Verteilungsgesetz der Eigenwerte linearer partieller Differentialgleichungen (mit einer Anwendung auf die Theorie der Hohlraumstrahlung)",
journal="Mathematische Annalen",
year="1912",
month="Dec",
day="01",
volume="71",
number="4",
pages="441--479",
issn="1432-1807",
doi="10.1007/BF01456804",
url="https://doi.org/10.1007/BF01456804"
}

@book{folland2009guide,
	title={A Guide to Advanced Real Analysis},
	author={Folland, Gerald B},
	number={37},
	year={2009},
	publisher={MAA}
}

@article{tenenbaum2000global,
	title={A global geometric framework for nonlinear dimensionality reduction},
	author={Tenenbaum, Joshua B and De Silva, Vin and Langford, John C},
	journal={science},
	volume={290},
	number={5500},
	pages={2319--2323},
	year={2000},
	publisher={American Association for the Advancement of Science}
}

@article{john1938ultrahyperbolic,
	title={The ultrahyperbolic differential equation with four independent variables},
	author={John, Fritz and others},
	journal={Duke Mathematical Journal},
	volume={4},
	number={2},
	pages={300--322},
	year={1938},
	publisher={Duke University Press}
}

@article{dey2015comparing,
	title={Comparing graphs via persistence distortion},
	author={Dey, Tamal K and Shi, Dayu and Wang, Yusu},
	journal={arXiv preprint arXiv:1503.07414},
	year={2015}
}

@article{belton2018learning,
	title={Learning Simplicial Complexes from Persistence Diagrams},
	author={Belton, Robin Lynne and Fasy, Brittany Terese and Mertz, Rostik and Micka, Samuel and Millman, David L and Salinas, Daniel and Schenfisch, Anna and Schupbach, Jordan and Williams, Lucia},
	journal={arXiv preprint arXiv:1805.10716},
	year={2018}
}

@article{turner2014persistent,
	title={Persistent homology transform for modeling shapes and surfaces},
	author={Turner, Katharine and Mukherjee, Sayan and Boyer, Doug M},
	journal={Information and Inference: A Journal of the IMA},
	volume={3},
	number={4},
	pages={310--344},
	year={2014},
	publisher={Oxford University Press}
}

@article{curry2018many,
	title={How Many Directions Determine a Shape and other Sufficiency Results for Two Topological Transforms},
	author={Curry, Justin and Mukherjee, Sayan and Turner, Katharine},
	journal={arXiv preprint arXiv:1805.09782},
	year={2018}
}

@book{oudot2015persistence,
	title={Persistence theory: from quiver representations to data analysis},
	author={Oudot, Steve Y},
	volume={209},
	year={2015},
	publisher={American Mathematical Society Providence, RI}
}

@book{chazal2016structure,
	title={The structure and stability of persistence modules},
	author={Chazal, Fr{\'e}d{\'e}ric and De Silva, Vin and Glisse, Marc and Oudot, Steve},
	year={2016},
	publisher={Springer}
}

@article{ghrist2008barcodes,
	title={Barcodes: the persistent topology of data},
	author={Ghrist, Robert},
	journal={Bulletin of the American Mathematical Society},
	volume={45},
	number={1},
	pages={61--75},
	year={2008}
}

@article{carlsson2009topology,
	title={Topology and data},
	author={Carlsson, Gunnar},
	journal={Bulletin of the American Mathematical Society},
	volume={46},
	number={2},
	pages={255--308},
	year={2009}
}

@book{edelsbrunner2010computational,
	title={Computational topology: an introduction},
	author={Edelsbrunner, Herbert and Harer, John},
	year={2010},
	publisher={American Mathematical Soc.}
}

@book{ghrist2014elementary,
	title={Elementary applied topology},
	author={Ghrist, Robert W},
	volume={1},
	year={2014},
	publisher={Createspace Seattle}
}

@inproceedings{cohen2005stability,
	title={Stability of persistence diagrams},
	author={Cohen-Steiner, David and Edelsbrunner, Herbert and Harer, John},
	booktitle={Proceedings of the twenty-first annual symposium on Computational geometry},
	pages={263--271},
	year={2005},
	organization={ACM}
}

@article{davis69,
author = "Davis, Chandler and Kahan, W. M.",
fjournal = "Bulletin of the American Mathematical Society",
journal = "Bull. Amer. Math. Soc.",
number = "4",
pages = "863--868",
publisher = "American Mathematical Society",
title = "Some new bounds on perturbation of subspaces",
url = "https://projecteuclid.org:443/euclid.bams/1183530664",
volume = "75",
year = "1969"
}

@inproceedings{DBLP:conf/alt/EldridgeBW18,
  author    = {Justin Eldridge and
               Mikhail Belkin and
               Yusu Wang},
  title     = {Unperturbed: spectral analysis beyond Davis-Kahan},
  booktitle = {Algorithmic Learning Theory, {ALT} 2018, 7-9 April 2018, Lanzarote,
               Canary Islands, Spain},
  pages     = {321--358},
  year      = {2018},
  crossref  = {DBLP:conf/alt/2018},
  url       = {http://proceedings.mlr.press/v83/eldridge18a.html},
  timestamp = {Wed, 03 Apr 2019 18:17:24 +0200},
  biburl    = {https://dblp.org/rec/bib/conf/alt/EldridgeBW18},
  bibsource = {dblp computer science bibliography, https://dblp.org}
}

@proceedings{DBLP:conf/alt/2018,
  editor    = {Firdaus Janoos and
               Mehryar Mohri and
               Karthik Sridharan},
  title     = {Algorithmic Learning Theory, {ALT} 2018, 7-9 April 2018, Lanzarote,
               Canary Islands, Spain},
  series    = {Proceedings of Machine Learning Research},
  volume    = {83},
  publisher = {{PMLR}},
  year      = {2018},
  url       = {http://proceedings.mlr.press/v83/},
  timestamp = {Wed, 03 Apr 2019 18:17:24 +0200},
  biburl    = {https://dblp.org/rec/bib/conf/alt/2018},
  bibsource = {dblp computer science bibliography, https://dblp.org}
}

@article{10.2307/25048365,
 ISSN = {00364452},
 URL = {http://www.jstor.org/stable/25048365},
 author = {V. S. Varadarajan},
 journal = {Sankhyā: The Indian Journal of Statistics (1933-1960)},
 number = {1/2},
 pages = {23--26},
 publisher = {Springer},
 title = {On the Convergence of Sample Probability Distributions},
 volume = {19},
 year = {1958}
}

@inproceedings{belkin2007convergence,
	title={Convergence of Laplacian eigenmaps},
	author={Belkin, Mikhail and Niyogi, Partha},
	booktitle={Advances in Neural Information Processing Systems},
	pages={129--136},
	year={2007}
}

@BOOK{VillaniOldandNew,
  TITLE = {Optimal Transport},
  SUBTITLE = {Old and New},
  AUTHOR = {Villani, C\'edric},
  YEAR = {2009}, 
  PUBLISHER = {Springer},
}

@article{cohen2010lipschitz,
	title={Lipschitz functions have {Lp}-stable persistence},
	author={Cohen-Steiner, David and Edelsbrunner, Herbert and Harer, John and Mileyko, Yuriy},
	journal={Foundations of computational mathematics},
	volume={10},
	number={2},
	pages={127--139},
	year={2010},
	publisher={Springer}
}

@InProceedings{10.1007/978-3-0348-8829-5_11,
author="Koltchinskii, Vladimir I.",
editor="Eberlein, Ernst
and Hahn, Marjorie
and Talagrand, Michel",
title="Asymptotics of Spectral Projections of Some Random Matrices Approximating Integral Operators",
booktitle="High Dimensional Probability",
year="1998",
publisher="Birkh{\"a}user Basel",
address="Basel",
pages="191--227",
abstract="Given a probability space (S, S, P) and a symmetric measurable real valued kernel h on S {\texttimes} S, which defines a compact integral operator H from L2(P) into L2(P), we study the asymptotic behavior of spectral projections of the random n {\texttimes} n matrices Hn with entries n−1h(Xi,Xj), 1 ≤ i,j ≤ n, and Hn, obtained from Hn by deleting of its diagonal. Here (X1,{\ldots} , Xn) is a sample of independent random variables in (S, S) with common distribution P. We show that if H is a Hilbert-Schmidt operator, then the spectral projections of Hn converge a.s. to the spectral projections of H (the convergence is understood in the sense of quadratic forms). Under slightly stronger assumptions, the convergence also holds for the spectral projections of Hn. Moreover, under much stronger assumptions on the kernel h, we show that the fluctuations of the spectral projections of Hn or Hn are asymptotically Gaussian.",
isbn="978-3-0348-8829-5"
}

@ARTICLE{KonchilskiiGine2000,
    author = {Koltchinskii, Vladimir I. and Gin\'e, Evarist},
    title = {Random matrix approximation of spectra of integral operators},
    journal = {Bernoulli},
    volume = {6},
    number = {1},
    year = {2000},
    pages = {113--167}
}

@inproceedings{croke1980some,
  title={Some isoperimetric inequalities and eigenvalue estimates},
  author={Croke, Christopher B},
  booktitle={Annales scientifiques de l'{\'E}cole Normale Sup{\'e}rieure},
  volume={13},
  number={4},
  pages={419--435},
  year={1980}
}

@article{bates2014embedding,
	title={The embedding dimension of {L}aplacian eigenfunction maps},
	author={Bates, Jonathan},
	journal={Applied and Computational Harmonic Analysis},
	volume={37},
	number={3},
	pages={516--530},
	year={2014},
	publisher={Elsevier}
}

@article{burago2013laplacebeltrami,
title = {A graph discretization of the {L}aplace-{B}eltrami operator},
author = {Burago, Dmitri and Ivanov, Sergei and Kurylev, Yaroslav},
year = {2013},
pages = {},
volume = {4},
journal = {Journal of Spectral Theory},
doi = {10.4171/JST/83}
}

@article{ghrist2018persistent,
	title={Persistent homology and Euler integral transforms},
	author={Ghrist, Robert and Levanger, Rachel and Mai, Huy},
	journal={Journal of Applied and Computational Topology},
	volume={2},
	number={1-2},
	pages={55--60},
	year={2018},
	publisher={Springer}
}

@article{oudot2017barcode,
	title={Barcode Embeddings for Metric Graphs},
	author={Oudot, Steve and Solomon, Elchanan},
	journal={arXiv preprint arXiv:1712.03630},
	year={2017}
}

@article{oudot2018inverse,
	title={Inverse Problems in Topological Persistence},
	author={Oudot, Steve and Solomon, Elchanan},
	journal={arXiv preprint arXiv:1810.10813},
	year={2018}
}

@article{crawford2016topological,
	title={Topological summaries of tumor images improve prediction of disease free survival in glioblastoma multiforme},
	author={Crawford, Lorin and Monod, Anthea and Chen, Andrew X and Mukherjee, Sayan and Rabad{\'a}n, Ra{\'u}l},
	journal={arXiv preprint arXiv:1611.06818},
	year={2016}
}

@inproceedings{belkin2002laplacian,
	title={Laplacian eigenmaps and spectral techniques for embedding and clustering},
	author={Belkin, Mikhail and Niyogi, Partha},
	booktitle={Advances in neural information processing systems},
	pages={585--591},
	year={2002}
}

@article{belkin2003laplacian,
	title={Laplacian eigenmaps for dimensionality reduction and data representation},
	author={Belkin, Mikhail and Niyogi, Partha},
	journal={Neural computation},
	volume={15},
	number={6},
	pages={1373--1396},
	year={2003},
	publisher={MIT Press}
}

@article{coifman2006diffusion,
	title={Diffusion maps},
	author={Coifman, Ronald R and Lafon, St{\'e}phane},
	journal={Applied and computational harmonic analysis},
	volume={21},
	number={1},
	pages={5--30},
	year={2006},
	publisher={Elsevier}
}

\end{document}